\newcommand{\bm}[1]{\mbox{\boldmath $ #1 $}}
\newtheorem{theorem}{Theorem}[section]
\newtheorem{lemma}[theorem]{Lemma}
\newtheorem{proposition}[theorem]{Proposition}
\newtheorem{corollary}[theorem]{Corollary}
\theoremstyle{definition}
\newtheorem{definition}[theorem]{Definition}
\theoremstyle{remark}
\newtheorem{remark}[theorem]{Remark}
\newtheorem{problem}[theorem]{Problem}
\newcommand{\be}{\begin{equation}}
\newcommand{\ee}{\end{equation}}
\newcommand{\g}{g^o}
\newcommand{\cG}{{\mathcal G}}
\newcommand{\cU}{\mathcal{U}}
\newcommand{\G}{\Gamma}
\renewcommand{\H}{B}
\newcommand{\D}{\Delta}
\newcommand{\al}{\mbox{\boldmath$\Delta$}}
\newcommand{\aNd}{\mbox{\boldmath$ \nabla$}\hspace{-.6mm}}
\newcommand{\si}{\sigma}
\newcommand{\ts}{\textstyle}
\newcommand{\ba}{\begin{array}}
\newcommand{\ea}{\end{array}}
\newcommand{\beq}{\begin{eqnarray}}
\newcommand{\eeq}{\end{eqnarray}}
\newtheorem{lm}{lemma}
\newtheorem{thee}{theorem}
\newtheorem{proo}{proposition}
\newtheorem{co}{corollary}
\newtheorem{rem}{remark}
\newtheorem{deff}{definition}
\newcommand{\bd}{\begin{deff}}
\newcommand{\ed}{\end{deff}}
\newcommand{\bl}{\begin{lm}}
\newcommand{\el}{\end{lm}}
\newcommand{\bp}{\begin{proo}}
\newcommand{\ep}{\end{proo}}
\newcommand{\bt}{\begin{thee}}
\newcommand{\et}{\end{thee}}
\newcommand{\bc}{\begin{co}}
\newcommand{\ec}{\end{co}}
\newcommand{\brm}{\begin{rem}}
\newcommand{\erm}{\end{rem}}
\newcommand{\f}{\overline{f}}
\newcommand{\F}{\overline{F}}
\def\frak{\mathfrak}
\def\Bbb{\mathbb}
\def\Cal{\mathcal}
\newcommand{\newc}{\newcommand}
\let\ccdot\cdot
\def\cdot{\hbox to 2.5pt{\hss$\ccdot$\hss}}
\newc{\aR}{\mbox{\boldmath{$ R$}}}
\newc{\aS}{\mbox{\boldmath{$ S$}}}
\newc{\aT}{\mbox{\boldmath{$ T$}}}
\newc{\aW}{\mbox{\boldmath{$ W$}}}
\newcommand{\aX}{\mbox{\boldmath{$ X$}}\hspace{-.2mm}}
\newcommand{\aI}{\mbox{\boldmath{$ I$}}}
\newcommand{\eaI}{\bm{\mathcal I}}
\newcommand{\iaI}{\bm{\mathcal I}^*}
\newcommand{\eaX}{\bm{\mathcal X}}
\newc{\aD}{\mbox{\boldmath{$ D$}}\hspace{-.2mm}}
\renewcommand{\colon}{\scalebox{1.2}{:}}
\newcommand\oh{{\overline h}}
\newc{\aK}{\mbox{\boldmath{$ K$}}}
\newc{\aL}{\mbox{\boldmath{$ L$}}}
\newcommand{\ca}{{\Cal A}}
\newcommand{\ce}{{\Cal E}}
\newcommand{\cO}{{\Cal O}}
\newcommand{\cf}{{\Cal F}}
\newcommand{\ct}{{\Cal T}}
\newcommand{\nd}{\nabla}
\newcommand{\Rho}{P}
\newcommand{\Up}{\Upsilon}
\newcommand{\End}{\operatorname{End}}
\newcommand{\im}{\operatorname{im}}
\newcommand{\Ric}{\operatorname{Ric}}
\newcommand{\I}{\bs{I}}
\newcommand{\cT}{{\mathcal T}}
\let\hash=\sharp  
\let\i=\iota
\newcommand{\aM}{\, \widetilde{\!M}\, }
\newcommand{\asi}{\tilde{\sigma}}
\newcommand{\cW}{{\Cal W}}
\newcommand{\nn}[1]{(\ref{#1})}
\newcommand{\X}{\mbox{\boldmath{$ X$}}}
\newcommand{\sX}{{\mbox{\scriptsize\boldmath{$X$}}}} 
\newcommand{\sI}{{\mbox{\scriptsize\boldmath{$I$}}}}        
\newcommand{\aF}{\boldsymbol{F}}
\newcommand{\h}{\mbox{\boldmath{$ h$}}}
\newcommand{\bg}{\mbox{\boldmath{$ g$}}}
\renewcommand{\S}{\Sigma}
\let\G=\Gamma
\newcommand{\J}{{\mbox{\sf J}}}
\newcommand{\trH}{{\mbox{\sf H}}}
\newc{\obstrn}[2]{B^{#1}_{#2}}
\newcommand{\rpl}                         
{\mbox{$
\begin{picture}(12.7,8)(-.5,-1)
\put(0,0.2){$+$}
\put(4.2,2.8){\oval(8,8)[r]}
\end{picture}$}}
\newcommand{\lpl}                         
{\mbox{$
\begin{picture}(12.7,8)(-.5,-1)
\put(2,0.2){$+$}
\put(6.2,2.8){\oval(8,8)[l]}
\end{picture}$}}
\newc{\tensor}[1]{#1}
\newc{\Mvariable}[1]{\mbox{#1}}
\newc{\down}[1]{{}_{#1}}
\newc{\up}[1]{{}^{#1}}
\newc{\JulyStrut}{\rule{0mm}{6mm}}
\newc{\midtenPan}{\mbox{\sf S}}
\newc{\midten}{\mbox{\sf T}}
\newc{\midtenEi}{\mbox{\sf U}}
\newc{\ATen}{\mbox{\sf E}}
\newc{\BTen}{\mbox{\sf F}}
\newc{\CTen}{\mbox{\sf G}}
\renewcommand{\P}{\mbox{$\Bbb P$}}
\newcommand{\w}{\mbox{\bf w}} 
\def\sideremark#1{\ifvmode\leavevmode\fi\vadjust{\vbox to0pt{\vss
 \hbox to 0pt{\hskip\hsize\hskip1em
 \vbox{\hsize3cm\tiny\raggedright\pretolerance10000
 \noindent #1\hfill}\hss}\vbox to8pt{\vfil}\vss}}}%
\newcommand{\edz}[1]{\sideremark{#1}}
\numberwithin{equation}{section}
\newcommand\degree{{\bm N}}
\newcommand\extd{{\bm d}}
\newcommand\extdS{{\bm d}_{_\Sigma}}
\newcommand\cod{{\bm \delta }}
\newcommand\codS{{\bm \delta }_{_\Sigma}}
\newcommand\FL{{\bm \Delta }}
\newcommand\N{{\mathscr N}}
\renewcommand\I{{\mathscr I}}
\newcommand\Is{{\mathscr I}^\star}
\renewcommand\D{{\mathscr D}}
\newcommand\Ds{{\mathscr D}^\star}
\renewcommand\X{{\mathscr X}}
\newcommand\Xs{{\mathscr X}^\star}
\newcommand{\Y}{{\mathscr Y}}
\newcommand{\Ys}{{\mathscr Y}^\star}
\renewcommand\F{{\mathcal F}}
\newcommand\XsS{\X^{\phantom{\star}}_{_\Sigma}}
\renewcommand\colon{\scalebox{1.3}{$:$}}
\newcommand{\wDs}{\widehat{\D}^\star}
\newcommand{\wD}{\widehat{\D}}
\newcommand{\Dt}{\widetilde \D}
\newcommand{\Dts}{{\widetilde \D}^\star}
\newcommand{\coker}{\operatorname{coker}}
\newcommand{\oD}{\overline{\D}}
\newcommand{\oDs}{\overline{\D}^{\, \star}}
\newcommand{\wiota}{\tilde\iota}
\newcommand{\wepsilon}{\tilde\varepsilon}
\newcommand{\Integer}{{\mathbb Z}}
\newcommand{\A}{\mathcal A}
\newcommand{\B}{\mathcal B}
\begin{document}

\renewcommand{\today}{}
\title{Poincar\'e--Einstein Holography for Forms via  Conformal Geometry in the Bulk }
\author{A. Rod Gover${}^\diamondsuit$, Emanuele Latini${}^\clubsuit$ \& Andrew Waldron${}^\spadesuit$}

\address{${}^\diamondsuit$Department of Mathematics\\
  The University of Auckland\\
  Private Bag 92019\\
  Auckland 1\\
  New Zealand,  and 
  Mathematical Sciences Institute, Australian National University, ACT
  0200, Australia} \email{gover@math.auckland.ac.nz}
  
  \address{${}^{\clubsuit}$Institut f{\"u}r Mathematik, Universit{\"a}t Z{\"u}rich-Irchel, Winterthurerstrasse 190, CH-8057 Z{\"u}rich, 
  Switzerland, and INFN, Laboratori Nazionali di Frascati, CP 13,
  I-00044 Frascati, Italy} \email{emanuele.latini@math.uzh.ch}
  
  \address{${}^{\spadesuit}$Department of Mathematics\\
  University of California\\
  Davis, CA95616, USA} \email{wally@math.ucdavis.edu}

\vspace{10pt}

\vspace{10pt}

\renewcommand{\arraystretch}{1}

\begin{abstract} 

We study higher form Proca equations on Einstein manifolds with boundary data along conformal infinity. 
We solve these Laplace-type boundary problems formally, and to all orders, by constructing  an operator which projects arbitrary forms to
solutions. We also develop a product formula  for solving these asymptotic problems in general.
The central tools of our approach are (i)  the conformal geometry of differential forms and the associated exterior tractor calculus, and (ii) a generalised notion
of scale which encodes the connection between the underlying geometry  and its boundary. The latter also controls the breaking of conformal invariance in a very strict way by coupling conformally invariant equations to the scale tractor associated with the generalised scale.
From this, we obtain a map from existing solutions to new ones that exchanges
Dirichlet  and Neumann boundary conditions.
Together, the  scale tractor and exterior structure extend the solution generating algebra of~\cite{GWasym} to a conformally invariant, Poincar\'e--Einstein calculus on (tractor) differential forms.
This calculus leads to explicit holographic formul\ae\  for  all the higher order conformal operators on weighted differential forms, differential complexes, and Q-operators of~\cite{BrGodeRham}. This complements the results of Aubry and Guillarmou~\cite{AG-BGops} where associated conformal harmonic spaces parametrise smooth solutions.

\vspace{2cm}
\noindent
{\sf \tiny Keywords: Differential forms, conformally compact, Q-curvature, conformal harmonics, Poincar\'e--Einstein, AdS/CFT, holography, scattering, Poisson transform; 53A30, 53B50, 53A55, 53B15, 22E46}

\end{abstract}

\maketitle

\pagestyle{myheadings} \markboth{Gover, Latini \& Waldron}{Holography for Forms}

\newpage

\tableofcontents

\section{Introduction}\label{intro}

The Poincar\'e model realises hyperbolic $(n+1)$-space
$\mathbb{H}^{n+1}$ as the interior of a unit Euclidean ball, but
equipped with a metric conformally related to the Euclidean metric in
a way that places the boundary $n$-sphere $S^n$ at infinity.  This
provides a concrete setting for identifying the isometry group of
$\mathbb{H}^{n+1}$ with the conformal group of $S^n$ and so
a geometric foundation for Poisson transforms linking 
representations of $G=SO(n+1,1)$, as induced by its maximal parabolic,
to those induced by its maximal compact subgroup~\cite{Helgason}.
We develop here new tools for this programme, however our main focus is 
its  curved generalisation and our constructions are based in this setting.
Such curved analogues of this ``flat model'' underlie
striking new developments in mathematics and physics. 

Let $M$ be a
$d:=n+1$-dimensional, compact manifold with boundary $\Sigma=\partial M$
(all geometric structures assumed smooth).  A pseudo-Riemannian metric $\g$ on
the interior~$M^+$ of~$M$ is said to be {\it conformally compact} if it
extends to $\Sigma$ by $$g=r^2\g\, ,$$ where~$g$ is non-degenerate up to the
boundary, and $r$ is a defining function for the boundary ({\it
  i.e.}\ $\Sigma$ is the zero locus of $r$, and $d r$ is non-vanishing
along~$\Sigma$). Assuming that $\Sigma$ does not contain null
directions, the restriction of $g$ to $T\Sigma$ in $TM|_\Sigma$
determines a conformal structure, and this is independent of the
choice of defining function $r$; $\S$~with this conformal structure is
termed the {\em conformal infinity} of $M^+$. In the case of Riemannian
signature, if in addition $$|dr|^2_g=1$$ along $\Sigma$, then sectional
curvatures approach $-1$ asymptotically and so the structure is said
to be {\em asymptotically hyperbolic (AH)}~\cite{Ma-hodge}. On the other hand, if the
interior is negative Einstein (without loss of generality the Ricci
curvature satisfying $\Ric^{\g}=-ng^o$), then the structure is said to
be {\em Poincar\'e-Einstein (PE)}; PE~restricts the asymptotic
sectional curvatures and in the Riemannian case implies~AH.

These structures provide a framework for
relating conformal geometry and associated field
theories of the boundary to the far field phenomena of the interior
(pseudo-)Riem\-annian geometry of one higher dimension; the latter
often termed the {\em bulk}. Such problems may be viewed as 
analogues of the Poisson transform on the hyperbolic model
structure. For the construction of local conformal invariants, a
highly influential approach to PE structures was developed by
Fefferman-Graham in~\cite{FGast}; this was partly inspired by related
ideas from physics and general relativity~\cite{LeBrunH}. On the
global side critical aspects of the spectral theory for (Riemannian
signature) conformally compact manifolds was developed by Mazzeo and
Mazzeo-Melrose~\cite{Ma-hodge,Ma-unique,MaMe}. The local and global
directions were brought together in~\cite{GrZ} which develops a
scattering matrix approach to PE structures.

In physics, there is a general notion of  holography which strives  to capture field/string theories and their geometries
in terms of corresponding structures on a space of lower dimension~\cite{tHooft,Susskind}; a hologram provides a visual analogue
of this principle.  Much of the mathematics of conformally compact
structures may be viewed as a concrete realisation of this idea, and
indeed these geometries are used as prototypes for holography and
related renormalization ideas. In this Article, we shall use the term holography in reference to 
 this programme in the setting of Poincar\'e-Einstein
manifolds. Aside from the mathematics revealed, we view our work as
contributing to the investigation of Maldacena's conjectural AdS/CFT
correspondence within quantum theory. This
proposes to relate string theory on
the bulk to a boundary conformal field theory, and has been the
stimulus for much of the recent intense interest in the described
directions~\cite{Mal,AdSCFTreview,Henningson,Juhl}.

In a nutshell the current work is concerned with developing a
comprehensive holographic treatment of differential forms, and
conformally weighted differential forms; 
we study both bulk field equations  and  conformally invariant differential operators
intrinsic to the boundary.
Our approach uses heavily
some new ideas surrounding a generalised notion of scale introduced in
~\cite{GoPrague,GoIP,Goal}, that is closely linked to conformal tractor
calculus~\cite{BEG,CapGoamb,GoPetCMP}. Building on~\cite{GWasym} we
develop the new theory  and  calculus required to 
apply this to differential form problems.

The results we obtain draw together three recent
developments.  In~\cite{BrGodeRham} it is shown that differential
forms host a particularly rich conformal theory. New conformal (detour)
complexes and related gauge fixing operators found
there lead to corresponding conformal cohomologies, a notion of
conformal harmonics, as well as  invariants and invariant operators that generalise Branson's
Q-curvature~\cite{tomsharp}. In a study of harmonic~$k$-forms on PE manifolds,  Aubry and Guillarmou found a 
``holographic'' meaning of these objects~\cite{AG-BGops}; for example, each
space of conformal harmonics as defined in~\cite{BrGodeRham}, for the conformal infinity, is seen to
fit into an exact sequence involving the spaces of harmonics and~$L_2$-harmonics of the bulk.
On the other hand, in what might at first appear to
be an unrelated development, it was found in~\cite{Gover:2008pt,Grigoriev:2011gp} that massive,
massless and partially massive particle theories~\cite{Deser:1983tm,Deser:2001us} are described
simultaneously via a (tractor calculus mediated)-interaction of scale
with conformal differential operators (this also underlies the recent ``shadow field'' approach of~\cite{Metsaev}). In fact, by design, the tools
developed in~\cite{Gover:2008pt} are entirely compatible with the boundary
calculus of~\cite{GWasym}. An extension of that machinery is used here to formulate
a class of Proca systems for higher forms. We show  that these are
compatible with the forms problem treated in~\cite{AG-BGops}. 
Here we solve formally  Dirichlet and Neumann Proca problems, see Problem~\ref{laylanguage} and~\ref{solprob2}. We obtain explicit formul\ae\ for complete solutions:
in the massless case, we recover the conformal harmonic space from~\cite{BrGodeRham} as a condition
for compatibility with smoothness of solutions, consistent with results of~\cite{AG-BGops}. An important feature of this approach is that it provides explicit holographic formul\ae \ for the detour and gauge operators defining these spaces. Here and throughout the Article, a {\it holographic formula} for an operator on the conformal infinity  is a canonically obtained bulk operator which recovers the given boundary operator  by restriction.

The scattering programme of~\cite{GrZ} and others~\cite{GuilN,Joshi,HPT,MSV,Vasy} surrounds
natural boundary problems on conformally compact manifolds, see also the related Dirichlet problems~\cite{Anderson,Sullivan}; this programme is particularly
rich for  Poincar\'e-Einstein structures.
In~\cite{GWasym} it is shown quite generally that the asymptotics of such boundary
problems can be solved to all orders using an algebra of geometric
operators; there the problems treated  include not only scalars but general
twistings of such by tractor bundles; this laid  the universal route to handling general tensor problems. A key point of that approach is
that it is not only algebraically efficient, but it is also geometrically
conceptual. A conformally compact manifold is the same as a (compact)
conformal manifold with boundary equipped with a certain density
(which should be interpreted as a generalised scale, termed later a defining scale) the zero locus of
which defines the boundary. The point in~\cite{GWasym} is that this
structure {\em canonically determines} an ${\frak sl}(2)$-generating
triple of differential operators which not only yield the natural
boundary problems (including those previously studied in the
literature), but also a solution generating algebra which solves these.  Here we develop a calculus of scale for differential forms which extends this idea in a way that solves higher form Proca systems.

Consider broadly the ``holographic problem'' for differential forms. Since
the bulk has higher dimension than the bounding conformal infinity, we
expect that freely specified boundary forms 
correspond to differential forms on the interior satisfying
constraining equations. We investigate this idea on PE manifolds, and
where the interior equation is the higher form Proca system
\begin{equation}\label{proca1}
\cod\extd A - m^2 A = 0, \qquad \cod A=0,
\end{equation}
for a (differential) $k$-form $A$. Here $\extd$ is the exterior
derivative, $\cod$ the negative of its formal adjoint, and the
parameter $m^2$ is a constant over the manifold. Even though it will
be seen that $m^2$ is quadratic in a natural spectral parameter, it
can be negative; the historical notation is used because  $m^2$
recovers the square of the rest mass in certain settings.  Note also that 
$\cod\extd A - m^2 A = 0 $ implies the divergence/transversality condition $\cod A=0$, unless $m^2=0$ in which case 
$\cod A=0$ is known as the Feynman gauge fixing relation.  
It implies that $\FL A= m^2 A$, which for
$m^2=\big(\frac n2 -k+\ell\big)\big(\frac n2 -k-\ell\big)$ is the
 interior equation considered in~\cite{AG-BGops}  (for suitable integers $\ell$).

Let us summarise our main results: 
\begin{itemize}

\item We show how to write the Proca system \nn{proca1} using the
  tractor calculus machinery. In fact we obtain much more: We show
  that the Proca system \nn{proca1} arises canonically from the PE
  structure, and the mass term is determined (quadratically) by the conformal 
weight. It is given by an obvious coupling of conformally
  invariant (or in physics terminology---Weyl invariant) tractor
  equations to the defining scale tractor, see Proposition~\ref{westproca}.

\vspace{2mm}
\item In contrast to \nn{proca1}, the tractor system of Proposition
 ~\ref{westproca} is well-defined up to the boundary and so it is seen, 
 via this use of conformal geometry and the tractor
  interpretation of a PE structure, that the interior system \nn{proca1}
  canonically determines two compatible boundary problems and these
  are given in Problem~\ref{laylanguage} and the expression~\nn{laidagain}. Solutions of these problems are called,
  respectively, (Proca) solutions of the {\em first type}, and (Proca)
  solutions of the {\em second type}.  

\vspace{2mm}
\item Following~\cite[Proposition 5.10]{GWasym}, we show that any solution of the
  Proca system~\ref{proca1} may be coupled with the defining scale of
  the PE structure to yield another solution, see {\it e.g.}\ Theorem
 ~\ref{swapg}; at the level of formal solutions this maps solutions of
  the first type to solutions of the second type, see Theorem~\ref{swap}
  and Corollary~\ref{swapc}.  For solutions to the global boundary
  problem this {\em scale duality map} gives a new solution of the
  same interior equation with the {\it r\^oles} of  the Dirichlet and Neumann data
  exchanged, see Remark~\ref{drem}.
 
\vspace{2mm}
\item The Proca Boundary  Problem~\ref{laylanguage} is solved formally to all orders in Theorem
 ~\ref{BIGTHEOREM}. See also Proposition~\ref{fsol} in Section
 ~\ref{prodf} which shows that even when written directly in terms of
  (weighted) differential form boundary data, remarkably simple
  explicit formul\ae \  are available for the solution.  The solution to
  Problem~\ref{solprob2} then follows by scale duality, and this is
  the content of Corollary~\ref{swapc}. Other cases require log terms
  in the solutions. Within this context Problem~\ref{logfire} is
  generic and is solved in Theorem~\ref{mainl}. The remaining
  exceptional weights are the important cases of true forms and their
  weight duals. These are the subject of Problem~\ref{trueproblem} and
  Problem~\ref{solprobdualtrue} and are solved in, respectively,
  Theorem~\ref{tpsol} and Theorem~\ref{tpdsol}.

\vspace{2mm}
 \item In Section~\ref{products} we give a product form for these solutions expressed as a certain product of
 second order differential operators that projects arbitrary bulk forms to solutions. This provides an
 alternative solution to the tractor extension problem of~\cite{GWasym} 
 and was inspired by the Fefferman--Hirachi product solution~\cite{Fefferman-Hirachi} for an ambient, scalar, Goursat-type problem.

\vspace{2mm}
 \item In Section~\ref{funSec} we obtain holographic constructions
   and formul\ae\  for the higher order, differential, {\it Branson--Gover (BG)  operators} on forms
   from~\cite{BrGodeRham}. These yield
   holographic formul\ae\ for the natural conformally invariant boundary operators~$L^{\ell}_k$.
   In particular we
    obtain the  
   detour operators
   $L_k$,  and we also find holographic formul\ae\ for  gauge fixing
   operators $G_k$, the Q-operators $Q_k$, the factorisations
$$
L_k= \cod Q_{k+1} \extd \, ,
$$
and hence differential detour complexes (see Theorem~\ref{detids}).
  These are seen to arise both from tangential operators 
  along the boundary~$\Sigma$ as well as the obstruction to smooth solutions to the Proca systems
  for true forms. The former uses surprising holographic identities for powers of the Laplace--Robin operator (see Theorem~\ref{fun}).
\end{itemize}

The technique for solving all these problems is to construct a universal operator that projects arbitrary bulk forms to (formal) solutions.
This is a composition of a projector which solves the problem of inserting forms in tractors (see Section~\ref{holographicproj}) and a modification of the solution generating  operator on 
tractors constructed in~\cite{GWasym} which acts as a projector onto formal power series solutions. 
This extends the idea of {\em curved translation}, initiated in~\cite{Eastwood-Rice}, which constructs new invariant differential operators from existing ones. Here we use a similar
idea to obtain solutions of the Proca problem from solutions of a related universal tractor problem. 
Concretely this yields  the following remarkably simple expression for solutions to the Proca system~\nn{proca1}
$$
A=q^* \colon K \colon \, q_W A_0\, ;
$$
see Theorem~\ref{fruit}. 
(This formula holds 
 avoiding one family of distinguished weights indexed by form degree and requiring a separate treatment.)
 In the above,  the differential form  $A_0$ is an arbitrary smooth extension to the interior of Dirichlet boundary data along~$\Sigma$
 and the operator $q_W$ is a differential splitting operator mapping forms to tractors along the lines of~\cite{BrGodeRham} and developed in detail in Section~\ref{compinsert}; the bundle map $q^*$ is a left inverse for this.
The operator $\colon K \colon$ is a specially constructed variant of the solution generating operator of~\cite{GWasym} adapted to forms and tuned to  the Proca system\footnote{It has recently come to our attention  that, in the context of completions of the enveloping algebra $\cU(\frak{sl}(2))$, the operator $\colon K \colon$ is an extremal projector~\cite{Lowdin,Tolstoy}. These are projections $P$ such that $P e_\alpha=0=e_{-\alpha}P$ for all positive roots $\alpha$ labeling Lie algebra generators $e_\bullet$.}.

It is central to our approach to construct and solve the tractor version of the Proca system.
Therefore we first develop a direct approach to  exterior tractor calculus on general conformal structures. 
 This avoids using the Fefferman--Graham ambient metric, yields explicit formul\ae, and recovers and extends the identities found in~\cite{BrGodeRham}. 
 Also, a  complete algebra of differential splitting operators is obtained
using this exterior tractor calculus (see Sections~\ref{algebra} and~\ref{compinsert}).
In Section~\ref{coho} classical conformally invariant equations on forms are surveyed and their origins from the cohomology of the nilpotent 
exterior Thomas D-operator are explained.

To describe the Proca system, it essential to couple tractor forms to the PE defining scale.
This draws additional canonical operators into the tractor exterior calculus
and determines natural boundary conditions for a canonical and universal class of extension problems that we call the {\it tractor Proca equations}:
\begin{itemize}
\item The solution generating algebra of~\cite{GWasym} for  the forms analog of the Laplace--Robin operator $I\cdot D$ of~\cite{powerslap,GoIP}
is extended to include exterior and interior multiplication by the scale tractor~$I$. 
This is the basis for an exterior calculus of scale described in Section~\ref{extcalcsc}, and along the boundary  gives an extrinsic and conformally invariant  Robin-type operator~$\delta_R$ on forms.
\item On PE structures, the Laplace--Robin operator has  natural ``square roots''
$$
\Is \D + \D \Is = I\cdot \slashed D = \I \, \Ds + \Ds \I\, .
$$ 
This supersymmetry  is described in Section~\ref{solnalgform} and plays a critical {\it r\^ole}  in the  tractor description of the Proca system of Section~\ref{Procasect}.
\item 
We develop tangential versions of bulk tractor operators that allow us to follow the tractor Proca equations to the boundary 
in order to capture the required boundary conditions, see Sections~\ref{tractorsontheedge} and~\ref{bctractors}.
\item Solutions to the tractor Proca system are developed in Section~\ref{formssol}. This relies on a holographic formula $\Pi$ for an operator that commutes with the solution generating operator $\colon K\colon$ and extends to the interior  a projector onto boundary
tractor sections with image isomorphic to boundary forms. Moreover, the operator $\Pi$ ensures the tractor analog of the Proca transversality condition $\delta A=0$ and is  compatible  with  
 the boundary conditions. The composition of $\Pi$ and $\colon K\colon$  projects arbitrary tractor forms $\A_0$ onto  formal  solutions to the generic tractor Proca problem:
$$
\A=\Pi \, \colon K\colon\,  
\A_0\, ,
$$
see Theorem~\ref{BIGTHEOREM}.
\end{itemize}

\vspace{1cm}

\thanks{A.W. thanks Eugene Skvortsov for a very useful discussion of the extremal projector method. A.G.\ gratefully acknowledges support from the Royal Society of
  New Zealand via Marsden Grant 10-UOA-113; E.L. and A.W. wish to thank
  the University of Auckland for hospitality during the preparation
  of this work.}

\newpage

\section{Bulk conformal geometry and extension problems}\label{EXTENSION}
 
 \newcommand{\Real}{{\mathbb R}}
 \newcommand{\Complex}{{\mathbb C}}

 Here we lay out our conventions and some basic facts about conformal geometry (see Sections~\ref{landofRiemann} 
 and~\ref{CGeom}) before stating in Section~\ref{extprob} the general formulation of the extension problems solved in this Article.
 In Section~\ref{coulomb}, we solve a generalised divergence  extension problem which is required later to handle the transversality condition of the Proca system.
To solve the full Proca problem, tractor calculus plays an central {\it r\^ole}; the basic ingredients are stated in Section~\ref{TRACTORS}.
In Section~\ref{LRs} we review the calculus of defining scales and 
construct the Laplace--Robin operator which controls the dynamics 
of the systems we solve. In Section~\ref{LRalg} we  give the solution generating algebra of~\cite{GWasym}.  
This part of the Article is completed in Section~\ref{products} by giving a novel product form for the solution generating operator of~\cite{GWasym}.
For details and background on conformal geometry relied on here see~\cite{CapGoamb,GoPetCMP}.

\subsection{Riemannian conventions}\label{landofRiemann}
 
The main background and notations in this Article are mostly those of~\cite{GWasym}. Here we briefly highlight some main points.
Throughout we focus on manifolds $M$ of dimension~$d:=n+1$ at least three equipped with a metric, or a conformal equivalence class of Riemannian metrics.
 All structures will be assumed smooth.
For a given metric with Levi-Civita connection~$\nabla$, the Riemann
curvature tensor $R$ is given by
$$
R(X,Y)Z=\nd_{X}\nd_{Y}Z-\nd_{Y}\nd_{X}Z-\nd_{[X,Y]}Z,
$$
where $X$, $Y$, and $Z$ are arbitrary vector fields. In an 
index notation  $R$ is denoted by $R_{ab}{}^{c}{}_d$,
and $R(X,Y)Z$ is $X^aY^bZ^d R_{ab}{}^{c}{}_d$.
This can be decomposed into the totally trace-free {\em Weyl curvature}
$C_{abcd}$ and the symmetric {\em
Schouten tensor} $\Rho_{ab}$ according to
\begin{equation}\label{Rsplit}
R_{abcd}=C_{abcd}+2g_{c[a}\Rho_{b]d}+2g_{d[b}\Rho_{a]c},
\end{equation}
where $[\cdots]$ indicates  antisymmetrisation over the enclosed
indices. 
Thus $\Rho_{ab}$ is a trace modification of the Ricci tensor 
${\rm Ric}_{ab}=R_{ca}{}^c{}_b$:
$$
\Ric_{ab}=(n-2)\Rho_{ab}+ \J g_{ab}, \quad \quad \J:=\Rho^a_{~a}.
$$

\subsection{Conformal and almost Riemannian geometry}\label{CGeom}
 
Recall that  a conformal geometry $(M,c)$ is a $d$-manifold~$M$
equipped with an equivalence class of metrics~$c$ such that $g,\widehat g\in c$ obey $$\widehat g=\Omega^2\, g\, ,\qquad \Omega:=e^{\omega}\, ,$$ for some $\omega\in
C^{\infty}(M)$. Observe that a conformal structure can be viewed as a smooth ray subbundle
$\cG\subset S^2T^*M$ whose fibre at a point $x\in M$ consists of the values of $g_x$ for all metrics $g\in c$. The principal bundle~$\cG$ has structure group $\Real_+$ whose irreducible representations $\Real_+\ni t\mapsto t^{-w/2}\in \mbox{End}(\Complex)$, are labeled by weights~$w\in\Complex$. 
These induce ``conformal density'' line bundles~$\ce M[w]$. A section~$\tau\in \Gamma \ce M[w]$ is equivalent to a smooth function $\underline\tau$ of~$\cG$ that satisfies the  homogeneity property
$$
\underline\tau(x,\Omega^2 g)=\Omega^w \underline\tau(x,g)\, .
$$

Each metric $g\in c$ determines a canonical, positive, section $\tau\in
\Gamma\ce_+ M[1]$, {\it viz.}\ the section with the property that
$\underline{\tau}(p,g)=1$ for all $p\in M$. (The conformal density bundles are oriented, and a subscript plus indicates the $\Real_+$-ray subbundle.)
It follows that there is a
tautological section of $S^2T^*\!M\otimes \ce M[2]$ that is termed the
\hypertarget{conformal metric}{{\it conformal metric}}, denoted~$\bg$ with the property that any
nowhere zero section $\tau\in \Gamma \ce M[1]$ determines a  metric $g\in c$ 
via 
$g:=\tau^{-2}\bg$.
Henceforth the conformal metric~$\bg$ is the
default object that will be used to identify $TM$ with $T^*\!M[2]\cong T^*M\otimes \ce M[2]$ (rather
than a metric from the conformal class) and to form metric traces.

A Riemannian manifold can be treated as a conformal manifold $(M,c)$ equipped with a nowhere zero density $\sigma\in\Gamma\ce M[1]$, 
since the metric is recovered by $\sigma^{-2}\bg$. Following~\cite{Goal}, we formulate an obvious generalised notion of this.
\begin{definition}
A  {\em defining scale} is a  section $\si\in\Gamma\ce M[1]$ that is nowhere vanishing on an open dense subset of~$M$. 
A conformal manifold $(M,c)$ equipped with a defining scale  $\sigma\in\Gamma \ce M[1]$ is called an {\em almost Riemannian
structure} $(M,c,\sigma)$; where $\sigma$ is non-zero, $\sigma^{-2}\bg$ defines a Riemannian metric.

In the case that
$\si$ is {\em nowhere} zero this is a {\em true scale}, so that $\si^{-2}\bg$
is a metric  everywhere. 
\end{definition}
This notion of almost Riemannian structures arises naturally in the context of defining densities:
Consider a smooth, oriented hypersurface~$\Sigma$ given as the zero locus of some smooth (at least in a neighbourhood of~$\Sigma$) {\it defining function} $s$, with $\extd s\neq 0$ along $\Sigma$. Generally, to optimally employ the \hypertarget{conformal structure} conformal structure~$c$,
we will replace the defining function $s$ by a {\it defining density}~$\sigma$ which is
the unique conformal density $\sigma\in \Gamma\ce M[1]$ that yields the defining function $s$ in the trivialisation of~$\ce M[1]$
determined by some~$g\in c$. A~preferred defining density is a special example of a defining scale.

This notion is especially natural and useful for conformally compact manifolds.  A~conformally compact manifold is an almost Riemmanian manifold with boundary such that the  zero locus ${\mathcal Z}(\sigma)=\partial M$. Thus, in the notation of the Introduction  $g^o=r^{-2}g=\sigma^{-2}\bg$ and~$r$ is the component function representing $\sigma$ in the scale $g$. In particular
this applies to special case of PE structures where $g^o$ is negative Einstein.

Since each $g\in c$ determines a trivialisation of $\ce M[w]$, it  also defines a corresponding {\it Levi-Civita connection} $\nabla$ (see {\it e.g.}~\cite{CapGoamb}).
Moreover, a metric $g\in c$ canonically determines a true scale~$\tau\in\Gamma\ce M[1]$  by the requirement $\underline\tau(x,g)=1$. We will write
$\nabla^\tau$ for the connection corresponding to this scale. 
Almost Riemannian structures $(M,c,\sigma)$ come equipped with the canonical Levi--Civita connection $\nabla^\sigma:=\nabla^o$ {\it away} from the zero locus ${\mathcal Z}(\sigma)$ of $\sigma$.
In the scale $\tau$, acting on a density $\mu\in \Gamma\ce M[w]$, on $M\setminus {\mathcal Z}(\sigma)$ 
$$
\nabla^o \mu = \big(\nabla^\tau-w\sigma^{-1}n\big)\mu\, ,\qquad n := \nabla^\tau \sigma\, .  
$$
The \hypertarget{doublerole}{operator}
$$
\overline \nabla :=\sigma \nabla^\tau-w n     
$$
extends $\sigma \nabla^o$ to ${\mathcal Z}(\sigma)$. 
In the case where ${\mathcal Z}(\sigma)=\Sigma$ for some hypersurface $\Sigma$, this reduces to $w n_{|_\Sigma} \mu_{|_\Sigma}$
along $\Sigma$. Thus, the Levi--Civita connection off $\Sigma$ and (when $w\neq0$) the normal to the hypersurface are smoothly and canonically 
 incorporated   in a single operator. Moreover, the Levi--Civita connection has the following important yet obvious property.
 \begin{proposition}\label{overnabla}
 Let $\mu\in\Gamma \ce^\bullet M[w]$, where $\ce^\bullet M[w]$  indicates any weight $w$ tensor bundle. Then
 $$
 \overline \nabla \sigma \mu = \sigma \overline \nabla \mu\, .
 $$
 \end{proposition}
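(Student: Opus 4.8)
The plan is to verify the identity by a direct computation from the definition $\overline\nabla=\sigma\nabla^\tau-w\,n$, with $n=\nabla^\tau\sigma$, paying careful attention to the single genuinely important point: the operator $\overline\nabla$ is \emph{weight-dependent}, in the sense that the scalar $w$ multiplying $n$ is exactly the conformal weight of the object on which $\overline\nabla$ acts. Since $\sigma\in\Gamma\ce M[1]$, the product $\sigma\mu$ has weight $w+1$, so the defining formula for $\overline\nabla$, applied to $\sigma\mu$, must be taken with $w$ replaced by $w+1$. Getting this bookkeeping right is what drives the whole argument.

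Concretely, I would first expand
$$
\overline\nabla(\sigma\mu)=\sigma\,\nabla^\tau(\sigma\mu)-(w+1)\,n\,(\sigma\mu)\, ,
$$
then use that $\nabla^\tau$ is a Levi--Civita connection, hence satisfies the Leibniz rule, together with $n=\nabla^\tau\sigma$, to rewrite $\nabla^\tau(\sigma\mu)=n\,\mu+\sigma\,\nabla^\tau\mu$. Substituting and expanding produces four terms, among which $\sigma\,n\,\mu$ (from the Leibniz expansion) and one copy of $-n\,\sigma\mu$ (split off from the $-(w+1)$ coefficient) cancel, using that multiplication of the line-bundle section $\sigma$ against the $T^*\!M$-valued density $n\mu$ is commutative. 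What remains is
$$
\sigma^2\,\nabla^\tau\mu-w\,\sigma\,n\,\mu=\sigma\big(\sigma\,\nabla^\tau\mu-w\,n\,\mu\big)=\sigma\,\overline\nabla\mu\, ,
$$
which is the asserted equality.

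There is no real obstacle here beyond the weight tracking noted above; the proposition is labelled "obvious" in the text precisely because, once the weight $w+1$ is inserted correctly, the cancellation is forced. As a consistency check I would note the interpretation away from the zero locus $\mathcal Z(\sigma)$: there $\overline\nabla=\sigma\nabla^o$ with $\nabla^o$ the Levi--Civita connection of $\sigma^{-2}\bg$, which annihilates $\sigma$, so the identity reduces to the trivial statement $\sigma\nabla^o(\sigma\mu)=\sigma^2\nabla^o\mu$. The content of the proposition is that this relation persists, exactly as written, across $\mathcal Z(\sigma)$, where $\nabla^o$ itself is singular but $\overline\nabla$ is smooth.
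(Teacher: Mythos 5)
Your computation is correct, and since the paper states this proposition as ``obvious'' with no written proof, the intended argument is precisely the direct Leibniz expansion you give: the only genuine content is replacing $w$ by $w+1$ in the definition of $\overline\nabla$ applied to the weight-$(w+1)$ section $\sigma\mu$, after which the terms $\sigma n\mu$ cancel and the identity $\overline\nabla(\sigma\mu)=\sigma\big(\sigma\nabla^\tau\mu-w\,n\,\mu\big)=\sigma\overline\nabla\mu$ follows. Your closing remark is also a complete alternative proof in itself: the identity holds trivially off the zero locus $\mathcal Z(\sigma)$ (where $\nabla^o\sigma=0$), and since $\sigma$ is nonvanishing on an open dense set and both sides are smooth, continuity extends it across $\mathcal Z(\sigma)$.
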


\subsection{Extension problems}\label{extprob}

Our core extension problems are formulated as below.
\begin{problem}\label{canextprob}
Let $y:\Gamma \cf\to\Gamma\cf'$ be a given operator acting on sections of some vector bundle~$\cf$ over $M$
with codomain the section space of another vector bundle over $M$. Then, for fixed ``boundary data'' $f|_\Sigma\in\Gamma\cf|_\Sigma$, where $\Sigma\subset M$
is a hypersurface in $M$ (or $\Sigma=\partial M$),
find $f\in \Gamma\cf$ such that the {\it extension}~$f$ of~$f|_\Sigma$ obeys
$$y f = 0\, .$$
\end{problem}

For the class of \hypertarget{extension operator}{{\it extension operators}} $y$ studied here
we are interested in explicit
asymptotic solutions. Assuming that $\sigma$ is a defining density for $\Sigma={\mathcal Z}(\sigma)$ the zero locus of~$\sigma$, these can be treated by an expansion in the density $\sigma$~\cite{GWasym}.
In the case that we view~$\sigma$ as a defining scale, so that it gives a {\it preferred} defining density, we obtain  canonical  coordinate independent expansions
$f^{(\ell)}$ 
satisfying
$$
y f^{(\ell)}=\sigma^{\ell} f_{\ell}\, ,
$$ 
for some smooth $f_{\ell}\in \Gamma(\cf'\otimes\ce M[-\ell])$.
We will often abbreviate the right hand side of the above display by $O(\sigma^\ell)$.
In many cases we are able to find asymptotic solutions for arbitrarily high integers~$\ell$.

To begin with, we consider an asymptotic solution to a simple model problem
coming from  the transversality condition for massive
form fields; this is an integrability condition  of Proca's equations.
In the massless limit, this condition reduces to the Coulomb/Feynmann gauge condition for
 Maxwell's equations.

\subsection{The generalised divergence extension problem}\label{coulomb}

Throughout, we will denote the tensor product $\Lambda^k M\otimes \ce M[w]=: \ce^kM[w]$ and refer to sections thereof as \hypertarget{exterior density}{{\it weighted forms}}, or simply forms. Sections of $\Lambda^kM$ will be called {\it true forms} with section space denoted~$\Omega^k M=\ce^kM[0]$.
In a true scale $\tau$, we will write $\extd$ for the exterior action of the Levi-Civita connection $\extd^\tau :=\varepsilon(\nabla^\tau):\Gamma\ce^kM[w]\to \Gamma\ce^{k+1}M[w]$.
Similarly, for the codifferential,~$\cod$~will denote $\cod^\tau:=\iota(\nabla^\tau):\Gamma\ce^kM[w]\to \Gamma\ce^{k-1}M[w-2]$. For almost Riemannian structures, the canonical Levi--Civita
connection $\nabla^o$ determines exterior and interior operators acting on $\Gamma\ce^kM[w]$ 
\begin{equation}\label{d0delta0}
\extd^o := \extd - w \sigma^{-1}\varepsilon(n) \, ,\qquad \cod^o:=\cod - (d+w-2k) \sigma^{-1}\iota(n)\, .
\end{equation}
Here $\varepsilon(\,.\,)$ and $\iota(\,.\,)$ denote, respectively, exterior and  interior products.
We note  the following obvious but highly useful identities.
\begin{lemma}\label{taut}
The operators $$\extd^o:\Gamma \ce^kM[w]\longrightarrow \Gamma \ce^{k+1}M[w]\, ,\qquad \cod^o:\Gamma \ce^kM[w]\longrightarrow \Gamma \ce^{k-1}M[w-2]$$
and $$\cod^o\extd^o+\extd^o\cod^o=:\FL^{\! o}\ :\Gamma \ce^kM[w]\longrightarrow \Gamma \ce^{k}M[w-2]\, ,$$
defined away from $\Sigma$, 
obey
\begin{eqnarray*}
\big(\extd^o\big)^2 \:\: = &\!0\!& =\:\:  \big(\cod^o\big)^2\, ,\\[2mm]
{}[\FL^{\! o},\extd^o]\: =&\!0\!&=\: [\cod^o,\FL^{\! o}]\, ,\\[2mm]
{}[\extd^o,\sigma]\:\: =&\!0\!&=\:\: [\sigma,\cod^o]\, .
\end{eqnarray*}
\end{lemma}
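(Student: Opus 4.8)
The plan is to verify the six identities of Lemma~\ref{taut} by direct substitution of the definitions~\nn{d0delta0}, reducing everything to properties of the Levi-Civita exterior and interior operators $\extd^\tau=\varepsilon(\nabla^\tau)$, $\cod^\tau=\iota(\nabla^\tau)$ together with elementary facts about $\varepsilon(\cdot)$ and $\iota(\cdot)$. Recall the standard graded commutation relation $\varepsilon(n)\iota(n)+\iota(n)\varepsilon(n)=\bg(n,n)=|n|^2$ and, crucially, the nilpotency $\varepsilon(n)^2=0=\iota(n)^2$, which will carry much of the weight. Since $n=\nabla^\tau\sigma$, these $n$-dependent terms are what distinguish $\extd^o,\cod^o$ from the Levi-Civita operators, so the whole computation is an exercise in tracking how the single extra $\sigma^{-1}\varepsilon(n)$ and $\sigma^{-1}\iota(n)$ terms interact with themselves and with $\extd^\tau,\cod^\tau$.

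First I would prove the commutativity with $\sigma$, i.e.\ $[\extd^o,\sigma]=0=[\sigma,\cod^o]$. This is the cleanest step and a useful warm-up: $\extd^\tau(\sigma\mu)=(\extd^\tau\sigma)\wedge\mu+\sigma\extd^\tau\mu = \varepsilon(n)\mu+\sigma\extd^\tau\mu$ (using $n=\nabla^\tau\sigma$, recalling $\sigma$ is a density of weight~$1$ so $\extd^\tau\sigma=\varepsilon(n)$ acting as a $0$-form), while the correction term in $\extd^o$ contributes $-w\sigma^{-1}\varepsilon(n)\sigma\mu=-w\varepsilon(n)\mu$; but multiplication by $\sigma$ raises the weight by one, so the weight label in the correction term shifts from $w$ to $w+1$, and the two $\varepsilon(n)$ contributions cancel precisely. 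The same bookkeeping with $\iota(n)$ and the weight shift $w\mapsto w+1$ (noting $d+(w+1)-2k$ versus $d+w-2k$ and the extra $\extd$-action on $\sigma$) gives $[\sigma,\cod^o]=0$; this is essentially Proposition~\ref{overnabla} specialised to the exterior operators, so I may cite it.

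Next I would establish the nilpotencies $(\extd^o)^2=0=(\cod^o)^2$. Expanding $(\extd^o)^2$ produces four pieces: $(\extd^\tau)^2=0$ automatically (flat exterior derivative squares to zero), the purely $n$-quadratic term vanishes by $\varepsilon(n)^2=0$, and the two cross-terms $\extd^\tau\!\big(\sigma^{-1}\varepsilon(n)\,\cdot\,\big)$ and $\sigma^{-1}\varepsilon(n)\,\extd^\tau$ must cancel after one accounts for the weight-dependent coefficients and the Leibniz action of $\extd^\tau$ on the coefficient $\sigma^{-1}$ and on $\varepsilon(n)$. Here I would use $\extd^\tau\varepsilon(n)+\varepsilon(n)\extd^\tau = \varepsilon(\extd^\tau n)=\varepsilon(\nabla^\tau\nabla^\tau\sigma)$, which is symmetric and hence annihilated by the exterior (antisymmetrising) action, so $\varepsilon(\extd^\tau n)=0$; the surviving terms then cancel against the derivative of $\sigma^{-1}$ once the weights are tracked correctly. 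The argument for $(\cod^o)^2=0$ is dual, replacing $\varepsilon$ by $\iota$ and using $\iota(n)^2=0$ together with $\cod^\tau\iota(n)+\iota(n)\cod^\tau=0$ (the analogous symmetric-tensor vanishing).

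Finally, the commutators $[\FL^{\!o},\extd^o]=0=[\cod^o,\FL^{\!o}]$ follow formally from the first two groups of identities. Indeed, once $(\extd^o)^2=0$, writing $\FL^{\!o}=\cod^o\extd^o+\extd^o\cod^o$ gives $\FL^{\!o}\extd^o=\extd^o\cod^o\extd^o=\extd^o\FL^{\!o}$ directly, and symmetrically $\FL^{\!o}\cod^o=\cod^o\FL^{\!o}$ using $(\cod^o)^2=0$; this is the standard Hodge-theoretic observation that a Laplacian built from anticommuting square-zero operators commutes with each of them, so no new computation is needed. I expect the main obstacle to be the cross-term cancellation in the nilpotency step: the correction coefficients $-w$ and $-(d+w-2k)$ depend on both the weight $w$ and the form degree $k$, and since $\extd^o$ and $\cod^o$ shift $k$ (and $\cod^o$ also shifts $w$), I must be scrupulous about evaluating each coefficient at the \emph{correct} weight and degree for the bundle each factor actually acts on. Keeping this bookkeeping honest — rather than any conceptual difficulty — is where errors would most easily creep in.
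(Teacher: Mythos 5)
Your proof is correct and is essentially the paper's own argument: the paper proves the lemma "by explicit computation based on" precisely the identities you use, namely $[\extd,\sigma]=\varepsilon(n)$, $[\cod,\sigma]=\iota(n)$ and $\{\extd,\varepsilon(n)\}=0=\{\cod,\iota(n)\}$ (its display~\nn{comms}), with the Laplacian commutators following formally from nilpotency exactly as you observe. The only thing worth noting is that the paper's first-line proof is even shorter and conceptual — away from $\Sigma$, the operators $\extd^o$, $\cod^o$, $\FL^{\!o}$ are just the standard exterior derivative, codifferential and form Laplacian of the preferred interior metric $g^o=\sigma^{-2}\bg$ (in which scale $\sigma$ is represented by the constant function), so all six identities are classical — and your careful coefficient bookkeeping (e.g.\ that $d+w-2k$ is unchanged under $k\mapsto k-1$, $w\mapsto w-2$) is the verification that the explicit route indeed closes.
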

\begin{proof}
All these results follow from standard ones for the exterior derivative, codifferential and form Laplacian
calculating in the {\it preferred interior choice of scale} $g^o\in c$ determined by $\sigma$.
They can also be obtained by explicit computation based on
the following elementary
commutator and anticommutator operator identities
\begin{equation}\label{comms}
\begin{array}{ccc}
{} [\extd,\sigma]=\varepsilon(n) \, ,&\hspace{3mm}&{}[\cod,\sigma]=\iota(n)\, , \\[2mm]
 \{\extd,\varepsilon(n)\}=0\, , &&\{\cod,\iota(n)\}=0\, . 
\end{array}
\end{equation}

\end{proof}

More importantly, from $\extd^o$ and $\cod^o$   we can define operators that extend to the boundary~$\Sigma$.
\begin{definition}\label{wdef}
\hypertarget{wiota}{Let} $A\in \Gamma\ce^kM[w]$. The operators 
$$
\wepsilon :\Gamma\ce^kM[w]\longrightarrow\Gamma\ce^{k+1}M[w+1]\:\mbox{ and }\:
\wiota:\Gamma\ce^kM[w]\longrightarrow\Gamma\ce^{k-1}M[w-1]$$
defined, for some $g\in c$ by
$$
 \wepsilon\,  A := \big[w\varepsilon(n)-\sigma \extd\big] A\: \mbox{ and }\:
\wiota \, A := \big[(d+w-2k) \iota(n)-\sigma \cod\big] A\, ,
$$
respectively extend 
$\wepsilon= -\sigma \extd^o$
and $
\wiota=-\sigma \cod^o$
to~$\Sigma$.
\end{definition}

\begin{remark}
 The operators   $\wepsilon$ and $\wiota$ play a double {\it r\^ole} 
(see the discussion of $\nabla^o$ \hyperlink{doublerole}{above}): Away from~$\Sigma$ in the preferred interior scale $g^o\in c$,
they yield minus the codifferential $-\extd$ and exterior derivative $-\cod$. Along $\Sigma$ and avoiding weights $w=0$ and $w=2k-d$, respectively, they give interior and exterior
multiplication by the normal covector $n$. In reference to this, we will often call  them  the {\it holographic exterior} and {\it interior normals}.
\end{remark}

From Lemma~\ref{taut} (and smoothness), we immediately deduce  the algebra of the operators  $\wepsilon$ and $\wiota$.
\begin{corollary}\label{walgebra}
The holographic exterior normal~$\wepsilon$ and holographic interior normal~$\wiota$ acting on weighted forms obey the relations
$$ \{\wiota,\wepsilon\}=\sigma^2\FL^{\! o}\, ,$$
\begin{eqnarray*}
\wepsilon^{\, 2} \:\: = &\!0\!& =\:\:  \wiota^{\, 2}\, ,\\[2mm]
{}[\FL^{\! o},\wepsilon\, ]\: =&\!0\!&=\: [\, \wiota,\FL^{\! o}]\, ,\\[2mm]
{}[\wepsilon,\sigma]\:\: =&\!0\!&=\:\: [\sigma,\wiota\, ]\, .
\end{eqnarray*}
\end{corollary}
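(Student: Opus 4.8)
The plan is to reduce every relation to the corresponding identity of Lemma~\ref{taut} by exploiting Definition~\ref{wdef}, which tells us that on the dense open set $M\setminus{\mathcal Z}(\sigma)$ the holographic normals factor simply as $\wepsilon=-\sigma\,\extd^o$ and $\wiota=-\sigma\,\cod^o$. Since all the operators $\wepsilon,\wiota,\sigma,\FL^{\! o}$ in play are built from $\extd^o,\cod^o,\sigma$, the entire algebra should fall out of the identities of Lemma~\ref{taut} once one observes that $\sigma$ commutes past $\extd^o$ and $\cod^o$.

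First I would record the single additional fact needed beyond Lemma~\ref{taut}: because $[\extd^o,\sigma]=0=[\sigma,\cod^o]$ and $\FL^{\! o}=\cod^o\extd^o+\extd^o\cod^o$, the scale $\sigma$ also commutes with $\FL^{\! o}$. With this in hand each relation is a one-line manipulation on $M\setminus{\mathcal Z}(\sigma)$: for the anticommutator, $\{\wiota,\wepsilon\}=\sigma\cod^o\sigma\extd^o+\sigma\extd^o\sigma\cod^o=\sigma^2(\cod^o\extd^o+\extd^o\cod^o)=\sigma^2\FL^{\! o}$, the two signs cancelling; for the squares, $\wepsilon^{\,2}=\sigma^2(\extd^o)^2=0$ and likewise $\wiota^{\,2}=\sigma^2(\cod^o)^2=0$; for the commutators with the Laplacian, $[\FL^{\! o},\wepsilon]=-\sigma[\FL^{\! o},\extd^o]-[\FL^{\! o},\sigma]\extd^o=0$ and symmetrically $[\wiota,\FL^{\! o}]=0$; and for the commutators with the scale, $[\wepsilon,\sigma]=-\sigma[\extd^o,\sigma]=0$ and $[\sigma,\wiota]=0$. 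None of these requires more than pushing one factor of $\sigma$ through a commuting operator and invoking the relevant line of Lemma~\ref{taut}.

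The one genuine point—flagged in the statement by the parenthetical ``(and smoothness)''—is the passage from $M\setminus{\mathcal Z}(\sigma)$ to all of $M$. Here I would argue that, by Definition~\ref{wdef}, $\wepsilon$ and $\wiota$ are smooth operators defined everywhere on $M$, whereas $\extd^o$, $\cod^o$ and $\FL^{\! o}$ individually become singular along $\Sigma={\mathcal Z}(\sigma)$. Consequently those identities whose two sides are manifestly smooth and globally defined—namely $\{\wiota,\wepsilon\}=\sigma^2\FL^{\! o}$ (its right side being smooth precisely because the left is), the nilpotency relations, and the two $\sigma$-commutators—hold on a dense open set between smooth operators and therefore extend to all of $M$ by continuity. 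The two relations featuring $\FL^{\! o}$ in isolation are read where $\FL^{\! o}$ is defined, {\it i.e.}\ off $\Sigma$, so no extension is required for them. I expect this density/smoothness bookkeeping, rather than any computation, to be the only subtle step.
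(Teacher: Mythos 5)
Your proposal is correct and follows exactly the route the paper intends: its proof of Corollary~\ref{walgebra} consists of the single remark that the relations follow ``from Lemma~\ref{taut} (and smoothness)'', which is precisely your reduction via $\wepsilon=-\sigma\extd^o$, $\wiota=-\sigma\cod^o$ off $\Sigma$ followed by smooth extension across ${\mathcal Z}(\sigma)$. Your explicit one-line manipulations and the density/continuity bookkeeping simply spell out what the paper leaves implicit, so there is nothing to correct.
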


To place our approach in a familiar context, we propose a ({\it na\"\i ve}) version of the extension Problem~\ref{canextprob} for the case $f=A\in\ce^k M[w+k]$
and $y=-\wiota$. The weight $w+k$ is chosen for easier comparison with the tractor calculus analog of this problem.

\newcommand{\Natural}{{\mathbb N}}

\begin{problem}\label{deltaprob}
Given $A|_\Sigma\in \Gamma\ce^kM[w+k]|_\Sigma$ and an arbitrary extension~$A_0$ of this, find $A_i\in\Gamma\ce^kM[w+k-i]$, $i=1,2,\ldots$, so that
$$A^{(\ell)}=A_0+\sigma A_1 + \sigma^2 A_2+\cdots+\sigma^{\ell'} A_{\ell'} \!+O(\sigma^{\ell'+1})$$
solves
\begin{equation}\label{extprob}
\wiota \, A^{(\ell)}=O(\sigma^\ell)\, ,
\end{equation}
off $\Sigma$, for some integers $\ell'$ and $\ell\in\Natural\cup\infty$ as high as possible.
\end{problem}

\begin{remark}
Away from~$\Sigma$, $-\wiota=\sigma \cod^o$ (see Definition~\ref{wdef}). So, in the preferred scale $\sigma$ off~$\Sigma$,  the above Problem yields the divergence condition
\begin{equation}\label{deltaA}
\cod A = 0\, ,
\end{equation}
often referred to as the Coulomb or, in a relativistic context, Feynmann gauge choice for massless fields. 
For  the Proca equation, this is a necessary integrability condition.
The point is that equation~\nn{extprob} naturally  extends the condition~\nn{deltaA} to~$\Sigma$.
Therefore we term the equation $\wiota A=0$ a {\it generalised divergence} condition.
\end{remark}

An elementary approach to treating Problem~\ref{deltaprob}, is to write out Equation~\nn{extprob} in  some choice of scale $g\in c$
and solve iteratively: Using the first relation of~\nn{comms}, one immediately finds the recursion relation
\begin{equation}\label{notsocool}
(i-d-w+k)\iota(n)A_i +\cod A_{i-1}=0\, ,\quad i=0,1,\ldots \, , \quad A_{-1}:=0\, .
\end{equation}
When $d+w-k\neq 0$,
the base case $i=0$ gives the condition 
\begin{equation}\label{along}
\iota(n) A_0= O(\sigma)\, .
\end{equation}
Note that we write $O(\sigma)$ on the right hand side because this and  higher order terms in~$\sigma$ can be removed by appropriately shifting $A_1,A_2,\ldots$ in the ansatz.

Off~$\Sigma$, equation~\nn{along} is a harmless algebraic condition on the choice of extension~$A_0$ of~$A|_\Sigma$. However, 
by the obvious isomorphism  (used henceforth without comment) between $T^*\Sigma$ and the annihilator of the normal in $T^*M|_\Sigma$, the condition $\iota(n)A_0|_\Sigma=0$ says  that $A_0|_\Sigma$ is a differential form on~$\Sigma$. 
This shows that the problem cannot be solved in general, and we must make the restriction
$$
A|_\Sigma=A_0|_\Sigma = A_{_\Sigma} \quad \mbox{for some}\quad A_{_\Sigma}\in \Gamma\ce^k\Sigma[w+k]\, .
$$
Upon making this restriction on the data $A|_\Sigma$, the algebraic recursion clearly has solutions to all orders so long as the coefficient
$i-d-w+k\neq0$. When this does vanish the problem is potentially 
obstructed by $\cod A_{i-1}$ (modulo $O(\sigma)$) where $A_{i-1}\in \Gamma \ce^k M[2k-n]$.
Observe, interestingly enough,  the boundary codifferential 
$\cod_{_\Sigma}$ is conformally invariant acting on~$\Gamma \ce^k \Sigma[2k-n]$.
We will see shortly that these obstructions are mostly avoidable, while the remaining exceptional cases fit into an
rich and interesting picture, see Section~\ref{OBSTQD}.

We have by  now  established the following result.
\begin{proposition}\label{series}
For $d+w-k\notin \Integer_{\geq1}$ and $A|_\Sigma\in\Gamma\ce^k\Sigma[w+k]$, Problem~\ref{deltaprob} can be solved to order $\ell=\infty$. 
When $d+w-k=0$, the restriction on $A|_\Sigma$ can be relaxed. For $d+w-k=m\in\Integer_{\geq 1}$, a solution exists to the
order $\ell=m$.
\end{proposition}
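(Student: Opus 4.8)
The plan is to extract everything from the single recursion~\nn{notsocool}. First I would expand $\wiota A^{(\ell)}=\sum_i\sigma^i\,\wiota A_i$, using that $\wiota$ commutes with multiplication by $\sigma$ (Corollary~\ref{walgebra}) and that on a weight-$(w+k-i)$ form the operator $\wiota$ carries the coefficient $c_i:=d+w-k-i$ in front of $\iota(n)$. Collecting powers of $\sigma$ then shows that demanding $\wiota A^{(\ell)}=O(\sigma^\ell)$ is equivalent to annihilating the coefficient of $\sigma^i$ for $i=0,\dots,\ell-1$, which is exactly~\nn{notsocool} (whose coefficient $i-d-w+k=-c_i$ vanishes precisely when $i=d+w-k$). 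The whole problem thereby reduces, order by order, to two questions: whether $c_i$ is invertible, and, when it is, whether the already-determined datum $\cod A_{i-1}$ can be written as $\iota(n)$ of a genuine $k$-form so that the normal component of $A_i$ can be prescribed by $\iota(n)A_i=c_i^{-1}\cod A_{i-1}$.

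The crux of the argument is a consistency lemma: at every step with $c_i\neq0$ the right-hand side $\cod A_{i-1}$ lies in the image of $\iota(n)$, i.e.\ $\iota(n)\,\cod A_{i-1}=0$. I would prove this by induction, solving the recursion \emph{exactly} for the normal parts. The base case is $\iota(n)A_0=0$, so that $\iota(n)\,\cod A_0=-\cod\,\iota(n)A_0=0$ by the anticommutator $\{\cod,\iota(n)\}=0$ of~\nn{comms}. If at step $i-1$ one has set $\iota(n)A_{i-1}=c_{i-1}^{-1}\cod A_{i-2}$ exactly, then
$$
\iota(n)\,\cod A_{i-1}=-\cod\,\iota(n)A_{i-1}=-c_{i-1}^{-1}\,\cod^2 A_{i-2}=0\, ,
$$
using $\cod^2=0$. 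Consistency thus propagates, and one realises the prescribed normal component by setting $A_i\ni |n|^{-2}c_i^{-1}\,\varepsilon(n)\,\cod A_{i-1}$ (the tangential part of $A_i$ remaining a free smooth extension). The point requiring care — and what I expect to be the main obstacle — is exactly this word ``exactly'': since the given extension $A_0$ satisfies $\iota(n)A_0=0$ only along $\Sigma$, one must first absorb the harmless interior discrepancy $\iota(n)A_0=O(\sigma)$ into $A_1,A_2,\dots$ (as noted after~\nn{notsocool}) so that the induction closes with genuine equalities rather than equalities modulo $O(\sigma)$; checking that this absorption does not spoil the consistency arranged at lower order is the delicate bookkeeping.

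With the consistency lemma in hand the three cases are immediate. If $d+w-k\notin\Integer_{\geq1}$ then $c_i\neq0$ for every $i\geq1$; the base case $c_0\,\iota(n)A_0=0$ then forces (when $c_0\neq0$) the condition $\iota(n)A_0|_\Sigma=0$ of~\nn{along}, that is the tangentiality restriction $A|_\Sigma\in\Gamma\ce^k\Sigma[w+k]$, after which every $A_i$ is determined and the expansion runs to $\ell=\infty$. When moreover $d+w-k=0$ the coefficient $c_0$ itself vanishes, so the base-case equation is vacuous, the restriction on $A|_\Sigma$ may be dropped, and $c_i=-i\neq0$ for $i\geq1$ still gives $\ell=\infty$.

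Finally, if $d+w-k=m\in\Integer_{\geq1}$, then $c_0,\dots,c_{m-1}$ are nonzero, so the recursion determines $A_1,\dots,A_{m-1}$ and kills the coefficients of $\sigma^0,\dots,\sigma^{m-1}$, yielding $\wiota A^{(m)}=O(\sigma^m)$ and hence a solution to order $\ell=m$. At $i=m$ one has $c_m=0$, so \nn{notsocool} degenerates to the requirement $\cod A_{m-1}=0$ (modulo $O(\sigma)$); since $A_{m-1}\in\Gamma\ce^kM[2k-n]$ this is a genuine and generically nonvanishing obstruction — precisely the conformally invariant boundary codifferential flagged in the text — so the expansion cannot in general be pushed beyond $O(\sigma^m)$, matching the claimed $\ell=m$ and foreshadowing Section~\ref{OBSTQD}.
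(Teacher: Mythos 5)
Your route is the paper's own: the authors' proof of this Proposition is precisely the discussion preceding it, namely expanding $\wiota A^{(\ell)}$ using $[\sigma,\wiota\,]=0$, reading off the recursion~\nn{notsocool}, extracting the base-case tangentiality condition~\nn{along} when $d+w-k\neq 0$, and locating the obstruction $\cod A_{i-1}$ at the step $i=d+w-k$. Your generic case and your case $d+w-k=m\in\Integer_{\geq 1}$ (including the identification of the obstruction at weight $2k-n$) are correct, and your explicit consistency lemma --- that solvability of each step requires $\cod A_{i-1}\in\ker\iota(n)=\im\,\iota(n)$ near $\Sigma$, and that this propagates via $\{\cod,\iota(n)\}=0$ from~\nn{comms} and $\cod^2=0$ once the base case $\iota(n)A_0=0$ is arranged exactly --- makes rigorous what the paper compresses into ``the algebraic recursion clearly has solutions to all orders''. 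That is a genuine improvement, as is your care about converting $\iota(n)A_0=O(\sigma)$ into an exact equality by absorbing $|n|^{-2}\varepsilon(n)\iota(n)A_0$ into $A_1$.

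The gap is in your middle case, and it is exposed by your own lemma: when $d+w-k=0$ you discard the base case of the consistency induction ($\iota(n)A_0=0$ is exactly the hypothesis being ``relaxed'') while retaining its conclusion. At $i=1$ the recursion demands $\iota(n)A_1=-\cod A_0$ modulo $O(\sigma)$, whose normal component requires $\iota(n)\cod A_0=-\cod\,\iota(n)A_0=O(\sigma)$; since $\iota(n)A_0\in\ker\iota(n)$, this quantity along $\Sigma$ is determined by the boundary value alone (no normal-derivative freedom from $A_1,A_2,\dots$ enters), so it is a genuine condition on the data, namely that the normal part of $A|_\Sigma$ be coclosed along $\Sigma$. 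Concretely, in the flat model with $\sigma=r$, $d=3$, $k=2$ and $A_0=x^2\,\extd r\wedge\extd x^2$, one finds $\iota(n)\cod A_0=-1$, a nonzero constant that no choice of $A_1$ can cancel, so already $\ell=2$ is unreachable: dropping the restriction entirely fails for $k\geq 2$. (For $k\leq 1$ your argument does go through, since $\iota(n)$ annihilates zero-forms and the consistency chain is vacuous.) To be fair, the paper's own one-line treatment is equally silent here; but its surrounding results --- the critical-weight west insertion~\nn{qWspecial} of Lemma~\ref{West} and Proposition~\ref{bcs1}, which impose coclosedness at exactly this weight --- supply precisely the hypothesis your induction needs to restart. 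Your write-up should either add the condition $\cod_{_\Sigma}\big(\iota(n)A|_\Sigma\big)=0$ in the relaxed case or explain that ``relaxed'' is to be read in this weaker sense.
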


When the structure $(M,c,\sigma)$ is AH,   the  defining scale~$\sigma$ obeys (for $g\in c$)
\begin{equation}\label{unit conormal}
|\nabla \sigma|_g^2 := |n|_g^2 = 1+ O(\sigma)\, , 
\end{equation}
so that $n_g$ is a unit conormal for any $g\in c$. This condition is effectively no restriction; for any  almost Riemannian structure $(M,g,\sigma)$ obeying $|n|^2_g>0$ along $\Sigma$, we can find a new scale $\sigma'$ such that $(M,g,\sigma')$ is AH in an obvious way.
For AH structures a simple product-type solution to Problem~\ref{deltaprob} is available.

\begin{proposition}\label{holpro1}
Let $(M,c,\sigma)$ be an AH structure. 
Then, for any $w\neq -k,k-n$, and $A|_\Sigma \in \Gamma\ce^k\Sigma[w+k]$, Problem~\ref{deltaprob} can be solved to order $\ell=\infty$ by
\begin{equation}\label{reallycool}
A=\scalebox{.95}{$\frac{1}{(w+k)(n+w-k)}$}\: \wiota \, \wepsilon\,  A_0\, .
\end{equation}
\end{proposition}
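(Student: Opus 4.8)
The plan is to verify the two things Problem~\ref{deltaprob} demands of the candidate $A=\frac{1}{(w+k)(n+w-k)}\,\wiota\,\wepsilon\,A_0$: first, that it solves the generalised divergence equation $\wiota A=O(\sigma^\infty)$, and second, that it restricts correctly to the prescribed boundary data. The first requirement is essentially free. Since $A$ is, up to a nonzero constant, $\wiota\wepsilon A_0$, applying $\wiota$ once more gives $\wiota A\propto\wiota^{\,2}\wepsilon A_0$, and Corollary~\ref{walgebra} records $\wiota^{\,2}=0$ as an exact operator identity (away from $\Sigma$ this follows from $(\cod^o)^2=0$ together with $[\sigma,\cod^o]=0$, and it extends everywhere by smoothness). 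Hence $\wiota A=0$ identically, which is $O(\sigma^\ell)$ for every $\ell$; that is, the solution holds to order $\ell=\infty$. So all the genuine content lies in the normalisation, namely in showing $A|_\Sigma=A_0|_\Sigma$.

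Next I would compute $\wiota\wepsilon A_0$ at $\Sigma$ directly from the explicit formulae of Definition~\ref{wdef}, being careful with the weights. On $A_0\in\Gamma\ce^kM[w+k]$ the operator $\wepsilon$ contributes the coefficient $(w+k)$, producing $\wepsilon A_0=[(w+k)\varepsilon(n)-\sigma\extd]A_0\in\Gamma\ce^{k+1}M[w+k+1]$; on this degree-$(k+1)$, weight-$(w+k+1)$ form $\wiota$ contributes the coefficient $d+(w+k+1)-2(k+1)=n+w-k$, so that
$$
\wiota\wepsilon A_0=\big[(n+w-k)\iota(n)-\sigma\cod\big]\big[(w+k)\varepsilon(n)-\sigma\extd\big]A_0\, .
$$
Every cross term carries an explicit factor of $\sigma$ (using that $\sigma$ commutes with $\iota(n)$, and, via \nn{comms}, $\cod\sigma=\sigma\cod+\iota(n)$, the doubly-$\sigma$ term reduces to $O(\sigma)$ as well), so all of these vanish along $\Sigma$. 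The only term surviving the restriction is $(w+k)(n+w-k)\,\iota(n)\varepsilon(n)A_0$.

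Finally I would evaluate $\iota(n)\varepsilon(n)A_0$ at the boundary using $\{\iota(n),\varepsilon(n)\}=|n|_g^2$. Two inputs close the argument: the AH condition \nn{unit conormal}, which gives $|n|_g^2=1$ along $\Sigma$, and the hypothesis $A|_\Sigma\in\Gamma\ce^k\Sigma[w+k]$, which, via the identification of $T^*\Sigma$ with the annihilator of the normal, says $\iota(n)A_0|_\Sigma=\iota(n)A|_\Sigma=0$. Thus $\iota(n)\varepsilon(n)A_0|_\Sigma=(|n|_g^2-\varepsilon(n)\iota(n))A_0|_\Sigma=A_0|_\Sigma=A|_\Sigma$, and dividing by $(w+k)(n+w-k)$ returns exactly $A|_\Sigma$. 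This is where the excluded weights enter: the normalisation is legitimate precisely when $(w+k)(n+w-k)\neq0$, i.e.\ $w\neq-k,\,k-n$. There is no serious obstacle in this argument; the only points requiring attention are the bookkeeping of the two weight-dependent coefficients of $\iota(n)$ and $\varepsilon(n)$, and the fact that $\wiota^{\,2}=0$ holds exactly rather than merely asymptotically, which is what upgrades the solution from finite order to $\ell=\infty$.
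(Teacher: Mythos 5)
Your proposal is correct and follows essentially the same route as the paper's own proof: exactness to order $\ell=\infty$ via the identity $\wiota^{\,2}=0$ from Corollary~\ref{walgebra}, followed by the boundary evaluation $\frac{1}{(w+k)(n+w-k)}\,\wiota\,\wepsilon\,A_0\big|_\Sigma=\iota(n)\varepsilon(n)A_0\big|_\Sigma=A|_\Sigma$ using the AH condition $\{\iota(n),\varepsilon(n)\}=|n|^2_g=1$ along $\Sigma$ and $\iota(n)A_0|_\Sigma=0$. Your explicit tracking of the two weight coefficients $(w+k)$ and $(n+w-k)$ and the $O(\sigma)$ cancellation of the cross terms (including the $\sigma\cod\,\sigma\extd$ term via $[\cod,\sigma]=\iota(n)$) simply fills in the computation the paper compresses into the phrase ``computing along $\Sigma$.''
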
 

\begin{proof}
Note $\wiota A = 0$ identically by virtue of $\wiota^{\, 2}=0$ as established in Corollary~\ref{walgebra}.
It remains to show that $\big(\scalebox{.95}{$\frac{1}{(w+k)(n+w-k)}$}\: \wiota \, \wepsilon\,  A_0\big)\big|_\Sigma=A|_\Sigma$.
Computing along~$\Sigma$ we have 
$$
\scalebox{.95}{$\frac{1}{(w+k)(n+w-k)}$}\: \wiota \, \wepsilon\,  A_0=\iota(n)\varepsilon(n) A_0\, .
$$
Then, using that $n$ is a unit conormal to $\Sigma$ we note that $\iota(n)\varepsilon(n)$ is a projector onto the subbundle~$\ce^k\Sigma[w+k]$ of $\ce^kM[w+k]|_\Sigma$.
But since $A_0|_\Sigma=A|_\Sigma\in \Gamma\ce^k \Sigma[w+k]$ we have $\iota(n) A_0=0$ along~$\Sigma$.
Thus, (using that $\{\iota(n),\varepsilon(n)\}=n^2$) along $\Sigma$ the above display equals $A|_\Sigma$.
\end{proof}

\begin{remark}
Observe that the right hand side of~\nn{reallycool} actually provides a global solution to the $\wiota A=0$ problem with the given boundary data.
The Proposition is an example of a more general {\it holographic boundary projector} technique that we shall develop in Section~\ref{holographicproj}.
There, this terminology is used to refer to a bulk operator that acts as a projector along~$\Sigma$ and solves a set of prescribed bulk equations (such as the one in Problem~\ref{deltaprob}).
\end{remark}

\begin{remark}
The series solution determined by~\nn{notsocool} and the holographic boundary projector solution~\nn{reallycool} are easily verified to be compatible, indeed the latter yields a series solution that terminates at $O(\sigma^2)$ because the ``coefficient'' at that order is coclosed.
Together, Propositions~\ref{series} and~\ref{holpro1} give an order $\ell=\infty$ solution to Problem~\ref{deltaprob} for any weight save $w$ subject to  $n+w-k=0$ or $w=-k\in- \{0,1,\ldots,\lfloor\frac{d-1}2\rfloor\}$.
These exceptional weights 
will be discussed in detail when we consider boundary conditions for higher form Proca systems in Section~\ref{bctractors}.
\end{remark}

\subsection{Conformal tractor calculus}\label{TRACTORS}

For a conformal $d$-manifold $(M,c)$, a key tool will be the \hypertarget{standard tractor bundle}standard tractor bundle~$\ct M$ and associated 
{\it tractor calculus}~\cite{BEG} for building conformally invariant differential operators. The tractor bundle is a rank $d+2$ vector bundle
equipped with a canonical tractor connection $\nabla^\ct$ (or simply $\nabla$ when the context is clear). A given metric $g\in c$ determines
the isomorphism
$$
\ct M\stackrel{g}{\cong}\ce M[1]\oplus T^*M[1]\oplus\ce M[-1]\, .
$$
We will often employ this isomorphism to express sections $T\in\Gamma\ct M$ as
$$
T\stackrel{g}{=}\begin{pmatrix}\nu\\ \mu_a\\ \rho\end{pmatrix}=:T^A\, .
$$
Here, and throughout, we frequently employ an abstract index notation ({\it cf.}~\cite{ot}) to denote sections of the various vector bundles encountered.

In the obvious way, the notation $\stackrel{g}=$ indicates calculations in a scale determined by $g\in c$. We will use this notation for emphasis if the scale is not clear by context.
In terms of the above splitting, the tractor connection is given by
\begin{equation}\label{trconn}
\nd^{\ct}_a
\left( \begin{array}{c}
\si\\\mu_b\\ \rho
\end{array} \right) : =
\left( \begin{array}{c}
    \nabla_a \nu-\mu_a \\
    \nabla_a \mu_b+ \bg_{ab} \rho +\Rho_{ab}\, \nu \\
    \nabla_a \rho - \Rho_{ac}\mu^c  \end{array} \right) .
\end{equation}
Changing to a conformally related metric
$\widehat{g}=e^{2\omega}g$  gives a different
isomorphism, which is related to the previous one by the transformation
formula
\begin{equation}\label{trconn}
\widehat{\begin{pmatrix}\nu\\\mu_b\\\rho\end{pmatrix}}=\begin{pmatrix}\nu\\ \mu_b+\nu\Up_b \\ \rho-\bg^{cd}\Up_c\mu_d-
\tfrac{1}{2}\nu\bg^{cd}\Up_c\Up_d\end{pmatrix}\, ,
\end{equation}
where $\Upsilon_a$ is the one-form $\extd\omega$. It is straightforward to verify that the
right-hand-side of~\nn{trconn} also transforms in this way and this
verifies the conformal invariance of~$\nabla^\ct$.

In the above formul\ae, we have denoted by $\bg_{ab}$ the \hyperlink{conformal metric}{conformal metric} introduced in Section~\ref{CGeom}.
From this, we can build the conformally invariant \hypertarget{tractor metric}{{\it tractor metric}} $h$ on~$\ct M$
given (as a quadratic form on $T^A$ as above) by
\begin{equation*}
\begin{pmatrix}\nu\\ \mu\\ \rho\end{pmatrix}\longmapsto\  \bg^{-1}(\mu,\mu)+2\si \rho \ = : h(T,T)=h_{AB}T^AT^B~; 
\end{equation*}
it is preserved by the connection. 
We shall often write $T\cdot T$ or $T^2$ as a shorthand for the right hand side of this display.
Note that this has signature
$(p+1,q+1)$ on a conformal manifold $(M,c)$ of signature $(p,q)$. The tractor metric
$h_{AB}$ and its inverse $h^{AB}$ are used to identify $\ct M$ with its
dual in the obvious way. 

Tensor powers of the standard tractor bundle $\ct M$, and tensor products
thereof, are vector bundles that are also termed tractor bundles. We
shall denote an arbitrary \hypertarget{tractor bundle}tractor bundle by $\ct^\Phi M$ and write
$\ct^\Phi M[w]$ to mean $\ct^\Phi M\otimes \ce M[w]$; $w$ is then said to be
the weight of $\ct^\Phi[w]$. In the obvious way, we may introduce a \hypertarget{weight operator}{{\it weight operator}} $\w$ on
 sections of weighted tractor bundles $\ct^\Phi M[w]\ni f$ by
\begin{equation}\label{weight}
\w f = w f\, .
\end{equation}

Whereas the tractor connection maps sections of a weight 0 tractor
bundle~$\ct^\Phi$ to sections of $T^*\!M \otimes \ct^\Phi$, there is a
conformally invariant operator which maps between sections of weighted
tractor bundles. This is the \hypertarget{Thomas D}{{\it Thomas D- (or tractor D-) operator}}
$$
D^A :  \Gamma \ct^\Phi M [w]\mapsto \Gamma (\ct^A M\otimes \ct^\Phi M[w-1]),
$$
given in a scale $g$ by
\begin{equation}\label{Dform}
D^A V \stackrel{g}{=}\left(\begin{array}{c} (d+2w-2)w V\\[1mm]
(d+2 w-2) \nabla_a V\\[1mm]
-(\Delta+ \J w) V   \end{array} \right) ,
\end{equation}
where $\Delta=\bg^{ab}\nabla_a\nabla_b$, $V\in \Gamma \ct^\Phi M[w]$ and $\nabla$ is the coupled
Levi-Civita-tractor connection~\cite{BEG,T}. 

A key point to emphasise here is that the Thomas D-operator is a
fundamental object in conformal geometry. On a conformal manifold the
tractor bundle is ``as natural'' as the tangent bundle. Moreover the Thomas D-operator acting on densities in $\Gamma\ce M[1]$ basically
defines the tractor bundle, see~\cite{CapGoirred}.

We will also make frequent use of the canonical conformally invariant operator
$$
X^A: \Gamma \ct^\Phi M[w]\mapsto \Gamma (\ct^A M\otimes \ct^\Phi M[w+1]),
$$
defined by multiplication by the canonical tractor~$X^A$. This derives from the  canonical invariant map $\ce M[-1]\to \ct M$ where $\rho\mapsto X^A\rho$. In a choice of splitting $g\in c$
$$
X^A\stackrel{g}=\begin{pmatrix}0\\0\\1\end{pmatrix}\, .
$$
We may also view $X^A$ as a canonical, {\em null} section of~$\ct^AM[1]$.

\subsection{The calculus of scale}\label{LRs}

Almost Riemannian manifolds are equipped with a  splitting of geometry into ``conformal'' and ``scale'' parts.
This melds perfectly with the conformal tractor calculus. Together these yield a powerful calculus of scale~\cite{Goal,GoIP,GWasym} that we will
further develop and exploit in the following Sections.
Central to this is the object we now define.
\begin{definition}
 Let $\sigma$ be a defining scale for an almost Riemmanian structure $(M,g,\sigma)$. Then
\begin{equation}\label{scaletractor}
I^A:=\frac{1}{d}\, D^A \si\, ,
\end{equation} is called the {\em scale tractor}. Note that $\si=X^AI_A$. 
\end{definition}

In the metric $g^o=\sigma^{-2} \bg$, $I^2=-\frac{2\scalebox{.75}{$\J$}}d$.
Hence we use the terminology {\em almost scalar constant}~\cite{Goal} if an almost Riemannian geometry~$(M,c,\sigma)$ obeys
$$
I^2=\mbox{constant}\, .
$$

Putting together the scale tractor and Thomas D-operator gives a canonical degenerate Laplace operator
$$
I\cdot D:=I^A D_A: \Gamma\ct^\Phi[w]\longrightarrow \ct^\Phi[w-1]\, .
$$
Recall we denote the zero locus ${\mathcal Z}(\sigma)$ by $\Sigma$ (which could possibly be empty).
If we calculate in the metric $g^o=\sigma^{-2} \bg$ away from~$\Sigma$, and trivialise density bundles accordingly, we have
$$
I\cdot D \stackrel{g^o}= -\Big(\Delta^{g^o}+\frac{2w(d+w-1)}{d}\, \J^{g^o}\Big)\, ,
$$
where again $\Delta = \bg^{ab} \nabla_a \nabla_b$ for the coupled Levi-Civita-tractor connection~$\nabla$.
This allows a study of Laplacian eigen-equations. 

If~$\sigma$ also satisfies the  almost scalar constant condition with $I^2=1$, then along
its zero locus
$$
I\cdot D=(d+2w-2)\,  \delta_n\, ,
$$
where $\delta_n\stackrel{g}= n^a \nabla_a^g-w H^g$ is the first order (tractor-twisted) conformal Robin operator~\cite{cherrier,BrGoOps}. Here $n^a$ is a unit normal and $H^g$ is the mean curvature measured in the metric~$g$. Thus, on conformally compact manifolds, the operator $I\cdot D$ unifies both the  interior  Laplace   problem with boundary dynamics and hence we generally dub it the
{\em Laplace--Robin operator}.

\subsection{The Laplace--Robin solution generating algebra}\label{LRalg}

Here and until further notice we work on an almost Riemannian geometry.
In that setting
the Laplace--Robin operator provides a  distinguished choice for the extension operator $y$.
Moreover, together with the defining scale, it generates  an algebra that facilitates the solution of extension problems~\cite{GWasym}.
To display this, we first define a triplet of canonical operators.
\begin{definition}
Let $\sigma\in\Gamma\ce M[1]$ be a defining scale
with vanishing locus $\Sigma$ and nowhere vanishing $I^2$. Then we define the triplet of operators $\{x,h,y\}$ \hypertarget{h}{} 
\begin{equation*}
\begin{array}{rcccc}
x &\! \! :\! \!&\Gamma\ct^\Phi M[w]&\longrightarrow&\Gamma\ct^\Phi M[w+1]\\
&&\rotatebox{90}{$\in$}&&\rotatebox{90}{$\in$}\\[1mm]
& &f&\longmapsto&\sigma f\\[4mm]
h & \!\! :\!\! &\Gamma\ct^\Phi M[w]&\longrightarrow&\Gamma\ct^\Phi M[w]\\
&&\rotatebox{90}{$\in$}&&\rotatebox{90}{$\in$}\\[1mm]
& &f&\longmapsto&(d+2w) f\\[4mm]
y & \!\! :\!\! &\Gamma\ct^\Phi M[w]&\longrightarrow&\Gamma\ct^\Phi M[w-1]\\
&&\rotatebox{90}{$\in$}&&\rotatebox{90}{$\in$}\\[1mm]
& &f&\longmapsto&-\frac{1}{I^2}I\cdot D f\\[2mm]
\end{array}
\end{equation*}
\end{definition}

Following~\cite[Proposition 3.4]{GWasym} the following result holds.
\begin{proposition}\label{thesl2}
 The operators $\{x,h,y\}$ obey the ${\frak g}:={\frak sl}(2)$ algebra
\begin{equation}\label{sl2}
[h,x]=2x\, ,\qquad [x,y]=h\, ,\qquad [h,y]=-2y\, .
\end{equation}
\end{proposition}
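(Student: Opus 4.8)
The plan is to verify the three relations \nn{sl2} directly, treating the two commutators involving $h$ as pure weight-bookkeeping and reserving the real work for $[x,y]=h$. The operator $h$ is nothing but $d+2\w$, where $\w$ is the weight operator of \nn{weight}, while $x$ raises conformal weight by one and $y$ lowers it by one. Hence, on $f\in\Gamma\ct^\Phi M[w]$, I would simply compare $hxf=(d+2w+2)\sigma f$ with $xhf=(d+2w)\sigma f$ to read off $[h,x]=2x$, and likewise $hyf=(d+2w-2)yf$ against $yhf=(d+2w)yf$ to obtain $[h,y]=-2y$. Equivalently, these are immediate from $[\w,x]=x$ and $[\w,y]=-y$.

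The substance lies in $[x,y]=h$, which I would first reduce to a single commutator of $\sigma$-multiplication with the Laplace--Robin operator. Since $I^A=\tfrac1d D^A\sigma$ has weight $0$, its square $I^2$ is a nowhere-vanishing weight-$0$ density, i.e.\ a genuine function, and so commutes with multiplication by $\sigma$. Thus on weight $w$
\begin{equation*}
[x,y]f=-\tfrac{1}{I^2}\,\sigma\, I\cdot D f+\tfrac{1}{I^2}\, I\cdot D(\sigma f)=\tfrac{1}{I^2}\,[I\cdot D,\sigma]f\, ,
\end{equation*}
so it suffices to establish the \emph{key identity} $[I\cdot D,\sigma]=I^2\,(d+2\w)$, after which $[x,y]f=\tfrac{1}{I^2}I^2(d+2w)f=(d+2w)f=hf$ follows at once, notably without invoking any almost-scalar-constant hypothesis.

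To prove the key identity I would compute in the preferred interior scale $g^o=\sigma^{-2}\bg$ on the dense open set $M\setminus\Sigma$, using the two facts already recorded in Section~\ref{LRs}: that $I\cdot D\stackrel{g^o}{=}-\big(\Delta+\tfrac{2w(d+w-1)}{d}\J\big)$ on weight $w$, and that $I^2\stackrel{g^o}{=}-\tfrac{2\J}{d}$. In this scale $\sigma$ is represented by $1$ and is $\nabla^o$-parallel, so the coupled Laplacian contributions to $I\cdot D(\sigma f)$ and to $\sigma\,I\cdot D f$ coincide and cancel in the commutator; only the $\J$ terms survive, the intervening $\sigma$ serving merely to evaluate the $\J$-coefficient at weight $w+1$ rather than $w$:
\begin{equation*}
[I\cdot D,\sigma]f\stackrel{g^o}{=}-\tfrac{2}{d}\big[(w+1)(d+w)-w(d+w-1)\big]\J f=-\tfrac{2\J}{d}\,(d+2w)\,f=I^2(d+2w)f\, .
\end{equation*}

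Finally I would observe that both $[I\cdot D,\sigma]$ and $I^2(d+2\w)$ are globally defined smooth tractor operators, built from $D$, $\sigma$ and $I$ without any choice of metric; having matched them on the dense set $M\setminus\Sigma$ where $g^o$ is regular, they agree on all of $M$ by continuity. I expect precisely this extension across the zero locus $\Sigma$, together with the weight-bookkeeping in the $\J$-coefficient, to be the only delicate points: the $\frak{sl}(2)$ algebra is forced once $[I\cdot D,\sigma]=I^2 h$ is in hand, while the two relations with $h$ are purely formal consequences of the weight shifts effected by $x$ and $y$.
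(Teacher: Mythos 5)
Your proof is correct, and it is worth noting that the paper itself supplies no proof of Proposition~\ref{thesl2}: the result is quoted from \cite[Proposition 3.4]{GWasym}, so your argument is a genuinely self-contained alternative rather than a reproduction. The two $h$-relations are indeed pure weight bookkeeping, and your reduction of $[x,y]=h$ to the single identity $[I\cdot D,\sigma]=I^2(d+2\w)$ --- with the harmless observation that the weight-zero function $I^2$ commutes with multiplication by $\sigma$ --- is the right pivot. Your verification of that identity in the preferred scale $g^o=\sigma^{-2}\bg$ is sound: in the $\sigma$-trivialisation the coupled Levi-Civita--tractor Laplacian acts identically on $f$ and $\sigma f$ (since $\nabla^o\sigma=0$ there), so only the shift of the $\J$-coefficient survives, and $(w+1)(d+w)-w(d+w-1)=d+2w$ together with $I^2\stackrel{g^o}{=}-\tfrac{2\J}{d}$ closes the computation; the continuity step is legitimate because both $[I\cdot D,\sigma]$ and $I^2(d+2\w)$ are differential operators with coefficients smooth on all of $M$ that agree on the dense open complement of ${\mathcal Z}(\sigma)$. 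By contrast, the computation in \cite{GWasym} is carried out scale-generally, directly from the component formula~\nn{Dform}, which makes the identity manifestly global and avoids any extension argument; in the almost Einstein setting one can also obtain your key identity in one stroke by contracting the identity of Lemma~\ref{DXid} (equation~\nn{ureq}) with $I_AI_B$, using $X\cdot I=\sigma$ and parallelism of $I$, though that route --- unlike yours --- does not cover general defining scales. Your explicit remark that no almost-scalar-constant hypothesis is needed, only the nowhere-vanishing of $I^2$ already built into the definition of $y$, correctly matches the generality of the statement.
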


This operator  algebra was called a {\it solution generating algebra} in~\cite{GWasym} for reasons that will rapidly become apparent.

\begin{remark}
As proven in~\cite{GWasym} the above solution generating algebra holds for any defining scale~$\sigma$ such that its scale tractor~$I$ is nowhere null. Even in the case~$I^2=0$, a contraction of the above algebra holds~\cite{GWasym}. In the latter half of this Article we will draw further operators into the above algebra that act on tractor forms. That system has a proclivity towards almost Einstein structures.
\end{remark}

In what follows we will often employ the quadratic Casimir
$$
c:=xy+\frac14h(h-2)\, ,
$$
which commutes with $x$, $y$ and $h$.

Our results rely heavily on identities in the enveloping algebra $\cU({\frak g})$, in particular, for any $k\in {\mathbb Z}_{\geq 0}$
\begin{eqnarray}\label{core}
[y,x^k]&=&-k \, x^{k-1}\, (h+k-1)\, ,\\[2mm]
[y^k,x]&=&-k \, y^{k-1}\, (h-k+1)\, .\label{rcore}
\end{eqnarray}

It will also be highly advantageous to extend the enveloping algebra by the operator~$x^\alpha$, for any $\alpha\in{\mathbb C}$,
where
\begin{equation*}
\begin{array}{rcccc}
x^\alpha &\! \! :\! \!&\Gamma\ct^\Phi M[w]&\longrightarrow&\Gamma\ct^\Phi M[w+\alpha]\\
&&\rotatebox{90}{$\in$}&&\rotatebox{90}{$\in$}\\[1mm]
& &f&\longmapsto&\sigma^\alpha f\, .
\end{array}
\end{equation*}
A straightforward calculation~\cite{GWasym} shows that the identity~\nn{core} can be extended to arbitrary values of the exponent $\alpha\in {\mathbb C}$ of $x$
\begin{equation}\label{corarb}
[y,x^\alpha]=-\alpha \, x^{\alpha-1}\, (h+\alpha-1)\, .
\end{equation}

In Section~\ref{logsect}, we will also  need to draw $\log
x$ into our operator algebra. In~\cite{GWasym} it was shown that
\begin{equation}\label{ylogx}
[h,\log x]=2\, ,\qquad [y,\log x]=-x^{-1}(h-1)\, .
\end{equation}
Moreover, if $\mu\in \Gamma \ce_+[w]$ is any positive weighted conformal density,
then $\log\mu$ is a weighted log density, {\it viz.} a section of ${\mathcal F}M[w_0]$, the log density bundle induced from the log representations of ${\mathbb R}_+$~\cite{GWasym}. It
obeys
$$
[h,\log\mu]=2w_0\, , 
$$
read as an operator relation acting on any section of a weighted tractor
bundle.  The relations  in~\nn{ylogx} 
hold on arbitrary sections of weighted tractor bundles as well as on log densities or tensor products of these with conformal
densities, or more generally on a log density bundle in  tensor product with any
weighted tractor bundle.

\subsection{Product solutions}

\label{products}

In this Section we develop various operator identities valid in the universal enveloping algebra $\cU(\frak{ g})$
and completions thereof corresponding to power series in the generator~$x$. These are quite general in their validity and are
intimately related to the theory of extremal projectors (see~\cite{Tolstoy}). It  is   convenient to use $x$ 
as a series variable in this algebraic context.
Later, when we apply these results in a geometric setting,
the Lie algebra generator~$x$ will be represented by the scale~$\sigma$.

\subsubsection{The first solution}

In~\cite{GWasym}, the following problem was posed and solved in terms of a certain solution generating operator based
on a normal ordered representation for elements of~$\cU({\frak g})$.
\begin{problem}\label{first soln}
  Given $f|_{\Sigma}$, and an arbitrary extension $f_0$ of this to
  $\ct^\Phi M[w_0]$ over~$M$, find $f_i\in \ce^{\Phi}[w_0-i]$ (over
  $M$), $i=1,2,\cdots,\ell$, so that
$$
f^{(\ell)}: = f_0 + x f_1  + x^2f_2 +\cdots + x^\ell f_\ell + O(x^{\ell+1})
$$ solves $$y f = O(x^\ell)\, ,$$ off $\Sigma$, 
for $\ell\in \mathbb{N}\cup \infty$ as high as possible.
\end{problem}

\begin{lemma}\label{next}
If $f^{(\ell)}$ solves Problem~\ref{first soln} and $\ell\neq d+2w_0-2$ to $O(x^\ell)$, then 
$$
f^{(\ell+1)} = \frac{1}{(\ell+1)(d+2w_0-2-\ell)}\, \big[xy+(\ell+1)(h-\ell-2)\big]\, f^{(\ell)}
$$
is a solution to $O(x^{\ell+1})$.
\end{lemma}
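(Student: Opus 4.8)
The plan is to read the stated formula as $f^{(\ell+1)} = P\, f^{(\ell)}$ with the operator
$$
P := \frac{1}{(\ell+1)(d+2w_0-2-\ell)}\,\big[xy + (\ell+1)(h-\ell-2)\big]\, ,
$$
and to exploit that every term $x^i f_i$ of $f^{(\ell)}$ carries weight $w_0$, so that by the definition~\nn{weight} of the weight operator $h$ acts on $f^{(\ell)}$ as the scalar $d+2w_0$. First I would check that $f^{(\ell+1)}$ is an admissible candidate, i.e.\ that it differs from $f^{(\ell)}$ only at order $x^{\ell+1}$. Substituting $h f^{(\ell)} = (d+2w_0)f^{(\ell)}$ collapses the bracket and gives
$$
f^{(\ell+1)} = f^{(\ell)} + \frac{1}{(\ell+1)(d+2w_0-2-\ell)}\, x\,y f^{(\ell)}\, .
$$
Since $y f^{(\ell)} = O(x^\ell)$ by hypothesis, the correction is $O(x^{\ell+1})$, so $f^{(\ell+1)}$ retains the coefficients $f_0,\dots,f_\ell$ and only introduces a new term $x^{\ell+1}f_{\ell+1}$. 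This is also the step where the hypothesis $\ell\neq d+2w_0-2$ enters: together with $\ell+1\neq 0$ it is exactly what makes the denominator nonzero.

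Next I would establish the algebraic heart of the argument, an operator identity obtained by commuting $y$ to the right using the $\frak{sl}(2)$ relations~\nn{sl2} (in the forms $yx = xy-h$ and $yh = hy+2y$):
$$
y\big[xy + (\ell+1)(h-\ell-2)\big] = xy^2 + \ell\,(h-\ell-1)\,y\, .
$$
I would record this as a one-line standalone computation.

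Finally, because $O(x^\ell)$ means precisely $x^\ell$ times a smooth section, I would write $y f^{(\ell)} = x^\ell g$ exactly and evaluate the two resulting terms with the enveloping-algebra identity~\nn{core} and $[h,x^\ell]=2\ell x^\ell$. The term $x y^2 f^{(\ell)} = x\,y(x^\ell g)$ produces $-\ell\, x^\ell(h+\ell-1)g$ at leading order, while moving $x^\ell$ to the left in $\ell(h-\ell-1)y f^{(\ell)} = \ell(h-\ell-1)x^\ell g$ produces $+\ell\, x^\ell(h+\ell-1)g$; these cancel, leaving
$$
y f^{(\ell+1)} = \frac{x^{\ell+1}\,y g}{(\ell+1)(d+2w_0-2-\ell)} = O(x^{\ell+1})\, ,
$$
which is the claim.

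The only delicate point — and thus the main obstacle — is this exact cancellation of the two order-$x^\ell$ contributions. It hinges on the noncommutativity bookkeeping $(h-\ell-1)x^\ell = x^\ell(h+\ell-1)$ precisely matching the $(h+\ell-1)$ factor emitted by $[y,x^\ell]$; the coefficient $(\ell+1)$ multiplying $(h-\ell-2)$ in $P$ is engineered for exactly this, and any other choice would leave an uncancelled $O(x^\ell)$ remainder. Everything else is routine manipulation in $\cU(\frak{sl}(2))$.
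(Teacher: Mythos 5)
Your proof is correct and follows essentially the same route as the paper: commuting $y$ through the bracket via the $\frak{sl}(2)$ relations~\nn{sl2} to obtain $y\big[xy+(\ell+1)(h-\ell-2)\big]=\big[xy+\ell(h-\ell-1)\big]y$, writing $yf^{(\ell)}=x^\ell g$, and using~\nn{core} to see the order-$x^\ell$ terms cancel, leaving $x^{\ell+1}yg$. The only difference is presentational — you expand the cancellation into two explicitly matched terms and verify the normalization claim in detail, both of which the paper compresses into a single line and a closing remark.
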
   
\begin{proof}
The proof relies on a direct computation utilizing the algebra~\nn{sl2} and~\nn{core}. For brevity, we call $h_0:=d+2w_0$
and in addition use that $yf^{(\ell)}=O(x^\ell)$ means that $yf^{(\ell)}=x^\ell g_{\ell}$ for some $g_\ell$:
\begin{eqnarray*}
y \big[xy+(\ell+1)(h-\ell-2)\big]\, f^{(\ell)}&=&\big[xy+\ell(h-\ell-1)\big] y f^{(\ell)}\\[1mm]
&=&\big[xy+\ell(h-\ell-1)\big] x^{\ell} g_{\ell}\\[1mm]
&=&x^{\ell+1} y g_\ell\ = \ O(x^{\ell+1})\, .
\end{eqnarray*}
The normalization factor $\big[(\ell+1)(h_0-\ell-2)\big]^{-1}$ ensures that $f^{(\ell+1)}$ is again a series of the form $f_0+x f_1 + \cdots$. 
\end{proof}

The above Lemma, suggests a  solution to Problem~\ref{first soln} in terms of products of operators acting on the initial data~$f_0$ on~$\Sigma$. A key insight is to express 
elements in $\cU({\frak g})$ in terms of the Casimir $c$ and Cartan generator~$h$.
In particular, the operator appearing in Lemma~\ref{next} is one of a sequence of operators defined  for any $j\in  {\mathbb Z}$
$$
c_j:= c-\frac14(h-2j)(h-2j-2)\, .
$$
Simple algebra shows that
\begin{equation}\label{ci}
c_j=xy+j(h-j-1)\, ;
\end{equation}
Lemma~\ref{next} utilises $c_{\ell+1}$.
Moreover (and this fact will feature prominently in the following)
$$
c_0=xy\, .
$$
Since $yc=cy$ and $yh=(h+2)y$, it follows immediately that
$$
y\, c_j=c_{j-1}\, y\, .
$$
In turn we have
\begin{eqnarray}
y\, c_1c_2\cdots c_{\ell}&=&c_0 c_1\cdots c_{\ell-1}\, y\nonumber\\
&=&xy\,  c_1c_2\cdots c_{\ell-1}\, y\nonumber\\
&=&x\,  c_0 c_1\cdots c_{\ell-2}\, y^2\nonumber\\
&\vdots&\nonumber\\
&=&x^\ell y^{\ell+1}\, .\label{yleft}
\end{eqnarray}
Hence we already see that 
\begin{equation}\label{prodform}
{\mathscr F}:=c_1c_2\cdots c_{\ell} f_0
\end{equation}
obeys $ y \mathscr F=O(x^\ell) $.  To solve Problem~\ref{first soln},
we still need to relate this product to a series expansion in~$x$.  To
that end we recall the following identity from~\cite{GWasym}
(which also can be derived directly from equation~\nn{core})
\begin{equation}\label{bessel}
y \, x^j y^j = j(j-1) x^{j-1}y^{j} - j x^{j-1} y^{j} (h-2) + x^j y^{j+1}\, ,\qquad j\in{\mathbb Z}_{\geq0}\, .  
\end{equation}
It is very convenient to elevate this relation to one for formal power series of elements in~$\cU({\frak g})$ of the form
$$
\colon K(z)\colon=\sum_j \colon z^j \colon\,  \alpha_j(h)\, ,\qquad K(z)\in {\mathbb C}[[z]]\, .
$$
When we allow the coefficients $\alpha_j$ to be polynomials in the Cartan generator $h$, these are always ordered to the right. Moreover, for $z:=xy$  the normal ordering $\colon \, \bullet  \, \colon$ defines the operators
$$
\colon z^j\colon := x^j y^j\, ,\qquad 
\colon x^ mz^j\colon := x^{m+j} y^j\, ,\qquad 
\colon z^j y^m\colon := x^j y^{j+m}\, .\qquad 
$$ 
Then equation~\nn{bessel} can be restated as
\begin{equation}\label{besselop}
y \, \colon K(z)\colon = \colon\big(z K''(z) + K(z)\big) y\colon  -\colon K'(z)y\colon\,   (h-2)\, .
\end{equation}

The following technical Lemma establishes the relationship between solutions in the product form~\nn{prodform} and the formal series solutions of~\cite{GWasym}.

\begin{lemma}
The product $c_1c_2\cdots c_\ell$ equals the following polynomial in~$x$
\begin{equation}\label{claim}
c_1c_2\ldots c_\ell = \sum_{j=0}^\ell \frac{\ell!}{j!}\,  x^j y^j (h-j-2)_{\ell-j}=:\, \colon K^{(\ell)}(z)\colon
\ ,
\end{equation}
where the Pochhammer symbol denotes the product $$\big(h-j-2\big)_{\ell-j}:=(h-j-2)(h-j-3)\cdots(h-\ell-1)\, ,$$
(subject to $(m)_0:=1$). 
\end{lemma}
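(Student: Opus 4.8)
The plan is to prove \eqref{claim} by induction on $\ell$, building the product up one factor at a time from the right, since $c_1c_2\cdots c_{\ell+1}=(c_1\cdots c_\ell)\,c_{\ell+1}$. The base case is immediate: for $\ell=0$ the empty product is $1=(h-2)_0$, matching the single $j=0$ term, and for $\ell=1$ one checks directly that $c_1=xy+(h-2)$ reproduces the $\ell=1$ right-hand side. So write $P_\ell:=c_1\cdots c_\ell$ and assume $P_\ell=\sum_{j=0}^\ell \frac{\ell!}{j!}\,x^jy^j\,(h-j-2)_{\ell-j}$; the task is to verify $P_{\ell+1}=P_\ell\,c_{\ell+1}$ has the predicted form with $\ell$ replaced by $\ell+1$, where $c_{\ell+1}=xy+(\ell+1)(h-\ell-2)$ by \eqref{ci}.

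The core local computation is to normal-order a single term $x^jy^j\,p(h)\,c_{\ell+1}$, where $p(h)$ abbreviates the polynomial $(h-j-2)_{\ell-j}$. First I would use the Cartan relations $hx=x(h+2)$ and $hy=y(h-2)$ (from \eqref{sl2}) to slide the $h$-polynomial through the $xy$ factor of $c_{\ell+1}$, giving $x^jy^j\,p(h)\,xy=x^jy^j\,xy\,p(h)$. Then \eqref{rcore} in the form $y^jx=xy^j-j\,y^{j-1}(h-j+1)$, followed by one more use of $(h-j+1)y=y(h-j-1)$, yields the pivotal identity
\[
x^jy^j\,p(h)\,xy=x^{j+1}y^{j+1}\,p(h)-j\,x^jy^j\,(h-j-1)\,p(h)\,.
\]
Together with the already-ordered piece $x^jy^j\,p(h)\cdot(\ell+1)(h-\ell-2)$ and the Pochhammer absorption $p(h)(h-\ell-2)=(h-j-2)_{\ell+1-j}$, this gives the full contribution of the $j$-th term of $P_\ell$ to $P_{\ell+1}$.

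The heart of the argument is the bookkeeping when collecting the coefficient of $x^jy^j$ in $P_{\ell+1}$. For $1\le j\le \ell$ this coefficient gets a ``diagonal'' contribution from the $j$-th term of $P_\ell$ and a ``shifted'' contribution, namely the $x^{j}y^{j}$ piece arising from the $x^{(j-1)+1}y^{(j-1)+1}$ part of the $(j-1)$-st term. Using $(h-j-1)_{\ell-j+1}=(h-j-1)(h-j-2)_{\ell-j}$ and $\frac{\ell!}{(j-1)!}=j\,\frac{\ell!}{j!}$, I would factor out the common $\frac{\ell!}{j!}(h-j-2)_{\ell-j}$; the two $j(h-j-1)$ terms then cancel exactly, leaving $(\ell+1)(h-\ell-2)$ times the common factor, i.e. $\frac{(\ell+1)!}{j!}(h-j-2)_{\ell+1-j}$, which is precisely the desired coefficient. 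I would finish by checking the extreme indices $j=0$ and $j=\ell+1$ separately (each receives only one contribution), where the telescopings $(h-2)_\ell(h-\ell-2)=(h-2)_{\ell+1}$ and $(h-\ell-2)_0=1$ close the induction.

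I expect the only genuine obstacle to be organising the $h$-polynomial manipulations so that the above cancellation is manifest: the commutators \eqref{sl2} and \eqref{rcore} spawn several $h$-dependent terms, and the error-prone part is tracking the Pochhammer shifts correctly (which factor is prepended versus appended, and how $(h-j-1)_{\ell-j+1}$ relates to $(h-j-2)_{\ell-j}$). Once the single-term identity above is in hand, the rest is routine enveloping-algebra computation.
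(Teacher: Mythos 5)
Your proof is correct: the pivotal identity $x^jy^j\,p(h)\,xy=x^{j+1}y^{j+1}p(h)-j\,x^jy^j(h-j-1)p(h)$ does follow from \eqref{rcore} together with the Cartan relations (note $p(h)$ commutes with $xy$ since $xy$ has weight zero), and the diagonal/shifted bookkeeping works exactly as you describe — the two $j(h-j-1)$ contributions cancel and $(h-j-2)_{\ell-j}(h-\ell-2)=(h-j-2)_{\ell+1-j}$ closes the induction. (At the top index $j=\ell+1$ the empty Pochhammer is $(h-\ell-3)_0$ rather than $(h-\ell-2)_0$, but both equal $1$ by convention, so nothing is affected.) Your route shares the paper's inductive skeleton but runs on a different computational engine, and the difference is worth recording. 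The paper peels $c_{\ell+1}$ off on the \emph{left}, writing $c_1\cdots c_{\ell+1}=c_{\ell+1}\,c_1\cdots c_\ell$ — legitimate only because the $c_j$, being polynomials in the commuting pair $(c,h)$, mutually commute, a fact the paper uses tacitly — and then normal-orders $c_{\ell+1}\,\colon\! K^{(\ell)}(z)\colon$ wholesale via the series identity \eqref{besselop}, finishing with a generating-function computation in $E=z\frac{d}{dz}$ that exhibits the single surviving term $x^{\ell+1}y^{\ell+1}$. You instead peel on the right, $P_{\ell+1}=P_\ell\,c_{\ell+1}$, which requires no commutativity input at all, and normal-order term by term directly from \eqref{rcore}, matching Pochhammer coefficients by hand. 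Your version is the more elementary and self-contained one, never invoking \eqref{besselop}; what the paper's version buys is conceptual economy downstream — \eqref{besselop} is precisely the identity whose $O(z^{\ell+1})$ solutions drive the solution-generating operators (the Bessel function $K^{h_0}$ and the log-modified operator), so proving the Lemma through it makes the relation between the product $c_1\cdots c_\ell$ and those series solutions immediate, whereas in your approach that relation would need a separate remark.
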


\begin{proof}
We proceed by induction. The base case $\ell=1$ holds by definition
of $c_1$
$$
\colon K^{(1)}(z)\colon=c_1=(h-2) + xy\, .
$$
The induction step requires  us to  demonstrate that
$$c_{\ell+1}\sum_{j=0}^\ell \frac{\ell!}{j!}\,  x^j y^j (h-j-2)_{\ell-j}=\sum_{j=0}^{\ell+1} \frac{(\ell+1)!}{j!}\,  x^j y^j (h-j-2)_{\ell-j+1}\, .$$
Computing the left hand side using~\nn{besselop} and the fact that $x\colon K(z)y\colon =\colon zK(z)\colon$ yields
$$
\colon \Big[z^2 \frac{d^2K^{(l)}(z)}{dz^2}+zK^{(l)}(z)\Big] \colon-\colon z\,  \frac{dK^{(l)}(z)}{dz}\colon\ (h-2)
+(\ell+1)\,  \colon K^{(l)}(z)\colon\,  (h-\ell-2)\, .
$$
By inspection, the last term reproduces all terms in the expression for $\, \colon K^{(\ell+1)}(z)\colon$ save for the last, $j=\ell+1$, term in the sum
on the right hand side of the induction step requirement above, namely $ x^{\ell+1} y^{\ell+1}  $.
The remaining terms produce exactly this missing term essentially because $K^{(\ell)}(z)$ solves equation~\nn{besselop}
to $O(z^{\ell+1})$. This is also easily explicitly verified by direct computation: Calling $E:=z\frac{d}{dz}$, we have
$$
\colon \Big[z^2 \frac{d^2K^{(l)}(z)}{dz^2}+zK^{(l)}(z)\Big] \colon-\colon z\,  \frac{dK^{(l)}(z)}{dz}\colon\ (h-2)
\hspace{5.3cm}
$$
\begin{eqnarray*}
\hspace{1.5cm}&=&\colon \big[E(E-1)+z\big]K^{(l)}(z)\colon-\colon E K^{(l)}(z)\colon (h-2)\\[2mm]
&=&-\sum_{j=0}^\ell \frac{\ell!}{j!}\,  x^j y^j (h-j-2)_{\ell-j}\, j(h-j-1) + \sum_{j=0}^\ell \frac{\ell!}{j!}\,  x^{j+1} y^{j+1} (h-j-2)_{\ell-j}\\
&=&-\sum_{j=1}^\ell \frac{\ell!}{(j-1)!}\,  x^j y^j (h-j-1)_{\ell-j+1} + \sum_{j=1}^{\ell+1} \frac{\ell!}{(j-1)!}\,  x^{j} y^{j} (h-j-1)_{\ell-j+1}\\[2mm]
&=&x^{\ell+1}y^{\ell+1}\, .
\end{eqnarray*}
\end{proof}

Now, returning to the Problem~\ref{first soln}, taking $f_0$ to have definite weight $w_0$, so that $h f_0= h_0 f_0$ with
$h_0:=d+2w_0$, then the above Lemma  implies
\begin{eqnarray}\label{nonnormalized}
{\mathscr F}&=&c_1c_2\cdots c_{\ell} f_0= \sum_{j=0}^\ell \frac{\ell!}{j!}\,  x^j y^j (h_0-j-2)_{\ell-j}f_0
\nonumber\\
&=&\ell! (h_0-2)_{\ell}\ \colon\Big[\, 1+\frac{z}{h_0-2}+\frac{z^2}{2!(h_0-2)(h_0-3)}
+\cdots + \frac{z^{\ell}}{\ell! (h_0-2)_{\ell}}\Big]\colon\,  f_0
\, .\label{nonnormalized}
\end{eqnarray}
Therefore we see that, away from values $h_0\in{\mathbb Z}_{\geq 2}$,  ${\mathscr F}\big/(\ell! (h_0-2)_{\ell})$ solves Problem~\ref{first soln} and thus  have established the following result. 
\begin{proposition}\label{firstsolprop}
The solution to Problem~\ref{first soln} for any $\ell\in {\mathbb Z}_{\geq 1}$, when $h_0\notin {\mathbb Z}_{\geq 2}$,~is
\begin{equation}\label{product solution}
f^{(\ell)}=\Big[\prod_{j=1}^{\ell}\frac{c_j}{j(h_0-j-1)} \Big]f_0\, .
\end{equation}
\end{proposition}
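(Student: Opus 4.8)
The plan is to recognise the stated product as nothing more than the normalisation of the solution $\mathscr{F}=c_1c_2\cdots c_\ell f_0$ already exhibited in~\nn{prodform}, so that the Proposition assembles the preceding lemmas rather than requiring fresh computation. Since the denominators $j(h_0-j-1)$ are fixed scalars (with $h_0:=d+2w_0$ the weight eigenvalue of $f_0$), they factor straight out of the product, and $\prod_{j=1}^\ell j(h_0-j-1)=\ell!\,(h_0-2)_\ell$; thus $\big[\prod_{j=1}^\ell\frac{c_j}{j(h_0-j-1)}\big]f_0=\mathscr{F}/\big(\ell!\,(h_0-2)_\ell\big)$, and it suffices to verify that this right-hand side solves Problem~\ref{first soln}.

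First I would check that it satisfies the extension equation to the required order. By~\nn{yleft} one has $y\,\mathscr{F}=x^\ell y^{\ell+1}f_0=O(x^\ell)$, and dividing by the scalar $\ell!\,(h_0-2)_\ell$ (nonzero precisely because $h_0\notin{\mathbb Z}_{\geq 2}$) preserves this, so $yf^{(\ell)}=O(x^\ell)$ off $\Sigma$. Next I would confirm that the candidate is a genuine series $f_0+xf_1+x^2f_2+\cdots$ with leading term exactly $f_0$. For this I invoke the normal-ordering Lemma~\nn{claim}: because $f_0$ has definite weight, $h$ acts as the scalar $h_0$ throughout, and substituting this eigenvalue turns $c_1\cdots c_\ell f_0$ into the explicit series~\nn{nonnormalized},
\[
\mathscr{F}=\ell!\,(h_0-2)_\ell\,\colon\Big[1+\frac{z}{h_0-2}+\cdots+\frac{z^\ell}{\ell!\,(h_0-2)_\ell}\Big]\colon f_0\,.
\]
Its $j=0$ term is $\ell!\,(h_0-2)_\ell f_0$ and every higher term carries a factor $x^j$ (recall $\colon z^j\colon=x^jy^j$); dividing by $\ell!\,(h_0-2)_\ell$ therefore yields precisely a solution of the admissible form, establishing the Proposition.

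The genuine content sits entirely in Lemma~\nn{claim}, and that is where I expect the only real obstacle. One must show by induction on $\ell$ that the product of Casimir shifts $c_j=xy+j(h-j-1)$ collapses to the normal-ordered sum $\sum_{j=0}^\ell\frac{\ell!}{j!}x^jy^j(h-j-2)_{\ell-j}$, with leading coefficient exactly $\ell!\,(h_0-2)_\ell$ after evaluating $h\mapsto h_0$. The induction step hinges on the operator identity~\nn{besselop} together with $x\,\colon K(z)y\colon=\colon zK(z)\colon$; checking that the derivative terms $z^2K''+zK-zK'(h-2)$ conspire to produce exactly the single missing top term $x^{\ell+1}y^{\ell+1}$ is the delicate bookkeeping. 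Once that identity is in hand, the Proposition follows immediately from the scalar factorisation and the product-to-series passage described above.
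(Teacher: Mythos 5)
Your proposal is correct and follows essentially the same route as the paper: the paper likewise obtains the Proposition by combining the identity $y\,c_1\cdots c_\ell=x^\ell y^{\ell+1}$ with the normal-ordering Lemma~\nn{claim} (proved by induction via~\nn{besselop}), evaluating $h\mapsto h_0$ on the definite-weight $f_0$ to reach the series~\nn{nonnormalized}, and dividing by $\ell!\,(h_0-2)_\ell$, nonzero precisely when $h_0\notin{\mathbb Z}_{\geq 2}$. Your observation that the $c_j$ preserve weight (so $h$ acts as $h_0$ throughout the product) and that the leading term of the normalised series is exactly $f_0$ is the same bookkeeping the paper relies on.
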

\begin{remark}
In the limit $\ell \to \infty$, the polynomial in $z$ in the squared brackets on the right hand side of display~\nn{nonnormalized}
for the non-normalised solution, gives the series expansion of the Bessel function of the first kind
\begin{equation}\label{besselfn}
K^{h_0}(z):=z^{\frac{h_0-1}{2}} \Gamma(2-h_0)\, J_{1-h_0}\big(2\sqrt{z}\, \big) \, .
\end{equation}
Once the Cartan generators $h$ in the $c_j$ of the product solution\nn{product solution} are replaced by their eigenvalues $h_0$
when they act on $f_0$, then
the infinite product from the $\ell\to\infty$ limit can also be formally evaluated; use of the computer package Maple yields
$$
\prod_{j=1}^{\infty}\frac{c-\frac14(h_0-2j)(h_0-2j-2)}{j(h_0-j-1)} =\frac{-\big(c-\frac{1}{4}h_0(h_0-2)\big)^{-1}\ \Gamma(2-h_0)}{
\Gamma\Big(\frac{4-h_0}{2}-\sqrt{c+\frac14}\,\Big)\, \Gamma\Big(\frac{4-h_0}{2}+\sqrt{c+\frac14}\, \Big)}\, .
$$
Note that this ratio of gamma functions can be encoded by a single beta function and the left hand side can also be expressed in terms of $xy$ so
$$\prod_{j=1}^\infty\frac{xy+j(h_0-j-1)}{j(h_0-j-1)}=
-\left[xy\, \beta\left(\frac{4-h_0}{2}-\sqrt{c+\frac14}\, ,\, \frac{4-h_0}{2}+\sqrt{c+\frac14}\right)\right]^{-1}
\, .$$
\end{remark}
\begin{remark}
The formal square root of the Casimir $\sqrt{c+\frac14}$ appeared in~\cite{Hallowell:2007zb,Hallowell:2005np}
where it was employed to extend the algebra ${\mathcal U}(\frak{g})$ by an operator whose eigenvalues gave the
so-called ``depth'' of states. In the language used here, this amounts to supposing that~$f$ is a simultaneous eigenfunction
of the Casimir $c$ and Cartan generator $h$ and obeys $y^j f\neq 0 = y^{j+1} f$. The integer $j$ is the depth. 
\end{remark}
\begin{lemma}\label{xright}
$$
c_1c_2\ldots c_{\ell}\,  x = x^{\ell+1}y^{\ell}\, .
$$
\end{lemma}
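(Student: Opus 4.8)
The plan is to establish a right-handed companion to the intertwining relation $y\, c_j = c_{j-1}\, y$ already used in the derivation of~\nn{yleft}, namely the dual identity
$$
c_j\, x = x\, c_{j-1}\, ,
$$
and then to telescope the factor $x$ leftward through the product $c_1 c_2 \cdots c_\ell$, mirroring the computation~\nn{yleft} but run in the opposite direction.

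To prove the dual identity I would exploit that the Casimir $c$ is central, so $c\,x = x\,c$, together with the shift property coming from $[h,x]=2x$ in~\nn{sl2}: since $h\,x = x\,(h+2)$, every polynomial obeys $p(h)\, x = x\, p(h+2)$. Writing $c_j = c - \tfrac14 (h-2j)(h-2j-2)$ and applying the shift to the quadratic in $h$ gives $\tfrac14(h-2j)(h-2j-2)\, x = x\, \tfrac14\big(h-2(j-1)\big)\big(h-2(j-1)-2\big)$, whence $c_j x = x\, c_{j-1}$. (Equivalently, this is a two-line $\mathfrak{sl}(2)$ computation starting from $yx = xy - h$ and $xh = hx - 2x$ applied to the form $c_j = xy + j(h-j-1)$ of~\nn{ci}.)

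With this relation in hand, I would push $x$ all the way to the left,
$$
c_1 c_2 \cdots c_\ell\, x = c_1 \cdots c_{\ell-1}\, x\, c_{\ell-1} = \cdots = x\, c_0 c_1 \cdots c_{\ell-1}\, ,
$$
and then close the argument using $c_0 = xy$ together with~\nn{yleft} applied with $\ell$ replaced by $\ell-1$, that is $y\, c_1 \cdots c_{\ell-1} = x^{\ell-1} y^\ell$:
$$
x\, c_0 c_1 \cdots c_{\ell-1} = x^2\, \big(y\, c_1 \cdots c_{\ell-1}\big) = x^2\, x^{\ell-1} y^\ell = x^{\ell+1} y^\ell\, ,
$$
as claimed. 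The algebraic content is immediate from centrality of $c$ and the $h$-shift; the only real care needed is the index bookkeeping in the telescoping step, where the product contracts on the right exactly as one $x$ emerges on the left, and this is the step I would track most carefully.
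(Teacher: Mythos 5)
Your proposal is correct and takes essentially the same route as the paper: the paper's proof also rests on the readily verified identity $c_j\, x = x\, c_{j-1}$ and then runs the telescoping of~\nn{yleft} in mirror image. Your explicit derivation of that identity (centrality of $c$ plus the shift $p(h)\,x = x\,p(h+2)$) and your closing appeal to~\nn{yleft} at $\ell-1$ simply spell out steps the paper leaves to the reader.
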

\begin{proof}
The proof is identical to the derivation of the identity~\nn{yleft} for $y$ acting from the left on a product of $c_i$'s
save that one now uses the readily verified identity
$$
c_j\,  x = x \, c_{j-1}\, .
$$
\end{proof}
\begin{remark}
Lemma~\ref{xright} shows that the solution~\nn{product solution} does not depend on how $f_{_\Sigma}$ is extended off~$\Sigma$ to $f_0\in\Gamma\ct^\Phi M[w_0]$ since shifting $f_0\to f_0+x f_1$ for any $f_1\in\Gamma\ct^\Phi M[w_0-1]$ we have $c_1c_2\ldots c_{\ell}\, x f_1=O(x^{\ell+1})$. 
\end{remark}
\begin{remark}
The poles at $h_0=2,3,\ldots$ in the solution given in Proposition~\ref{firstsolprop} suggest that the simple product 
formula must be adjusted at these values. Indeed, observe that if $h_0\in {\mathbb Z_{\geq 2}}$ and $f^{(h_0-2)}$
is an $O(x^{h_0-2})$ solution to~Problem~\ref{first soln}, then Lemma~\ref{next} fails to provide a $O(x^{h_0-1})$ solution 
because
$$
c_{h_0-1} f^{(h_0-2)}=c_0 f^{(h_0-2)}=xy f^{(h_0-2)}
$$
and therefore vanishes along~$\Sigma$. 
\end{remark}

Even when $h_0\in{\mathbb Z}_{\geq 2}$, we may still evaluate products of the form~\nn{prodform} for $\ell> h_0-1$
\begin{eqnarray*}
\overline{\mathscr F}=c_1c_2\ldots c_{\ell} f_0 &=& c_1\ldots c_{h_0-2} xy c_{h_0}\ldots c_\ell f_0\\
&=& x^{h_0-1} y^{h_0-1}c_{h_0}\ldots c_{\ell}f_0\\
&=& x^{h_0-1} c_{1}\ldots c_{\ell-h_0+1}\overline f_0\, , \mbox{ where } \overline f_0=y^{h_0-1} f_0\, .
\end{eqnarray*}
Clearly $y\overline{\mathscr F}=O(x^{h_0-1+\ell})$ but the behaviour of $\overline{\mathscr F}$ near $\Sigma$ is no longer of the Dirichlet
type stipulated in Problem~\ref{first soln}. In fact, this is an example of a solution of the second kind.

\subsubsection{The second solution}\label{sol2}

Since the equation $yf=0$ corresponds to a second order differential equation for a normal ordered solution generating operator
as in~\nn{besselop}, we expect to find a second independent solution. In~\cite{GWasym}, by extending the algebra ${\mathcal U}({\frak g})$ by $x^\alpha$ for $\alpha\in {\mathbb C}$, this was shown to be the case with precisely determined boundary behaviour encapsulated by the following.
\begin{problem}\label{second soln}
  Given $\f_0|_{\Sigma}\in \Gamma \ct^\Phi M[-d-w_0+1]|_\Sigma$ and an
  arbitrary extension $\f_0$ of this to $\Gamma\ct^\Phi M[-d-w_0+1]$ over
  $M$, find $\f_i\in \Gamma\ct^{\Phi}M[-d-w_0+1 -i]$,
  $i=1,2,\cdots$, so that
\begin{equation}\label{Jf}
\f^{(\ell)}: = x^{h_0-1} \big( \f_0 + x\,  \f_1  + x^2\, \f_2 +\cdots + O(x^{\ell+1}) \big)\, , \quad h_0:=d+2w_0\, ,
\end{equation}
solves $y \f  = O(x^{h_0-1+\ell})$, off $\Sigma$, 
for $\ell\in \mathbb{N}\cup \infty$
as high as possible.
\end{problem}
\begin{remark}
If the leading behaviour of $\f$ is relaxed to $\f=x^\alpha\big( \f_0 + x\,  \f_1  +\cdots\big)$, one quickly learns (see~\cite{GWasym}, Section~5.3) that
the value $\alpha=h_0-1$ is forced. An easy way to understand this is to note that for any $g\in\Gamma \ct^\Phi M[-d-w_0+1]$, by virtue of~\nn{corarb},
the following holds
$$
y\, x^{h_0-1}g=x^{h_0-1} y\,  g\, .
$$
This underlies a scale duality map on solutions which is implicit in the solution below and discussed in more detail in Section~\ref{sol2}. 
\end{remark}

We now seek a product type solution of Problem~\ref{second soln}.
The above remark immediately shows us how to proceed. Since the operator $y$ effectively commutes through the leading~$x^{h_0-1}$
behaviour, we are left with a version of the original Problem~\ref{first soln}, but now at the weight~$-d-w_0+1$. This establishes the following
result.
\begin{proposition}\label{secondsolprop}
The solution to Problem~\ref{second soln} for $\ell$ arbitrarily high and any $h_0\notin{\mathbb Z_{\leq0}}$ is
\begin{equation}\label{product solution 2}
\f^{(\ell)}=x^{h_0-1}\, \Big[\prod_{j=1}^{\ell}\frac{c_j}{j(1-j-h_0)} \Big]\f_0\, .
\end{equation}
\end{proposition}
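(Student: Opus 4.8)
The plan is to reduce Problem~\ref{second soln} to the already-solved Problem~\ref{first soln} by factoring out the leading power $x^{h_0-1}$. Writing the sought solution as $\f=x^{h_0-1}g$ with $g:=\f_0+x\f_1+x^2\f_2+\cdots$, the first observation to record is that $g$ is homogeneous of weight $-d-w_0+1$: since $\f_i\in\Gamma\ct^\Phi M[-d-w_0+1-i]$, each monomial $x^i\f_i$ again lands in $\Gamma\ct^\Phi M[-d-w_0+1]$, so the whole bracket is a formal power series valued in the single weighted bundle $\ct^\Phi M[-d-w_0+1]$, on which $h$ acts by the eigenvalue $2-h_0$.

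With this in hand I would invoke the identity~\nn{corarb} with $\alpha=h_0-1$, giving $[y,x^{h_0-1}]=-(h_0-1)\,x^{h_0-2}(h+h_0-2)$. Applied to $g$, whose $h$-eigenvalue is $2-h_0$, the factor $h+h_0-2$ annihilates it, so $y\,x^{h_0-1}g=x^{h_0-1}\,y\,g$; this is precisely the commutation recorded in the remark preceding the statement. Consequently $y\f=x^{h_0-1}\,yg$, and the required decay $y\f=O(x^{h_0-1+\ell})$ is equivalent to $yg=O(x^\ell)$. The leading behaviour $x^{h_0-1}\f_0$ demanded in~\nn{Jf} is automatic, since the product formula below gives $g=\f_0+O(x)$.

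The condition $yg=O(x^\ell)$ with $g=\f_0+x\f_1+\cdots$ is exactly Problem~\ref{first soln} posed at the shifted weight $w_0':=-d-w_0+1$, for which $h_0':=d+2w_0'=2-h_0$. Here I would apply Proposition~\ref{firstsolprop}: its hypothesis $h_0'\notin{\mathbb Z}_{\geq 2}$ is equivalent to $h_0\notin{\mathbb Z}_{\leq 0}$, matching the stated range, and it yields
\[
g=\Big[\prod_{j=1}^{\ell}\frac{c_j}{j(h_0'-j-1)}\Big]\f_0=\Big[\prod_{j=1}^{\ell}\frac{c_j}{j(1-j-h_0)}\Big]\f_0\, ,
\]
using $h_0'-j-1=1-j-h_0$. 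Multiplying through by $x^{h_0-1}$ produces the asserted formula for $\f^{(\ell)}$.

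The only genuinely delicate point is the weight bookkeeping in the first step: one must notice that although the coefficients $\f_i$ carry descending weights, the monomials $x^i\f_i$ all share the single weight $-d-w_0+1$, which is exactly what forces the $h$-eigenvalue $2-h_0$ and makes $[y,x^{h_0-1}]$ vanish on $g$. Everything else is a direct transcription of Proposition~\ref{firstsolprop} to the dual weight $w_0'$, so no computation beyond the single commutator~\nn{corarb} is needed.
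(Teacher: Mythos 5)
Your proposal is correct and follows essentially the same route as the paper: the paper likewise uses the commutation $y\, x^{h_0-1}g = x^{h_0-1}\, y\, g$ for $g$ of weight $-d-w_0+1$ (a consequence of~\nn{corarb}, noted in the remark preceding the Proposition) to reduce Problem~\ref{second soln} to Problem~\ref{first soln} at the dual weight, and then invokes Proposition~\ref{firstsolprop}. Your explicit verification of the weight bookkeeping, including the translation of the hypothesis $h_0'\notin{\mathbb Z}_{\geq 2}$ into $h_0\notin{\mathbb Z}_{\leq 0}$, simply spells out details the paper leaves implicit.
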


\begin{remark}
Again, we have a product form for the solution generating operator, defined independently of how $\f|_\Sigma$ is extended off $\Sigma$.
Performing the infinite product, this is formally encoded by the operator
$$
\frac{-\ x^{h_0-1}\ \big(c-\frac{1}{4}h_0(h_0-2)\big)^{-1}\ \Gamma(h_0)}{
\Gamma\Big(\frac{h_0+2}{2}-\sqrt{c+\frac14}\,\Big)\, \Gamma\Big(\frac{h_0+2}{2}+\sqrt{c+\frac14}\, \Big)}\ 
: \ \Gamma\ct^\Phi M[-d-w_0+1]|_\Sigma\longrightarrow \Gamma\ct^\Phi M[w_0]\, .
$$
In addition, composing the above with the operator $$y^{h_0-1}:\Gamma\ct^\Phi M[w_0]\to \Gamma\ct^\Phi M[-d-w_0+1]\, ,$$ yields 
$$
\frac{-\ x^{h_0-1}\ \big(c-\frac{1}{4}h_0(h_0-2)\big)^{-1}\ \Gamma(h_0)\ y^{h_0-1}}{
\Gamma\Big(\frac{h_0+2}{2}-\sqrt{c+\frac14}\,\Big)\, \Gamma\Big(\frac{h_0+2}{2}+\sqrt{c+\frac14}\, \Big)}\ 
: \ \Gamma\ct^\Phi M[w_0]|_\Sigma\longrightarrow \Gamma\ct^\Phi M[w_0]\, .
$$
The image of both the above formal maps is in the kernel of the operator $y:\Gamma\ct^\Phi M[w_0]\to \Gamma\ct^\Phi M[w_0]$.
\end{remark}

Notice that for any $h_0-1\in {\mathbb Z}_{>0}$ (respectively $ {\mathbb Z}_{<0}$), the $\ell=\infty$ power series solutions of the first (respectively second) kind are obstructed. Moreover, when $h_0=1$, the solutions of the first and second kind coincide. As explained in Section~5.4 of~\cite{GWasym} this presages the appearance of solutions with log terms. We will return to these in Section~\ref{logsect}.

\section{Tractor exterior calculus}

\label{tractorforms}

Form bundles, by virtue of their exterior calculus, play a distinguished {\it r\^ole}  among tensor bundles; the same holds
for so-called {\it tractor form} bundles. This is perhaps not surprising on the basis of the close relationship between
these and form bundles in the Fefferman--Graham ambient space. In fact, the latter connection is exploited heavily in the 
exposition on tractor forms given in~\cite{BrGodeRham}. Here we mainly eschew an ambient approach, and construct the main elements
of a {\it tractor exterior calculus}  directly from the viewpoint of the underlying conformal manifold~$(M,c)$.

The natural tractor analog of a one-form belongs to the weight~$-1$, rank $d+2$ tractor bundle $\ct^AM[-1]$ which for a given $g\in c$ 
enjoys the isomorphism
$$
\ct^AM[-1]\stackrel{g}{\cong}\ce M[0]\oplus\Lambda^1M\oplus\ce M[-2]\, .
$$
The $k$-fold exterior product of this bundle yields a tractor bundle of weight~$-k$ which we denote $\ct^k M[-k]$.
It is isomorphic for a choice $g\in c$ to the direct sum
$$
\ct^k M[-k]\stackrel{g}{\cong}\Lambda^{k-1} M\oplus\Lambda^k M\oplus\ce^{k-2}M[-2]\oplus\ce^{k-1}M[-2]\, .
$$ 
(Recall that $\ce^kM[w]$ denotes the bundle $\Lambda^k M\otimes \ce M[w]$.)
Tensoring with the weighted conformal density bundle $\ce M[w+k]$ we obtain a weight~$w$ bundle which we term the
\hypertarget{exterior tractor bundle}{{\it exterior (or form) tractor bundle}} $\ct^k M[w]$. For $g\in c$ this obeys
$$
\cT^kM[w]\stackrel{g}{=}\ce^{k-1}M[w+k]\oplus \ce^kM[w+k]\oplus \ce^{k-2}M[w+k-2]\oplus \ce^{k-1}M[w+k-2]\, .
$$
We may employ this isomorphism to express sections $\F\in\Gamma\ct^kM[w]$ as
$$
\F\stackrel{g}=\begin{pmatrix}F^+\\F\\F^{+-}\\F^-\end{pmatrix}\, .
$$
For ease of discourse, we will often refer to the various components of this splitting as the ``northern'', ``western'', ``eastern'' and ``southern''
slots, respectively, for the weighted forms $F^+$, $F$, $F^{+-}$ and $F^-$. This terminology was devised in the presentation of~\cite{BrGodeRham}
to reflect the composition series structure of the tractor form bundle. On the other hand, here we use a representation that makes
operator compositions compatible with matrix multiplication.

Invariant sections of $\ct^k M[w]$ corresponding to conformally related choices of metric $\widehat g=e^{2\omega} g$,
are related by the transformation formula
\begin{equation}\label{trafsF}
\widehat{
\begin{pmatrix}
F^+ \\ F \\ F^{+-} \\ F^-
\end{pmatrix}}
=
\begin{pmatrix}
1 & 0 & 0 & 0\\
\varepsilon(\Upsilon)&1&0&0\\
-\iota(\Upsilon)&0&1&0\\
\frac12\big(\varepsilon(\Upsilon)\iota(\Upsilon)-\iota(\Upsilon) \varepsilon(\Upsilon)\big) & -\iota(\Upsilon) & -\varepsilon(\Upsilon) & 1
\end{pmatrix}
\begin{pmatrix}
F^+ \\ F \\ F^{+-} \\ F^-
\end{pmatrix}\, .
\end{equation}
(As usual we denote $\Upsilon := {\bf d} \omega$ and $\iota,\varepsilon$ are the standard exterior and interior products on forms.)
Moreover, in terms of the above splitting, the tractor connection
is given by
$$
\nabla^{\cT}_v\begin{pmatrix}
F^+ \\ F \\ F^{+-} \\ F^-
\end{pmatrix}
=
\begin{pmatrix}
\nabla_v & -\iota(v) & \varepsilon(v) & 0\\
\varepsilon({\mathbb P}v)&\nabla_v&0&\varepsilon(v)\\
-\iota({\mathbb P}v)&0&\nabla_v&\iota(v)\\
0 & -\iota({\mathbb P}v) & -\varepsilon({\mathbb P}v) & \nabla_v
\end{pmatrix}
\begin{pmatrix}
F^+ \\ F \\ F^{+-} \\ F^-
\end{pmatrix}\, ,
$$
where $v$ is an arbitrary section of $TM$ and ${\mathbb P}$ denotes the \hypertarget{Schouten} canonical endomorphism $TM\to TM$
obtained from the Schouten tensor (in an index notation, ${\mathbb P} v := (\Rho^a_b v^b)$).
It is easy to check that this formula enjoys the transformation property in~\nn{trafsF}.
In what follows, we will often assume some choice of splitting and represent various operators on tractor forms
by a $4\times 4$ matrix of operators as in the example above. 

\renewcommand{\G}{{\Cal G}}

For completeness we record the exterior algebra and \hypertarget{tractor star}{{\it tractor Hodge star operator}} of~\cite{BrGodeRham}
in the splitting notations above. The wedge product maps sections $\F$ and $\G$ of ${\cT}^kM[w]$ and $\cT^{k'}M[w']$, respectively, 
to a section $\F\wedge \G$ of $\cT^{k+k'}[w+w']$ given in the above splitting as
$$
\F\wedge\,  \G\stackrel{g}{=}\scalebox{.9}{$
\begin{pmatrix}F^+\\[1mm] F\\[1mm] F^{+-}\\[1mm] F^-\end{pmatrix}\bigwedge
\begin{pmatrix}G^+\\[1mm] G\\[1mm] G^{+-}\\[1mm] G^-\end{pmatrix}
=
\begin{pmatrix}F^+\wedge G+(-1)^k F\wedge G^+\\
F\, \wedge G\\ 
F^{+-}\!\wedge G + (-1)^{k}\Big[F^-\wedge G^+-F^+\wedge G^-\Big]+ F\wedge G^{+-} \\
F^-\wedge G + (-1)^k F\wedge G^-
\end{pmatrix} \, .$}
$$
The conformal Hodge star operator $*:\ce^kM[w]\stackrel\cong{\ \rightarrow} \ce^{d-k}M[d+w-2k]$. We shall denote the degree operator on
forms by~$\degree$, so that for $A\in\Gamma\ce^kM[w]$, $$\degree A=k\, A\, .$$ In these terms we have $**=(-1)^{\scalebox{.70}{\degree}(d-\scalebox{.70}{\degree})+q}$
(where $(-1)^q$ is the sign of the metric determinant in the metric signature $(p,q)$  and is unity for  Riemannian metrics). 
From the conformal Hodge star we can build the tractor Hodge star 
$$\star:\ct^{k}M[w]\stackrel{\cong}{\ \rightarrow}\ct^{d+2-k}M[w]\, ,$$
defined in a given splitting by 
$$
\star\,  \F \ \stackrel{g}=\ \begin{pmatrix}*(-1)^{\scalebox{.75}{\degree}}&0&0&0\\
0&0& * &0\\
0&*\, (-1)  &0&0\\
0&0&0&*(-1)^{\scalebox{.75}{\degree}-1}
\end{pmatrix}\begin{pmatrix}F^+\\F\\F^{+-}\\F^-\end{pmatrix}\, .
$$
We call the \hypertarget{tractor degree}{degree operator on tractor forms}~$\N$ so that for $\F\in\Gamma\ct^kM[w]$, 
\begin{equation}\label{tdegree}\N \F=k\, \F\, .\end{equation} 
It follows directly that
$$
\star\, \star=(-1)^{\N(d+2-\N)+q+1}\, ,
$$
in concordance with the relationship between tractor forms and forms in the Fefferman--Graham ambient space~\cite{BrGodeRham}.

\newcommand{\cb}{{\mathcal B}}
\newcommand{\sgn}{{\rm sgn}}
The standard inner product on $\Omega^k M$ provides an inner product on $\Gamma\ct^k M\big[\frac d2\big]$ as follows.
Firstly, $\Gamma\ct^{d+2} M[-d]$ is canonically isomorphic to $\Omega^d M$ and not only are elements of the latter conformally invariant, 
but they can be integrated over $M$ (oriented). For $\cb\in\Gamma\ct^{d+2} M[-d]$ we shall write $\int_M \cb$ assuming this above isomorphism.
Since the tractor Hodge star $\star:\Gamma\ct^k M\big[-\frac d2\big]\to\Gamma \ct^{d+2-k}M\big[-\frac d2\big]$, given $\A,\A'\in\Gamma\ct^k M\big[-\frac d2\big]$,
we define the conformally invariant inner product $(\A,\A')=\int_M \A\ \scalebox{1.1}{$\wedge$}\,  \star\A'$.
Moreover,  at arbitrary weights $\A,\A'\in\Gamma\ct^k M\big[w\big]$
$$
\langle \A,\A'\rangle := \sgn\left(\int_M  \A \ \scalebox{1.1}{$\wedge$}\, \star \A'\right) \in \{\pm 1,0\}\, ,
$$
is conformally invariant. This pairing is useful for developing orthogonal decompositions.

\subsection{Algebra of  invariant operators}
\label{algebra}

We now develop a conformally invariant exterior calculus of tractor forms. This efficiently compresses 
a large class of operators and accompanying identities. 

The utility of differential forms relies on the exterior derivative operator
$$
\extd:\Omega^k M\to\Omega^{k+1}M\, ,\qquad \extd^2=0\, ,
$$
its Hodge dual $(-1)^{\scalebox{.7}{$\degree$}}\! *^{-1}\!\extd\,  *$, {\it viz.} the codifferential
$$
\cod:\Omega^k M\to\Omega^{k-1}M\, ,\qquad \cod^2=0\, ,
$$
and the supersymmetry algebra formed by these and the form Laplacian
$$
\FL=\cod \extd + \extd \cod=:\{\cod,\extd\}\, ,\qquad [\extd,\FL]=0=[\FL,\cod]\, .
$$
An examination of the Thomas D-operator in~\nn{Dform}, suggests there ought exist a conformally invariant operator on
form tractors that unifies the exterior derivative, codifferential and form Laplacian. A natural candidate for such an operator
would simply be the exterior, or skew, action of the Thomas D-operator $D^A$. Although partially true, this expectation is not fulfilled,
essentially because what appears in the bottom slot of~\nn{Dform} is the Bochner--Laplacian, rather than its form counterpart.
The solution to this problem was given in~\cite{BrGodeRham};  just as the difference between form and Bochner Laplacians
is given by a natural action of the curvature tensor, a similar modification~$\slashed D{}^A$ of the Thomas D-operator exists 
such that its exterior action on tractor forms is nilpotent.

It is straightforward to compute the exterior derivative-type operator $\varepsilon(\slashed{D})$ of~\cite{BrGodeRham} in the $4\times 4$ 
matrix notation for a given splitting as introduced above. We shall denote this operator by
$$
{\mathscr D} : \Gamma\cT^kM[w] \longrightarrow \Gamma\cT^{k+1}M[w-1]\, ,
$$
and refer to it as the \hypertarget{exterior D}{{\it exterior tractor D-operator}}.
Our result for this computation is
\begin{equation}\label{D}
{\mathscr D}\stackrel{g}=
\scalebox{.95}{$
\begin{pmatrix}
-(d+2w-2){\extd}&(w+{\degree})(d+2w-2) & 0 & 0\\
0&(d+2w-2){\extd}&0&0\\
\!\!{\FL }+(w+{\degree}-1)(\J-2{\mathbb P})\!\!\!\!\!&-2{\cod }&\!\!\!\!\!\!\!(d+2w){\extd}&\!(w+{\degree}-1)(d+2w)\!\! \\
[\J-2{\mathbb P},{\extd}] & \!\!\!\!-{\FL }-(w+{\degree})(\J-2{\mathbb P}) \!\!\!\!& 0 & -(d+2w){\extd}
\end{pmatrix}\, .$}
\end{equation}
Here we have canonically extended ${\mathbb P}$ by linearity to act on forms of degree $k$.
Nilpotency 
$$
{\mathscr D}^2=0\, ,
$$
of ${\mathscr D}$ follows from~\cite{BrGodeRham}, but can also be readily verified by a simple matrix composition
based on the above display. 

The tractor analog of the codifferential is also given in~\cite{BrGodeRham} as the interior action of $\iota(\slashed D)=(-1)^{\N-1}\star^{-1}\D\ \star$. We will denote this operator by
$$
{\mathscr D}^\star:\Gamma\cT^kM[w] \longrightarrow \Gamma\cT^{k-1}M[w-1]\, ,
$$
and dub it the \hypertarget{interior D}{{\it interior tractor D-operator}}.
It acts in a given splitting according to 
\begin{equation}\label{Ds}
{\mathscr D}^\star\stackrel{g}=
\scalebox{.88}{$
\begin{pmatrix}
-(d+2w-2){\cod }&0&\!\!\!\!\!\!-(d+2w-2)(d+w-{\degree})&\!\!\!\!\!\!\!\!\!\!\!\!0\\
\!\!-{\FL } +(d+w-{\degree}-1)(\J-2{\mathbb P}) &\!(d+2w){\cod }\!\!\!\!\!&\!\!\!\!\!-2{\extd} \!& \!\!\!\!\!\!\!\!\!\!\!\!\!\!\!\!\!\!(d+2w)(d+w-{\degree}-1)\!\\
0&0&\!\!\!\!(d+2w-2){\cod }&\!\!\!\!\!\!\!\!\!\!\!\!0\\
[\J-2{\mathbb P},{\cod }]&0&\!\!\!\!\!\!\!\!\!\!\!\!\!-{\FL } +(d+w-{\degree})(\J-2{\mathbb P})\!\!\!\!&\!\!\!\!\!\!\!\!-(d+2w){\cod }
\end{pmatrix}\, .$}
\end{equation}
Per~\cite{BrGodeRham}, it is also nilpotent
$$
{\mathscr D}^\star{}^2=0\, ,
$$
and anticommutes with the exterior derivative
$$
{\mathscr D}^\star\, {\mathscr D}+ {\mathscr D}\ {\mathscr D}^\star=0\, .
$$
Altogether, including the weight operator $h:=d+2\w$ (see~\nn{weight}) and tractor form degree~$\N$ (as in~\nn{tdegree})
we have established  the beginnings of an {\it exterior tractor algebra}
\begin{center}
\begin{tabular}{cc}
$
\{\D,\Ds\}=0\, $, & $\D^2=0=\Ds{}^2$,\\[1mm]
$[\N,\D]=\D\, $, & $[\N,\Ds]=-\Ds$,\\[1mm]
$[h,\D]=-2\D\, $, & $[h,\Ds]=-2\Ds$.
\end{tabular}
\end{center}

We now
augment these identities with exterior and interior multiplication by the canonical tractor $X^A$:
Let us denote exterior multiplication by this with $\varepsilon(X)$, which we term the \hypertarget{exterior canonical tractor operator}{{\it exterior canonical tractor}}
$$
\X:\ct^k M[w]\rightarrow \ct^{k+1} M[w+1]\, .
$$
In a choice of splitting this is represented by
\begin{equation}\label{X}
\X\ \stackrel{g}= \ \begin{pmatrix}
0&0&0&0\\
0&0&0&0\\
-1&0&0&0\\
0&1&0&0
\end{pmatrix}\, .
\end{equation}
For later use, it is worth noting that this operator moves the northern and western slots, respectively, to the eastern and southern slot.
Its adjoint, the \hypertarget{interior canonical tractor}{{\it interior canonical tractor}} $\iota(X)=(-1)^{\N-1}\star^{-1}\X\star$ will be denoted
\begin{equation}\label{Xs}
\Xs:\ct^k M[w]\rightarrow \ct^{k-1} M[w+1]\, .
\end{equation}
In a choice of splitting this is represented by
\begin{equation}\label{Xs}
\Xs\ \stackrel{g}=\   
\begin{pmatrix}
0&0&0&0\\
1&0&0&0\\
0&0&0&0\\
0&0&1&0
\end{pmatrix}\, ,
\end{equation}
which moves the northern slot to the west and the eastern slot to the south.

Therefore, we may now add to our exterior tractor algebra the relations
\begin{center}
\begin{tabular}{cc}
$
\{\X,\Xs\}=0\, $, & $\X^2=0=\Xs{}^2$,\\[1mm]
$[\N,\X]=\X\, $, & $[\N,\Xs]=-\Xs$,\\[1mm]
$[h,\X]=2\X\, $, & $[h,\Xs]=2\Xs$.
\end{tabular}
\end{center}

To compute an algebra for products of the interior and exterior Thomas D- and canonical tractor operators amongst one another
we rely on the following: 
\begin{lemma}\label{DXid}
Let $h^{AB}$ denote the tractor metric and $h:=d+2\w$. Then the Thomas~D- and canonical tractor operators obey the identity
\begin{equation}\label{ureq}
h\,  X^A D^B - (h-2)\,  D^B X^A - 2 \, X^B D^A + h(h-2) \, h^{AB}
=0\, .\end{equation}
\end{lemma}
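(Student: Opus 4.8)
The plan is to exploit that both sides of~\eqref{ureq} are, by construction, built only from the conformally invariant operators $X^A$, $D^A$, the weight operator $\w$ (hence $h=d+2\w$) and the tractor metric $h^{AB}$; the left-hand side is therefore a well-defined conformally invariant operator, and it suffices to show that it annihilates every section $V\in\Gamma\ct^\Phi M[w]$ of definite weight after computing in a single choice of scale $g\in c$. On such a $V$ the operator $h$ merely reads off the relevant weight, and tracking the weight shifts produced by $X^A$ (raising by $1$) and $D^A$ (lowering by $1$), I would first reduce the claim to the numerical-coefficient identity
\[
W\, X^A D^B V-(W-2)\, D^B X^A V-2\, X^B D^A V+W(W-2)\, h^{AB}V=0\, ,\qquad W:=d+2w\, .
\]

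Next I would record the three ingredients as sections of $\ct^A M\otimes\ct^B M\otimes\ct^\Phi M[w]$ in the scale $g$. Using~\eqref{Dform}, the terms $X^A D^B V$ and $X^B D^A V$ are immediate, since $X^A\stackrel{g}{=}(0,0,1)$ places its index purely in the southern slot. The one genuinely nontrivial computation is $D^B X^A V$: because $X^A$ is \emph{not} parallel, the tractor connection gives $\nabla_a X^B$ the single nonzero (middle-slot) component $\bg_{ab}$, and a further differentiation of this middle injector produces $-\bg_{ab}$ in the northern slot and $-\Rho_{ab}$ in the southern slot. Expanding by the Leibniz rule then yields
\[
\Delta(X^A V)=X^A\Delta V+2\,\bg^{ab}(\nabla_b X^A)\,\nabla_a V-d\,Y^A V-\J\, X^A V\, ,
\]
where $Y^A$ denotes the northern injector; feeding this into~\eqref{Dform} determines the southern slot, and hence all components, of $D^B X^A V$.

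With these in hand, I would organise each of the four terms according to the $3\times3$ array of (northern/middle/southern)$\otimes$(northern/middle/southern) slot-pairs for the free indices $A,B$ and verify cancellation block by block. In every populated block the four coefficients conspire to vanish: for example the southern-$A$/northern-$B$ block contributes $W(W-2)\big[w-(w+1)+1\big]V=0$, the southern-$A$/southern-$B$ block gives $\big[-W+(W-2)+2\big](\Delta+\J w)V=0$, and the northern-$A$/southern-$B$ block gives $(W-2)\big[-d-2w+W\big]V=0$. Here the r\^ole of the $-2X^B D^A$ term is to cancel the genuinely two-index (off-diagonal) pieces generated by the non-parallelism of $X$, while $W(W-2)h^{AB}$ supplies precisely the northern--southern and middle--middle completions.

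I expect the main obstacle to be purely the careful bookkeeping of $\Delta(X^A V)$ — propagating the derivatives of the canonical tractor through the Leibniz rule and correctly identifying the slot into which each resulting term lands — together with matching the normalisation of the middle--middle and northern--southern blocks of $h^{AB}$ against those produced by $D^B X^A V$. No curvature identities beyond the definition of the Schouten tensor enter, so once the slot assignments are fixed consistently the remaining verification is entirely mechanical.
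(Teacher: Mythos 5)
Your proof is correct, but it takes a genuinely different route from the paper's. The paper proves Lemma~\ref{DXid} in Appendix~\ref{AMBIENT} by lifting to the Fefferman--Graham ambient manifold: there $D^A$ becomes $\boldsymbol{\nabla}^A(d+2\boldsymbol{\nabla}_{\!X}-2)+X^A\boldsymbol{\nabla}^2$, the canonical tractor satisfies $\boldsymbol{\nabla}_A X^B=\delta_A^B$, and the identity collapses in three lines using only the commutators $[\boldsymbol{\nabla}^2,X^A]=2\boldsymbol{\nabla}^A$ and $[h,X^A]=2X^A$ — no curvature and no slot bookkeeping at all. You instead work intrinsically in a chosen scale $g\in c$, which is precisely the alternative the authors gesture at when they remark that the relation ``also follows from known tractor identities.'' Your execution is sound: the reduction to weight eigenvalues $W=d+2w$ is legitimate since every term is a composition of invariant operators and sections of $\ct^\Phi M[w]$ have definite weight; your Leibniz computation of $\Delta(X^AV)$ (using $\nabla_a X^B=Z_a{}^B$ and $\nabla_a Z_b{}^B=-\bg_{ab}Y^B-\Rho_{ab}X^B$) is exactly right; and your three sample blocks check out numerically — in particular the northern-$A$/southern-$B$ block correctly assembles $-d$ from the $Y^A$ term in $\Delta(X^AV)$, $-2w$ from $-2X^BD^AV$, and $+W$ from the metric term, summing to zero. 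The trade-off is clear: the ambient proof is uniform and nearly computation-free, but rests on the ambient construction (only formal in even dimensions, though tangential Ricci-flatness along $\cG$ suffices for the tractor correspondence); your direct proof is more mechanical — a full $3\times3$ block verification rather than two commutators — but is entirely elementary, stays within the intrinsic tractor calculus of Section~\ref{TRACTORS}, and makes transparent exactly how the $-2X^BD^A$ term absorbs the off-diagonal debris from the non-parallelism of $X$ while $h(h-2)h^{AB}$ supplies the diagonal completions.
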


The simplest proof of the above employs the ambient techniques of~\cite{GoPetCMP,CapGoamb}, although this relation also follows from known tractor identities, see for example~\cite{GoSrni99,Goinvariant}.
Proofs of this and other results relying on an ambient formulation are collected in Appendix~\ref{AMBIENT}.

\begin{remark}\label{hashing}
Suppose $\cW$ is any weight~$-2$ tractor tensor in $\Gamma(\otimes^k\End(\cT M)
)$ and let $\hash$ denote the natural \hypertarget{tensor endomorphism}{tensorial action of endomorphisms}
on tractor sections (to the right) so, for example, for $k=1$ and acting on a rank one
tractor $T^A$
$$
\cW^\hash T^A:=T^B\, \cW_B{}^{A}\,  .
$$
Moreover, let us suppose that $\cW$ is orthogonal to the canonical tractor $X^A$, {\it i.e.},
the contraction of $X^A$ with any index of $\cW$ vanishes. Hence 
$$
\big[\cW^{\overbrace{\hash\cdots\hash}^{k{\rm \  times}}},X^A\big]=0\, .
$$
It follows immediately that Lemma~\nn{DXid} still holds upon the replacement of the Thomas D-operator by
$$
D^A\longmapsto  D^A - X^A \cW^{\hash\cdots \hash}\, .
$$
\end{remark}

Now recall the definition of $\slashed D{}^A:=D^A - X^A \Omega^{\hash\hash}$ of~\cite{BrGodeRham}. Here $\Omega\in\Gamma\big(\otimes^2\End(\cT M)\big)$
 equals $1/(d-4)$ times the $W$-tractor of~\cite{GoSrni99} in dimensions other than~$4$, and  is perpendicular to the canonical tractor $X^A$.
It follows immediately from Lemma~\nn{DXid} and Remark~\nn{hashing} that
\begin{equation}\label{genrel}
h \, X^A \slashed{D}{}^B - (h-2) \, \slashed{D}{}^B X^A - 2 \, X^B \slashed{D}{}^A + h(h-2) \, h^{AB}=0\, ,
\end{equation}
in dimensions $d\neq 4$.

\begin{remark}
\label{Wtilde} As it stands, $\slashed{D}$ is not defined in four dimensions. This restriction is
inconsequential for our purposes. The first point is that the exterior version of the operator~$\D$ given in~\nn{D} is well-defined in all dimensions $d\geq 3$, see~\cite{BrGodeRham}. 
(A quick way to see this is to observe $\X \Omega^{\hash\hash}$  is well defined in any dimension.)
On the other hand, beyond the exterior setting the above formula is useful for efficiently proving several results. In practice for the problems solved here, we may use the above formula in all dimensions because in the presence of an Einstein scale 
a version of it holds in four dimensions:
It is possible to construct an invariant tractor $\widetilde  W/(d-4)$ that equals $W/(d-4)$ in dimensions other than four and is well defined when $d=4$, although in that dimension
(and only then) $\widetilde  W/(d-4)$ depends on the scale~\cite[Section 4]{powerslap}; see also the proof of Proposition~\ref{Yform} below.
\end{remark}

\begin{proposition}\label{XDalgebra}
The exterior and interior Thomas D- and canonical tractor operators  $\D$, $\Ds$, $\X$ and $\Xs$, satisfy 
\begin{center}
\begin{tabular}{c}
\scalebox{1.1}{$
(h-2)\, \D\X+ (h+2)\, \X\D \, = \, 0\,  = \, (h-2)\, \Ds\Xs+ (h+2)\, \Xs\Ds\, ,$}
\\[4mm]
\scalebox{1.1}{$
h\,  \X \Ds+ (h-2)\, \Ds \X + 2\Xs \D  -\Big(\frac{d+h}{2}-\N+2\Big) h(h-2)=0\, ,
$}
\\[4mm]
\scalebox{1.1}{$
h\,  \Xs \D + (h-2)\, \D \Xs + 2\X \Ds + \Big(\frac{d-h}{2}-\N\Big) h(h-2)=0\, .
$}
\end{tabular}
\end{center}
\end{proposition}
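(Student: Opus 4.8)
The plan is to extract all three identities from the single master relation~\nn{genrel}, which after replacing $\slashed{D}$ by its scale-dependent four-dimensional analogue (Remark~\ref{Wtilde}) holds in every dimension $d\geq 3$. The key observation is that the four operators are precisely the four ways of pairing the canonical tractor $X$ and the modified Thomas operator $\slashed{D}$ with exterior or interior multiplication: $\X=\varepsilon(X)$, $\D=\varepsilon(\slashed{D})$, $\Xs=\iota(X)$ and $\Ds=\iota(\slashed{D})$. Consequently a product of two such operators records exactly a choice of $\varepsilon$ or $\iota$ in each of the two free indices $A,B$ of~\nn{genrel}, together with an operator ordering, and this is what lets one read the stated relations directly off the master identity. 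Throughout I will freely use $[h,\X]=2\X$, $[h,\Xs]=2\Xs$, $[h,\D]=-2\D$ and $[h,\Ds]=-2\Ds$ to move the weight operator into a convenient position.

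For the first identity I would antisymmetrise~\nn{genrel} over $A$ and $B$. The symmetric tractor-metric term $h(h-2)h^{AB}$ then drops out, and since $X^{[B}\slashed{D}^{A]}=-X^{[A}\slashed{D}^{B]}$ the three remaining terms collapse to
\[
(h+2)\, X^{[A}\slashed{D}^{B]}-(h-2)\, \slashed{D}^{[B}X^{A]}=0\, .
\]
Wedging both free indices now turns $X^{[A}\slashed{D}^{B]}$ into $\X\D$ and $\slashed{D}^{[B}X^{A]}$ into $-\D\X$, the extra sign arising when the index order is restored against the wedge; this yields $(h+2)\X\D+(h-2)\D\X=0$, which is the first stated relation. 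The companion relation for $\Ds$ and $\Xs$ follows identically by contracting both free indices interiorly instead, or at once by conjugating with the tractor Hodge star $\star$ (using $\D\mapsto\pm\Ds$, $\X\mapsto\pm\Xs$, $\N\mapsto d+2-\N$, and that $\star$ preserves weight).

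For the remaining two identities I would instead feed one index into an exterior and the other into an interior multiplication. Now the symmetric term survives: its contraction produces the scalar $h(h-2)$ multiplied by a trace factor whose value is fixed by the rank $d+2$ of the tractor bundle together with the degree counting supplied by the tractor form degree $\N$. The operator part is organised by the anticommutators $\{\X,\Ds\}=X\cdot\slashed{D}$ and $\{\Xs,\D\}=\slashed{D}\cdot X$, so that the combinations $h\X\Ds+(h-2)\Ds\X$ and $h\Xs\D+(h-2)\D\Xs$ reduce, after commuting $h$, to these contractions evaluated on weighted tractor forms. Here one uses that $X$ is null, so $X\cdot X=0$ and hence $X\cdot\slashed{D}=X\cdot D$, together with the value of $X\cdot D$ read off from the bottom slot of~\nn{Dform}. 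Assembling these pieces gives the second identity with its coefficient $\big(\tfrac{d+h}{2}-\N+2\big)$; the third identity is then its $\star$-dual, since conjugation by $\star$ sends $\tfrac{d+h}{2}-\N+2$ to $-\big(\tfrac{d-h}{2}-\N\big)$ and interchanges the operator products accordingly.

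The main obstacle is exactly this bookkeeping in the mixed extraction: keeping the $\varepsilon/\iota$ ordering signs consistent while commuting the weight operator $h$ past $\X,\Xs,\D,\Ds$, and --- most delicately --- pinning down the precise trace coefficient $\tfrac{d\pm h}{2}-\N(+2)$ produced by the $h^{AB}$ term, which requires carefully disentangling the dimension $d$, the weight through $h$, and the form degree through $\N$. The antisymmetric extraction for the first identity is by contrast routine, and the $\star$-duality then halves the remaining work.
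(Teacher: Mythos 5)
Your overall route is exactly the paper's: its proof of this Proposition consists of acting with the left-hand side of~\nn{genrel} on an arbitrary tractor form and taking the appropriate irreducible (exterior/interior) parts, which is what you propose. Your treatment of the first identity is correct --- composing two exterior multiplications automatically skews the free indices, so the symmetric $h(h-2)h^{AB}$ term dies and the antisymmetrised~\nn{genrel} yields $(h+2)\X\D+(h-2)\D\X=0$ --- and your $\star$-duality bookkeeping also checks out: conjugation sends $\N\mapsto d+2-\N$ while fixing $h$, which maps $\frac{d+h}{2}-\N+2$ to $-\big(\frac{d-h}{2}-\N\big)$ and converts the second displayed identity into the third.

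The genuine gap is the one concrete mechanism you offer for the mixed identities: the claimed anticommutator formulas $\{\X,\Ds\}=X\cdot\slashed D$ and $\{\Xs,\D\}=\slashed D\cdot X$ are \emph{false} as operator identities, precisely because $X^A$ and $\slashed D{}^B$ do not commute --- the master identity~\nn{genrel} is the quantitative statement of that failure, so using such formulas as inputs is circular at best. Concretely, on a density $f\in\Gamma\ce M[w]$ (a tractor $0$-form) one has $\X\Ds f=0$, so
$$
\{\X,\Ds\}f=\Ds\X f=\slashed D_A(X^Af)=(d+2w+2)(d+w)\,f\, ,
$$
whereas $(X\cdot\slashed D)f=X_AD^Af=w(d+2w-2)f$ by~\nn{Dform}; at $w=0$ these are $d(d+2)f$ versus $0$. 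Indeed the Proposition itself shows that $h\,\X\Ds+(h-2)\Ds\X$ is not a scalar contraction: it equals $-2\Xs\D$ plus the trace term. The repair is to drop the anticommutator shortcut and substitute~\nn{genrel} directly when expanding the interior multiplication across the wedged slots of the argument: each cross-contraction is then governed term by term by the master identity, the $h^{AB}$ term contributes through $\sum_A\varepsilon(e^A)\iota(e_A)=\N$ and $\sum_A\iota(e_A)\varepsilon(e^A)=d+2-\N$, and the weight dependence of $\frac{d+h}{2}-\N+2=d+\w-\N+2$ --- which cannot come from the trace term alone --- enters through the scalar contractions you correctly identified (using nullity of $X$ so that $X\cdot\slashed D=X\cdot D$). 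As a sanity check for the repaired bookkeeping, on $0$-forms the second identity reduces to $(h-2)\,\slashed D_A X^A+2\,X\cdot D=(d+w+2)\,h(h-2)$, i.e.\ $(d+2w-2)\big[(d+2w+2)(d+w)+2w\big]=(d+w+2)(d+2w)(d+2w-2)$, which holds.
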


\begin{proof}
The proof amounts to acting with the left hand side of the identity~\nn{genrel} on an arbitrary tractor form and then
taking appropriate irreducible parts.
\end{proof}

At the weight $w=-d/2$, the operators $\D$, $\Ds$ are the zero maps on $\ker\X$, $\ker\Xs$, respectively.
 We can, however, define a nontrivial analog of $\D$ and $\Ds$ at this weight on these spaces
by considering the residues of $(d+2w)^{-1}\D$ and $(d+2w)^{-1}\Ds$  at $w=-\frac d2$.
This motivates the following definitions.
\begin{definition}\label{HAT}
Acting on $\ct^k M[w]$ with $w\neq -\frac d2$, define the composition of operators
$$
\wD:=\, \D\ \frac{1}{h}\, ,\qquad \wDs:=\, \Ds\, \frac 1h\, .
$$
For $w=-\frac d2$ 
define conformally invariant operators
$$\wD: \ker\X\subset \Gamma \ct^k M[-{\ts \frac d2}]\longrightarrow \ker\X\subset \Gamma\ct^{k+1} M[-1-{\ts \frac d2}]$$
and
$$\wDs: \ker\Xs\subset \Gamma \ct^k M[-{\ts \frac d2}]\longrightarrow \ker\Xs\subset\Gamma\ct^{k-1} M[-1-{\ts \frac d2}]$$
via their expressions for some $g\in c$ acting on $\A\in\ker\X\subset \Gamma \ct^k M[-{\ts \frac d2}]$
$$
\wD \A \ \stackrel{g}=\ \wD\, \begin{pmatrix}0\\0\\B\\\phi\end{pmatrix}\ :=\ \begin{pmatrix}0\\ 0\\\extd B +(k-\frac d2-2)\phi \\-\extd\phi\end{pmatrix}
$$
and on $\A \in\ker\Xs\subset \Gamma \ct^k M[-{\ts \frac d2}]$
$$
\wDs \A \ \stackrel{g}=\ \wDs\, \begin{pmatrix}0\\A\\0\\\phi\end{pmatrix}\ :=\ \begin{pmatrix}0\\\cod A-(k-\frac d2)\phi\\0\\-\cod\phi\end{pmatrix}\, .
$$
\end{definition}

\begin{remark}
The notation~$\frac{1}h$ above makes sense because we can always work with weight eigenspaces. 
\end{remark}

The operators of the above Definition are intimately related to the exterior and interior actions of the first order differential operator $D^{AB}:=X^{B} \widetilde D^A-X^A \widetilde D^B$  termed the double D-operator~\cite{GoSrni99,Goinvariant}. In particular, the operators $\wD\,  \X$ and $\wDs\Xs$ are given, for some $g\in c$, by

\begin{equation}\label{doubleD}
\D_{[2]}\!:=\!\wD\X \stackrel{g}=\begin{pmatrix}0&0&0&\ 0\\0&0&0&\ 0\\
-\extd & w+\degree  &0&\ 0\\ 0&-\extd &0&\ 0\end{pmatrix}\:\:\:\: \mbox{and}\quad\!\!
\Ds_{[2]}\!:=\wDs\Xs \stackrel{g}=
\begin{pmatrix}
0& \ 0\ & 0& 0\\
\cod & \ 0\  & \! d+w-\degree \!& 0\\
0& \ 0\ & 0& 0\\
0& \ 0\ & -\cod & 0
\end{pmatrix} .
\end{equation}
An important fact is that both the above, nilpotent, Grassmann even,   first order operators  obey the Leibnitz rule acting on products of tractor forms. 
 We shall term~$\D_{[2]}$ and~$\Ds_{[2]}$  the {\it exterior}, and {\it interior double D-operators}, respectively.

It is also useful to introduce a version of Definition~\ref{HAT} tailored to the cokernels of the operators $\X$ and $\Xs$.
For a choice of $g\in c$, the cokernel of $\X$ is defined by the equivalence relation
$$
\begin{pmatrix}
\: \psi\: \\A\\B\\\phi
\end{pmatrix}\ \sim \ 
\begin{pmatrix}
\psi\\A\\B + b\\ \   \phi \,  + f\ 
\end{pmatrix}\in \coker \X\, ,
$$ 
(an analogous formula holds for $\coker \Xs$) 
so we will employ the notations
$$
\coker \X \ni
\begin{pmatrix}
\:\psi\:\\A\\ * \\ *
\end{pmatrix}\, \quad
\mbox{ and }\quad 
\begin{pmatrix}
\:\psi\:\\ *\\ B \\ *
\end{pmatrix}\in \coker \Xs\, ,
$$ 
for elements of $\coker \X$ and $\coker \Xs$, respectively.

\begin{definition}\label{WHAT}
Acting on $\ct^k M[w]$ with $w\neq 1-\frac d2$, define the composition of operators
$$
\Dt:=\, \frac{1}{h}\, \D \, ,\qquad {\Dt}^\star:=\, \frac 1h \, \Ds\, .
$$
For $w=1-\frac d2$ 
define 
$$\Dt: \coker\big(\X, \Gamma \ct^k M[1-{\ts \frac d2}]\big)\longrightarrow \coker\big(\X,\Gamma\ct^{k+1} M[-{\ts \frac d2}]\big)$$
and
$$\Dts: \coker\big(\Xs,\Gamma \ct^k M[1-{\ts \frac d2}]\big)\longrightarrow \coker\big(\Xs,\Gamma\ct^{k-1} M[-{\ts \frac d2}]\big)$$
via their expressions acting on $\A\in\coker\big(\X, \Gamma \ct^k M[1-{\ts \frac d2}]\big)$ for some $g\in c$ 
$$
\Dt \A \ \stackrel{g}=\ \Dt\, \begin{pmatrix}\ \psi\ \\A\\ *\\ *\end{pmatrix}\ :=\ \begin{pmatrix}-\extd \psi +(k-\frac d2+1)A\\ \extd A\\ *\\ *\end{pmatrix}
$$
and on $\A \in\coker\big(\Xs, \Gamma \ct^k M[1-{\ts \frac d2}]\big)$
$$
\Dts \A \ \stackrel{g}=\ \Dts\, \begin{pmatrix}\ \psi\ \\ *\\ B\\ *\end{pmatrix}\ :=\ \begin{pmatrix}-\cod \psi + (k-\frac d2 -3) B\\ *\\ \cod B\\ *\end{pmatrix}\, .
$$

\end{definition}
Observe that the operators $\X\Dt$ and $\Xs\Dts$ are well-defined acting on~$\ct^kM[w]$ at any weight. It is easy to relate them to the exterior
and interior double D-operators using the first line of Proposition~\ref{XDalgebra}. The result of that computation is the following.
\begin{proposition}\label{hatstotilde}
On weighted tractor forms
$$
\wD \X + \X \Dt = 0=\wDs \Xs + \Xs \Dts\, . 
$$
\end{proposition}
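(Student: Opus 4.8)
The plan is to reduce the identity to the first line of Proposition~\ref{XDalgebra} at all but two weights, and to dispatch those two exceptional weights by the explicit matrix formulas. First I would work on a fixed weight-$w$ eigenspace of $h=d+2\w$ (harmless, since $\ct^kM[w]$ has definite weight) and track the weight through each factor of the defining compositions $\wD=\D\,\tfrac1h$ and $\Dt=\tfrac1h\,\D$. This gives the scalar reductions
$$
\wD\,\X=\tfrac{1}{d+2w+2}\,\D\,\X\,,\qquad \X\,\Dt=\tfrac{1}{d+2w-2}\,\X\,\D\,,
$$
valid wherever the denominators are nonzero, i.e. away from $w=-\tfrac d2-1$ and $w=1-\tfrac d2$.

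Second, on such a weight-$w$ eigenspace, clearing denominators shows that $\wD\X+\X\Dt=0$ is equivalent to $(d+2w-2)\,\D\X+(d+2w+2)\,\X\D=0$. Since the images of $\D\X$ and $\X\D$ both have weight $w$, this is exactly the first relation of Proposition~\ref{XDalgebra} read on the weight-$w$ eigenspace, so the identity is immediate at every non-exceptional weight. The companion identity $\wDs\Xs+\Xs\Dts=0$ reduces in the same way to $(h-2)\Ds\Xs+(h+2)\Xs\Ds=0$.

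The delicate step — and the main obstacle — is the two exceptional weights, where one of $\wD$ or $\Dt$ is only defined by a residue on $\ker\X$ or $\coker\X$ (Definitions~\ref{HAT} and~\ref{WHAT}) and the scalar prefactors above are singular. Here I would argue directly from the explicit representations. The composite $\wD\X=\D_{[2]}$ is given at all weights, in every dimension $d\ge3$, by the formula \nn{doubleD}, whose entries are affine in $w$. Computing $\X\D$ from \nn{D} and \nn{X}, one finds that both of its nonzero (eastern and southern) rows carry the common factor $d+2w-2$; cancelling it yields an operator whose entries are again affine in $w$ and which equals $-\D_{[2]}$. Two operator-families polynomial in $w$ that agree off the two exceptional weights must agree everywhere, so $\X\Dt=-\D_{[2]}=-\wD\X$ at every weight; at $w=1-\tfrac d2$ (resp. $w=-\tfrac d2-1$) I would confirm that this affine formula reproduces the cokernel (resp. kernel) residue definition by running a representative with northern slot $\psi$ and western slot $A$ through Definition~\ref{WHAT} (resp.~\ref{HAT}) and checking that the eastern and southern entries cancel against those of $\D_{[2]}$. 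The starred identity then follows by the identical computation with $\Ds_{[2]}$, or most economically by conjugating the first identity with the tractor Hodge star $\star$.
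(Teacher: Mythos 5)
Your proof is correct and follows essentially the same route as the paper, which likewise obtains the identity from the first line of Proposition~\ref{XDalgebra} after the weight bookkeeping you describe (and which elsewhere, e.g.\ in the proof of Proposition~\ref{diffsplit}, invokes exactly your polynomiality-in-$w$ removable-singularity device). Your explicit verification at the two exceptional weights $w=1-\tfrac d2$ and $w=-1-\tfrac d2$ against Definitions~\ref{HAT} and~\ref{WHAT}, and the Hodge-star reduction of the starred identity, simply make precise the checks the paper leaves implicit in the display~\nn{doubleD}.
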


To conclude this Section, we draw the exterior double $D$-operators into our algebra.
\begin{proposition}
On weighted tractor forms
\begin{center}
\begin{tabular}{lr}
$
{}[\D_{[2]},\Xs]=- \big(\frac{h-d}{2}+\N-2\big)\X\, $, & $[\Ds_{[2]},\D]=\big(\frac{h-d}{2}+\N\big)\, \Ds$,\\[3mm]
$[\D_{[2]},\Ds]=\big(\frac{h+d}{2}-\N+2\big)\, \D $, & $[\Ds_{[2]},\X]=-\big(\frac{h+d}{2}-\N\big)\, \Xs$.
\end{tabular}
\end{center}
\end{proposition}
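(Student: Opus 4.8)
The plan is to prove all four relations by staying inside the universal enveloping algebra of the exterior tractor operators, deriving them from Proposition~\ref{XDalgebra} together with the nilpotency and grading identities already recorded. A brute-force alternative would fix a scale $g\in c$ and verify each commutator by $4\times4$ matrix multiplication using \nn{D}, \nn{Ds}, \nn{X}, \nn{Xs} and \nn{doubleD} --- legitimate because every operator involved, including $\D_{[2]}$, $\Ds_{[2]}$, $h$ and $\N$, is conformally invariant, so a single scale suffices --- but the algebraic route is cleaner and uniform across all four cases.

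The first step is to eliminate the hatted operators. By Proposition~\ref{hatstotilde} and Definition~\ref{WHAT} one has $\D_{[2]}=\wD\X=-\X\Dt=-\X\tfrac1h\D$, and since $[h,\X]=2\X$ gives $\X\tfrac1h=\tfrac{1}{h-2}\X$, this reads
\[
\D_{[2]}=-\tfrac{1}{h-2}\,\X\D\,,\qquad \Ds_{[2]}=-\tfrac{1}{h-2}\,\Xs\Ds\,,
\]
the second being the interior analogue. I would then expand each commutator, transport every scalar factor $\tfrac1h,\tfrac{1}{h-2}$ to one side with the shift rules $f(h)\X=\X f(h+2)$ and $f(h)\Ds=\Ds f(h-2)$ (and their partners), and reorder using $\{\D,\Ds\}=0$ and $\{\X,\Xs\}=0$. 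In every case this brings the commutator to the shape (outer $\X$ or $\Xs$)$\,\times\,$(weight-preserving combination of $\X\Ds,\Ds\X$)$\,\times\,$(inner $\D$ or $\Ds$). For example $[\D_{[2]},\Ds]$ collapses to $\big[\tfrac{1}{h-2}\X\Ds+\tfrac1h\Ds\X\big]\D$, and $[\D_{[2]},\Xs]$ to $-\X\big[\tfrac1h\D\Xs+\tfrac{1}{h-2}\Xs\D\big]$.

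The key step is to evaluate the middle bracket with Proposition~\ref{XDalgebra}: its second relation handles the combinations built from $\X\Ds,\Ds\X$, and its third those built from $\D\Xs,\Xs\D$. Since $\X\Ds$ and $\Ds\X$ commute with $h$, solving the chosen relation for the required combination is pure scalar algebra, and it returns exactly the advertised Cartan-type factor $\big(\tfrac{d\pm h}{2}\mp\N+\text{const}\big)$ plus a single leftover term proportional to $\Xs\D$ or $\X\Ds$. The crux is that this leftover is always adjacent to a second copy of one of the nilpotent generators, so it is killed by $\D^2=\Ds^2=0$ or $\X^2=\Xs^2=0$: in $[\D_{[2]},\Ds]$ the remainder $\Xs\D\cdot\D$ dies by $\D^2=0$, while in $[\D_{[2]},\Xs]$ the remainder $\X\cdot\X\Ds$ dies by $\X^2=0$. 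Only the Cartan factor survives, which I would finally commute through the remaining $\X/\Xs/\D/\Ds$ via $[\N,\cdot]$ and $[h,\cdot]$ to land on the stated normalisation, e.g.\ $\X\big(\tfrac{d-h}{2}-\N\big)=\big(\tfrac{d-h}{2}-\N+2\big)\X=-\big(\tfrac{h-d}{2}+\N-2\big)\X$.

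I expect the main obstacle to be purely organisational: keeping the weight and degree shifts consistent while transporting the $\tfrac1h$ factors, and selecting the second versus third relation of Proposition~\ref{XDalgebra} correctly in each of the four cases. Two caveats deserve a line. First, dividing by $h$ and $h-2$ is legitimate because the operators so divided are weight-preserving (hence $h$-eigen), but it formally excludes the weights $w=-\tfrac d2,\,1-\tfrac d2$ where $\wD$ and $\Dt$ require the separate definitions of Definitions~\ref{HAT} and~\ref{WHAT}; as $\D_{[2]}$ (through \nn{doubleD}) and both sides of each identity are polynomial in $w$, these exceptional weights follow by continuity. Second, the two interior relations are the tractor-Hodge-$\star$ conjugates of the two exterior ones, so one could in principle obtain each right-hand column from the left; I would nevertheless carry out all four directly, since the mechanism is identical and the $\star$-bookkeeping for $h$ and $\N$ buys nothing.
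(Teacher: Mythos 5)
Your proposal is correct, and it takes a genuinely different route from the paper: the paper's proof of this Proposition is exactly your ``brute-force alternative,'' namely a direct verification by $4\times4$ matrix composition in a fixed scale using the displays \nn{D}, \nn{Ds}, \nn{X}, \nn{Xs} and \nn{doubleD}, with no weight restrictions arising. Your algebraic derivation instead traces all four commutators back to the single identity \nn{genrel} through Proposition~\ref{XDalgebra}, and I checked a representative case in detail: writing $\D_{[2]}=-\tfrac{1}{h-2}\X\D$ (correct, via Proposition~\ref{hatstotilde} and the shift rule $\X\tfrac1h=\tfrac{1}{h-2}\X$), one indeed gets $[\D_{[2]},\Ds]=\big[\tfrac{1}{h-2}\X\Ds+\tfrac1h\Ds\X\big]\D$, the second relation of Proposition~\ref{XDalgebra} converts the bracket into $\big(\tfrac{d+h}{2}-\N+2\big)$ plus a term $\propto\Xs\D$ that dies against the trailing $\D$ by nilpotency, and your final Cartan-commutation rule $\X\big(\tfrac{d-h}{2}-\N\big)=\big(\tfrac{d-h}{2}-\N+2\big)\X$ is the right normalisation bookkeeping for the $[\D_{[2]},\Xs]$ case. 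What each approach buys: the matrix proof is shorter and valid verbatim at every weight, while yours is scale-free, exhibits the four identities as four faces of one relation, and makes the origin of the Cartan-type factors transparent --- at the cost of inverting $h$ and $h-2$ (and note that moving $\tfrac{1}{h-2}$ past $\Xs$ in the $[\D_{[2]},\Xs]$ and $[\Ds_{[2]},\X]$ cases produces an intermediate $\tfrac{1}{h-4}$, an extra pole beyond the two weights you list). Your removable-singularity argument covers this too, since both sides are natural with coefficients polynomial in the weight --- the same device the paper itself invokes, for instance in the proof of Lemma~\ref{failure} --- so the gap is cosmetic rather than substantive.
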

\begin{proof}
The most direct proof is a straightforward application of the matrix expressions for the exterior tractor operators as given in Equations~\nn{D},~\nn{Ds},~\nn{X},~\nn{Xs}
and~\nn{doubleD}.
\end{proof}

\subsection{Algebra of differential splitting operators}\label{compinsert}

\hypertarget{insertions}{}

For many applications one begins with a differential form $A\in \Omega^\bullet M$, or perhaps more generally a form density $A\in \Gamma\ce^\bullet[\, . \, ]$
which one wishes to handle using the tractor machinery. Given a choice of $g\in c$ and the form~$A$, there in fact exists a quartet of differential {\it insertion operators} $(q_{\rm N}, q_{\rm E}, q_{\rm S}, q_{\rm W})$ which invariantly insert the form into a uniquely determined tractor form, respectively, whose northern, eastern, southern or western slot is given by $A$. We describe these in this Section.

Firstly, we have an isomorphism and its formal adjoint~\cite{BrGodeRham}
$$
q:\Gamma\ce^{k}M[w+k]\stackrel{\cong}{\to}\coker \big(\X, \Gamma \ct^k M[w]\big)\mbox{ and }
q^*\!:\ker \Xs \subset\Gamma \ct^k M[w] \stackrel{\cong}{\to} \Gamma\ce^{k}M[w+k]
$$
via, for some $g\in c$
$$
A \ {\longmapsto} \begin{pmatrix}0\\A\\ *\\ \  *\ \end{pmatrix} \quad \mbox{ and }\quad
\begin{pmatrix}0\\A\\ 0\\ \  \phi\ \end{pmatrix}
\longmapsto A\, .
$$

\begin{figure}
\scalebox{.8}{
\begin{tabular}{rcl}
&$q_N$&\\
&$\ker(\D,\Ds)$&\\
\raisebox{1.7cm}{$q_W\quad\ker(\Ds,\Xs)$}&\includegraphics[scale=.5]{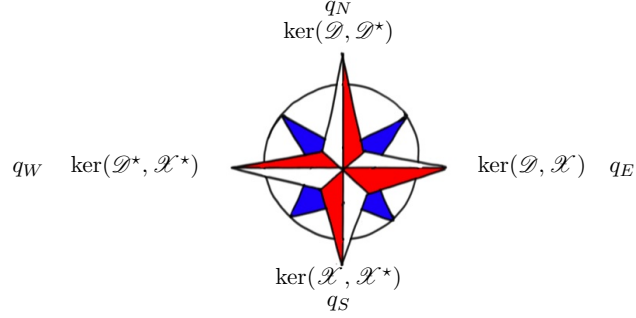}&\raisebox{1.7cm}{$\ker(\D,\X)\quad q_E$}\\[-3mm]
&$\ker(\X,\Xs)$&\\[-1mm]
&$q_S$&\\
\end{tabular}}
\caption{The insertion operators summarised by the points of a compass.}
\end{figure}

The following differential splitting operators have proved to be important~\cite{BrGodeRham,Esrni}.
The first of these is in fact algebraic.
\begin{lemma}[South]\label{southlemma} We have the isomorphism
$$
q_{\rm S}:\Gamma\ce^{k-1}M[w+k-2]\stackrel{\cong}{\longrightarrow}\ker(\X,\Xs)\subset \Gamma \ct^k M[w]
$$
via
$$
A\stackrel{g}{\longmapsto} \begin{pmatrix}0\\0\\0\\ \ A\ \end{pmatrix} \, .
$$
\end{lemma}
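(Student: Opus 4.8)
The plan is to verify Lemma~\ref{southlemma} directly from the matrix realisations of $\X$ and $\Xs$ in a fixed splitting, and then to check that the resulting bijection is independent of that choice. Since the lemma asserts an algebraic (zeroth order) isomorphism, no differential identities are needed: everything reduces to linear algebra on the composition series of $\ct^kM[w]$ together with the conformal transformation rule~\nn{trafsF}. Note first that $\X$ and $\Xs$ are conformally invariant bundle maps, so $\ker(\X,\Xs)$ is a well-defined subbundle; it remains to identify it and to exhibit the claimed map onto it.

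First I would fix $g\in c$ and write a general section as $\F\stackrel{g}=(F^+,F,F^{+-},F^-)^T$. Applying the matrix~\nn{X} gives $\X\F\stackrel{g}=(0,0,-F^+,F)^T$, and applying~\nn{Xs} gives $\Xs\F\stackrel{g}=(0,F^+,0,F^{+-})^T$. Hence $\X\F=0$ forces $F^+=0=F$, while $\Xs\F=0$ forces $F^+=0=F^{+-}$. Together these say that $\F\in\ker(\X,\Xs)$ if and only if $F^+=F=F^{+-}=0$, so that $\F\stackrel{g}=(0,0,0,F^-)^T$ with $F^-$ an arbitrary section of the southern bundle $\ce^{k-1}M[w+k-2]$. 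This already exhibits $q_{\rm S}$, in this splitting, as the linear bijection $A\mapsto(0,0,0,A)^T$ between $\Gamma\ce^{k-1}M[w+k-2]$ and $\ker(\X,\Xs)\subset\Gamma\ct^kM[w]$, with inverse the projection onto the southern slot.

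It remains to see that this bijection is well-defined on $(M,c)$, i.e.\ independent of the choice of $g$; this is the only point requiring care. For this I would use that the change-of-splitting matrix in~\nn{trafsF} is lower-triangular and unipotent. Setting $F^+=F=F^{+-}=0$ in~\nn{trafsF}, every off-diagonal entry multiplies a vanishing slot, so the transformed section is again of the form $(0,0,0,\widehat{F^-})^T$ with $\widehat{F^-}=F^-$. Thus a tractor supported in the southern slot in one scale is supported in the southern slot in every scale, with the \emph{same} southern component, so the map $q_{\rm S}$ is conformally invariant. Equivalently, the southern slot is the canonical invariant subobject at the bottom of the composition series of $\ct^kM[w]$, which is exactly the content of the lemma; the fact that $\ker(\X,\Xs)$ precisely realises this bottom slot is what the kernel computation above supplies.
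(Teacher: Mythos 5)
Your proof is correct and follows essentially the same route as the paper, which likewise establishes the lemma by inspecting the matrix expressions~\nn{X} and~\nn{Xs} to compute $\ker(\X,\Xs)$ in a chosen splitting. Your additional verification via~\nn{trafsF} that the southern slot is scale-independent is a worthwhile explicit check that the paper leaves implicit (it is guaranteed by the conformal invariance of $\X$ and $\Xs$ together with the unipotent lower-triangular form of the splitting change), but it does not alter the argument.
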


\begin{remark}
We will often refer to a tractor form  in $\ker(\X,\Xs)$ as a {\it southern tractor}, and employ the corresponding
language for each of the insertion Lemmas below.
\end{remark}

\begin{proof}
The proof follows immediately by computing the kernels of~$\X$ and~$\Xs$ by  inspecting their matrix expressions in a given $g\in c$ as displayed in equations~\nn{X} and~\nn{Xs}.
\end{proof}

\begin{corollary}\label{soucor}
Let $\B\in \ker(\X,\Xs)\subset \Gamma \ct^k M[w]$ and $w\neq  -k+2,k-d$. Then 
$$
\B=-\frac{1}{(w+k-2)(d+w-k)}\,  \Xs\X\, \Dt\, \wDs\, \B\, ,
$$
and when $w= -k+2,k-d$, $\Xs\X\Dt\, \wDs\B=0$ and the singularities there are removable.
\end{corollary}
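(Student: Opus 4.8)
The plan is to establish the identity by a direct computation in a chosen metric $g\in c$, reading each operator off its $4\times 4$ matrix presentation and composing. Since $\B\in\ker(\X,\Xs)$, Lemma~\ref{southlemma} exhibits it as a southern tractor, so we may write $\B\stackrel{g}=(0,0,0,B^-)^\top$ with $B^-\in\Gamma\ce^{k-1}M[w+k-2]$. The strategy is to apply $\wDs$, $\Dt$, $\X$ and $\Xs$ in turn (that being the order in which they act), tracking only the slots that survive to the end, and to show that the composite equals a scalar multiple of $\B$.

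First I would compute $\wDs\B=\Ds\tfrac1h\B$ from the fourth column of~\eqref{Ds}: using $\degree B^-=(k-1)B^-$, every surviving entry of $\Ds\B$ carries the factor $(d+2w)$, which cancels $\tfrac1h=\tfrac1{d+2w}$, giving $\wDs\B\stackrel{g}=(0,(d+w-k)B^-,0,-\cod B^-)^\top$ at weight $w-1$. Next I would apply $\Dt=\tfrac1h\D$ via~\eqref{D} with $w$ replaced by $w-1$. Only the northern and western output slots matter, since the subsequent $\X$ discards the rest; in exactly those two slots the factor $(d+2w-4)$ produced by $\D$ cancels the $\tfrac1h$, leaving northern entry $(w+k-2)(d+w-k)B^-$ and western entry $(d+w-k)\extd B^-$. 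Applying $\X$ as in~\eqref{X} sends the northern slot to the eastern slot (with a sign) and the western slot to the southern slot; then $\Xs$ as in~\eqref{Xs} returns that eastern slot to the southern slot while killing the rest. The upshot is the single surviving (southern) entry, i.e.
\begin{equation*}
\Xs\X\Dt\wDs\,\B \stackrel{g}= -(w+k-2)(d+w-k)\,\B\, ,
\end{equation*}
which rearranges to the stated formula whenever the scalar is nonzero and vanishes identically at $w=-k+2$ and $w=k-d$.

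The one point I would treat with care is the claim that the apparent singularities are removable. Each of $\wDs$ and $\Dt$ carries a factor $\tfrac1h$, singular at $w=-\tfrac d2$ and (for the weight-$(w-1)$ input to $\Dt$) at $w=2-\tfrac d2$. The resolution is precisely the cancellations above: $\Ds\B$ already carries $(d+2w)$ in every nonzero slot, and the northern and western entries of $\D(\wDs\B)$ already carry $(d+2w-4)$, so the quotients extend smoothly and the final expression is the manifestly regular polynomial displayed above. To confirm the smooth extension coincides with the intended operators at the exceptional weights, I would check agreement with the special $\ker\Xs$-definition of $\wDs$ at $w=-\tfrac d2$ (Definition~\ref{HAT}) and with the special $\coker\X$-definition of $\Dt$ at input weight $1-\tfrac d2$, tractor degree $k-1$ (Definition~\ref{WHAT}); each check collapses to the elementary identity $d+w-k=-(k-\tfrac d2)$, respectively $w+k-2=k-\tfrac d2$, at the relevant weight. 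I anticipate no conceptual obstacle: the argument is bookkeeping in the matrix calculus, and the only real hazard is keeping the weight shifts and the degree operator $\degree$ consistent through the four compositions.
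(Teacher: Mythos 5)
Your proposal is correct and coincides with the paper's intended argument: Corollary~\ref{soucor} is left as a direct consequence of the matrix expressions (exactly as in the proofs of Lemma~\ref{southlemma} and Corollary~\ref{northcor}), and your slot-by-slot computation — including the cancellation of the factors $(d+2w)$ and $(d+2w-4)$ against the $\frac{1}{h}$'s, and the verification that the smooth extensions agree with the special-weight definitions of $\wDs$ and $\Dt$ — yields precisely $\Xs\X\,\Dt\,\wDs\B=-(w+k-2)(d+w-k)\,\B$, from which both the displayed identity and the removability of the singularities follow.
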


In this Article we make heavy use of the western insertion operator that invariantly inserts a (weighted) degree $k$-form in a degree $k$-tractor form.
It is described by the following result.
\begin{lemma}[West]\label{West}
When $w\neq k-d$ there is an isomorphism
$$
q_{\rm W}:\Gamma\ce^kM[w+k]\stackrel{\cong}{\longrightarrow}\ker(\wDs,\Xs)\subset\Gamma \ct^k M[w]
$$
via
$$
A\stackrel{g}{\longmapsto} \begin{pmatrix}0\\ A\\0\\-\frac{1}{d+w-k}\ \cod A\end{pmatrix} \, .
$$
At the critical weight $ w=k-d$, at any fixed scale $g\in c$, there is a related isomorphism that we also denote $q_W$
\begin{equation}\label{qWspecial}
\ker(\cod,\cod)\subset\big(\Gamma\ce^kM[2k-d]\oplus \Gamma\ce^{k-1}M[2k-d-2]\big)\stackrel{\cong}{\longrightarrow}\ker(\wDs,\Xs)\subset \Gamma \ct^k M[k-d]
\end{equation}
via
$$
(A,\phi)\stackrel{g}{\longmapsto} \begin{pmatrix}0\\ A\\0\\\ \phi\ \end{pmatrix} \, .
$$
\end{lemma}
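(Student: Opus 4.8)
The plan is to compute the kernel $\ker(\wDs,\Xs)$ directly from the matrix presentations of $\Xs$ and $\Ds$ in~\nn{Xs} and~\nn{Ds}, and to read off that it coincides exactly with the image of the proposed map $q_{\rm W}$. First I would determine $\ker\Xs$: inspecting the matrix~\nn{Xs}, a section $\F\stackrel g=(F^+,F,F^{+-},F^-)^T$ lies in $\ker\Xs$ precisely when its northern and eastern slots vanish, so $\F\stackrel g=(0,F,0,F^-)^T$ with $F\in\Gamma\ce^kM[w+k]$ and $F^-\in\Gamma\ce^{k-1}M[w+k-2]$. Away from the weight $w=-\tfrac d2$ (where $h=d+2w\neq0$) Definition~\ref{HAT} gives $\wDs=\tfrac1{d+2w}\Ds$, so on such sections $\wDs\F=0$ is equivalent to $\Ds\F=0$.

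Next I would apply the matrix~\nn{Ds} to the column $(0,F,0,F^-)^T$. The northern and eastern output slots vanish identically, and the only subtlety is that the form-degree operator $\degree$ in the entries acts on the slot it multiplies; since $F^-$ has form degree $k-1$, the entry $(d+2w)(d+w-\degree-1)$ contributes $(d+2w)(d+w-k)\,F^-$. Dividing by the overall factor $d+2w$, the outcome is
\begin{equation*}
\wDs\F\ \stackrel{g}=\ \begin{pmatrix}0\\ \cod F+(d+w-k)\,F^-\\ 0\\ -\cod F^-\end{pmatrix}\, .
\end{equation*}
Thus $\F\in\ker(\wDs,\Xs)$ if and only if $\cod F+(d+w-k)F^-=0$ and $\cod F^-=0$. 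When $w\neq k-d$ the first equation forces $F^-=-\tfrac1{d+w-k}\cod F$, whereupon $\cod F^-=-\tfrac1{d+w-k}\cod^2F=0$ holds automatically by $\cod^2=0$, so the second equation is redundant. Hence $\ker(\wDs,\Xs)$ is exactly the set of tractors $(0,F,0,-\tfrac1{d+w-k}\cod F)^T$ with $F$ arbitrary, which is precisely the image of $q_{\rm W}$. The map is injective because the western slot recovers $F=A$, and surjective onto this kernel by the computation above; therefore it is the asserted isomorphism $\Gamma\ce^kM[w+k]\stackrel\cong\to\ker(\wDs,\Xs)$.

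At the critical weight $w=k-d$ the coefficient $d+w-k$ vanishes, so the two conditions decouple into $\cod F=0$ and $\cod F^-=0$; writing $F^-=\phi$ and noting the weights $w+k=2k-d$ and $w+k-2=2k-d-2$, the kernel is parametrised freely by the coclosed pair $(F,\phi)\in\ker(\cod,\cod)$, giving the isomorphism~\nn{qWspecial}. The one point requiring separate attention is the overlap with $w=-\tfrac d2$, which occurs exactly when $k=\tfrac d2$: there $\wDs$ must be taken in its special form from Definition~\ref{HAT} rather than as $\tfrac1{d+2w}\Ds$. I would verify that this special formula yields the very same pair of conditions $\cod A-(k-\tfrac d2)\phi=0$ and $\cod\phi=0$, and that, using $d+w-k=-(k-\tfrac d2)$, it reproduces both the generic formula $-\tfrac1{d+w-k}\cod A$ (when $k\neq\tfrac d2$) and the decoupled conditions (when $k=\tfrac d2$); hence the statement is uniform across all weights. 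The only genuine obstacle is this bookkeeping — tracking how $\degree$ acts slot-by-slot in~\nn{Ds} and reconciling the two definitions of $\wDs$ at the coincident critical weights — as the algebra itself is routine.
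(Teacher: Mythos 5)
Your proposal is correct and takes essentially the same route as the paper: the paper's proof of Lemma~\ref{West} is exactly the computation of $\ker\Xs$ and of the kernel of the interior tractor D-operator read off from the matrix display~\nn{Ds}, which you have carried out in full detail, with the slot-by-slot action of $\degree$ handled correctly (so that the southern entry contributes $(d+w-k)F^-$) and with the decoupling into the coclosed pair $(\cod A=0=\cod\phi)$ at $w=k-d$. Your extra care in reconciling the generic formula $\wDs=\Ds\,\frac1h$ with the special Definition~\ref{HAT} at $w=-\frac d2$ (and at the coincidence $k=\frac d2$) is a point the paper leaves implicit, and you resolve it correctly.
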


\begin{proof}
The proof proceeds as in the previous Lemma, but in addition requires a computation of the kernel of the interior tractor D-operator 
based on the display~\nn{Ds}.
\end{proof}

\renewcommand{\ts}{\textstyle}

\begin{remark}\label{specialwest}
In Equation~\nn{qWspecial} it  is natural to identify the domain objects $(A,\phi)$ with the  components of the invariant tractor on the right in a scale $g\in c$.
Therefore, for some other $\hat g\in c$, this pair is determined by Equation~\nn{trafsF}  to be
$(\hat A,\hat\phi)$ with $\hat A=A$, $\hat\phi=\phi-\iota(\Upsilon) A$ and $\Up$ defined as in Section~\ref{TRACTORS}.
These are coclosed for any $g\in c$.


\end{remark}

\begin{remark}\label{obvious}
The maps $q$ and $q^*$ are intimately related to $q^{\phantom{-1}}_W$ and $q_W^{-1}$. Indeed, one has the operator identities
$$
\X q_W = \X q\, \qquad \mbox{ and }\qquad q_W^{-1} \Xs = q^* \Xs\, .
$$
Obvious analogs of these relations also hold for the north and east insertion operators introduced below.
The algebraic operators corresponding to $q$ and $q^*$ for those cases---defined on appropriate cokernel and kernels---will be denoted
$q_{(E)},q_{(N)}$ and $q_{(E)}^*,q_{(N)}^*$. For consistency one should write $q_{(W)}$ for $q$, but we shall use this operator so often that
the latter notation is preferred.
\end{remark}

The eastern insertion operator is related to its western counterpart by (tractor) Hodge duality. It is given by the following Lemma.

\begin{lemma}[East]\label{East}
For $w\neq -k+2$
$$
q_{\rm E}:\Gamma\ce^{k-2}M[w+k-2]\stackrel{\cong}{\longrightarrow}\ker(\wD,\X)\subset \Gamma \ct^k M[w]
$$
by
$$
A\stackrel{g}{\longmapsto} \begin{pmatrix}0\\0\\ A\\-\frac{1}{w+k-2}\, \extd A\end{pmatrix} \, .
$$

\end{lemma}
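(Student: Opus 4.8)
The plan is to prove the East Lemma by the same direct, scale-by-scale method already used for the South and West Lemmas, exploiting the explicit $4\times 4$ matrix forms~\nn{D} and~\nn{X} for $\D$ and $\X$ in a fixed splitting $g\in c$, together with the characterisation of $\ker(\wD,\X)$ that these matrices make transparent. An alternative is to deduce East from West by conjugating $q_{\rm W}$ with the tractor Hodge star $\star$ (which interchanges the western and eastern slots, up to sign and the conformal Hodge star $*$); I will remark on this route, but the direct computation is cleaner because it sidesteps the sign bookkeeping coming from $**$.

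First I would check that the proposed section lands in the target bundle; note that the third slot of $\ct^kM[w]$ is $\ce^{k-2}M[w+k-2]$, so the weights match by construction. Membership in $\ker\X$ is immediate from~\nn{X}: since $\X$ reads only the northern and western slots, it annihilates any tractor of the form $(0,0,A,\phi)$. For membership in $\ker\wD$, apply~\nn{D} to $(0,0,A,\phi)^{\top}$; only the third and fourth columns contribute, and evaluating the degree operator $\degree$ as $k-2$ on the eastern slot and $k-1$ on the southern slot gives
$$
\D\begin{pmatrix}0\\0\\A\\\phi\end{pmatrix}\stackrel{g}{=}(d+2w)\begin{pmatrix}0\\0\\\extd A+(w+k-2)\phi\\-\extd\phi\end{pmatrix}\, .
$$
Substituting $\phi=-\tfrac{1}{w+k-2}\extd A$ makes the third slot vanish identically, and the fourth slot vanishes because $\extd\phi\propto\extd^2 A=0$; hence $\wD\,q_{\rm E}A=0$. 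At the exceptional weight $w=-\tfrac d2$ the operator $\wD$ is instead the residue of Definition~\ref{HAT}, but substituting $w=-\tfrac d2$ into the bracket above reproduces exactly the coefficient $k-\tfrac d2-2$ appearing there, so a single formula covers all admissible weights uniformly.

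Injectivity is trivial, since the eastern slot of $q_{\rm E}A$ is $A$ itself. For surjectivity I would run the computation in reverse: any $\F\in\ker\X$ has vanishing northern and western slots by~\nn{X}, so $\F\stackrel{g}{=}(0,0,B,\psi)$, and imposing $\wD\F=0$ forces $\extd B+(w+k-2)\psi=0$ together with $\extd\psi=0$. The first equation determines $\psi=-\tfrac1{w+k-2}\extd B$ uniquely precisely when $w+k-2\neq 0$, i.e. $w\neq -k+2$, which is exactly the hypothesis; the second equation is then automatic. Thus $\F=q_{\rm E}B$, establishing the bijection. Conformal invariance of $q_{\rm E}$ then follows without checking~\nn{trafsF} termwise: on $\ker\X$ the northern slot vanishes, so by~\nn{trafsF} the eastern-slot projection $\F\mapsto F^{+-}$ carries no $\iota(\Upsilon)$ correction and is conformally invariant; $q_{\rm E}$ is its inverse on $\ker(\wD,\X)$, hence invariant as well.

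I expect the main obstacle to be bookkeeping rather than anything conceptual: one must track the degree operator $\degree$ carefully (it takes the distinct values $k-2$ and $k-1$ on the eastern and southern slots entering~\nn{D}) and the accompanying weight shifts, and must confirm that the $w=-\tfrac d2$ residue definition of $\wD$ from Definition~\ref{HAT} is consistent with the generic kernel equation, so that the single stated formula is valid and the excluded weight $w=-k+2$ is the only genuine restriction.
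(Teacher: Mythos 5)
Your proposal is correct and follows essentially the paper's own (implicit) route: the paper proves the South and West insertion Lemmas by inspecting the matrix expressions~\nn{D} and~\nn{X} in a fixed scale $g\in c$ and computing the relevant kernels, and states that the East case follows by the same technique, which is precisely your direct computation, including the consistency check with the $w=-\tfrac d2$ residue of Definition~\ref{HAT}. Your alternative via tractor Hodge duality from the West Lemma is also the relationship the paper itself flags just before the statement, so nothing further is needed.
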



\begin{remark}
When $ w=-k+2$ and at any fixed scale $g\in c$ we have the isomorphism
$$
\ker (\extd,\extd)\subset\big(\Omega^{k-2}M\oplus\Omega^{k-1}M\big)\stackrel{\cong}{\longrightarrow}\ker(\D,\X)\subset \Gamma \ct^k M[-k+2]
$$
with
$$
(A,F)\stackrel{g}{\longmapsto} \begin{pmatrix}0\\0\\ A\\ F\end{pmatrix} \, .
$$
\end{remark}

For the northern  insertion operator there are 
four classes of special weights to account for:
$$
w=\left\{
\begin{array}{lc}
1-\frac d2 \, ,&\\[1mm]
-\frac d2\, , &\\[1mm]
-k\, , & \multirow{2}{*}{ \scalebox{1.3}[2.9]{\}}  \raisebox{.25cm}{$k\in \{0,1,\ldots ,d+2\}\, .$}}\\[1mm]
-d-2+k\, ,& \\[1mm]
\end{array}
\right.
$$

\begin{lemma}[North]\label{northlemma}
 For $w\neq -\frac d 2,-k, -d-2+k$
$$
q_{\rm N}:\Gamma\ce^{k-1}M[w+k]\stackrel{}{\longrightarrow}\ker(\D,\Ds)\subset \Gamma \ct^k M[w]
$$
by
$$
A\stackrel{g}{\longmapsto} \begin{pmatrix}A\\[2mm] \frac{1}{w+k}\, \extd A\\[2mm]-\frac{1}{d+w-k+2}\, \cod A\\[2mm]  
 -\frac{1}{d+2w}\Big(\frac{1}{w+k}\, \cod\extd 
 -\frac{1}{d+w-k+2}\, \extd \cod +\J-2\P\Big) A\end{pmatrix} \, .
$$
Moreover, the map $q_N$ is an isomorphism whenever $w\neq1-\frac d2$.
\end{lemma}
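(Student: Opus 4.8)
The plan is to fix a scale $g\in c$ and prove the two assertions in turn: that the prescribed column vector is annihilated by both $\D$ and $\Ds$, and that the resulting map is a bijection onto $\ker(\D,\Ds)$ precisely when $w\neq1-\tfrac d2$. The computational input is the pair of matrix presentations~\nn{D} and~\nn{Ds}, read against the fact that in the splitting the four slots of $\ct^kM[w]$ carry form degrees $k-1,k,k-2,k-1$, so the operator $\degree$ in those matrices contributes the corresponding eigenvalue column by column.

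First I would confirm $\D\, q_{\rm N}A=0=\Ds\, q_{\rm N}A$ by direct matrix multiplication. Applying~\nn{D}, the northern output slot vanishes because the western normalisation $\tfrac1{w+k}\extd A$ is tuned to cancel the term $-(d+2w-2)\extd A$, while the western output slot vanishes by $\extd^2=0$. The eastern and southern output slots are the substantive check: after expanding $\FL=\cod\extd+\extd\cod$, the $\J-2\P$ contributions cancel against the commutator entry $[\J-2\P,\extd]$, and the residual $\cod\extd$ and $\extd\cod$ terms cancel exactly because of the denominators $w+k$, $d+w-k+2$ and $d+2w$ built into the eastern and southern slots of $q_{\rm N}A$. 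The verification for $\Ds$ via~\nn{Ds} is the mirror image under $\extd\leftrightarrow\cod$, with $\cod^2=0$ and $[\J-2\P,\cod]$ in the analogous roles; the first-row factor $-(d+2w-2)(d+w-\degree)$ cancels the eastern contribution precisely because $\degree$ acts there as $k-2$. I expect this to be bookkeeping whose only genuine content is that the three normalising constants are forced.

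Next I would treat conformal invariance, that is, independence of $q_{\rm N}A$ from $g$. Granting the injectivity of the northern projection $\ker(\D,\Ds)\to\ce^{k-1}M[w+k]$ established below --- a statement proved scale by scale, hence not presupposing invariance of $q_{\rm N}$ --- the argument is immediate for $w\neq1-\tfrac d2$: since $\D,\Ds$ are conformally invariant and the northern slot is invariant by~\nn{trafsF}, the sections produced in scales $g$ and $\hat g$ both lie in $\ker(\D,\Ds)$ with the same northern slot $A$ and so coincide. At the residual weight $w=1-\tfrac d2$ one instead checks the transformation law~\nn{trafsF} against the explicit formula directly.

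Finally, for the isomorphism I would analyse the same northern projection. Injectivity of $q_{\rm N}$ is immediate, the northern slot of $q_{\rm N}A$ being $A$; surjectivity reduces, after subtracting $q_{\rm N}$ of the northern slot, to showing that any $\Psi=(0,F,E,S)$ with $\D\Psi=0=\Ds\Psi$ vanishes. Here I would run a cascade: the northern row of $\D\Psi=0$ gives $(w+k)(d+2w-2)F=0$, so $F=0$ using $w\neq-k,\,1-\tfrac d2$; the northern row of $\Ds\Psi=0$ gives $(d+2w-2)(d+w-k+2)E=0$, so $E=0$ using $w\neq-d-2+k$; and then the eastern row of $\D\Psi=0$ and the western row of $\Ds\Psi=0$ give $(w+k-2)(d+2w)S=0$ and $(d+w-k)(d+2w)S=0$. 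The conceptual heart of the statement --- and the step I expect to be the main obstacle --- is exactly this last pair: the coefficients $w+k-2$ and $d+w-k$ vanish together only when $w=1-\tfrac d2$ (forcing $k=\tfrac{d+2}2$), so for every other admissible weight at least one relation forces $S=0$ and the projection is bijective, whereas at $w=1-\tfrac d2$ it degenerates, which is precisely the excluded weight.
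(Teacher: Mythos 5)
Your proposal is correct and matches the paper's (unwritten) proof: the paper simply remarks that the North Lemma follows by ``exactly the same techniques as in the other insertion Lemmas,'' namely fixing $g\in c$ and computing with the matrix expressions~\nn{D} and~\nn{Ds}, which is precisely your slot-by-slot cancellation check together with the kernel cascade $F=0$, $E=0$, $S=0$ isolating $w=1-\tfrac d2$ as the unique weight where both coefficients $w+k-2$ and $d+w-k$ can vanish simultaneously. Your uniqueness-based argument for scale-independence is a detail the paper leaves implicit, but it sits within the same method rather than constituting a different route.
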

\vspace{-1mm}
\noindent Although technically more involved, the proof uses exactly the same techniques as in the other insertion Lemmas.

\begin{corollary}\label{northcor}
Let $\F\in \ker(\D,\Ds)\subset\Gamma\ct^k M[w]$ and $w\neq 1-\frac d2,-\frac d2,-k, k-d-2$, then
$$
\F=-\frac{1}{(d+2w)(w+k)(d+w-k+2)}\, \Ds\wD\, \X\Xs \F\, .
$$
\end{corollary}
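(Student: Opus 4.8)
The plan is to exploit that $\F$, lying in $\ker(\D,\Ds)$, is completely determined by its northern slot $F^+\in\Gamma\ce^{k-1}M[w+k]$ through the isomorphism $q_{\rm N}$ of Lemma~\ref{northlemma}, which is available precisely because $w\neq 1-\frac d2$. Hence it suffices to establish (i) that $\Ds\wD\X\Xs\F$ again lies in $\ker(\D,\Ds)$, and (ii) that its northern slot equals $-(d+2w)(w+k)(d+w-k+2)\,F^+$. Equality with $-(d+2w)(w+k)(d+w-k+2)\,\F$ then follows at once, since two sections of $\ker(\D,\Ds)$ that agree in the north slot must coincide, and $\F=q_{\rm N}(F^+)$.

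For step (i) I would argue purely algebraically. The outermost operator is $\Ds$, so the image lies in $\ker\Ds$ because $\Ds{}^2=0$. For the $\D$-kernel I would use the anticommutator $\{\D,\Ds\}=0$ to write $\D\Ds\wD\X\Xs\F=-\Ds\D\wD\X\Xs\F$, and then observe that $\D\wD=0$: since $\wD=\D\,\frac1h$ (Definition~\ref{HAT}) and $\frac1h$ acts as scalar multiplication on a weight eigenspace, $\D\wD=\D^2\tfrac1h=0$. Thus the image lies in $\ker(\D,\Ds)$, exactly as required.

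For step (ii) I would track the slots through the composition using the matrix forms~\nn{X},~\nn{Xs},~\nn{D},~\nn{Ds} together with the insertion Lemmas. Reading off~\nn{X} and~\nn{Xs}, one finds $\X\Xs\F=q_{\rm S}(F^+)$, a southern tractor in $\ct^kM[w+2]$ whose south slot is exactly $F^+$ (all other contributions being annihilated). Applying $\wD$ to this southern tractor yields, by a one-line use of~\nn{D}, the eastern tractor $\wD\,q_{\rm S}(F^+)=q_{\rm E}\big((w+k)F^+\big)$, the factor $d+2w+4$ produced by $\D$ cancelling the $\frac1h$ in $\wD$, so that no spurious exceptional weight arises. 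Finally, applying $\Ds$ and extracting the northern slot via the $(1,3)$ entry of~\nn{Ds}, evaluated at weight $w+1$ and form degree $k-1$, produces the coefficient $-(d+2w)(d+w-k+2)(w+k)$, which is the claimed normalisation. The three excluded weights $w=-\frac d2,\,-k,\,k-d-2$ are exactly the zeros of this coefficient, at which the displayed formula is singular.

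The only genuine hazard, and hence the main point requiring care, is the simultaneous bookkeeping of the weight shifts $w\mapsto w+1\mapsto w+2\mapsto w+1\mapsto w$ and of the form-degree argument of $\degree$ in each relevant matrix entry; a single index or sign slip would break the cancellation verified above. The reduction to the single northern slot in step (i) keeps this computation short. Alternatively, one could derive the same coefficient directly from the commutator relations of Proposition~\ref{XDalgebra} relating $\Ds\X$, $\wD\Xs$ and their analogues, thereby bypassing the explicit matrices; this would parallel the proof of the southern reconstruction in Corollary~\ref{soucor}.
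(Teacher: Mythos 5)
Your proof is correct and follows essentially the paper's own route: the paper proves the corollary precisely by using the explicit description of $\ker(\D,\Ds)$ from Lemma~\ref{northlemma} and calculating with the splitting formul\ae, and it notes, as you do, the alternative of using Proposition~\ref{XDalgebra} to push $\D$ and $\Ds$ to the right where they annihilate $\F$. Your reduction to matching only the northern slot via the isomorphism $q_{\rm N}$ (valid since $w\neq 1-\frac d2$) is a mild streamlining of the same slot computation; the one point you leave implicit is the weight $w=-2-\frac d2$, where both the factor $d+2w+4$ from $\D$ and the $\frac1h$ in $\wD$ degenerate, but there the special case of Definition~\ref{HAT} applied to the southern tractor $\X\Xs\F\in\ker\X$ reproduces the same entry $(w+k)F^+$, so the singularity is removable and your cancellation argument goes through uniformly.
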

\begin{proof}
An elementary proof is to use the above explicit expression for $\ker(\D,\Ds)$ and then calculate
along the same lines as previously. Alternatively, Proposition~\ref{XDalgebra} can be used to move the operators $\D$ and $\Ds$
to the right where they annihilate $\F$. 
\end{proof} 

\begin{remark}
Note that at $w=1-\frac d2$
the display of the above Lemma gives {\it a} solution to $\D\F=0=\Ds\F$, but not the most general one.
\end{remark}

\begin{remark}\label{spinningcompass}
It is clear how to project onto each slot and move from slot to slot. Starting from a weight $w$, degree $k$ northern tractor, and ignoring for this discussion distinguished weights, we have the projector
$$
\F=-\frac1{(d+2w)(w+k)(d+w-k+2)}\, \Ds\wD\X\Xs \F\, .
$$
Calling $\widetilde\A=-\frac1{(d+2w)(w+k)(d+w-k+2)}\, \wD\X\Xs \F$ gives 
$
\F=\Ds\widetilde\A$ with $\widetilde\A\in \ker(\wD,\X)$ so $\widetilde \A$ is a weight $w+1$, degree $k+1$, eastern  tractor with corresponding projector 
$$
\widetilde\A=-\frac1{(w+k)(d+w-k+2)}\, \wD\X\Xs \Dts\widetilde \A\, .
$$
To reach the west, write $\F=\D\A$ with
$\A=\frac1{(d+2w)(w+k)(d+w-k+2)} \, \wDs\X\Xs \F$ so that 
$$
\A=\frac1{(w+k)(d+w-k+2)}\, \wDs\X\Xs \Dt\A\, ,$$ 
which is the projector for weight $w+1$, degree $k-1$, western tractors. The same procedure holds for the south where $\F=\D\wDs\B$
and $\B=-\frac{1}{(d+2w)(w+k)(d+w-k+2)}\,  \Xs\X\F$ and the projector is given in Corollary~\ref{soucor}.
\end{remark}

The projector onto  western tractors described in the above Remark  is the one most often required in later developments. Hence  we record it in the following.
\begin{proposition}\label{WESTPROJECTOR}
The operator $\Pi_W:\Gamma\ct^kM[w]\longrightarrow\Gamma\ct^kM[w]$, $w\neq -k,k-d$, defined by
$$
\Pi_W:=\scalebox{.95}{$\frac{1}{(w+k)(d+w-k)}$}\, \Ds_{[2]}\, \D_{[2]}\, ,
$$
obeys $\Pi_W^2=\Pi_W$. Moreover, if $\A\in \ker(\wDs,\Xs)\subset\Gamma\ct^kM[w]$, then $\Pi_W \A=\A$.
\end{proposition}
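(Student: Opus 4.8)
The plan is to deduce both assertions from two structural facts about $\Pi_W$: that $\im\Pi_W\subseteq\ker(\wDs,\Xs)$, and that $\Pi_W$ restricts to the identity on $\ker(\wDs,\Xs)$. Granting these, idempotency is automatic: for any $\F\in\Gamma\ct^kM[w]$ the section $\Pi_W\F$ already lies in $\ker(\wDs,\Xs)$, so $\Pi_W(\Pi_W\F)=\Pi_W\F$; and the identity-on-the-kernel statement is precisely the second claim. Thus everything reduces to these two facts, and I would establish them in turn.

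For the image I would argue algebraically. Since $\Ds_{[2]}=\wDs\Xs=-\Xs\Dts$ by Proposition~\ref{hatstotilde} and $\Xs^2=0$, one has $\Xs\,\Ds_{[2]}\D_{[2]}=-\Xs^2\,\Dts\,\D_{[2]}=0$, hence $\Xs\Pi_W=0$. Likewise $\wDs\,\Ds_{[2]}=\wDs\wDs\,\Xs$, and $\wDs\wDs=0$ since $\Ds^2=0$ while $[h,\Ds]=-2\Ds$ lets the scalar factors of $\wDs=\Ds\,\tfrac1h$ be commuted through; hence $\wDs\Pi_W=0$. Together these give $\im\Pi_W\subseteq\ker(\wDs,\Xs)$. (At the exceptional weight $w=-\tfrac d2$ one must read $\wD,\wDs$ as the residue operators of Definition~\ref{HAT}; but the argument of the outer $\wDs$ already lies in $\ker\Xs$, where these relations persist, so the conclusion is unchanged.)

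For the identity on $\ker(\wDs,\Xs)$ the cleanest route is the explicit $4\times4$ realisations~\eqref{doubleD} of $\D_{[2]}$ and $\Ds_{[2]}$, which carry no $\tfrac1h$ factors and so are valid at every admissible weight. Composing them on a general $\F=(F^+,F,F^{+-},F^-)^\top$, one first finds $\D_{[2]}\F=(0,0,-\extd F^++(w+k)F,-\extd F)^\top$, a degree-$(k+2)$ tractor, and then
\begin{equation*}
\Ds_{[2]}\D_{[2]}\F=\begin{pmatrix}0\\ (d+w-k)\big(-\extd F^++(w+k)F\big)\\ 0\\ \cod\extd F^+-(w+k)\cod F\end{pmatrix}.
\end{equation*}
Dividing by $(w+k)(d+w-k)$ and writing $\tilde F:=F-\tfrac1{w+k}\extd F^+$, this is exactly $\Pi_W\F=q_W(\tilde F)$ in the notation of Lemma~\ref{West}. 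From this single formula both facts drop out at once: the right-hand side lies in $\im q_W=\ker(\wDs,\Xs)$, and if $\A=q_W(A)$ then $F^+=0$, whence $\tilde F=A$ and $\Pi_W\A=q_W(A)=\A$.

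The one genuine obstacle is bookkeeping rather than conceptual. Because $\D_{[2]}$ raises the tractor degree by two while fixing the weight, the degree operator $\degree$ occurring in the $\Ds_{[2]}$ matrix of~\eqref{doubleD} must be evaluated on a degree-$(k+2)$ tractor; its east-slot entry $d+w-\degree$ acts there on a form of degree $k$ and so equals $d+w-k$. Carrying this, together with the normalisation, correctly---and then recognising the resulting column as $q_W$ applied to the corrected western component $\tilde F$---is the whole content of the check. No separate treatment of the excluded weights is needed, since $w=-k$ and $w=k-d$ are exactly the loci where $\big[(w+k)(d+w-k)\big]^{-1}$ is singular.
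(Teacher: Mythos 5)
Your proof is correct, and it takes a more computational route than the paper's. The paper records Proposition~\ref{WESTPROJECTOR} as a consequence of the ``compass'' Remark~\ref{spinningcompass} and, in effect, of Propositions~\ref{hatstotilde} and~\ref{diffsplit}: writing $\Ds_{[2]}\,\D_{[2]}=\wDs\Xs\wD\X=\wDs\X\,\Xs\Dt$ (via $\wD\X=-\X\Dt$ and $\Xs\X=-\X\Xs$) and then applying $\Xs\Dt\A=(w+k)\A$ and $\wDs\X\A=(d+w-k)\A$ on $\ker(\wDs,\Xs)$ gives $\Pi_W\A=\A$ in one line, with the image condition and idempotency following from nilpotency much as you argue; this is exactly the mechanism the paper reuses in the proof of Lemma~\ref{failure}. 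You instead compose the explicit $4\times4$ matrices of~\nn{doubleD} and obtain the stronger closed formula $\Pi_W\F=q_W\big(F-\tfrac1{w+k}\,\extd F^+\big)$, from which the image statement, the restriction-to-identity statement and idempotency all drop out at once via Lemma~\ref{West}; your bookkeeping for the slotwise degree operator $\degree$ (so that $d+w-\degree$ evaluates to $d+w-k$ on the eastern slot of the degree-$(k+2)$ intermediate tractor) is indeed the one delicate point, and you handle it correctly --- your formula is precisely the $\Pi_W$ analogue of the explicit computation the paper performs for the holographic projector in Remark~\ref{explicitpi}. Two minor remarks: your algebraic paragraph is redundant given the matrix computation, since the formula $\Pi_W\F=q_W(\tilde F)$ already exhibits the image inside $\ker(\wDs,\Xs)$; and your parenthetical on critical weights is slightly imprecise --- the $\tfrac1h$ inside $\Ds_{[2]}$ is singular at $w=-\tfrac d2-1$ (because $\Xs$ shifts the weight) while $\wDs$ applied to $\Pi_W\F$ is critical at $w=-\tfrac d2$ --- though nothing hinges on this, as the matrix realisations~\nn{doubleD} are polynomial in $w$ and your computational route bypasses the issue entirely. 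What each approach buys: yours is weight-regular, explicit and self-contained, and yields a usable closed form for $\Pi_W$ on arbitrary input; the paper's algebraic route is splitting-independent and integrates the statement into the operator calculus exploited throughout the solution theory of Section~\ref{Procasect}.
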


We close this Section with a useful technical result which follows immediately from the machinery given in this Section.
\begin{proposition}\label{diffsplit}
Let $\A\in \ker(\wDs,\Xs)\subset\Gamma\ct^kM[w]$. Then
$$
\Xs \Dt \A = (w+k) \A\, ,\qquad \wDs \X \A = (d+w-k)\A\, .
$$
\end{proposition}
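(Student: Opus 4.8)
The plan is to derive both identities purely algebraically from Proposition~\ref{XDalgebra}, using the two kernel conditions that characterise a western tractor. By Lemma~\ref{West}, any $\A\in\ker(\wDs,\Xs)\subset\Gamma\ct^kM[w]$ obeys $\Xs\A=0$ and $\wDs\A=0$; away from the critical weight $w=-\frac d2$ the second condition is equivalent to $\Ds\A=0$, since $\wDs=\Ds\frac1h$ and $\frac1h$ acts by the nonzero scalar $(d+2w)^{-1}$. I would set $h_0:=d+2w$, record $\N\A=k\A$, work at generic $w$ throughout, and dispatch the finitely many exceptional weights at the end.

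For the first identity I would apply the third relation of Proposition~\ref{XDalgebra},
\[
h\Xs\D+(h-2)\D\Xs+2\X\Ds+\Big(\tfrac{d-h}{2}-\N\Big)h(h-2)=0,
\]
to $\A$. The terms $\D\Xs\A$ and $\X\Ds\A$ vanish by the kernel conditions. Evaluating $h$ and $\N$ on the correct weight eigenspaces (note $\Xs\D\A$ again has weight $w$) leaves $\Xs\D\A=-\big(\tfrac{d-h_0}{2}-k\big)(h_0-2)\A$. Passing to $\Dt=\frac1h\D$ introduces the factor $(h_0-2)^{-1}$ coming from the weight $w-1$ of $\D\A$, and the coefficient $-\big(\tfrac{d-h_0}{2}-k\big)$ collapses to $w+k$, giving $\Xs\Dt\A=(w+k)\A$.

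For the second identity I would feed the value of $\Xs\D\A$ just obtained into the second relation of Proposition~\ref{XDalgebra},
\[
h\X\Ds+(h-2)\Ds\X+2\Xs\D-\Big(\tfrac{d+h}{2}-\N+2\Big)h(h-2)=0.
\]
Since $\X\Ds\A=0$, this solves for $\Ds\X\A$ as a scalar multiple of $\A$; passing to $\wDs=\Ds\frac1h$ supplies the factor $(h_0+2)^{-1}$ from the weight $w+1$ of $\X\A$, and the rational expression in $h_0=d+2w$ simplifies (using $\tfrac{d+h_0}{2}=d+w$) to $d+w-k$, yielding $\wDs\X\A=(d+w-k)\A$.

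The only genuine care required is the bookkeeping: each operator shifts the weight, so the scalar extracted from every $\frac1h$ must be read off on the correct eigenspace, and $h,\N$ must be evaluated before being commuted past $\X,\Xs,\D,\Ds$. The residual subtlety, which I expect to be the main point to address rather than a real obstacle, is the handful of exceptional weights ($w=-\frac d2$, $w=1-\frac d2$, $w=k-d$) where the normalisations $\frac1h$ or the isomorphism $q_W$ degenerate; there both sides of each identity are rational in $w$ with removable singularities, so the generic computation extends by continuity and agrees with the separate definitions of $\Dt$ and $\wDs$ at these weights in Definitions~\ref{WHAT} and~\ref{HAT}. As an independent check one may instead insert the explicit western form of $\A$ from Lemma~\ref{West}, with western component $A$ and southern component $-\frac{1}{d+w-k}\cod A$, and compute $\Xs\Dt\A$ and $\wDs\X\A$ directly from the matrix presentations~\nn{D},~\nn{Ds},~\nn{X} and~\nn{Xs}.
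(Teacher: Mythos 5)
Your proposal is correct and is essentially the paper's own proof: the paper likewise obtains both identities by direct application of the algebra of Proposition~\ref{XDalgebra} to $\A\in\ker(\wDs,\Xs)$, handling the special weights just as you indicate ($w=1-\frac d2$ as a removable singularity visible from Lemma~\ref{West}, and $w=k-d$ via the description~\nn{qWspecial} of the kernel), the only difference being that for the second identity the paper first writes $\A=\Xs\B$ — legitimate since fibrewise $\ker\Xs=\im\Xs$ — which makes $\wDs\X\A$ manifestly well defined at every weight in one stroke. One small bookkeeping point: your list of exceptional weights should also include $w=-1-\frac d2$, where the $\frac 1h$ inside $\wDs$ acts on the weight-$(w+1)$ section $\X\A$; this is covered by exactly your removability argument, since $\X\A\in\ker\Xs$ (as $\Xs\X\A=-\X\Xs\A=0$) and the residue formula of Definition~\ref{HAT} there is the continuous extension of $\Ds\,\frac 1h$ restricted to $\ker\Xs$.
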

\begin{proof}
For the second identity,  observe that since $\Xs \A = 0$, we may write $\A=\Xs \B$ for some $\B$. Thus the left hand side 
is well defined for any weight $w$. The result then follows by simple application of the algebra of Proposition~\ref{XDalgebra}.
The first identity can be established the same way when $w\neq 1-\frac d2,k-d$. The first weight is a simple removable singularity,
as can be seen from Lemma~\ref{West}. At the weight $w=k-d$ there is no singularity but one needs to use Equation~\nn{qWspecial}
of Lemma~\ref{West} to establish the result. 
\end{proof}

\newcommand{\FN}{F_N}
\newcommand{\FW}{F_W}
\newcommand{\FE}{F_E}
\newcommand{\FS}{F_S}

\subsection{Conformally invariant equations and the cohomology of the Tho\-mas $D$ operator}\label{coho}

For later developments, we need to understand the conformal differential operators on forms and their 
origins in the tractor calculus.
At low orders, these  follow from the basic equation
$$
\D \F =0\, ,
$$
and various refinements thereof. 

The simplest conformally invariant differential equation for a differential form is the closure condition
$$
\extd  A =0\, ,
$$
which is conformally invariant for any $A\in \Omega^k M$ with $0\leq k\leq d$. For physical models, the form $A$ can either be interpreted
as a field strength or potential. In the former case, it is traditional to use the symbol $F$. Imposing as well a divergence condition, we have the curvature version of the (higher form) {\it Maxwell's equations}
$$
\extd F = 0 = \cod F\, .
$$
The divergence equation is conformally invariant in even dimensions when $F\in \Omega^{\frac{d}{2}}M$.

The potential version of the higher form Maxwell's equations are obtained by taking a divergence of the closure condition
$$
\cod \extd A =0\, .
$$
This equation enjoys a {\it gauge invariance}
$$
A\sim A + \extd \alpha\, ,
$$
for $A\in \Omega^kM$ and $\alpha \in \Omega^{k-1} M$ which is already evident from the conformally invariant de Rham complex
$$
\stackrel{\extd}\longrightarrow \Omega^\bullet M\stackrel{\extd}\longrightarrow\, .
$$
The potential form of Maxwell's equations is conformally invariant in even dimensions for $A\in \Omega^{\frac d2-1}M$.

\newcommand{\boxdn}{{\scalebox{1.1}{$\boxast$}}_{_{\rm BDN}}}
\newcommand{\boxmp}{{\scalebox{1.1}{$\boxbox$}}_{_{\rm MP}}}

Once weighted forms are considered, there are further conformally invariant equations: The higher form {\it Branson--Deser--Nepomechie equation}
$$
\boxdn A:=\Big(\frac{1}{k-\frac d2+1}\cod \extd+\frac{1}{k-\frac d2-1}\extd \cod + \J-2\P\Big)A=0\, ,
$$
is conformally invariant for $A\in \Gamma\ce^kM[k-\frac d2 +1]$ where the conformally invariant {\it Branson--Deser--Nepomechie operator} 
is a map $\Gamma\ce^k\big[k-\frac d2 +1\big]\to\Gamma\ce^k\big[k-\frac d2 -1\big]$.
 (When it is unclear which degree forms $\boxdn$ acts on, we will write $\boxdn^{(k)}$.)
This  conformally invariant generalization of both Maxwell's equations and the Yamabe equation seems to have been first uncovered in~\cite{Tomforms}
and was then independently uncovered in a physical context in~\cite{Deser:1983tm}: The Yamabe equation appears at degree~$k=0$.
The residues of the poles at $k=\frac{d}{2}\pm 1$ ($d\in2{\mathbb N}$) give the higher form Maxwell's equations in their standard potential form, or expressed in terms of a Hodge dual potential, respectively.

Finally, in even dimensions, Maxwell's equations in their curvature
form can be coupled conformally to a Proca-type equation to yield the
conformally invariant {\it coupled Proca--Maxwell} system of equations
$$
*\, \cod F= \big(\FL+ 2(\J-2\P)\big) A \, , \quad \extd F = 0 = \cod A\, ,
$$
where $A\in \Gamma\ce^{1+\frac d2}M[2]$ and $*F\in \Omega^{\frac d2}M$.
Of course, all of the above systems of equations enjoy a dual formulation 
obtained by applying the isomorphism $*:\ce^kM[w]\stackrel\cong{\ \rightarrow} \ce^{d-k}M[d+w-2k]$.

It is not difficult to write the curvature Maxwell, Branson--Deser--Nepomechie and coupled Proca--Maxwell systems as simple equations for tractor forms. We record that result in the Proposition below.
Before doing so,  for completeness  we record the basic  $\D\F=0$ equation for a definite choice of splitting $g\in c$ of $\F\in\ct^kM[w]$ 
\begin{equation}\label{DFsystem}
\left\{\scalebox{.92}{$
\begin{array}{ccl}
0&=&(d+2w-2)\big(\extd F^+ - (w+k) F\big)\, ,\\[1mm]
0&=& (d+2w-2) \, \extd F\, ,\\[1mm]
0&=& \big(\FL+(w+k-2)(\J-2\P)\big)F^+ -2\cod F +(d+2w)\big(\extd F^{+-}\!+(w+k-2)F^-\big)\,  ,\\[1mm]
0&=& [\J-2\P,\extd ]\, F^+ -\big(\FL+(w+k)(\J-2\P)\big)F-(d+2w) \, \extd F^{-}\, .
\end{array}$}
\right.
\end{equation}

\begin{proposition}
Let $\F\in \ct^k M$. Then the system of equations
$$
\D \F = 0 = \Ds \F = \Xs \F
$$
describes 
\begin{enumerate}[(i)]
\item the Branson--Deser--Nepomechie equation when $w=1-\frac d2$, $k\neq 1+\frac{d}{2}$,
\item the coupled Proca--Maxwell system when $w=1-\frac d2$, $k=1+\frac d2$ and $d\in2{\mathbb N}$.
\end{enumerate}
The system of equations 
$$
\D \F = \Ds \F = \X \F = \Xs \F = 0
$$
describes the curvature version of the higher form Maxwell's equations when $w=1-\frac d2$, $k=1+\frac d2$~($d\in2{\mathbb N}$).
\end{proposition}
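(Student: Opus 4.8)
The plan is to fix a metric $g\in c$, work throughout with the $4\times4$ matrix forms \nn{D}, \nn{Ds}, \nn{X} and \nn{Xs}, and specialise to $w=1-\frac d2$. The crucial arithmetic fact to record first is that at this weight $d+2w-2=0$ and $d+2w=2$, which is exactly what makes the three named systems emerge. I would then impose $\Xs\F=0$: since \nn{Xs} carries the northern and eastern slots into the western and southern ones, this forces the northern and eastern components of $\F$ to vanish, so that $\F\stackrel{g}=(0,F,0,F^-)^{\mathrm T}$ with $F\in\Gamma\ce^kM[k-\frac d2+1]$ and $F^-\in\Gamma\ce^{k-1}M[k-\frac d2-1]$.

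The computational heart of the argument is to feed this truncated section into \nn{D} and \nn{Ds}, taking care that the degree operator $\degree$ returns the value $k$ on the western slot and $k-1$ on the southern slot. Using $d+2w-2=0$ and $d+2w=2$, all entries built from the northern or eastern components drop out and the conditions $\D\F=0=\Ds\F$ reduce to
\begin{align*}
\cod F &= \big(k-\tfrac d2-1\big)\,F^-,\\
\big[\FL+\big(k-\tfrac d2+1\big)(\J-2\P)\big]\,F &= -2\,\extd F^-,\\
\cod F^- &= 0.
\end{align*}
The first of these is produced identically by the eastern slot of $\D\F$ and the western slot of $\Ds\F$ (a useful internal consistency check), the second is the southern slot of $\D\F$, and the third is the southern slot of $\Ds\F$.

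For case (i) one has $k\neq1+\frac d2$, so $k-\frac d2-1\neq0$ and the first equation solves for $F^-=(k-\frac d2-1)^{-1}\cod F$; the third then holds automatically because $\cod^2=0$. Substituting into the second, expanding $\FL=\cod\extd+\extd\cod$, combining the two $\extd\cod F$ terms and dividing by $k-\frac d2+1$ yields exactly $\boxdn F=0$ on $F\in\Gamma\ce^kM[k-\frac d2+1]$, proving (i). (The divisor is the pole of $\boxdn$; at $k=\frac d2-1$ the same manipulation instead collapses to $\cod\extd F=0$, the residual potential form of Maxwell noted after the definition of $\boxdn$.) For case (ii) I set $k=1+\frac d2$, an integer precisely when $d\in2{\mathbb N}$; now $k-\frac d2-1=0$ and the three equations degenerate to $\cod F=0$, $[\FL+2(\J-2\P)]F=-2\,\extd F^-$ and $\cod F^-=0$. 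Identifying the western component $F\in\Gamma\ce^{1+\frac d2}M[2]$ with the Proca potential $A$ and the southern component $F^-\in\Omega^{\frac d2}M$ with the field strength via its Hodge dual $*F^-$, I would use the convention $\cod=(-1)^{\degree}*^{-1}\extd\,*$ to rewrite $\cod F^-=0$ as the closure $\extd(*F^-)=0$ and $\extd F^-$ as $*\cod(*F^-)$ up to sign; after absorbing that sign and the factor $2$ into the normalisation of the field strength, these are precisely the coupled Proca--Maxwell equations $\extd(*F^-)=0=\cod A$, $*\cod(*F^-)=(\FL+2(\J-2\P))A$, proving (ii). Finally, to obtain the curvature Maxwell system I additionally impose $\X\F=0$; by \nn{X}, which moves the northern and western slots into the eastern and southern ones, this forces $F=0$. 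Setting $F=0$ in the three displayed equations (equivalently, recomputing $\D\F$ and $\Ds\F$ directly on the purely southern tractor $(0,0,0,F^-)^{\mathrm T}$) collapses them to $\extd F^-=0=\cod F^-$, the curvature form of Maxwell's equations for the middle-degree form $F^-$.

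I expect the difficulty here to be bookkeeping rather than conceptual. The main care is in tracking $\degree$ slot-by-slot through the matrix multiplications \nn{D} and \nn{Ds}, and, in case (ii), in pinning down the Hodge-star sign and the factor of $2$ that fix the field-strength normalisation against the stated coupled system. Both are routine given the conventions already in place, so no tools beyond \nn{D}--\nn{Xs} and the definition of $\boxdn$ should be needed.
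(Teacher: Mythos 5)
Your proof is correct and takes essentially the same route as the paper's: the published proof simply invokes the explicit matrix expressions \nn{D}--\nn{Xs} for a choice of $g\in c$, notes that the kernels of $\Xs$ and $\Ds$ are spelled out in the west Lemma~\ref{West}, and identifies the western slot with $A$ and the Maxwell curvature with the Hodge dual of the southern slot --- precisely the computation you carry out in detail. Your slot-by-slot degree bookkeeping, the reduction to the three displayed equations at $w=1-\frac d2$, the recovery of $\boxdn$ in case (i) (including the degenerate degree $k=\frac d2-1$), and the Hodge-dual normalisation in case (ii) all check out.
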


\begin{proof}
Given the expressions for the exterior and interior tractor D- and canonical tractor operators for a choice of $g\in c$
explicated in Section~\ref{algebra}, the proof is elementary. For expedience, note that the kernels of $\Xs$ and $\Ds$ are 
spelled out in Lemma~\ref{West}. To complete the isomorphism between the tractor equations and their differential form
counterparts, note that we have  called the western slot $A$. Also, for the Proca--Maxwell system, the Maxwell curvature~$F$ is
the Hodge dual of the southern slot.
\end{proof}

\begin{remark}
For the case $k=1$, the above characterization of the Branson--Deser--Nepomechie equation is exactly that given in~\cite{Gover:2008pt}.
\end{remark}

\renewcommand{\ca}{{\mathcal A}}

It now remains only to explain the origin of the conformally invariant potential version of Maxwell's equations.
The key point here is the gauge invariance which generates a new solution $A+\extd \alpha$ from any given solution~$A$.
Since $\extd$ is nilpotent, this really amounts to a certain cohomology problem. Indeed, the system of equations~\nn{DFsystem} are also gauge invariant under 
$$
\F\sim \F + \D \ca\, .
$$
Hence the relevant problem is the cohomology of $\D$ with $\Gamma\ct^\bullet[\, . \, ]$ as the space of chains. 
This problem will also play an important {\it r\^ole}  in our study of Proca equations in Section~\ref{Procasect}.
Our first result establishes that for generic weights this cohomology is in fact trivial.

\newcommand{\ch}{{\mathcal H}}

\begin{proposition}\label{nocoho}
The cohomology of the differential complex
$$
\cdots 
\stackrel{\D}\longrightarrow \ \Gamma \ct^{k-1}M[w+1] \
\stackrel{\D}\longrightarrow \ \Gamma \ct^{k}M[w] \
\stackrel{\D}\longrightarrow \ \Gamma \ct^{k+1}M[w-1] \
\stackrel{\D}\longrightarrow\cdots\, ,
$$
denoted $\ch^kM[w]$ at $\Gamma \ct^{k}M[w]$, is trivial
whenever $w\neq -\frac d2, 1-\frac d2,-1-\frac d2, -k, -k+2$.
\end{proposition}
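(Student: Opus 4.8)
The plan is to prove exactness by producing a homotopy operator for $\D$. Since $\D^2=0$ is already established, it suffices to construct a conformally invariant operator $K\colon\Gamma\ct^k M[w]\to\Gamma\ct^{k-1}M[w+1]$, built from the algebra generators $\X$, $\Xs$, $\Ds$, such that on the weight-$w$, degree-$k$ block
\[
\D K + K\D = \lambda(w,k)\,\mathrm{Id}\, ,
\]
with $\lambda(w,k)\neq 0$ away from the listed weights. Granting such a $K$, any $\F\in\ker\D$ satisfies $\lambda(w,k)\,\F=\D(K\F)$, whence $\F=\D\big(\lambda(w,k)^{-1}K\F\big)\in\im\D$, so that $\ch^k M[w]=0$.

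The natural first candidate is $K\propto\Xs$, mimicking contraction with the Euler field in the Fefferman--Graham ambient picture, where $\D$ corresponds to the ambient exterior derivative. First I would compute $\{\D,\Xs\}$ directly from Proposition~\ref{XDalgebra}. Eliminating $\X\Ds$ between the second and third displayed relations there (dividing by the factor $h-2$, harmless for $w\neq 1-\tfrac d2$) yields, on the block of weight $w$ with $a:=d+2w$,
\[
\{\D,\Xs\}=\tfrac{2}{a}\big(\Ds\X-\Xs\D\big)+\big[(w+k)a-2(d+w-k+2)\big]\, .
\]
Thus $\{\D,\Xs\}$ is scalar only up to the residue $\Ds\X-\Xs\D$, which is the main obstacle: the bare $\Xs$ is not a homotopy. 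I would remove this residue by correcting $K$ with a second-order term built from $\Ds\X\Xs$ (equivalently from the double-$D$ operators $\D_{[2]}$, $\Ds_{[2]}$ of~\nn{doubleD} and the west projector $\Pi_W$ of Proposition~\ref{WESTPROJECTOR}), using the first relations of Proposition~\ref{XDalgebra} together with $\star$-duality to control $\Ds\X$. Because on $\ker\D$ one has $\D\X\F=0$ (immediate from the first relation of Proposition~\ref{XDalgebra}), the residual term $\Ds\X\F$ is again $\D$-closed, so iterating the construction a finite number of times closes up and fixes the coefficients of $K$.

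Carrying this out, I expect the resulting scalar to factor as
\[
\lambda(w,k)\ \propto\ (d+2w)\big((d+2w)^2-4\big)(w+k)(w+k-2)\, ,
\]
whose five linear factors vanish precisely at $w=-\tfrac d2,\,1-\tfrac d2,\,-1-\tfrac d2,\,-k,\,-k+2$, matching the excluded weights exactly. These are also the degeneration loci of the splitting operators of Lemmas~\ref{southlemma}, \ref{West}, \ref{East} and~\ref{northlemma}, which is the structural reason the homotopy's normalising denominators blow up there.

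As a concrete alternative that exposes the same obstruction, I would instead work in a fixed scale $g\in c$ and solve $\D\ca=\F$ slot by slot using the explicit matrix form~\nn{D}: writing $\F=(F^+,F,F^{+-},F^-)^\top$ with $\D\F=0$, the four components of $\D\ca=\F$ can be inverted successively for the slots of $\ca$, the required divisions being exactly by $d+2w$, $d+2w\pm2$, $w+k$ and $w+k-2$. The bookkeeping across the four slots, and checking that no further obstruction appears beyond these five factors, is the routine but delicate part; the homotopy formulation above is what makes the invariance and the precise exclusion set transparent.
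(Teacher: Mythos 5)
Your proposal is correct and takes essentially the same approach as the paper: the paper's proof exhibits precisely the corrected contraction you describe, showing that for $\D$-closed $\F$ one has $\F=\D\,\tfrac{1}{(d+2w)(w+k)}\Xs\big(1-\tfrac{2}{(d+2w+2)(w+k-2)}\,\X\Dts\big)\F$, verified by pushing $\D$ to the right via Proposition~\ref{XDalgebra} (using $w\neq 1-\tfrac d2$ to divide by $h-2$). Your intermediate formula for $\{\D,\Xs\}$, the second-order $\X\Dts$-type correction, and your predicted factorisation $(d+2w)\big((d+2w)^2-4\big)(w+k)(w+k-2)$ match the denominators and the excluded-weight set of that formula exactly.
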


\begin{proof}
Consider $$
\frac{1}{(d+2w)(w+k)}\, \D\Xs\Big(1-\frac{2}{(d+2w+2)(w+k-2)}\, \X\Dts\Big) \, \F\, .
$$
We claim that if $\D\F=0$, the above quantity identically equals $\F$. The claim is easily verified using Proposition~\ref{XDalgebra}
to push $\D$ to the right (assuming $w\neq1-\frac d2$) where it annihilates $\F$. 
Therefore $\D\F=0$ $\Rightarrow \F=\D \A$ for some $\A$.
\end{proof}

To analyze further  the cohomology of the exterior Tho\-mas D-operator,
observe that acting with $\D$ we move on the $(w,k)$ plane along lines $w+k=\mbox{constant}$ as shown in Figure~\ref{Dcomplex}.
Similarly, the interior tractor D-operator $\Ds$ moves along lines $w-k=\mbox{constant}$.
In what follows we will not analyze the $\Ds$ cohomology because it can easily be obtained from that of $\D$ by Hodge duality. 

\begin{figure}
\scalebox{1}{
\begin{picture}(200,200)(0,-120)
 \thicklines
 \put(10,0){\vector(1,0){225}}
\put(10,0){\vector(0,1){70}}
\put(8.5,-10){$\vdots$}
\put(10,-12){\line(0,-1){180}}
\thinlines
\color{red}{\put(10,-34.5){\line(1,0){180}}}
\color{red}{\put(6,-61.5){\line(1,0){180}}}
\color{red}{\put(3,-89.5){\line(1,0){180}}}

\thinlines

\color{black}{\put(179,49){\line(0,-1){240}}}

\put(-5.5,73){$w$}
\put(226,-3){$k$}
\put(-9,-1){$\scriptscriptstyle0$}
\put(-9,55){$\scriptscriptstyle2$}
\put(-24,-181){$\scriptscriptstyle -d-2$}
\put(181,4){$\scriptscriptstyle d+2$}
\put(-19,-36){$\scriptscriptstyle 1-\frac d2$}
\put(-16,-62.5){$\scriptscriptstyle -\frac d2$}
\put(-24,-92){$\scriptscriptstyle -1-\frac d2$}

\put(115,-126){\rotatebox{-9}{$\ddots$}}
\put(54.5,-9){\rotatebox{-9}{$\ddots$}}
\put(42,3.5){\rotatebox{-9}{$\ddots$}}

\put(-2,-9){\rotatebox{-9}{$\ddots$}}
\put(167,-121.5){\rotatebox{-9}{$\ddots$}}

\put(76,-72){\scalebox{.9}{\scalebox{.8}{$\D$}}}
\put(50,-46){\scalebox{.9}{\scalebox{.8}{$\D$}}}
\put(22.5,-18.5){\scalebox{.9}{\scalebox{.8}{$\D$}}}
\put(168,-164){\scalebox{.9}{\scalebox{.8}{$\D$}}}
\put(142,-138){\scalebox{.9}{\scalebox{.8}{$\D$}}}
\put(103,-99){\scalebox{.9}{\scalebox{.8}{$\D$}}}

\thicklines
\put(11.5,-11.5){\vector(1,-1){20}}
\put(38,-38){\vector(1,-1){20}}
\put(65,-65){\vector(1,-1){20}}
\put(94,-94){\vector(1,-1){20}}
\put(132,-132){\vector(1,-1){20}}
\put(159,-159){\vector(1,-1){20}}

\put(0,56){\vector(1,-1){20}}
\put(27,29){\vector(1,-1){20}}
\put(67.5,-11.5){\vector(1,-1){20}}
\put(94,-38){\vector(1,-1){20}}
\put(121,-65){\vector(1,-1){20}}
\put(148,-92){\vector(1,-1){20}}

\put(79.5,-37.5){\vector(1,-1){20}}
\put(106.5,-64.5){\vector(1,-1){20}}
\color{gray}{\put(0,42){\vector(1,-1){20}}
\put(53,-11){\vector(1,-1){20}}
\put(135,-93){\vector(1,-1){20}}
}
\put(155,-123){\rotatebox{-9}{$\ddots$}}
\put(36.5,-6){\rotatebox{-9}{$\ddots$}}
\put(26.5,5.5){\rotatebox{-9}{$\ddots$}}



\end{picture}
}

\vspace{3cm}

\caption{\label{Dcomplex}
In this picture we represent the complex for the exterior and interior tractor D-operators; 
the lower and upper diagonal line of arrows describe the complexes along $w+k=0$ and $w+k=2$ of Proposition~\ref*{w+k=0}
 which have non-empty cohomology. 
The central diagonal line corresponds to the case of generic values of $w+k$ of Proposition~\protect\ref*{w+kgeneric}, which only has non-trivial cohomology 
if the 
 three red horizontal lines  at the values $w=1-\frac d2, -\frac d2, -1-\frac d2$ are traversed.
}
\end{figure}
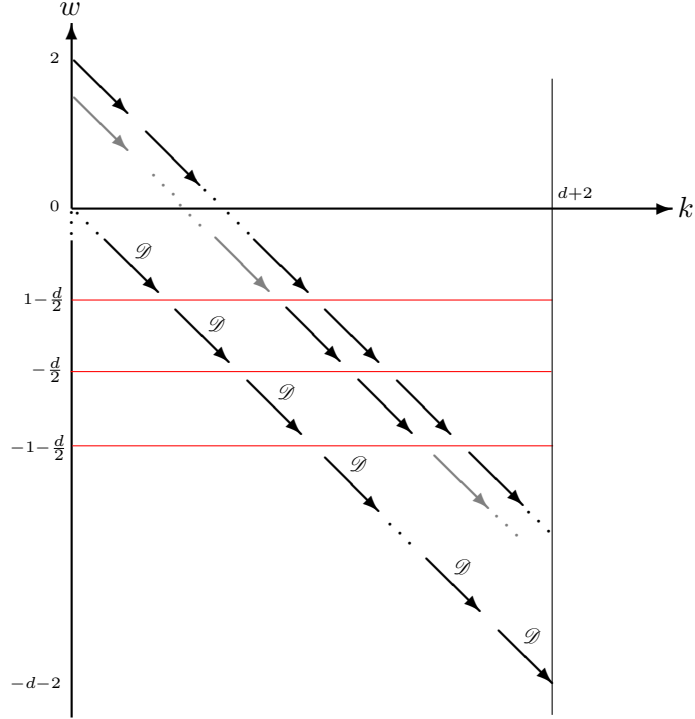

Proposition~\ref{nocoho} reveals that we may expect  non-trivial cohomology only along the lines $w+k=0,2$ or when a
 $w+k$ line meets the  weights $w=1-\frac d2, -\frac d2, -1-\frac d2$.  
This can also be seen by examining the system of equations~\nn{DFsystem} which one has to solve for $\D$ to be closed, 
as well as considering the same system after replacing $w\to w+1$, $k\to k-1$ in order to analyze when a $\D$-closed tractor form
is $\D$-exact. First we give a result for general weights.

\begin{proposition}\label{w+kgeneric} When $w+k\neq0,2$ the cohomology $\ch^kM[w]$ of the operator $\D$ can only be non-empty for $w= 1-\frac d2, -\frac d2,-1-\frac d2$.
Moreover:
 \begin{enumerate}[(i)]
  \item$\ch^{k}M[1-\frac d2]\cong\operatorname{ker} \boxdn^{(k)}$,\\[-2mm]
      \item$\ch^{k+1}M[-\frac d2]\cong\operatorname{ker} \boxdn^{(k)}\oplus \operatorname{coker} \boxdn^{(k)}$,\\[-2mm]
  \item$\ch^{k+2}M[-1-\frac d2]\cong\operatorname{coker} \boxdn^{(k)}$.
\end{enumerate} 
 \end{proposition}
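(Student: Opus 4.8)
The plan is to reduce everything to Proposition~\ref{nocoho} plus three short local computations. Since $w+k\neq0,2$ means $w\neq-k,-k+2$, that result already forces $\ch^\bullet M[w]$ to vanish unless $w\in\{1-\frac d2,-\frac d2,-1-\frac d2\}$, which is the opening assertion. These three weights sit at three consecutive spots along the single line $w+k=k-\frac d2+1$ (the direction in which $\D$ moves), at degrees $k,k+1,k+2$, so it suffices to compute the cohomology of
$$\Gamma\ct^kM[\ts1-\tfrac d2]\xrightarrow{\D}\Gamma\ct^{k+1}M[-\ts\tfrac d2]\xrightarrow{\D}\Gamma\ct^{k+2}M[-1-\ts\tfrac d2]$$
together with its two flanking maps. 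Throughout I would work in a fixed $g\in c$, reading $\D$ off its matrix~\nn{D} (equivalently the component system~\nn{DFsystem}); the hypothesis $w+k\neq0,2$ is precisely what makes the coefficients $k-\frac d2\pm1$ invertible in what follows, these being the degenerate (Maxwell) weights of $\boxdn^{(k)}$.

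The mechanism is that at each of these weights one prefactor $(d+2w-2)$ or $(d+2w)$ in~\nn{D} vanishes, decoupling two slots. I would start with the middle spot. At $w=-\frac d2$ one has $d+2w=0$, so the eastern and southern slots of $\G=(G^+,G,G^{+-},G^-)$ drop out of the closure equations, leaving $\extd G^+=(k-\frac d2+1)G$ and $\big(\FL+(k-\frac d2-1)(\J-2\P)\big)G^+-2\cod G=0$; substituting $G=\frac{1}{k-\frac d2+1}\extd G^+$ turns the latter into exactly $\boxdn^{(k)}G^+=0$, so $G^+\in\ker\boxdn^{(k)}$ while $G^{+-},G^-$ stay free. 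Dually, the image of $\D$ from $\Gamma\ct^kM[1-\frac d2]$ has vanishing northern and western slots (since $d+2w-2=0$ there), so it only cuts the $(G^{+-},G^-)$ directions. The crux of the whole argument is the identity obtained after eliminating the auxiliary slot through the degenerate equation: setting the eastern component of $\D\F$ to zero and feeding the resulting $F^-$ into the southern component collapses, using $\extd\FL=\FL\extd$ and $\extd^2=0$, to
$$(\D\F)^-\big|_{(\D\F)^{+-}=0}=\boxdn^{(k)}\big(\extd F^+-(k-\tfrac d2+1)F\big)\, .$$
Hence the southern image is exactly $\im\boxdn^{(k)}$ and the eastern direction is fully hit, giving $\ch^{k+1}M[-\frac d2]\cong\ker\boxdn^{(k)}\oplus\coker\boxdn^{(k)}$.

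The flanking spots are the same computation read off one slot. At $w=1-\frac d2$ closure again fixes the auxiliary $F^-$ via the vanishing eastern component, and the surviving southern equation reads $\boxdn^{(k)}F_{NW}=0$ with $F_{NW}:=\extd F^+-(k-\frac d2+1)F\in\Gamma\ce^kM[k-\frac d2+1]$; since on closed forms $F_{NW}$ vanishes precisely on $\im\D$ (one direction is $\extd^2=0$, the other a direct solution of $\D\ca=\F$), and since $F_{NW}$ can be prescribed as any element of $\ker\boxdn^{(k)}$ by a closed $\F$, the assignment $\F\mapsto F_{NW}$ yields $\ch^kM[1-\frac d2]\cong\ker\boxdn^{(k)}$. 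The bottom spot $w=-1-\frac d2$ is entirely parallel, now with $d+2w=-2\neq0$ so that closure imposes $\extd H^+=(k-\frac d2+1)H$ (killing the northern and western data in cohomology) and the cokernel of $\boxdn^{(k)}$ surfaces in the eastern slot $\Gamma\ce^kM[k-\frac d2-1]$ after quotienting by the image $\big(\FL+(k-\frac d2-1)(\J-2\P)\big)G^+-2\cod G$, producing $\ch^{k+2}M[-1-\frac d2]\cong\coker\boxdn^{(k)}$. Consistency with the $\Ds$-side follows from the intertwiner $\Ds=(-1)^{\N-1}\star^{-1}\D\star$.

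The main obstacle is the bookkeeping of the crux identity: checking that after the degenerate slot is removed the residual second-order operator is \emph{literally} $\boxdn^{(k)}$, not merely proportional to it. This hinges on the two coefficients $\frac{1}{k-\frac d2+1}$ and $\frac{1}{k-\frac d2-1}$ reassembling correctly out of the split $\FL=\cod\extd+\extd\cod$, and on the commutator $[\J-2\P,\extd]$ recombining with $\extd(\J-2\P)$ into $(\J-2\P)\extd$. These steps are short but must be carried out carefully; everything else is linear algebra on the four-slot splitting.
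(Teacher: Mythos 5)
Your proof is correct and takes essentially the same route as the paper: the paper's own argument likewise examines the slot system \nn{DFsystem} in a fixed scale, at $(w,k)$ for closure and at $(w+1,k-1)$ for exactness, identifies the same three ``conspiracies'' where the prefactors $d+2w-2$ or $d+2w$ vanish, and assembles the result into the schematic complex $0\to{\rm W}\to({\rm N,S})\to{\rm E}\to 0$ whose differentials are exactly the $\boxdn^{(k)}$ maps that your crux identity $(\D\F)^-\big|_{(\D\F)^{+-}=0}=\boxdn^{(k)}\big(\extd F^+-(k-\tfrac d2+1)F\big)$ makes explicit. Your elimination computation checks out --- including the implicit redundancy of the fourth closure equation at $w=-\tfrac d2$, which follows by applying $\extd$ to $\boxdn^{(k)}G^+=0$ --- so the only residual care is the sign bookkeeping you already flag: it is the matrix~\nn{D}, rather than the (sign-rescaled) list~\nn{DFsystem}, that fixes the convention under which $F_{NW}$ annihilates exact sections.
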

 \begin{proof}
 
 The method of proof for this and the other Proposition~\ref{w+k=0}  for  the cohomology of~$\D$ is the same:
One examines the system of equations~\ref{DFsystem} first at the weight and degree $(w,k)$ of interest in order to study the closure
condition, and then again at $(w+1,k-1)$ to control exactness. The general pattern for the closure conditions is that the first and third of
these equations allow the western and southern slots $F$ and $F^-$ to be determined in terms of their northern and eastern counterparts, $F^+,F^{+-}$, {\it unless} the weight and degree conspire to give them a zero coefficient. For exactness, at $(w+1,k-1)$ the right hand sides of the system of equations~\ref{DFsystem} show that the northern and eastern slots of $\F\in \ker\D$ are algebraically cohomologous to zero, again up to conspiracies between weights and degrees.

For the case at hand there are three conspiracies: (i) For $\ch^k\big[1-\frac d2\big]$, the northern and eastern slots are still cohomologous to zero, but 
only the southern slot can be eliminated by the closure condition. (ii) At $\ch^{k+1}\big[-\frac d2\big]$ only the  eastern slot is  cohomologous to zero
and only the western slot can be eliminated by the closure condition. (iii) For $\ch^{k+2}\big[-1-\frac d2\big]$ the northern slot is again cohomologous to zero and both the western and southern slots are removed by the closure requirement. Schematically then, a complex $0\to {\rm W}\to ({\rm N,S})\to {\rm E}\to 0$ results. It is not difficult to compute the differentials; the result is depicted below 

\scalebox{.88}{
$
\Gamma\ce^{k}M\big[k-\frac d2 +1\big] 
\xrightarrow{\raisebox{.2cm}{\scalebox{.85}{$\begin{pmatrix}0\\\boxdn\end{pmatrix}$}}}\Gamma\ce^{k}M\big[k-\frac d2 +1\big] \oplus \Gamma\ce^{k}M[k-\frac d2-1] 
\xrightarrow{\raisebox{.2cm}{\scalebox{.85}{$\Big(\boxdn\!\quad\!0\Big)$}}}
\Gamma\ce^{k}M\big[k-\frac d2-1\big]
$}

 \end{proof}

This leaves the cases $w+k=0,2$ which are closely related to both de Rham cohomology $H^k M$ and the detour complexes that will appear
 later in the Article in a holographic context.

\begin{proposition}\label{w+k=0} When $w+k=0,2$ but $w\neq 1-\frac d2,-\frac d2,-1-\frac d2$
$$
\left\{
\begin{array}{cclc}
\ch^kM[-k]&\!\cong\!& H^{k-1}M\oplus H^k M \, ,& w=-k\, ,\\[1mm]
 \ch^{k}M[2-k]&\!\cong\!& H^{k-2}M\oplus H^{k-1} M\, ,& w=2-k\, .
\end{array}
\right.
$$
Moreover, the differential complexes 
$$\xrightarrow{\D}\,\Gamma\ct^{\frac d2-1}M\big[1-\frac d2\big]\,\xrightarrow{\D}\,
\Gamma\ct^{\frac d2}M\big[-\frac d2\big]\,\xrightarrow{\D}\,\Gamma\ct^{\frac d2+1}M\big[-1-\frac d2\big]\,\xrightarrow{\D}\ ,$$
and
$$\xrightarrow{\D}\,\Gamma\ct^{\frac d2+1}M\big[1-\frac d2\big]\xrightarrow{\D}
\Gamma\ct^{\frac d2+2}M\big[2-\frac d2\big]\,\xrightarrow{\D}\,\Gamma\ct^{\frac d2+3}M\big[-1-\frac d2\big]\,\xrightarrow{\D}\ ,$$
corresponding to $w= 1-\frac d2,-\frac d2,-1-\frac d2$ and $w+k=0,2$, respectively, are equivalent to the following differential complexes
\begin{align*}
\xrightarrow{\scalebox{.5}{$\begin{pmatrix}-2\extd & 0 \\ 0 & 2 \extd\end{pmatrix}$}}\scalebox{.8}{$\Omega^{\frac{d}{2}-2}M\oplus\Omega^{\frac{d}{2}-1}M$}&
\xrightarrow{\scalebox{.5}{$\begin{pmatrix}0&0\\0&0\\ \FL-2(\J-2\P)& \cod \end{pmatrix} \extd$}}
\scalebox{.8}{$\Omega^{\frac{d}{2}-1}M\oplus\Omega^{\frac{d}{2}}M\oplus\Gamma\ce^{\frac d 2-1}M[-2] $}\\[2mm]
&\xrightarrow{\scalebox{.5}{$\begin{pmatrix}2\extd&0&0\\0&-2\extd&0\\ \FL-2(\J-2\P)&\cod&0\end{pmatrix}$}}
\scalebox{.8}{$
\Omega^{\frac{d}{2}}M\oplus\Omega^{\frac{d}{2}+1}M\oplus\Gamma\ce^{\frac d 2-1}M[-2]$}
\xrightarrow{\scalebox{.5}{$\begin{pmatrix}4\extd&0&0\\0&-4\extd&0\end{pmatrix}$}}\ , 
%
\end{align*}
and 
\begin{align*}
\xrightarrow{\scalebox{.5}{$\begin{pmatrix}0&0\\4\extd&0\\0&-4\extd \end{pmatrix}$}}\scalebox{.8}{$\Gamma\ce^{\frac d 2+1}M[2] \oplus\Omega^{\frac{d}{2}-1}M\oplus\Omega^{\frac{d}{2}}M$}&
\xrightarrow{\scalebox{.5}{$\begin{pmatrix}0&0&0\\\cod&2\extd&0\\ \FL+2(\J-2\P)&0&-2\extd\end{pmatrix}$}}
\scalebox{.8}{$\Gamma\ce^{\frac d 2+1}M[2]\oplus
\Omega^{\frac{d}{2}}M\oplus\Omega^{\frac{d}{2}+1}M$}\\[2mm]
&\xrightarrow{\scalebox{.5}{$\extd\begin{pmatrix}\cod&0&0\\\FL+2(\J-2\P)&0&0\end{pmatrix}$}}
\scalebox{.8}{$
\Omega^{\frac{d}{2}+1}M\oplus\Omega^{\frac{d}{2}+2}M$ }\xrightarrow{\scalebox{.5}{$\begin{pmatrix}-2\extd & 0 \\ 0 & 2 \extd\end{pmatrix}$}}\, .
\end{align*}
 \end{proposition}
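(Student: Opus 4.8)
The plan is to run the same slot-by-slot analysis of the system~\nn{DFsystem} that was used for Proposition~\ref{w+kgeneric}: first impose the closure condition $\D\F=0$ at the weight and degree $(w,k)$ of interest, reading off which slots are constrained and which are free; then re-examine~\nn{DFsystem} after the substitution $w\to w+1$, $k\to k-1$, which computes the incoming differential and hence decides which slots of a $\D$-closed form are rendered $\D$-exact. The whole argument is thus a bookkeeping of the weight- and degree-dependent coefficients in~\nn{DFsystem} and~\nn{D}.

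For the generic part of the statement, the decisive feature is the vanishing of a coupling coefficient. On $w+k=0$ the first two equations of~\nn{DFsystem} collapse to $\extd F^+=0=\extd F$, so the northern slot $F^+\in\Omega^{k-1}M$ and the western slot $F\in\Omega^k M$ are \emph{closed} true forms, while the eastern and southern slots are algebraically fixed by the last two equations (no conspiracy obstructs this away from the three distinguished weights). Running~\nn{DFsystem} at $(w+1,k-1)$ shows the incoming $\D$ acts on $F^+$ and $F$ by $\extd$ up to the factor $d+2w=d-2k$, which is nonzero precisely because $w=-k\neq-\tfrac d2$; hence $F^+$ and $F$ are determined modulo exact forms, giving $\ch^k M[-k]\cong H^{k-1}M\oplus H^k M$. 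The case $w+k=2$ is entirely analogous: now $(w+k-2)=0$ frees the eastern slot $F^{+-}\in\Omega^{k-2}M$ and southern slot $F^-\in\Omega^{k-1}M$ as closed true forms, which the incoming differential makes exact, yielding $\ch^k M[2-k]\cong H^{k-2}M\oplus H^{k-1}M$.

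For the two distinguished diagonal complexes I would specialise the matrix~\nn{D} at each triple-special point, where now in addition a \emph{leading} factor collapses: at $w=1-\tfrac d2$ one has $d+2w-2=0$, so the entire northern and western rows of $\D$ vanish, whereas at $w=-\tfrac d2$ one has $d+2w=0$, killing the $\extd F^{+-}$ and $F^-$ terms in the eastern and southern rows. Substituting these values, together with $\degree$ acting by the relevant $k$, collapses $\D$ to a matrix assembled purely from $\extd$, $\cod$ and $\FL\mp2(\J-2\P)$; identifying the surviving slots with the displayed weighted-form bundles and discarding the decoupled contractible subcomplexes then reproduces the first detour complex (whose long ``detour'' arrow $\begin{pmatrix}\FL-2(\J-2\P)&\cod\end{pmatrix}$ is exactly the Branson--Deser--Nepomechie structure of $\boxdn^{(k)}$ grafted onto the de Rham differentials of the generic case). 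The second complex follows from the first by the Hodge duality $* : \ce^kM[w]\to\ce^{d-k}M[d+w-2k]$ acting on form components, which exchanges $\extd\leftrightarrow\cod$ and flips the sign of the $(\J-2\P)$ term, converting $\FL-2(\J-2\P)$ into $\FL+2(\J-2\P)$ on the dual degrees.

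The routine content is the coefficient bookkeeping, but the main obstacle will be the second part: one must track precisely which of the four slots survive at each of the three special weights, verify that the slots being discarded genuinely form a contractible subcomplex (so that the reduced complex is isomorphic to, not merely quasi-isomorphic to, the displayed one), and then match both the operator entries and the numerical prefactors $-2,2,4$ (and their Hodge-dual counterparts) against the two displayed complexes. This matching is where a careful, if mechanical, component computation with~\nn{D} is unavoidable.
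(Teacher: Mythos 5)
Your treatment of the generic part and of the first displayed complex follows exactly the paper's route: the paper's proof is literally ``the same methodology as Proposition~\ref{w+kgeneric}'', i.e.\ read the closure conditions off the system~\nn{DFsystem} at $(w,k)$ and the exactness off the same system at $(w+1,k-1)$, then specialise the matrix~\nn{D} at the three distinguished weights and discard the contractible pieces. Your coefficient bookkeeping there is sound up to one small slip worth noting: on the line $w+k=0$ the eastern slot is \emph{not} fixed by the closure equations (it enters the third equation of~\nn{DFsystem} only through $\extd F^{+-}$); it is removed at the exactness stage, where the free southern entry $G^-$ of the incoming differential carries the coefficient $(w+k-2)(d+2w+2)=-2(d+2w+2)$, nonzero precisely because $w\neq -1-\frac d2$. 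This is harmless for your conclusion $\ch^kM[-k]\cong H^{k-1}M\oplus H^kM$, but it matters for seeing why all three excluded weights appear.

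The genuine gap is your derivation of the second complex ``by Hodge duality''. No plain Hodge star maps the first complex to the second. The tractor star preserves the weight and conjugates $\D$ into $\pm\Ds$, so it carries the $w+k=0$ complex to a $\Ds$-complex along the line $w-k=-d-2$, not to the $\D$-complex along $w+k=2$; moreover $\star$ swaps the western and eastern slots but fixes the northern and southern ones, whereas the generic surviving slots are $({\rm N},{\rm W})$ on $w+k=0$ and $({\rm E},{\rm S})$ on $w+k=2$, so the slot patterns cannot correspond. The componentwise form star fares no better: $*\big(\Omega^{\frac d2-2}M\oplus\Omega^{\frac d2-1}M\big)=\ce^{\frac d2+2}M[4]\oplus\ce^{\frac d2+1}M[2]$, and $\ce^{\frac d2+2}M[4]$ occurs nowhere in the second complex, whose outer terms in any case have three and two summands in the wrong pattern (only the middle terms happen to match under $*$, because $w=-\frac d2$ is self-dual). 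Your sign observation is correct---conjugation by $*$ does flip $\J-2\P$---but the bundle bookkeeping fails. The duality that actually relates the two complexes is \emph{arrow-reversing}: the formal adjoint with respect to $\langle\A,\A'\rangle=\int_M\A\wedge\star\,\A'$, which pairs $\ct^kM[w]$ with $\ct^kM[-d-w]$, composed with $\star$ sends $\ct^kM[w]$ with $w+k=0$ to $\ct^{d+2-k}M[-d-w]$ with $(-d-w)+(d+2-k)=2$, i.e.\ onto the second complex read right-to-left (this is why its matrices are sign-flipped, reversed transposes of the first's). But to promote that into a proof you would still have to verify the adjoint identifications on the \emph{reduced} complexes, including the middle detour arrows and the prefactors---at which point you are doing the direct slot computation anyway. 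The fix is simply to rerun your specialisation of~\nn{D} along $w+k=2$ at the weights $1-\frac d2$, $-\frac d2$, $-1-\frac d2$, which is exactly what the paper does.
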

\begin{proof}
Here the proof follows exactly the same methodology as for Proposition~\ref{w+kgeneric}.
\end{proof}

\begin{remark} Although the above differential complexes appear novel, they are closely related to the systems studied already.
For example, the second differential in the first diagram is a prolongation of  the Maxwell operator $\cod \extd$. The third differential is equivalent to the coupled Proca--Maxwell system (upon Hodge dualizing $A$).
\end{remark}

To relate the cohomology of the exterior tractor D-operator~$\D$ to the conformally invariant Maxwell operator and its associated detour complex, we need to refine the space of chains. This is achieved via the following simple observation.
\begin{lemma}\label{maplem}
There are  well-defined  maps 
$$
\left\{\!
\begin{array}{llll}
\scalebox{.9}{$\ker(\Ds,\Xs)\subset\Gamma \ct^k M[-k]$}&\!\longrightarrow\!&
\scalebox{.9}{$ \ker(\Ds,\Xs)\subset\Gamma \ct^{k+1} M[-k-1]\, ,$}& k\neq 1-\frac d2,-\frac d2\, ,\\[2mm]
\scalebox{.9}{$\ker(\Ds,\Xs)\subset\Gamma \ct^{\frac d2-1} M\big[1-\frac d2\big]$}\!\!\!&\!\longrightarrow\!& 
\scalebox{.9}{$\ker(\X,\Xs)\subset\Gamma \ct^{\frac d2} M\big[-\frac d2\big]\, ,$} & d\in 2{\mathbb N}\, ,\\[2mm]
\scalebox{.9}{$\ker(\X,\Xs)\subset\Gamma \ct^{\frac d2} M[-\frac d2]$}&\!\longrightarrow\!& 
\scalebox{.9}{$\ker(\Ds,\Xs)\subset\Gamma \ct^{\frac d2+1} M\big[-1-\frac d2\big]\, ,$} & d\in 2{\mathbb N}\, ,
\end{array}
\right.
$$
given by
$$ \F\longmapsto \D \F\, .$$
\end{lemma}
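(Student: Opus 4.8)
The plan is to verify directly that for $\F$ in each of the three domains the image $\D\F$ satisfies the kernel conditions defining the corresponding codomain. Since $\D$ maps $\Gamma\ct^k M[w]$ to $\Gamma\ct^{k+1}M[w-1]$, raising degree by one and lowering weight by one, the degree and weight of $\D\F$ already match those of the stated target in all three lines (each domain sits on the line $w+k=0$). Thus the only work is to propagate the annihilation conditions through $\D$, and the tools for this are the anticommutator $\{\D,\Ds\}=0$ together with the three relations of Proposition~\ref{XDalgebra}; I will also use the observation recorded before Definition~\ref{HAT} that at weight $-\frac d2$ the operator $\Ds$ is the zero map on $\ker\Xs$ (and $\D$ on $\ker\X$).

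For the generic map (first line) take $\F\in\ker(\Ds,\Xs)$ of weight $w=-k$. First $\Ds\D\F=-\D\Ds\F=0$ by $\{\D,\Ds\}=0$ and $\Ds\F=0$, with no restriction on the weight. To see $\Xs\D\F=0$ I would apply the third relation of Proposition~\ref{XDalgebra} to $\F$: the terms $(h-2)\D\Xs\F$ and $2\X\Ds\F$ vanish because $\Xs\F=\Ds\F=0$, while the ``mass term'' is killed since $\big(\tfrac{d-h}{2}-\N\big)\F=-(w+k)\F=0$ on the line $w+k=0$; what survives is $h\,\Xs\D\F=0$, and because $\Xs\D\F$ has weight $-k$ the scalar is $h=d-2k\neq0$ precisely when $w\neq-\frac d2$, giving $\Xs\D\F=0$. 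The exclusion of the weight $1-\frac d2$ is because there the target sits at weight $-\frac d2$, where the genuinely constraining kernel is $\ker(\X,\Xs)$ rather than $\ker(\Ds,\Xs)$; this is exactly the second line.

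For the second line $\F\in\ker(\Ds,\Xs)$ has weight $1-\frac d2$, so $h\F=2\F$, and I must land in $\ker(\X,\Xs)$ at weight $-\frac d2$. The vanishing $\Xs\D\F=0$ follows exactly as above, the leading scalar on $\Xs\D\F$ now being $h=2\neq0$. For $\X\D\F=0$ I would feed $\F$ into the first relation $(h-2)\D\X+(h+2)\X\D=0$: on $\F$ the term $(h-2)\D\X\F$ vanishes because $\D\X\F$ has weight $1-\frac d2$ where $h-2=0$, leaving $(h+2)\X\D\F=4\,\X\D\F=0$. For the third line $\F\in\ker(\X,\Xs)$ has weight $-\frac d2$, whence $h\F=0$ and, crucially, $\Ds\F=0$ automatically; then $\Ds\D\F=-\D\Ds\F=0$. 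To obtain $\Xs\D\F=0$ the third relation degenerates, its leading scalar $h$ vanishing on $\Xs\D\F$, so I would instead use the second relation $h\X\Ds+(h-2)\Ds\X+2\Xs\D-(\cdots)h(h-2)=0$: applied to $\F$ the first two terms die by $\Ds\F=\X\F=0$, the mass term dies by $h(h-2)\F=0$, and only $2\,\Xs\D\F=0$ remains.

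The one genuine subtlety, and the main obstacle, is the degenerate weight $w=-\frac d2$ where $h=0$. There the third relation of Proposition~\ref{XDalgebra} can no longer be solved for $\Xs\D$ by dividing out its leading coefficient, which is precisely why the second and third lines require switching to the $\X$-kernel description and to the second relation respectively; it is also why one must invoke the vanishing of $\Ds$ on $\ker\Xs$ at this weight to replace the otherwise-missing condition $\Ds\F=0$. Once this weight is absorbed by the complementary relation, each case reduces to bookkeeping of weight eigenvalues on the line $w+k=0$, and no curvature computation beyond the stated algebra is needed.
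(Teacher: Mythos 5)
Your proposal is correct and follows essentially the same route as the paper's proof: a case analysis in which the third identity of Proposition~\ref{XDalgebra} handles the generic line $w+k=0$, the first identity supplies the $\ker\X$ condition at $w=1-\frac d2$, and the second identity (together with the domain being $\ker(\X,\Xs)$ and the degeneracy of $\Ds$ on $\ker\Xs$ at $w=-\frac d2$) handles the third map. The only difference is that you spell out details the paper leaves implicit, namely $\Ds\D\F=-\D\Ds\F=0$ via $\{\D,\Ds\}=0$ and the weight bookkeeping that kills the mass terms on the line $w+k=0$.
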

\begin{proof}
To verify that at the weight $w=-k\neq -\frac d2$ elements of the kernel of $(\Ds,\Xs)$ are mapped by $\D$  again to $\ker(\Ds,\Xs)$,
it suffices to examine the third identity of Proposition~\ref{XDalgebra}. At $w=-k=1-\frac d2$ the $\ker \Ds$ condition on the image of the map $\D$ turns out to be an empty requirement but is augmented by $\ker \X$ which follows from the  first line of Proposition~\ref{XDalgebra}.
To see that at $w=-k=-\frac d2$, the map $\D$ has image contained in $\ker\Xs$ one uses the second line of Proposition~\ref{XDalgebra}
along with the fact that the domain is then taken to be  $\ker(\X,\Xs)$.
\end{proof}
\begin{remark}\label{remd}
By the western Lemma~\ref{West},  $\ker(\Ds,\Xs)\subset\Gamma \ct^k M[-k]\cong \Omega^k M$
so long as $k\neq \frac d2$. Hence, $\D$ induces a map $\Omega^k M\to \Omega^{k+1}M$ when $k\neq \frac d2-1,\frac d2$. 
From the display~\nn{D}, it follows that this map is the exterior derivative~$\extd$.
\end{remark}

\begin{proposition}\label{half de Rham}
Let $d\in 2{\mathbb N}$. Then the differential complex
\begin{align*}
\cdots & \xrightarrow{\D}\  \scalebox{.9}{$\ker(\Ds,\Xs)\subset\Gamma \ct^{\frac d2-2} M\big[2-\frac d2\big]$}
\\
&  \xrightarrow{\D}\   \scalebox{.9}{$\ker(\Ds,\Xs)\subset\Gamma \ct^{\frac d2-1} M\big[1-\frac d2\big]$}
\ \xrightarrow{\D}\ 
\scalebox{.9}{$\ker(\X,\Xs)\subset\Gamma \ct^{\frac d2} M\big[-\frac d2\big]$}
\\&\xrightarrow{\D}\ 
\scalebox{.9}{$\ker(\Ds,\Xs)\subset\Gamma \ct^{\frac d2+1} M\big[-1-\frac d2\big]$}
\ \xrightarrow{\D}\ \scalebox{.9}{$\ker(\Ds,\Xs)\subset\Gamma \ct^{\frac d2+2} M\big[-2-\frac d2\big]$}
\ \xrightarrow{\D}\ \cdots
\end{align*}
is equivalent to
$$
\stackrel{\extd}\longrightarrow \Omega^{\frac d2-2} M \stackrel{\extd}\longrightarrow
\Omega^{\frac{d}{2}-1}M \xrightarrow{\ \cod\extd\ }
\Gamma\ce^{\frac{d}{2}-1}M[-2]
\stackrel{0}\longrightarrow \Omega^{\frac d2 +1} M
\stackrel{\extd}\longrightarrow \Omega^{\frac d2 +2} M
\stackrel{\extd}\longrightarrow
$$
\end{proposition}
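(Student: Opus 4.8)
The plan is to verify the asserted equivalence degree-by-degree, leveraging the identifications of the various kernel spaces with ordinary form spaces supplied by the insertion Lemmas of Section~\ref{compinsert}, and then to compute the induced differential on each summand by reading off the appropriate entries of the matrix~\nn{D} for~$\D$. By the western Lemma~\ref{West} together with Remark~\ref{remd}, for $k\neq \frac d2$ we have $\ker(\Ds,\Xs)\subset\Gamma\ct^kM[-k]\cong\Omega^kM$ (here $w=-k$ so $d+w-k=d-2k\neq 0$ precisely when $k\neq \frac d2$, and the southern slot $\phi=-\frac1{d-2k}\cod A$ is determined). The two exceptional middle spots $\ker(\X,\Xs)\subset\Gamma\ct^{\frac d2}M[-\frac d2]$ are instead southern tractors, which by Lemma~\ref{southlemma} satisfy $\ker(\X,\Xs)\cong\Gamma\ce^{\frac d2-1}M[-2]$. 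Thus the abstract complex is mapped onto the concrete one claimed, and it remains only to identify the connecting maps.

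The main computational step is to apply Remark~\ref{remd} away from the special weights, which immediately gives that $\D$ reduces to the ordinary exterior derivative $\extd$ on each $\ker(\Ds,\Xs)\cong\Omega^kM$ provided $k\neq \frac d2-1,\frac d2$; this disposes of every arrow except the two meeting the middle slot. For the arrow $\Gamma\ct^{\frac d2-1}M[1-\frac d2]\to\Gamma\ct^{\frac d2}M[-\frac d2]$, I would take a western representative $A\in\Omega^{\frac d2-1}M$ (with its $\cod A$ in the southern slot as in Lemma~\ref{West}), substitute into the matrix~\nn{D} with $w=1-\frac d2$, $k=\frac d2-1$ so that the prefactor $d+2w-2=-1$ and $w+k=0$ kill the would-be northern and western images, and read off that the only surviving output sits in the $\ker(\X,\Xs)$ target and equals (up to the already-fixed normalisation of $q_W$) the Maxwell operator $\cod\extd A$; this is the arrow labelled $\cod\extd$. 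Symmetrically, for the arrow out of the middle slot $\Gamma\ct^{\frac d2}M[-\frac d2]\to\Gamma\ct^{\frac d2+1}M[-1-\frac d2]$, feeding a southern tractor through~\nn{D} at $w=-\frac d2$, $k=\frac d2$ shows the image vanishes identically, which is the $0$ map displayed between $\Gamma\ce^{\frac d2-1}M[-2]$ and $\Omega^{\frac d2+1}M$.

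The subtle point, and the step I expect to require the most care, is the bookkeeping of the normalisation constants introduced by the isomorphisms $q_W$ and $q_S$: the displayed differentials in the target complex ($\cod\extd$, the $0$ map, and the plain $\extd$'s) must match the raw matrix entries of~$\D$ only after conjugating by these splitting isomorphisms, and one must check that no additional term (such as a $\J-2\P$ contribution, or a residual piece in a slot one has quotiented out) survives. Here I would lean on the vanishing of the relevant coefficients $(d+2w-2)$, $(w+k)$, $(d+w-k)$ at exactly these conspiring weights---the same conspiracies catalogued in the proof of Proposition~\ref{w+kgeneric}---to guarantee that the off-diagonal curvature couplings in~\nn{D} either drop out or are absorbed into the chosen representatives. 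Once these constants are confirmed the equivalence of complexes follows formally, and the proof reduces to the routine matrix computation; it is this constant-tracking, rather than any conceptual difficulty, that constitutes the real work.
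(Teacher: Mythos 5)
Your proposal is correct and takes essentially the same route as the paper, whose proof likewise combines Lemma~\ref{maplem}, Remark~\ref{remd} and a direct computation of $\D$ from~\nn{D} at the two conspiring weights $w=-k=1-\tfrac d2$ and $w=-k=-\tfrac d2$. One arithmetic slip to repair: at $w=1-\tfrac d2$ the prefactor is $d+2w-2=0$, not $-1$ -- and it is precisely this vanishing (not the value $-1$ you quote, which would kill nothing) that annihilates the northern and western slots, while the eastern slot dies by the genuine cancellation $-2\cod A+(d+2w)(w+k-2)\big(-\tfrac12\cod A\big)=0$ using the West-Lemma value $F^-=-\tfrac12\cod A$, leaving only the southern entry $-\cod\extd A$ as you conclude.
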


\begin{remark}
In odd dimensions, the ``detour'' at $w=-k=1-\frac d2$ is avoided and one simply has an equivalence between the cohomology of $\D$ acting on $\ker(\Ds,\Xs)$ for $w+k=0$ 
$$
\cdots \xrightarrow{\D}\  \scalebox{.9}{$\ker(\Ds,\Xs)\subset\Gamma \ct^{k} M[-k]$}\ \xrightarrow{\D}\cdots
 $$
and de Rham cohomology
$$
\stackrel{\extd }\longrightarrow \Omega^{k} M
\stackrel{\extd}\longrightarrow
\, .
$$
\end{remark}

\begin{proof}
This follows directly from Lemma~\ref{maplem}, its accompanying Remark~\ref{remd} and a computation of~$\D$ at weights $w=-k=1-\frac d2,-\frac d2$
for some $g\in c$ using~\nn{D}.
\end{proof}

Rather than connecting the de Rham complex via the detour operator $\cod\extd$ followed by the zero map to the de Rham complex
again, as in Proposition~\ref{half de Rham}, a canonical man\oe uvre is to continue on with the dual de Rham complex. Pictorially this gives
the {\it Maxwell detour complex}
 \begin{equation}\label{detourmax}
\stackrel{\extd}\longrightarrow \Omega^\bullet M \stackrel{\extd} \longrightarrow\cdots
\stackrel{\extd}\longrightarrow\Omega^{\frac{d}{2}-1}M \xrightarrow{\ \cod\extd\ }
\Gamma\ce^{\frac{d}{2}-1}M[-2]
\stackrel{\cod}\longrightarrow\cdots \stackrel{\cod}\longrightarrow \Gamma\ce^\bullet M [\, .\, ]\stackrel{\cod}\longrightarrow\, ,
\end{equation}
(where the chains in the outgoing dual de Rham complex belong to $\Gamma\ce^kM[2k-d]$).
This is an important but simplest case of a family of conformally invariant differential detour complexes~\cite{BrGoEM,BrGodeRham,AG-BGops} whose 
study we take up again in Section~\ref{OBSTQD}.

We complete this Section by showing how the Maxwell detour complex arises in the current setting.
First we rely on a Corollary of Proposition~\ref{maplem} and the southern and eastern Lemmas~\ref{southlemma},~\ref{East}.
\begin{corollary}
Let $d\in2{\mathbb N}$. Then there is a well-defined, conformally invariant,  canonical ``detour'' map
$$
 \ker(\Ds,\Xs)\subset \Gamma\ct^{\frac d2-1} M\big[1-\frac d2\big]\longrightarrow \ker(\D,\X)\subset\Gamma \ct^{\frac d2+1} M\big[-1-\frac d2\big]\, ,$$
given by the composition of maps
$$
q^{\phantom{-1}\!\!}_E\circ q_S^{-1}\circ\D\, .
$$
Moreover,
$$
\Ds\circ q^{\phantom{-1}\!\!}_E\circ q_S^{-1}\circ\D=0\, .
$$
\end{corollary}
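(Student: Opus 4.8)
The plan is to read off well-definedness and conformal invariance from the insertion Lemmas already in place, and then to reduce the vanishing identity to the single fact that $\cod^2=0$.

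First I would record that along $\Gamma\ct^{\frac d2-1}M[1-\frac d2]$ one has $w+k=0$, so the second line of Lemma~\ref{maplem} (which uses $d\in2{\mathbb N}$) shows that $\D$ carries $\ker(\Ds,\Xs)\subset\Gamma\ct^{\frac d2-1}M[1-\frac d2]$ into $\ker(\X,\Xs)\subset\Gamma\ct^{\frac d2}M[-\frac d2]$. By the southern Lemma~\ref{southlemma} this target is exactly the image of the isomorphism $q_{\rm S}\colon\Gamma\ce^{\frac d2-1}M[-2]\stackrel{\cong}{\to}\ker(\X,\Xs)$, so $B:=q_{\rm S}^{-1}\D\F$ is a well-defined element of $\Gamma\ce^{\frac d2-1}M[-2]$. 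Since $w+k-2=-2$ is precisely the domain weight of the eastern Lemma~\ref{East} at degree $\frac d2+1$ and weight $-1-\frac d2$, the operator $q_{\rm E}$ embeds $B$ into $\ker(\wD,\X)\subset\Gamma\ct^{\frac d2+1}M[-1-\frac d2]$; and because $d+2w=-2\neq0$ at this weight, $\wD=\frac{1}{d+2w}\D$, whence $\ker(\wD,\X)=\ker(\D,\X)$, the asserted codomain. Conformal invariance is then automatic, each factor being a conformally invariant tractor operator and $q_{\rm S}^{-1}$ the inverse of a conformally invariant bundle isomorphism.

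For the vanishing statement I would fix $g\in c$ and compute. By Remark~\ref{remd} and the western Lemma~\ref{West}, $\ker(\Ds,\Xs)\subset\Gamma\ct^{\frac d2-1}M[1-\frac d2]\cong\Omega^{\frac d2-1}M$, so I may write $\F\stackrel{g}=q_{\rm W}(A)$ for a true form $A\in\Omega^{\frac d2-1}M$, whose only nonzero slots are the western slot $A$ and the southern slot $-\tfrac12\cod A$. Feeding this into the matrix \nn{D} at $(k,w)=(\frac d2-1,1-\frac d2)$—where $d+2w-2=0$ annihilates the northern and western output slots, and the eastern output cancels—leaves $\D\F\stackrel{g}=q_{\rm S}(-\cod\extd A)$, so that $B=-\cod\extd A$.

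The decisive observation is that $B$ is coclosed, $\cod B=-\cod^2\extd A=0$. It then remains to check that $\Ds\,q_{\rm E}(B)=0$ for every coclosed $B$. Writing $q_{\rm E}(B)$ via Lemma~\ref{East} as the tractor with eastern slot $B$ and southern slot $\tfrac12\extd B$, and applying the matrix \nn{Ds} at $(k,w)=(\frac d2+1,-1-\frac d2)$, I expect the northern output to vanish because the coefficient $d+w-\degree$ is zero on the eastern slot, the western output to cancel between its $-2\extd B$ and $+2\extd B$ contributions, the eastern output to equal a multiple of $\cod B$, and the southern output to reduce to $-\extd\cod B$; the last two die once $\cod B=0$. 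Conceptually this is nothing but the tractor shadow of $\cod^2=0$ inside the Maxwell detour complex \nn{detourmax}: the composite $q_{\rm E}\circ q_{\rm S}^{-1}\circ\D$ realizes the detour operator $\cod\extd$, while $\Ds$ restricts to $\cod$ on eastern-inserted forms. The only real obstacle in executing this plan is the careful bookkeeping of the degree operator $\degree$ in the coefficients of \nn{D} and \nn{Ds}, since it takes a different value on each of the four slots; obtaining the claimed cancellations hinges on evaluating it on the correct input slot of each matrix entry.
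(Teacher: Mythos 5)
Your proposal is correct and follows essentially the same route as the paper: both reduce the vanishing identity to $\cod^2=0$ by showing the composite lands in eastern tractors with eastern slot (a multiple of) $\cod\extd A$, on which $\Ds$ induces $\cod_{}$. The only difference is that where the paper invokes Proposition~\ref{half de Rham} and the dual version of Remark~\ref{remd}, you verify the same two facts by direct computation with the matrices~\nn{D} and~\nn{Ds}, including the correct slot-wise evaluation of $\degree$ in the coefficients.
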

\begin{proof}
Only the statement that the composition of the interior tractor D-operator and the detour map vanishes requires further elaboration:
The range of the differential $\cod \extd$ in Proposition~\ref{half de Rham} at $\Omega^{\frac d2} M$ is mapped to the  eastern slot
of a section of $\ct^{\frac d2+1} M\big[-1-\frac d2\big]$. Then the eastern Lemma~\ref{East} implies that a dual version of Remark~\ref{remd}
holds: the action of $\Ds$ on $\ker(\D,\X)\subset \Gamma\ct^{k}M[-d-2+k]$ induces the  map $$\cod :\Gamma\ce^kM[2k-d]\to \Gamma\ce^{k-1}M[2k-d-2]\, ,$$ so long as $k\neq \frac d2 +2$
(this is the point in the $(w,k)$-plane dual to the one where the map $q^{\phantom{-1}\!\!}_E q_S^{-1}$ was needed to replace the $\D$ operator). The proof is now complete since $\cod^2=0$.
\end{proof}

\begin{remark}
Acting on weight $-\frac d2$, degree~$\frac d2$ southern tractors (in even dimensions), the composition of operators $q^{\phantom{-1}\!\!}_E\circ q_S^{-1}=-2\, \wD$, as can be easily verified by a direct computation.
\end{remark}

We have therefore by now established the following result (depicted schematically in Figure~\ref{detourgraph})
\begin{proposition}\label{Maxwelldetour}
Let $d\in 2{\mathbb N}$. Then the differential complex
\begin{align*}
\cdots & \xrightarrow{\D}\  \scalebox{.9}{$\ker(\Ds,\Xs)\subset\Gamma \ct^{\frac d2-2} M\big[2-\frac d2\big]$}
\\
&  \xrightarrow{\D}\   \scalebox{.9}{$\ker(\Ds,\Xs)\subset\Gamma \ct^{\frac d2-1} M\big[1-\frac d2\big]$}
\ \xrightarrow{q_E^{{}}q_S^{-1} \D}\ 
\scalebox{.9}{$\ker(\D,\X)\subset\Gamma \ct^{\frac d2+1} M\big[-1-\frac d2\big]$}
\\&\xrightarrow{\Ds}\ 
\scalebox{.9}{$\ker(\D,\X)\subset\Gamma \ct^{\frac d2} M\big[-2-\frac d2\big]$}
\ \xrightarrow{\Ds}\ \scalebox{.9}{$\ker(\D,\X)\subset\Gamma \ct^{\frac d2-1} M\big[-3-\frac d2\big]$}
\ \xrightarrow{\Ds}\ \cdots
\end{align*}
is equivalent to the Maxwell detour complex (of Equation~\nn{detourmax}).
\end{proposition}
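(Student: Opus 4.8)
The plan is to read off the equivalence block by block, since each segment of the displayed complex has already been analysed. The complex splits naturally into an incoming string of spaces $\ker(\Ds,\Xs)$ joined by $\D$, a single detour map $q_E q_S^{-1}\D$, and an outgoing string of spaces $\ker(\D,\X)$ joined by $\Ds$, with two junctions (degree $\frac d2-1\to\frac d2+1$ across the detour, and degree $\frac d2+1\to\frac d2$ after it). First I would fix an identification of each block with the corresponding stretch of the Maxwell detour complex~\nn{detourmax}, and then verify that these identifications intertwine the differentials; the equivalence is then immediate.

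For the incoming block I would invoke Remark~\ref{remd}: at weight $w=-k$ with $k\neq\frac d2-1,\frac d2$ the western insertion gives $\ker(\Ds,\Xs)\subset\Gamma\ct^kM[-k]\cong\Omega^kM$, and under this identification $\D$ is the exterior derivative $\extd$ (read directly off the matrix~\nn{D}). This reproduces $\cdots\xrightarrow{\extd}\Omega^{\frac d2-2}M\xrightarrow{\extd}\Omega^{\frac d2-1}M$, the honest $\extd$ running up to the source of the detour. For the outgoing block I would use the Hodge-dual of Remark~\ref{remd}, already employed in the proof of the preceding Corollary: via the eastern Lemma~\ref{East} the spaces $\ker(\D,\X)$ at weight $w=k-d-2$ are isomorphic to weighted forms of degree $k-2$, and $\Ds$ induces the codifferential $\cod$ (valid away from $k=\frac d2+2$, which never occurs since the outgoing string starts at $k=\frac d2+1$ and decreases). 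This reproduces the outgoing dual de Rham complex $\Gamma\ce^{\frac d2-1}M[-2]\xrightarrow{\cod}\cdots$ of~\nn{detourmax}.

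The crux is the detour junction, and I expect this to be the main obstacle, since it is exactly the range of weights $w\in\{1-\frac d2,-\frac d2,-1-\frac d2\}$ where the western identification degenerates and one must route through the southern Lemma~\ref{southlemma} and eastern Lemma~\ref{East} insertions. Concretely, for $A\in\Omega^{\frac d2-1}M$ identified with $q_W A\in\ker(\Ds,\Xs)\subset\Gamma\ct^{\frac d2-1}M[1-\frac d2]$, Proposition~\ref{half de Rham} already shows that $\D$ carries $q_W A$ into the southern tractor $\ker(\X,\Xs)\subset\Gamma\ct^{\frac d2}M[-\frac d2]$ whose content is precisely $\cod\extd A\in\Gamma\ce^{\frac d2-1}M[-2]$; thus $q_S^{-1}\D q_W A=\cod\extd A$. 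Applying $q_E$ re-embeds this form into $\ker(\D,\X)\subset\Gamma\ct^{\frac d2+1}M[-1-\frac d2]$, so that, read through the source isomorphism $q_W$ and the target isomorphism $q_E$, the detour map $q_E q_S^{-1}\D$ is exactly the Maxwell detour operator $\cod\extd$; the identity $q_E q_S^{-1}=-2\wD$ from the remark preceding the statement serves only to confirm the well-definedness and conformal invariance of this middle differential, and (since $h=-2\neq 0$ there) that $\ker(\wD,\X)=\ker(\D,\X)$ at this weight. Finally I would check that the two junctions commute: that $\D$ out of degree $\frac d2-2$ agrees, under the identifications, with $\extd$ into $\Omega^{\frac d2-1}M$, and that $\Ds$ on the image of the detour agrees with $\cod$ — the latter being precisely the vanishing $\Ds\circ q_E q_S^{-1}\circ\D=0$ of the preceding Corollary together with $\cod^2=0$. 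Assembling the three blocks then yields the claimed equivalence with~\nn{detourmax}.
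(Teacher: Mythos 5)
Your proposal is correct and takes essentially the same route as the paper, which offers no separate proof but states the Proposition as ``by now established'' through precisely the chain you assemble: Remark~\ref{remd} for the incoming $\extd$ string, Proposition~\ref{half de Rham} for the $\cod\extd$ content of $\D$ at the junction, the Corollary giving $\Ds\circ q_E\circ q_S^{-1}\circ\D=0$ together with the dual of Remark~\ref{remd} (via the eastern Lemma~\ref{East}) for the outgoing $\cod$ string, and the identity $q_E^{{}} q_S^{-1}=-2\wD$. Your explicit weight checks at the two junctions (in particular $h=-2\neq0$ so that $\ker(\wD,\X)=\ker(\D,\X)$, and the avoidance of $k=\frac d2+2$ on the outgoing string) make precise what the paper leaves implicit.
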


\begin{figure}\begin{center}
\scalebox{1}{
\begin{picture}(200,200)(0,-120)
 \thicklines
 {\put(0,0){\vector(1,0){225}}}
\put(0,-200){\vector(0,1){230}}
\thinlines
\color{black}{\put(179,20){\line(0,-1){220}}}

\put(-4,32){$w$}
\put(226,-3){$k$}
\put(-24,-180){$\scriptscriptstyle -d-2$}
\put(181,4){$\scriptscriptstyle d+2$}
\put(31,4){$\scriptscriptstyle \frac d2-1$}
\put(-18,-40){$\scriptscriptstyle 1-\frac d2$}

\put(14,-39){\scalebox{.8}{$\ker(\Ds,\Xs)$}}

\put(70,-98){\scalebox{.8}{$\ker(\D,\X)$}}

\put(43,-127){\scalebox{.8}{$\ker(\D,\X)$}}
\put(61,-56){\scalebox{.8}{$q_E^{{\color{white}{-1}}}\! q_S^{-1}\D$}}
\put(22,-18){\scalebox{.8}{$\D$}}

\put(74,-114){\scalebox{.8}{$\Ds$}}
\put(46,-143){\scalebox{.8}{$\Ds$}}
\put(0,-10){\rotatebox{-9}{$\ddots$}}
\put(2,-165){\rotatebox{-99}{$\ddots$}}
\thicklines
\put(14,-14){\vector(1,-1){15}}
\put(43,-43){\vector(1,-1){46}}

\put(48,-131){\vector(-1,-1){15}}
\put(77,-102){\vector(-1,-1){15}}

\color{gray}
\put(102,-102){\vector(1,-1){15}}
\put(132,-132){\vector(1,-1){15}}
\put(104,-127){\scalebox{.8}{$\ker(\Ds,\Xs)$}}
\put(164.5,-175){\rotatebox{-9}{$\ddots$}}

\put(105,-74){\vector(-1,-1){15}}
\put(136,-43){\vector(-1,-1){15}}
\put(104,-69){\scalebox{.8}{$\ker(\D,\X)$}}

\put(112,-108){\scalebox{.8}{$\D$}}

\put(143,-139){\scalebox{.8}{$\D$}}
\put(103,-85){\scalebox{.8}{$\Ds$}}
\put(134,-54){\scalebox{.8}{$\Ds$}}
\put(162,-5){\rotatebox{-99}{$\ddots$}}

\end{picture}
}
\vspace{3.5cm}
\caption{\label{detourgraph}
The Maxwell detour complex from exterior and interior Thomas-D cohomologies is schematically
$$
\cdots\xrightarrow{\ \D_{{}} \ } {\rm W}\xrightarrow{\ \D_{{}} \ } {\rm W}
 \xrightarrow{q_{_{\scriptscriptstyle E}} q_S^{-1} \D}
 {\rm E} \xrightarrow{\Ds_{{}}} {\rm E} \xrightarrow{\Ds_{{}}}\cdots\qquad\qquad\qquad\qquad
$$
for degrees and weights as depicted above.
}
\end{center}

\end{figure}

\section{The exterior calculus of scale}
\label{extcalcsc}

We now come to a central point of our development, namely that there is a canonical way to introduce a (generalized) scale
into the conformal calculus and algebra. Most of the structure is available in the general almost Riemmanian setting, so we
treat this first before refining to  Poincar\'e--Einstein structures.

Let~$(M,c,\sigma)$ be an almost Riemannian structure.
The
scale tractor defines canonical maps on $\Gamma\ct^\bullet M[\, . \,]$ by exterior and interior multiplication
$$
\I:=\varepsilon(I):\Gamma\ct^k M[w]\rightarrow \ct^{k+1} M[w]\, ,\qquad \Is:=\iota(I):\Gamma\ct^k M[w]\rightarrow \ct^{k-1} M[w]\, .
$$
We may explicate these operators for a choice of $g\in c$ by
\begin{equation}\label{Iext}
\I
\stackrel{g}=
\begin{pmatrix}
-\varepsilon(n)&\sigma&0&0\\
0&\varepsilon(n)&0&0\\
-\rho&0&\varepsilon(n)&\sigma\\
0&\rho&0&-\varepsilon(n)
\end{pmatrix}\, ,\qquad
{\mathscr I}^\star\stackrel{g}=
\begin{pmatrix}
-\iota(n)&0&-\sigma&0\\
\rho&\iota(n)&0&\sigma\\
0&0&\iota(n)&0\\
0&0&\rho&-\iota(n)
\end{pmatrix}\, ,
\end{equation}
where $I_A\stackrel{g}{:=}(\rho,n,\sigma)$. From equations~\nn{Dform} and~\nn{scaletractor} we have
$$
n=\nabla \sigma\, ,\qquad \rho=-\frac 1d \big(\Delta^g+\J\big)\sigma\, .
$$

The following identities result trivially from standard properties of interior and exterior multiplication
as well as the definition of the maps $\I$ and $\Is$
\begin{center}
\begin{tabular}{c}
$\{\I,\X\}=0=\{\Is,\Xs\}\, ,$\\[1mm]
$\{\Is,\X\}=\sigma=\{\I,\Xs\}\, ,$\\[1mm]
$[\N,\I]=\I\, ,  \quad[h,\I]=0=[h,\Is]\, ,\quad [\N,\Is]=-\Is$\, .
\end{tabular}
\end{center}

 As always, we denote by $x$ the map
$\Gamma\ct^{k}M[w]\to\Gamma\ct^{k}M[w-1]$ obtained by multiplying by the
scale~$\sigma$; thus
 $x=\{\Is,\X\}=\{\I,\Xs\}\, .$

Finally we give identities that include the exterior and interior Thomas D-operators in our calculus.

\begin{proposition}\label{Iformalg}
Let $I^A$ be a scale tractor for an almost Riemannian conformal structure~$(M,c,\sigma)$,
and define the operator $y:=-I_A \slashed D{}^A$ that maps $\Gamma\ct^kM[w]\to\Gamma\ct^kM[w-1]$. Then, for $d\neq 4$, the following operator identities hold.
\begin{center}
\begin{tabular}{c}
$[h,x]=2\, x\, ,\quad$ $[x,y]=h\, ,\quad$ $[h,y]=-2\, y\, ,$
\\[3mm]
$
(h-2)\, \Ds x - h\  x \Ds \ =\ 2\, \Xs y +h(h-2)\Is
$,
\\[2mm]
$
(h-2)\, \D x - h\  x \D \ =\  2\, \X y +h(h-2)\I
$,
\\[3mm]
$
(h-2)\, y\X-h\, \X y\ =\ 2\,x\D-h(h-2)\I
$,
\\[2mm]
$
(h-2)\, y\Xs-h\, \Xs y\ =\ 2\, x\Ds-h(h-2)\Is
$.
\end{tabular}
\end{center}
When $d=4$, the above identities hold for any almost Einstein structure.
\end{proposition}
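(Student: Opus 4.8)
The plan is to derive every relation from the single tensorial master identity~\nn{genrel},
$$
h\, X^A \slashed{D}{}^B - (h-2)\, \slashed{D}{}^B X^A - 2\, X^B \slashed{D}{}^A + h(h-2)\, h^{AB}=0\, ,
$$
which holds as an operator identity on weighted tractors for $d\neq 4$, and (by Remark~\ref{Wtilde}) on almost Einstein structures when $d=4$ after replacing $W/(d-4)$ by the scale-dependent $\widetilde W/(d-4)$ inside $\slashed D$. All five displayed families will be produced by contracting one or both of the free indices $A,B$ with the scale tractor $I$ and then taking the exterior ($\varepsilon$) or interior ($\iota$) part in the remaining free index. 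Throughout I would keep $h=d+2\w$ on the far left as the weight operator, taking care that each monomial preserves the input density weight so that the coefficients $h$ and $h-2$ read off the correct final, rather than intermediate, weight.

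First I would dispose of the Cartan relations. Since $x$ multiplies by $\sigma\in\Gamma\ce M[1]$ and $y=-I_A\slashed{D}{}^A$ lowers weight by one, the relations $[h,x]=2x$ and $[h,y]=-2y$ are immediate from weight homogeneity together with $[h,\I]=0=[h,\Is]$. For $[x,y]=h$ I would contract~\nn{genrel} in both indices with $I_A I_B$; using $X^AI_A=\sigma=x$, $I_A\slashed{D}{}^A=-y$, and $I_AI^A=I^2$, and tracking the order of composition in each monomial, the four terms collapse to $(h-2)\big([x,y]-h\,I^2\big)=0$. Dividing by $(h-2)$ away from $h=2$ and extending to $h=2$ by continuity in the weight gives $[x,y]=h\,I^2$; the normalisation $I^2=1$ (automatic for the Poincar\'e--Einstein defining scale, and the relevant almost scalar constant normalisation here) then produces $[x,y]=h$.

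For the four $\D/\Ds$--$\X/\Xs$ identities I would contract only one index. Contracting~\nn{genrel} with $I_A$ and leaving $B$ free yields
$$
h\, x\,\slashed{D}{}^B - (h-2)\,\slashed{D}{}^B x + 2\,X^B y + h(h-2)\,I^B = 0\, ,
$$
and taking $\varepsilon(\cdot)_B$ and $\iota(\cdot)_B$ of this single-index relation --- using $\varepsilon(\slashed D)=\D$, $\varepsilon(X)=\X$, $\varepsilon(I)=\I$ and their interior analogues, together with the fact that the scalar $x$ passes through $\varepsilon$ and $\iota$ --- gives exactly the third and second displayed relations, respectively. Symmetrically, contracting with $I_B$ and leaving $A$ free gives
$$
-h\,X^A y + (h-2)\,y\,X^A - 2\,x\,\slashed{D}{}^A + h(h-2)\,I^A = 0\, ,
$$
whose exterior and interior parts in $A$ reproduce the fourth and fifth relations. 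No division is performed in these four cases, so they hold at all weights as polynomial operator identities in $w$.

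The genuinely substantive input is the master identity~\nn{genrel} itself (Lemma~\ref{DXid} and Remark~\ref{hashing}); granted that, the remaining work is purely bookkeeping. The main thing to watch will be the placement of the weight operator --- distinguishing $h$ from $h-2$ according to whether $X^A$ (weight-raising) or $\slashed{D}{}^A$ (weight-lowering) acts first --- since exactly this asymmetry is what survives contraction and is what a careless reordering would destroy. The only other delicate points are the removable singularity at $h=2$ in the $[x,y]$ computation and, for $d=4$, the appeal to the scale-dependent $\widetilde W$ of Remark~\ref{Wtilde}, which restores the master identity precisely on almost Einstein structures.
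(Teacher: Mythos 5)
Your proof is correct and follows essentially the same route as the paper: the paper likewise obtains the four exterior identities by contracting the master identity~\nn{genrel} with the scale tractor on one of its indices and then taking the exterior or interior part in the remaining index, and it handles $d=4$ exactly as you do, via Remark~\ref{Wtilde} together with the proof of Proposition~\ref{Yform}. The only divergence is minor: where you re-derive $[x,y]=h\,I^2$ from the double contraction $I_AI_B$ of~\nn{genrel} (correctly noting the removable singularity at $h=2$ and the normalisation $I^2=1$), the paper simply cites the first three identities from~\cite{GWasym}; your variant is self-contained and makes explicit a normalisation the paper leaves implicit.
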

\begin{proof}
The first three identities were proven already in~\cite{GWasym}. The remaining identities are obtained contracting equation~\nn{genrel}
with the scale tractor on one of its indices and then performing either exterior or interior multiplication with the other. For the $d=4$ almost Einstein case, the same proof applies using also Remark~\ref{Wtilde} and the proof of Proposition~\ref{Yform}.
\end{proof}

\subsection{Poincar\'e--Einstein structures} \label{solnalgform}

We now specialise to Poincar\'e--Einstein structures.
For concreteness, we recall some basic definitions. A Riemannian metric $g^o$ on the interior $M^+$ of a compact manifold $M$ with boundary $\Sigma:=\partial M$
is said to be conformally compact if it extends to~$\Sigma$ by
$
g=r^2 g^o
$,
with~$g$ non-degenerate up to~$\Sigma$ equaling the zero locus of a defining function~$r$; that is~$\Sigma$ is the zero locus ${\mathcal Z}(r)$
and $\extd r|_\Sigma\neq 0$. 
If the normal to $\Sigma$ is nowhere null, then $g$  determines a conformal structure~$c_{_\Sigma}$. In this case~$(\Sigma,c_{_\Sigma})$ 
is called the conformal infinity of~$M^+$.
If the defining function obeys
$$
|\extd r|^2_g=1\, ,
$$
along~$\Sigma$, the sectional curvatures of $\g$ tend to $-1$ at infinity and the structure is
said to be asymptotically hyperbolic (AH)~\cite{Ma-hodge}.

Tractor calculus enables a treatment of   any
  conformally compact structure~\cite{Goal}.
A very strong indication that  conformal geometries and their tractor treatment is fruitful  for the study of physical models, is their strong predilection for Einstein metrics~\cite{Sasaki}, as partly captured by the following result~\cite{GoNur}. 
\begin{theorem}\label{cein}
  On a conformal manifold $(M,c)$  there is a 1-1
  correspondence between conformal scales $\si\in \Gamma\ce M[1]$, such
  that $g^\si=\si^{-2}\bg$ is Einstein, and parallel standard tractors
  $I\in \Gamma \ct M$ with the property that $X_A I^A$ is nowhere vanishing. The
  mapping from Einstein scales to parallel tractors is given by
  $\si\mapsto \frac{1}{d}D^A \si$ while the inverse is $I^A \mapsto
  X^A I_A$.
\end{theorem}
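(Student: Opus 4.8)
The plan is to verify directly that the two displayed maps are mutually inverse bijections between the two sets, with the substantive content residing in matching the parallel condition on $I$ with the Einstein condition on $\sigma$. First I would record the components of the scale tractor: applying the Thomas D-operator formula~\nn{Dform} to $\sigma\in\Gamma\ce M[1]$ (so $w=1$) gives, for a choice $g\in c$,
$$
I^A=\tfrac1d D^A\sigma \stackrel{g}{=}\begin{pmatrix}\sigma\\ \nabla_a\sigma\\ -\tfrac1d(\Delta+\J)\sigma\end{pmatrix},
$$
in agreement with~\nn{Iext}. Since $X^A\stackrel{g}{=}(0,0,1)$ and the tractor metric pairs the top and bottom slots, $X_AI^A=\sigma$; thus $\sigma\mapsto\tfrac1d D^A\sigma$ followed by $I^A\mapsto X^AI_A$ is already the identity, and it remains only to prove that $\tfrac1d D^A\sigma$ is parallel precisely when $\sigma$ is an Einstein scale, and that every admissible parallel $I$ arises this way. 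I would also recall the standard fact that, on the open set where $\sigma\neq0$, the metric $g^\sigma=\sigma^{-2}\bg$ is Einstein if and only if the conformally invariant \emph{almost-Einstein equation} $(\nabla_a\nabla_b\sigma+\Rho_{ab}\sigma)_\circ=0$ holds, where $(\,\cdot\,)_\circ$ denotes the $\bg$-trace-free part.

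For the reverse direction, suppose $I\in\Gamma\ct M$ is parallel with $\sigma:=X_AI^A$ nowhere vanishing, and write $I\stackrel{g}{=}(\alpha,\beta_b,\gamma)$, so $\alpha=\sigma$. Reading off the three slots of $\nabla^\ct_aI=0$ from the tractor connection of Section~\ref{TRACTORS} gives: the top slot forces $\beta_a=\nabla_a\alpha=\nabla_a\sigma$; the $\bg$-trace of the middle slot forces $\gamma=-\tfrac1d(\Delta+\J)\sigma$, so that $I=\tfrac1d D^A\sigma$; and the trace-free part of the middle slot is exactly $(\nabla_a\nabla_b\sigma+\Rho_{ab}\sigma)_\circ=0$, i.e.\ $\sigma$ is an Einstein scale. (The bottom slot then holds automatically and needs no separate check.) This simultaneously shows that $I\mapsto X^AI_A$ lands among Einstein scales and is a left inverse of $\sigma\mapsto\tfrac1d D^A\sigma$.

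The forward direction is where the work lies. Given an Einstein scale $\sigma$, I would compute $\nabla^\ct_aI^B$ for $I=\tfrac1d D^A\sigma$ slot by slot: the top slot is $\nabla_a\sigma-\nabla_a\sigma=0$ identically; the $\bg$-trace of the middle slot vanishes by the very definition of $\gamma$, and its trace-free part is $(\nabla_a\nabla_b\sigma+\Rho_{ab}\sigma)_\circ$, which vanishes by hypothesis. The only remaining content is the bottom slot $\beta_a:=\nabla_a\gamma-\Rho_{ac}\nabla^c\sigma$, and showing $\beta_a\equiv0$ is the main obstacle. I would dispatch it invariantly: since $\nabla^\ct$ preserves the tractor metric and the top and middle slots of $\nabla_aI$ already vanish, $\nabla_a(I^2)=2\,h_{BC}I^B\nabla_aI^C=2\sigma\,\beta_a$. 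On the other hand, in the scale $g^\sigma$ one has $I^2=-\tfrac2d\J^{g^\sigma}$, a function of the scalar curvature of the Einstein metric $g^\sigma$ and hence constant by Schur's lemma (for connected $M$, $d\geq3$). Therefore $\nabla_a(I^2)=0$, forcing $\sigma\beta_a=0$, whence $\beta_a=0$ on the dense open set $\{\sigma\neq0\}$ and, by smoothness of $I$, everywhere; so $I$ is parallel. As an alternative to invoking Schur, $\beta_a\equiv0$ follows by direct computation: differentiating the established middle-slot relation $\nabla_a\nabla_b\sigma=-\Rho_{ab}\sigma-\bg_{ab}\gamma$, commuting derivatives via the Ricci identity, and using the contracted Bianchi identity $\nabla^a\Rho_{ab}=\nabla_b\J$, one recovers exactly $\nabla_a\gamma=\Rho_{ac}\nabla^c\sigma$. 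Either route completes the proof that the two maps are mutually inverse bijections.
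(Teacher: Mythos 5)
Your proof is correct and takes essentially the same approach as the paper: Theorem~\ref{cein} is quoted from~\cite{GoNur}, and the paper's only indication of proof is that the statement ``is easily verified using~\nn{trconn}'' --- precisely the slot-by-slot computation with the tractor connection that you carry out (compare the explicated parallel conditions~\nn{almostEinstein}). Your handling of the one nontrivial slot, $\nabla_a\rho=\Rho_{ab}n^b$, via constancy of $I^2=-\tfrac{2}{d}\J^{g^\sigma}$ by Schur's lemma (or, alternatively, the contracted Bianchi identity $\nabla^a\Rho_{ab}=\nabla_b\J$) is the standard way to fill in that detail, and your only superfluous step is restricting to the dense set $\{\sigma\neq 0\}$, which here is all of $M$ since $\sigma$ is a true scale.
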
 
 In the above, $X^A\in \ct^AM[1]$ is  the {\it canonical tractor}--a distinguished invariant tractor; see \hyperlink{canonical tractor}{Section~\ref{algebra}} for further details. The statement of the Theorem  is easily verified
using~\nn{trconn}, or may be viewed as an easy consequence of the {\em
  definition} of the tractor connection from~\cite{BEG}. 
  
 For concreteness and later use, we explicate in tensor terms the parallel conditions $$\nabla^\ct I^A=0$$ for  the scale tractor
for some $g\in c$:
\begin{equation}\label{almostEinstein}
\left\{
\begin{array}{ccl}
\nabla_a \sigma &=&\ \  n_a\, ,\\[1mm]
\nabla_a n_b & =& \! -\sigma \Rho_{ab} -\rho g_{ab}\, ,\\[1mm]
\nabla_a \rho &=&\ \ \Rho_{ab} n^b\, .
\end{array}\right.
\end{equation}

 In light of the above Theorem and in line with~\cite{GoIP}, we will  say that a conformal manifold $(M,c)$, is {\it almost Einstein} if it is equipped with a non-zero parallel standard tractor $I$. This notion slightly enlarges the standard Einstein condition. Indeed, from the Theorem above it follows that the \hypertarget{scale}{{\it defining scale}}~$\sigma$ is non-zero on an open dense set~\cite{GoNur}. Moreover, if non-empty, the zero locus of the defining scale~$\sigma$
 is a conformal infinity. {\it I.e., the almost Einstein condition extends the standard Einstein one to describe manifolds with a conformal infinity.} 
 This boundary structure is precisely the one required to study a wide range of physical applications. Let us spell out some pertinent details:

%

Recall that  an AH manifold
which  is Einstein in its interior is called Poincar\'e--Einstein.  
 Following~\cite{GoPrague} and~\cite{GoIP}, this   fits precisely  in the almost  Einstein picture and provides the first
 exterior identity specialized to this setting.
\begin{proposition}
  A Poincar\'e--Einstein manifold is an almost
  Einstein manifold $(M,c,\sigma)$ with boundary $\Sigma$ equaling the zero locus of~$\sigma$
   such that $I^2:=I_A I^A=1$; thus
\begin{center}
$
\{ \Is,\I\}=1\, .
$
\end{center}
\end{proposition}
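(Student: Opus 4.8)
The plan is to read off the claim from the tractor description of Einstein scales, and then fix the normalisation of the scale tractor using the asymptotically hyperbolic condition. First I would apply Theorem~\ref{cein}: since a Poincar\'e--Einstein manifold is by definition conformally compact with negative Einstein interior metric $g^o=\sigma^{-2}\bg$, the scale $\sigma$ corresponds to the parallel standard tractor $I=\tfrac1d D^A\sigma$, with $X_AI^A=\sigma$ nowhere vanishing on the (open dense) interior. This is exactly the data $(M,c,\sigma)$ of an almost Einstein structure, and the boundary $\Sigma=\partial M$ is the zero locus ${\mathcal Z}(\sigma)$ by construction of the defining density. This settles the structural content of the Proposition; the remaining work is the normalisation $I^2=1$ and the resulting anticommutator.

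For $I^2=1$, the key observation is that $I$ is parallel and the tractor metric is preserved by the tractor connection, so $I^2=I_AI^A$ is \emph{constant} on $M$ (equivalently, the almost scalar constant condition holds automatically). It therefore suffices to evaluate this constant at a single point. The cleanest evaluation is along $\Sigma$: in any $g\in c$ one has $I^A\stackrel{g}=(\sigma,n_a,\rho)$ with $n_a=\nabla_a\sigma$, so the tractor metric gives $I^2=|n|^2_g+2\sigma\rho$, which reduces on $\Sigma={\mathcal Z}(\sigma)$ to $|n|^2_g$; the AH condition~\nn{unit conormal} then pins this to $1$, whence $I^2\equiv1$. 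As a consistency check one may instead compute in the interior scale $g^o$, where $I^2\stackrel{g^o}{=}-\tfrac{2\J}{d}$ and the negative Einstein normalisation $\Ric^{g^o}=-n\,g^o$ forces $\J^{g^o}=-\tfrac d2$, again giving $I^2=1$. The only genuine subtlety here is that one needs \emph{both} inputs: the Einstein condition to make $I^2$ constant, and the AH scale normalisation to fix the value of that constant to unity.

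Finally, the anticommutator $\{\Is,\I\}=1$ is purely algebraic. Since $\Is=\iota(I)$ and $\I=\varepsilon(I)$ are interior and exterior multiplication by the fixed tractor $I$ acting on $\Gamma\ct^\bullet M[\,\cdot\,]$, the standard exterior-algebra identity $\{\iota(v),\varepsilon(v)\}=|v|^2$ (with $|v|^2$ the square-norm in the tractor metric; cf. the relation $\{\iota(n),\varepsilon(n)\}=n^2$ used in the proof of Proposition~\ref{holpro1}) yields $\{\Is,\I\}=I^2$ as operators on tractor forms. Combining with $I^2=1$ gives $\{\Is,\I\}=1$. I do not expect any real obstacle: the entire content is the identification of the parallel scale tractor via Theorem~\ref{cein} and the fixing of its norm, both of which follow directly from material already established in the excerpt.
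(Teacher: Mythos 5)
Your proof is correct and is exactly the argument the paper intends: the paper offers no written proof of this Proposition, deferring to~\cite{GoPrague,GoIP} and Theorem~\ref{cein}, and your chain of reasoning---identify $\sigma$ with the parallel scale tractor via Theorem~\ref{cein}, note that parallelness and $\nabla h=0$ make $I^2$ constant, pin the constant to $1$, and conclude $\{\Is,\I\}=I^2=1$ from the standard identity $\{\iota(v),\varepsilon(v)\}=|v|^2$ for interior and exterior multiplication---is precisely the standard verification those citations encode. One small inaccuracy in your closing remark: the two inputs are not both genuinely needed, since (as your own consistency check shows) the normalised Einstein condition $\Ric^{g^o}=-n\,g^o$ alone forces $\J^{g^o}=-\tfrac d2$ and hence $I^2=-\tfrac{2\J}{d}=1$ on the open dense interior, whence $I^2\equiv 1$ everywhere by continuity, with the AH normalisation $|\nabla\sigma|^2_g=1$ along $\Sigma$ then automatic rather than an independent hypothesis.
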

In view of this observation, it makes sense to treat the almost Einstein setting generally.

\subsubsection{Exterior calculus of almost Einstein scales}

Here we develop exterior identities
extending the solution generating algebra~\nn{sl2} to the almost Einstein setting.

Using that $I$ is parallel we come to the next exterior identities.
\begin{proposition}\label{yproof}
If $(M,c,\sigma)$ is almost Einstein, then
\begin{center}
\begin{tabular}{c}
$\{\I,\D\}=0=\{\Is,\Ds\}\, .$
\end{tabular}
\end{center}
\end{proposition}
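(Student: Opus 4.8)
The plan is to exploit the single structural fact that, for an almost Einstein structure, the scale tractor is parallel, $\nabla^\ct I^A=0$ (Theorem~\ref{cein} and the relations~\nn{almostEinstein}), so that wedging and contracting with $I$ anticommute with the exterior and interior Thomas $D$-operators. I would give the conceptual argument and then indicate the elementary matrix verification as a check.

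First, since $\varepsilon(I)$ and $\iota(I)$ are algebraic operations built from the parallel section $I$, they commute with the coupled Levi-Civita--tractor connection $\nabla$, and hence with every operator assembled from $\nabla$ alone; in particular with $\extd=\varepsilon(\nabla)$, with $\cod=\iota(\nabla)$, and with the form Laplacian $\FL$. Writing $\slashed D{}^A=D^A-X^A\Omega^{\hash\hash}$, the piece $D^A$ is built from $\nabla$ and the curvature quantities $\Rho,\J$ (see~\nn{Dform}), while $\Omega$ is a fixed curvature tractor independent of $\sigma$. Because $I$ is parallel, $\slashed D$ ``slides through'' wedging by $I$: when $D^A$ is applied to $I\wedge\F$, the first- and second-order pieces act only on $\F$, as $\nabla I=0=\Delta I$. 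This is precisely the statement that $\D=\varepsilon(\slashed D)$ behaves like an exterior-derivative-type operation in the tractor index and that $I$ is ``closed'' with respect to it.

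The cleanest way to package this is through the Fefferman--Graham ambient correspondence of~\cite{GoPetCMP,CapGoamb} (collected in Appendix~\ref{AMBIENT}): under it $\D$ corresponds to the ambient exterior derivative $\tilde d$ and $\I$ to wedging with the ambient $1$-form $\tilde I$ determined by $I$; since $I$ is parallel, $\tilde I$ is parallel, hence closed, $\tilde d\,\tilde I=0$. As $\tilde d$ is an antiderivation one has the general identity $\{\tilde d,\varepsilon(\tilde I)\}=\varepsilon(\tilde d\,\tilde I)=0$, which is exactly $\{\D,\I\}=0$. Dually, $\Ds$ corresponds to the ambient codifferential and $\Is$ to interior multiplication by the parallel (hence divergence-free) vector, giving $\{\Ds,\Is\}=0$. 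Alternatively, the second identity follows from the first by conjugating with the tractor Hodge star, using $\Ds=(-1)^{\N-1}\star^{-1}\D\,\star$ and $\Is=(-1)^{\N-1}\star^{-1}\I\,\star$.

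As an entirely elementary verification I would compute directly with the $4\times4$ matrices~\nn{D} for $\D$ and~\nn{Iext} for $\I$, forming $\I\D+\D\I$ entrywise and substituting the parallel relations $n_a=\nabla_a\sigma$, $\nabla_a n_b=-\sigma\Rho_{ab}-\rho g_{ab}$, $\nabla_a\rho=\Rho_{ab}n^b$ of~\nn{almostEinstein}, together with the commutators $[\extd,\sigma]=\varepsilon(n)$, $[\cod,\sigma]=\iota(n)$, $\{\extd,\varepsilon(n)\}=0$, $\{\cod,\iota(n)\}=0$ of~\nn{comms}. The main obstacle is the bookkeeping of the second-order $\FL$ entries and the algebraic $\J-2\P$ entries: one must check that the terms in which $\slashed D$ differentiates the components $(\rho,n_a,\sigma)$ of $I$ assemble into exactly the combinations annihilated by $\nabla I=0$, which is where parallelism is indispensable. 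In dimension four, where $\slashed D$ is \emph{a priori} undefined, the almost Einstein hypothesis supplies the scale-dependent substitute $\widetilde W/(d-4)$ of Remark~\ref{Wtilde}, so that the identities persist (cf.\ the proof of Proposition~\ref{Yform}).
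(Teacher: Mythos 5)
Your proposal is correct in substance and reproduces the paper's two-pronged structure: the paper likewise offers the elementary verification (anticommuting the matrix expressions for $\D$, $\Ds$ against those for $\I$, $\Is$ in~\nn{Iext} using the parallel identities~\nn{almostEinstein}) together with a ``slicker'' argument from parallelism, including the same dimension-four caveat via $\widetilde W/(d-4)$. Where you genuinely differ is in the packaging of the slick argument. The paper stays in the tractor picture: parallelism gives $[D^A,I^B]=0$, hence $\{\I,\varepsilon(D)\}=0=\{\Is,\iota(D)\}$, and the correction term in $\slashed D{}^A=D^A-X^A\Omega^{\hash\hash}$ is then disposed of by the almost-Einstein identity $I_AW^{ABCD}=0$. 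You pass instead to the ambient picture, where the $\Omega^{\hash\hash}$ correction never appears as a separate term---it is absorbed into the ambient form Laplacian---which is a genuine simplification; your derivation of $\{\Is,\Ds\}=0$ from $\{\I,\D\}=0$ by Hodge conjugation is also sound (with the paper's conventions $\Ds=(-1)^{\N-1}\star^{-1}\D\,\star$ and the analogous relation for $\Is$, the degree-dependent signs cancel to give $\{\Is,\Ds\}=-\star^{-1}\{\I,\D\}\star$).

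Three points need repair. First, your identification ``$\D$ corresponds to the ambient exterior derivative'' is inaccurate: by Appendix~\ref{AMBIENT}, $\D$ corresponds to $\bm{\mathcal D}=h\,\extd-\varepsilon(\aX)\FL$ (and $\Ds$ to $h\,\cod-\iota(\aX)\FL$), so the antiderivation identity $\{\extd,\varepsilon(\aI)\}=\varepsilon(\extd\,\aI)=0$ alone does not yield $\{\D,\I\}=0$. The argument survives because parallelism of $\aI$ also gives $[\FL,\varepsilon(\aI)]=0$ and $[h,\varepsilon(\aI)]=0$, while $\{\varepsilon(\aX),\varepsilon(\aI)\}=0$ holds for wedge products, whence $\{\bm{\mathcal D},\varepsilon(\aI)\}=0$ term by term---but this must be said. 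Second, in your tractor-level paragraph the justification that $\slashed D$ slides past $\I$ ``because $\Omega$ is a fixed curvature tractor independent of $\sigma$'' misses the relevant point: $\Omega^{\hash\hash}$ acts tensorially on every tractor factor, including the factor $I$ in $\I\,\F$, and the offending terms vanish precisely because $I_AW^{ABCD}=0$ for almost Einstein structures---the fact the paper's proof explicitly invokes. Third, the ambient route requires a parallel extension $\aI=\aNd\tilde\sigma$, which is available to all orders only for $d$ odd; for $d$ even one only has $\al^j\tilde\sigma=0$ along $\cG$ to finite order (see the end of Appendix~\ref{AMBIENT}), so you should note that this finite-order statement suffices here, or fall back on the paper's tractor-level argument in even dimensions.
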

\begin{proof}
 This can easily be directly verified by anticommuting the 
matrix expressions for the \hyperlink{exterior D}{exterior} and \hyperlink{interior D}{interior} Thomas D-operators of Section~\ref{algebra}
with those for the exterior and interior scale tractors~$\I$ and $\Is$ in Equation~\nn{Iext}, 
employing the almost Einstein identities~\nn{almostEinstein}.
A slicker argument is to note that since $I$ is parallel
$$
[D^A,I^B]=0\, ,
$$
 it immediately follows that $\{\I,\varepsilon(D)\}=0=\{\Is,\iota(D)\}$ so it only remains to verify that $\I$ and $\Is$ anticommute 
with $\X\Omega^{\hash\hash}$ and $\Xs\, \Omega^{\hash\hash}$, respectively. Away from dimension four, this holds trivially since for almost Einstein structures the $W$-tractor obeys $I_A W^{ABCD}=0$. In dimension four, $\I$ and $\Is$ still commute with the operator $\frac{1}{d-4}\widetilde  W^{\hash\hash}$ of~\cite[Section 4]{powerslap} which is also discussed  in the proof of Proposition~\ref{Yform} below. From this $\slashed D{}^A$ can still be defined, as commented upon in Remark~\ref{Wtilde}, and  we again obtain the result.
\end{proof}

Next we consider anticommutators of $\Is$ and $\D$ (or $\I$ and $\Ds$).
\begin{proposition}\label{Yform}
The map
$$
y:\Gamma\ct^kM[w]\rightarrow \Gamma\ct^kM[w-1]
\quad\mbox{where}\quad 
y:=-I_A\slashed D{}^A
$$
obeys
\begin{center}
$
\{\Is,\D\} = -y = \{\I,\Ds\}\, .
$
\end{center}

Moreover
\begin{center}
$
[\D,y]=[\I,y]=0=[\Is,y]=[\Ds,y]\, .
$
\end{center}
\end{proposition}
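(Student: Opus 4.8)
The plan is to establish the two displayed lines separately: first the anticommutator identities $\{\Is,\D\}=-y=\{\I,\Ds\}$, and then to deduce the four commutators as a purely formal consequence of these together with nilpotency.

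For the first statement I would compute $\{\Is,\D\}=\{\iota(I),\varepsilon(\slashed D)\}$ directly, following the same strategy as the proof of Proposition~\ref{yproof}. Writing $\slashed D{}^A=D^A-X^A\Omega^{\hash\hash}$, the graded Clifford relation $\{\iota(I),\varepsilon(\eta)\}=I_A\eta^A$ for interior and exterior multiplication produces a ``diagonal'' contribution $I_AD^A$, while the off-diagonal terms, in which $\iota(I)$ contracts an index of the form and $\varepsilon(D)$ wedges on another, cancel pairwise between the two orderings. The only subtleties are the operator nature of $\slashed D$ and its algebraic curvature correction: because $I$ is parallel one has $[\slashed D{}^A,I^B]=0$, so the coupled Levi--Civita--tractor derivative passes through $I$ and the cancellation is exactly the algebraic one, giving $\{\iota(I),\varepsilon(D)\}=I_AD^A$; and because $I_A W^{ABCD}=0$ on an almost Einstein structure (hence $I_A\Omega^{A\cdots}=0$), the correction term contributes only through $\{\iota(I),\varepsilon(X)\}\,\Omega^{\hash\hash}=(X^AI_A)\,\Omega^{\hash\hash}=\sigma\,\Omega^{\hash\hash}$, the cross commutator $[\iota(I),\Omega^{\hash\hash}]$ vanishing for the same reason. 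Assembling these pieces yields $\{\Is,\D\}=I_AD^A-\sigma\,\Omega^{\hash\hash}=I_A\slashed D{}^A=-y$. The identity $\{\I,\Ds\}=-y$ is proved verbatim in the mirror (interior/exterior) form, or deduced by applying the tractor Hodge star $\star$, under which $\I\leftrightarrow\Is$ and $\D\leftrightarrow\Ds$. The four-dimensional case is handled exactly as in Remark~\ref{Wtilde} and Proposition~\ref{yproof}, replacing $W/(d-4)$ by the scale-dependent $\widetilde W/(d-4)$ of~\cite{powerslap}, which still anticommutes appropriately with $\I$ and $\Is$ on an Einstein scale.

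With the first statement in hand, the commutators are immediate. The key observation is that whenever $a^2=0$ the operator $a$ commutes with any anticommutator $\{a,b\}$, since $[a,\{a,b\}]=a^2b-ba^2=0$. Applying this with $y=-\{\Is,\D\}$: taking $a=\D$ and using $\D^2=0$ gives $[\D,y]=0$, while taking $a=\Is$ and using $\Is^{\,2}=0$ (interior multiplication by a fixed tractor squares to zero) gives $[\Is,y]=0$. Symmetrically, from the representation $y=-\{\I,\Ds\}$, taking $a=\I$ with $\I^{2}=0$ gives $[\I,y]=0$, and taking $a=\Ds$ with $\Ds^{\,2}=0$ gives $[\Ds,y]=0$. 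This exhausts the list, so $[\D,y]=[\I,y]=0=[\Is,y]=[\Ds,y]$.

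The genuine work is entirely in the first statement: one must track the graded signs in the Clifford cancellation and confirm that the algebraic curvature correction inside $\slashed D$ does not disturb the clean contraction $I_A\slashed D{}^A$, which is precisely where parallelism of $I$ and the almost Einstein condition $I_AW=0$ (and in $d=4$ the $\widetilde W$ substitution) enter. By contrast, once $\{\Is,\D\}=-y=\{\I,\Ds\}$ is available, the commutator statement is a one-line consequence of the nilpotency relations $\D^2=\Ds^{\,2}=\I^2=\Is^{\,2}=0$, all of which are already recorded.
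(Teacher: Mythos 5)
Your argument is correct and coincides with the paper's own (second, ``slicker'') proof: the paper likewise derives $\{\Is,\D\}=\{\I,\Ds\}=I\cdot\slashed D$ from $[I^A,\slashed D{}^B]=0$ (parallelism of $I$, with the almost Einstein condition $I_AW^{ABCD}=0$ and the $d=4$ case via $\widetilde W/(d-4)$) together with the properties of exterior and interior multiplication, and it obtains the commutators by exactly your nilpotency trick, {\it e.g.}\ $[\D,y]=[\{\D,\Is\},\D]=\D\Is\D-\D\Is\D=0$. The only difference is cosmetic: the paper's primary presentation is a direct matrix verification in a scale $g\in c$, which it uses to record the explicit expression for $y$ in terms of a conformal Robin-type operator and a Yamabe-type operator on forms for later use, something your proof does not need for the proposition itself.
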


\begin{proof}
As usual, a rudimentary proof is to evaluate the matrix expression for each of the three operators for a given $g\in c$. The result agrees for
each of these, and since the operator~$y$ is a central player in this Article, we record the explicit result:
$$
-y\stackrel{g}=(d+2w-2) \delta_R - \sigma \Big({\square_Y} +\frac{\J}2 (d+2w-2) {\mathbb 1}\Big)\, ,
$$
where $\mathbb 1$ is the identity (matrix), and along~$\Sigma$,  $\delta_R$ is a conformal Robin-type operator for forms. It is given in general by
\begin{equation}\label{Robinform}
\delta_R:=\begin{pmatrix}
\nabla_n+w\rho&-\iota(n) & \varepsilon(n) & 0\\[1mm]
\varepsilon(\nabla\rho)&\nabla_n+w\rho&0&\varepsilon(n)\\[1mm]
-\iota(\nabla\rho)&0&\nabla_n+w\rho&\iota(n)\\[1mm]
0&-\iota(\nabla\rho)&-\varepsilon(\nabla\rho)&\nabla_n+w\rho
\end{pmatrix}\, .
\end{equation}
At weight  $w=1-d/2$, $\square_Y$ is a conformally invariant Yamabe-type operator for forms. It is given at general weights by
\begin{center}
\scalebox{.85}{
$
\!\square_Y\!:=\begin{pmatrix}
\!\FL \!+\!(\degree-\frac{d}2)(\J-2\P)\!\!\!\!\!\!\!&-2\cod&2\extd&2\degree-d\\[2mm]
-[\J-2\P,\extd]&\!\!\!\!\!\!\!\FL\! +(\degree-\frac{d}2+1)(\J-2\P)\!\!\!\!\!\!\!&0&2\extd\\[2mm]
-[\J-2\P,\cod]&0&\!\!\!\!\!\!\!\FL \!+\!(\degree-\frac{d}2-1)(\J-2\P)\!\!\!\!\!\!\!&2\cod\\[2mm]
\!-\Rho_a^b\Rho^a_b+2\End(\Rho.\Rho)+\frac2{\sigma}\slashed{\mathbb B}\!\!\!
&-[\J-2\P,\cod]&[\J-2\P,\extd]&\!\!\!\FL\! +\!(\degree-\frac{d}2)(\J-2\P)\!
\end{pmatrix}\, .
$}
\end{center}
In this formula the 
slashed Bach endomorphism~$
\slashed{\mathbb B}$ is defined in {\it any dimension} in terms of the Cotton tensor by
$\big(n^c(\nabla_c \Rho_{a}^b-\nabla_a \Rho_{c}^b )\big)\, .
$
In dimensions not equal to four, on almost Einstein structures,  it is related to the standard Bach tensor by
$$
\slashed{\mathbb B} = \frac{\sigma{\mathbb B}}{d-4}\, .
$$ 
Indeed, in dimensions other than four, the $W$-tractor {\em divided by $d-4$}---from which one builds $\slashed D{}^A$---is a sum of Weyl and Cotton tensor terms, plus the Bach tensor over $d-4$. Replacing the Bach over $d-4$ contribution by $\slashed {\mathbb B}/\sigma$, yields the tensor $\widetilde  W/(d-4)$. It equals the standard $W/(d-4)$ in any dimension not equal to four, but is also a well-defined tractor on four dimensional conformally Einstein manifolds~\cite{powerslap}. In this way we have defined
and computed the third expression $-I\cdot \slashed D$ in arbitrary dimensions.

An alternative proof is to
define $\slashed D{}^A$ in this way, and then use that $[I^A,\slashed D{}^B]=0$. Thus the equalities $\{\Is,\D\}=\{\I,\Ds\}=I\cdot\slashed D$,
follow immediately from the properties of exterior and interior multiplication.  The remaining commutation relations quoted are also trivial,
for example
$$
[\D,y]=[\{\D,\Is\},\D]=\big(\Is\D+\D\Is\big)\D-\D\big(\Is\D+\D\Is\big)=\D\Is\D-\D\Is\D=0\, ,
$$
because $\D$ is nilpotent.
\end{proof}

\subsection{Boundary tractors}\label{tractorsontheedge}

Let us first recall some general facts concerning hypersurface tractors~\cite{BEG,Grant,Goal,GoIP} before specializing to Poincar\'e--Einstein structures
and tractor forms in order to develop natural boundary conditions for the extension problems studied in the next Section. Assume, therefore,
that $\Sigma$ is a boundary component of $(M,c)$ and the conformal structure extends smoothly to a collar neighborhood of~$\Sigma$.

Assume~$\Sigma$ is smooth and is the zero locus of a defining density~$\sigma$ (see Section~\ref{CGeom})
which is also a defining scale for our structure.
 Let $n_a\in\ce_a[1]$ be a unit conormal so that, along~$\Sigma$
$$
\bg^{ab} n_a n_b = 1\, .
$$
In a scale $g\in c$, the mean curvature of~$\Sigma$ is
$$
H^g=\frac{1}{d-1}\, \nabla^T_a n^a\, ,\qquad \nabla^T:=\nabla-n \nabla_n\, .
$$
From this data, we can build the {\it normal tractor} $N\in \Gamma\ct M|_\Sigma$ of~\cite{BEG}
$$
N^A\stackrel{g}= \begin{pmatrix}0\\n_a\\[1mm]-H^g\end{pmatrix}\, .
$$
The normal tractor satisfies $N_A N^A=1$ and is linked to the
scale tractor when $(M,c,\sigma)$ is an almost scalar constant structure.
By observing
$$
I|_\Sigma\stackrel{g}=\begin{pmatrix}0\\\nabla_a \sigma\\[1mm]-\frac{1}{d}\Delta \sigma\end{pmatrix}\, ,
$$
and using $I^2=1$ we have the following result~\cite[Proposition 3.5]{Goal}.
\begin{lemma}
If $(M,c)$ is an almost scalar constant structure with defining scale singularity set~$\Sigma$ and scale tractor $I$, then
the normal tractor of $\Sigma$ satisfies
$$
N=I|_\Sigma\, .
$$ 
\end{lemma}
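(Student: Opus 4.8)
The plan is to compare the two tractors $N$ and $I|_\Sigma$ slot-by-slot in a fixed scale $g\in c$, and then observe that they agree. The final statement of the Lemma asserts that for an almost scalar constant structure with defining scale $\sigma$ whose zero locus is $\Sigma$, the normal tractor $N$ equals the restriction $I|_\Sigma$ of the scale tractor. The excerpt has already handed us the two expressions we need: the normal tractor
$$
N^A\stackrel{g}= \begin{pmatrix}0\\ n_a\\[1mm]-H^g\end{pmatrix}
$$
and the restriction
$$
I|_\Sigma\stackrel{g}=\begin{pmatrix}0\\ \nabla_a\sigma\\[1mm]-\tfrac1d\Delta\sigma\end{pmatrix}\, .
$$
So the proof reduces to matching the three slots.

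First I would dispose of the top slot: both vanish identically, so there is nothing to check there. For the middle slot, I would use that $\sigma$ is a \emph{defining density} for $\Sigma$ and recall from Section~\ref{CGeom} that $n=\nabla^\tau\sigma$; along $\Sigma$ the defining scale condition (together with the almost scalar constant hypothesis $I^2=1$, equivalently $|n|_g^2=1$ along $\Sigma$, cf.\ equation~\nn{unit conormal}) guarantees that $n_a=\nabla_a\sigma$ is precisely a \emph{unit} conormal along $\Sigma$. Thus the middle slots coincide. The only genuine computation is the bottom slot, where I must show $-\tfrac1d\Delta\sigma = -H^g$ along $\Sigma$, i.e.\ that $\tfrac1d\Delta\sigma$ restricts to the mean curvature.

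For the bottom slot I would proceed as follows. Decompose $\Delta\sigma=\bg^{ab}\nabla_a\nabla_b\sigma = \nabla_a(\nabla^a\sigma)=\nabla_a n^a$, and split the divergence into its tangential and normal parts using $\nabla^T:=\nabla-n\nabla_n$, exactly as in the definition of $H^g$. This gives $\nabla_a n^a = \nabla^T_a n^a + n^a\nabla_n n_a$. The first term is $(d-1)H^g$ by the definition of mean curvature supplied above. The second term is $n^a\nabla_n n_a = \tfrac12\nabla_n(n_an^a)$, which vanishes along $\Sigma$ because $n_an^a=1$ there (so its normal derivative is zero); more carefully, one uses that the almost scalar constant condition forces $|n|^2$ to be constant to the relevant order, so this term contributes nothing along $\Sigma$. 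Hence $\Delta\sigma|_\Sigma=(d-1)H^g$\,---\,wait, this would give $\tfrac1d\Delta\sigma=\tfrac{d-1}{d}H^g$, not $H^g$, so the bookkeeping of the trace and of the $\rho$-component must be done with care. The cleanest route is instead to invoke Theorem~\ref{cein} / the almost Einstein parallel conditions~\nn{almostEinstein}: the middle equation $\nabla_a n_b=-\sigma\Rho_{ab}-\rho g_{ab}$ evaluated along $\Sigma$ (where $\sigma=0$) gives $\nabla_a n_b|_\Sigma=-\rho g_{ab}$, and tracing yields $\nabla_a n^a|_\Sigma = -d\rho$, while $\rho=-\tfrac1d\Delta\sigma$ by~\nn{scaletractor}; combined with the identification of $-\rho$ with $H^g$ coming from the definition of $H^g$ as the trace of $\nabla^T n$, the bottom slots match.

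The main obstacle, and the step requiring the most care, is precisely this bottom-slot identification $-\tfrac1d\Delta\sigma|_\Sigma=-H^g$: one must correctly track the normalisation of the mean curvature (the $\tfrac1{d-1}$ factor), handle the trace of the full versus tangential Hessian of $\sigma$, and use the almost scalar constant hypothesis $I^2=1$ at the right order to kill the normal-derivative contribution of $|n|^2$. Everything else is a direct read-off from the displayed matrix expressions. Since the excerpt cites this as~\cite[Proposition 3.5]{Goal}, I would present the argument as the short verification above, emphasising that it follows immediately by comparing slots once the parallel conditions~\nn{almostEinstein} and the relation $\rho=-\tfrac1d\Delta\sigma$ are invoked along $\Sigma$.
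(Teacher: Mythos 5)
Your overall strategy (slot-by-slot comparison, with the bottom slot as the only real computation) matches the paper's, which simply observes the two displayed tractors and says the result follows ``using $I^2=1$''. However, your execution of the bottom slot has a genuine gap, and your fallback does not repair it under the stated hypotheses. The claim that $n^a\nabla_n n_a=\tfrac12\nabla_n(n_an^a)$ vanishes along $\Sigma$ because $n_an^a=1$ there is false: constancy of $|n|^2$ \emph{along} $\Sigma$ kills only its tangential derivatives, and the almost scalar constant condition does not force $|n|^2$ to be constant to first order off $\Sigma$. Indeed $I^2=|n|^2+2\sigma\rho_I$ with $\rho_I=-\tfrac1d(\Delta+\J)\sigma$, so $I^2=1$ in a neighbourhood gives $|n|^2=1-2\sigma\rho_I$; differentiating along $n$ and evaluating on $\Sigma$ (where $\sigma=0$ and $\nabla_n\sigma=|n|^2=1$) yields
$$
\nabla_n|n|^2\big|_\Sigma=-2\rho_I\big|_\Sigma=\tfrac2d\,\Delta\sigma\big|_\Sigma\, ,
$$
which is nonzero in general. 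Ironically, this is exactly the term that cures the $\tfrac{d-1}{d}$ mismatch you correctly flagged: inserting it into your own decomposition gives $\Delta\sigma|_\Sigma=(d-1)H^g+\tfrac1d\Delta\sigma|_\Sigma$, hence $\tfrac1d\Delta\sigma|_\Sigma=H^g$, and the bottom slots agree on the nose. So your first route was the right one; the error was discarding the normal-derivative contribution, i.e.\ using $I^2=1$ only along $\Sigma$ rather than to first order off it --- the latter is precisely what the paper's phrase ``using $I^2=1$'' refers to.

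Your ``cleanest route'' fallback is not available under the Lemma's hypotheses: the parallel conditions~\nn{almostEinstein} encode $\nabla^\ct I=0$, i.e.\ the \emph{almost Einstein} condition, which is strictly stronger than \emph{almost scalar constant} ($I^2=\mbox{constant}$); for a general almost scalar constant structure one has no equation $\nabla_a n_b=-\sigma\Rho_{ab}-\rho g_{ab}$ to trace. Invoking it therefore proves the Lemma only in the almost Einstein special case, whereas the statement (and the cited~\cite[Proposition~3.5]{Goal}) covers the weaker hypothesis. (A minor additional slip: the bottom slot of $I$ is $-\tfrac1d(\Delta+\J)\sigma$, and the displayed $-\tfrac1d\Delta\sigma$ is legitimate only because $\J\sigma$ vanishes where $\sigma=0$; your appeal to~\nn{scaletractor} for $\rho=-\tfrac1d\Delta\sigma$ should be stated as holding along $\Sigma$.)
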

  
The intrinsic tractor bundle $\ct\Sigma$ of $(\Sigma,c_{_\Sigma})$, where the conformal structure $c_{_\Sigma}$ is the one induced by~$c$,
is related to the standard  tractor bundle  $\ct M$ along $\Sigma$ ({\it i.e.} $\ct M|_\Sigma$)~\cite{BrGoOps}. Indeed, there is a canonical, conformally invariant isomorphism between the canonical, rank $d+1$ subbundle $N^\perp$ of $\ct M|_\Sigma$ orthogonal (with respect to the tractor metric) to the normal tractor $N$,
and the intrinsic boundary tractor bundle 
\hypertarget{isomb}{} 
$$
N^\perp \cong \ct\Sigma\, .
$$
We shall use this isomorphism to identify these spaces.
The map between their respective section spaces, calculated in a scale $g\in c$ (and therefore $g_{_\Sigma}=g|_\Sigma$) is~\cite{Grant,GoIP,Stafford}
$$
\begin{pmatrix}
\nu\\ \mu_a\\ \rho
\end{pmatrix}
\mapsto
\begin{pmatrix}
1&0&0\\
-H^g \, n_a & 1 & 0\\
-\frac12 (H^g)^2 & H^g \, n^b & 1
\end{pmatrix}
\begin{pmatrix}
\nu\\\mu_b\\\rho
\end{pmatrix}\, ,
$$  
where $n^a \mu_a = H^g \nu$ because the left hand side is a section of $N^\perp$.

This {\it boundary splitting isomorphism} can be extended to tractor tensor bundles, in particular for tractor forms the section space map from the subbundle of $\ct^\bullet M[\,  .\, ]|_\Sigma$
orthogonal to $M$ to $\ct^\bullet \Sigma[\,  .\, ]$~is 
\begin{equation}\label{boundaryisom}
\begin{pmatrix}
F^+ \\ F \\ F^{+-} \\ F^-
\end{pmatrix}
\stackrel{g}\longmapsto
\begin{pmatrix}
1&0&0&0\\
-H^g\, \varepsilon(n)&1&0&0\\
\ \ H^g\, \iota(n)&0&1&0\\
\frac{(H^g)^2}{2}\, [\varepsilon(n),\iota(n)]&H^g\, \iota(n)&H^g\, \varepsilon(n)&1
\end{pmatrix}
\begin{pmatrix}
F^+ \\ F \\ F^{+-} \\ F^-
\end{pmatrix}\, ,
\end{equation}
where here the orthogonal condition says (along~$\Sigma$) that
$$
\iota(n) F^+ = 0 = \iota(n) F - H^g\, F^+ = \iota(n) F^{+-} = \iota(n) F^{-} + H^g\, F^{+-}
\, .$$
In invariant terms, this can be expressed as~$\iota(N)\F=0$ for $\F\in\Gamma\ct^\bullet M[\,  .\, ]|_\Sigma$.

Recall that for Riemannian geometries, the Gau\ss\  formula relates the interior and boundary covariant derivatives.
In particular, if $u,v\in \Gamma TM$ are local extensions of $u_{_\Sigma},v_{_\Sigma}\in \Gamma T\Sigma$, then the Levi--Civita
connection $\nabla$  with respect to $g$ on $M$ and the Levi--Civita connection $\nabla^\Sigma$ on $\Sigma$ with respect to the metric $g_{_\Sigma}$ induced by $g$ {\it agree},
in the sense
$$
\big(\nabla_u v\big)^T|_\Sigma = \nabla^\Sigma_{u_{_\Sigma}} v_{_\Sigma}\, . 
$$
Here we denote the tangential component of $v\in T_{x\in \Sigma}M$ by $v^T$. Moving to the almost scalar constant setting, choosing a $g\in c$ and setting $n:=\nabla\sigma$, in the above formula we can replace the connection $\nabla$
by $\nabla^T=\nabla-n\nabla_n$. In this way the independence on the left hand side from the choice of local extension is manifest because of the operator statement 
$$
\nabla^T \sigma = 
O(\sigma)\, .
$$

There exists a natural and canonical tangential operator related to the Thomas D-operator that we shall only need
in the Poincar\'e--Einstein setting. In fact, a main point we wish to emphasise here is that in that setting this operator
is holographic formula for the boundary Thomas D-operator.
\begin{definition}
Let $(M,c,\sigma)$ be a Poincar\'e--Einstein manifold. Then we define the 
{\it tangential Thomas D-operator} defined acting on tractors in $\Gamma\ct^\Phi M[w]$ with $w\neq 1-\frac d2,1-\frac n2$ by
$$
D^T_A:=D_A -I_A I\cdot D +\frac{1}{(h-1)(h-2)}\, X_A \big(I\cdot D\big)^2\, .
$$
\end{definition}
\begin{remark}
Along~$\Sigma$,~the tangential Thomas D-operator may be viewed as a
 tractor analog of the Gau\ss\ formula.
\end{remark}

\begin{definition}
Suppose $\sigma$ is a defining density for a hypersurface $\Sigma$.
Let $P$ be a differential operator acting on the section space of a  vector bundle~$\cf$.
We say~$P$ acts {\it tangentially along~$\Sigma$} (or informally ``$P$ is tangential'') if
$$
P\circ \sigma = \sigma \circ \widetilde P \, ,
$$
for $\widetilde P$ some  smooth linear operator acting on $\Gamma(\cf\otimes\ce M[-1])$ in the same neighbourhood.
\end{definition}

\begin{remark}
It is straightforward to  verify that $D^T_A$ is tangential by employing the identity
$$
(h-2) D^A \sigma = h\sigma D^A -2 X^A I\cdot D + h(h-2) I^A\, , 
$$
which follows, for Poincar\'e--Einstein structures,  directly from equation~\nn{ureq}.
\end{remark}

\begin{remark}
It is  useful to define  $\Gamma\F\big|\!\big|_\Sigma$ the space 
of equivalence classes of sections 
\begin{equation}\label{barbar}\A\sim \A + O(x)\, ,\qquad \A \in \Gamma\F \, .\end{equation}
The space $\Gamma\F\big|\!\big|_\Sigma$ is naturally isomorphic to $\Gamma\F\big|_\Sigma$.
Note that tangential operators $P:\Gamma\F\to \Gamma\F'$ act canonically on $\Gamma\F\big|\!\big|_\Sigma$
 by $$\Gamma\F\big|\!\big|_\Sigma\ni[\A]\longmapsto [P\A]\in \Gamma\F'\big|\!\big|_\Sigma\, .$$
\end{remark}

The tangential Thomas D-operator and the (boundary) Tho\-mas D-operator on the intrinsic tractor bundle of $(\Sigma,c_{_\Sigma})$ are related as follows.
\begin{proposition}\label{Thomtang}
Let $(M,c)$ be Poincar\'e--Einstein with defining scale singularity set $\Sigma$ and let $U,V$ be local extensions of $U_{_\Sigma} \in \Gamma \ct \Sigma$,
$V_{_\Sigma}\in \Gamma \ct\Sigma[w]$ with $w\neq 1-\frac d2,2-\frac d2$ and subject to $I\cdot\,  U=0=I\cdot\,  V$. Then
\begin{equation}\label{shortimag}
D^\Sigma_{U_{_\Sigma}} V_{_\Sigma} = \left.\left(\frac{d+2w-3}{d+2w-2} \, D_U V+\frac{1}{(d+2w-2)(d+2w-4)}\, X\cdot\,  U \big(I\cdot D\big)^2 \, V\right)\right|_\Sigma\, .
\end{equation}
\end{proposition}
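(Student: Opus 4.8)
The plan is to prove~\nn{shortimag} by fixing an arbitrary representative metric $g\in c$, with induced boundary metric $g_{_\Sigma}=g|_\Sigma$, and matching the two sides component-by-component in the corresponding splitting of $\ct M|_\Sigma$. First I would use the boundary splitting isomorphism $N^\perp\cong\ct\Sigma$ (the standard, rank-one case of~\nn{boundaryisom}, with $N=I|_\Sigma$) to write the intrinsic boundary operator $D^\Sigma_{U_{_\Sigma}}V_{_\Sigma}$ explicitly in terms of the bulk top/middle/bottom components $(\nu,\mu_a,\rho_V)$ of $V$ along $\Sigma$, the boundary Levi--Civita connection $\nabla^\Sigma$, and the boundary Schouten data $\Rho^\Sigma,\J^\Sigma$. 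Because $\Sigma$ has dimension $n=d-1$, the intrinsic Thomas D-operator carries the normalisation $(n+2w-2)=(d+2w-3)$ in its top and middle slots rather than the bulk $(d+2w-2)$ of~\nn{Dform}; this is the structural source of the ratio $\tfrac{d+2w-3}{d+2w-2}$ appearing in~\nn{shortimag}.

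Next I would assemble the Poincar\'e--Einstein facts along $\Sigma$ that the computation requires. Since $n_a=\nabla_a\sigma$ is a unit conormal, $\nabla_n\sigma=|n|^2=1$ on $\Sigma$; the parallel conditions~\nn{almostEinstein} give $\nabla_a n_b=-\rho\, g_{ab}$ there, so $\Sigma$ is totally umbilic with mean curvature $H=-\rho$; and the scale tractor matches the normal tractor, $N=I|_\Sigma$. These are what allow me to invoke the Gau\ss\ formula $(\nabla_u v)^T|_\Sigma=\nabla^\Sigma_{u_{_\Sigma}}v_{_\Sigma}$ to trade tangential bulk derivatives for intrinsic ones, and the Gau\ss\ equation to express $\Rho^\Sigma,\J^\Sigma$ in terms of the bulk $\Rho,\J$ and $\rho$.

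I would then expand the right-hand side of~\nn{shortimag} in the scale $g$ and restrict to $\Sigma$. The term $U^A D_A V$ is read off directly from~\nn{Dform} and the tractor pairing, where the hypotheses $I\cdot U=0=I\cdot V$ supply (along $\Sigma$) the $N^\perp$-relations $\rho\,\alpha+n^a\beta_a=0$ and $n^a\mu_a+\rho\nu=0$ used to discard normal components. For the second term the crucial point is that $(I\cdot D)^2V|_\Sigma$ is evaluated by applying the boundary decomposition $I\cdot D\stackrel{g}{=}(d+2w-2)\,\delta_R-\sigma(\,\cdots)$ (the standard-tractor analogue of Proposition~\ref{Yform}, matching the boundary form $I\cdot D=(d+2w-2)\,\delta_n$ of Section~\ref{LRs}) \emph{twice}: the outer application is at weight $w-1$ and hence produces the second normalisation factor $(d+2w-4)$, while the only $\sigma$-correction that survives restriction does so through $\nabla_n\sigma=1$, contributing a single bulk Laplacian-type term. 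This accounts for the normalisation $\tfrac{1}{(d+2w-2)(d+2w-4)}$, and it is exactly here that the excluded weights enter: $w=1-\tfrac d2$ and $w=2-\tfrac d2$ are the poles of the two coefficients $\tfrac{1}{d+2w-2}$ and $\tfrac{1}{d+2w-4}$.

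The matching of the top and middle slots is then routine, following from the Gau\ss\ formula together with the normalisation ratio above. The main obstacle is the bottom (weight $w-1$) slot: one must show that the bulk Laplacian $\Delta$ occurring in $U^A D_A V$, combined with the $\nabla_n^2$ and bulk second-order ($\square_Y$-type) content extracted from $(I\cdot D)^2V|_\Sigma$, recombine into precisely the intrinsic boundary Laplacian $\Delta^\Sigma$ and Schouten contribution $\J^\Sigma w$ of $D^\Sigma$. Carrying this through demands the Gau\ss\ equation and a careful cancellation of all mean-curvature ($\rho$) and Schouten cross-terms; the umbilicity relation $\nabla_a n_b=-\rho\, g_{ab}$ and the parallelism of $I$ are exactly the inputs that make these cancellations collapse to the stated coefficients, completing the proof for $w\neq 1-\tfrac d2,\,2-\tfrac d2$.
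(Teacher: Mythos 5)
Your plan is essentially correct, but it is a genuinely different route from the one the paper takes. The paper proves Proposition~\ref{Thomtang} on the Fefferman--Graham ambient manifold (Appendix~\ref{AMBIENT}): there $\aI=\aNd\asi$ is parallel, the Thomas D-operator lifts to $h\aNd_A-\aX_A\aNd^{\,2}$, and a three-line operator computation shows that the tangential operator $D^T_A=D_A-I_A\, I\cdot D+\tfrac{1}{(h-1)(h-2)}X_A(I\cdot D)^2$ satisfies $D^T_A=\tfrac{h}{h-1}D^\Sigma_A$ along $\Sigma$; since $I\cdot U=0$ kills the $I_A$ term upon contraction, and since $D$ lowers weight by one so that $h\mapsto d+2w-2$, $h-1\mapsto d+2w-3$ and $(h-1)(h-2)\mapsto (d+2w-3)(d+2w-4)$ on the relevant outputs, the identity~\nn{shortimag} with precisely your coefficients drops out at once. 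Your proposal is instead the direct scale computation that the paper explicitly acknowledges as an alternative (``a tedious explicit computation for a given $g\in c$''), and your structural predictions all check out: the ratio $\tfrac{d+2w-3}{d+2w-2}$ from the intrinsic normalisation $n+2w-2=d+2w-3$, the factor $d+2w-4$ from the second application of $I\cdot D$ at weight $w-1$ together with the single surviving $\sigma$-correction via $\nabla_n\sigma=1$, the umbilicity $\nabla_an_b=-\rho\, g_{ab}$ with $H=-\rho$ and $N=I|_\Sigma$, and the excluded weights as exactly the poles of the two coefficients. What the ambient argument buys is brevity and the stronger uncontracted operator statement $D^T_A=\tfrac{h}{h-1}D^\Sigma_A$ (which is what the paper actually uses); what your route buys is independence from the ambient construction and from its even-dimensional formal subtleties, which the appendix must separately handle via $0=\al\asi=\cdots=\al^{d/2}\asi$ along $\cG$. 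Two cautions if you carry your plan out: the Gau\ss\ formula you need is not the vector-field one alone but its analogue for the coupled Levi-Civita--tractor connection, namely that on an umbilic hypersurface the bulk tractor connection restricted to $N^\perp$ induces the intrinsic boundary tractor connection (without umbilicity there are trace-free second fundamental form corrections), and this should be invoked explicitly since $D_UV$ and the bottom-slot Laplacian of~\nn{Dform} are tractor-coupled; and your decisive bottom-slot cancellation is asserted rather than executed --- it is exactly the step the authors judged tedious enough to bypass, so a complete write-up must actually perform it, where differentiating the bulk constraints $I\cdot U=0=I\cdot V$ against the parallel $I$ supplies the relations needed to discard the remaining normal-derivative cross-terms.
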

The proof of Proposition~\ref{Thomtang} is given in Appendix~\ref{AMBIENT}. There we demonstrate that $D^T_A = h(h-1)^{-1} D_A^\Sigma$ along~$\Sigma$ which suffices to establish the result. It can also be proven by a  tedious explicit computation for a given $g\in c$.
\begin{remark}
Let us  clarify the meaning of the formula~\nn{shortimag}:
\begin{itemize}
\item Because $I$ is (tractor) parallel, the right hand side of the above display is manifestly an element of $\Gamma \big(N^\perp\otimes\ce M[w]\big)$
so equality is in the sense of the natural extension of the isomorphism $N^\perp\cong \ct\Sigma$ (explained \hyperlink{isomb}{above}) to weighted
tractors. We will often use this isomorphism without further comment where appropriate.
\item
Given any extension $\widehat U$ of $U_{_\Sigma}$, we can construct another extension $U:=\widehat U-I\cdot \, \widehat U\, I$ satisfying $I\cdot\,  U=0$.
\item
So long as $w\neq \ell-\frac d2$, $\ell=2,3,4$, the tractor expression on the right hand side of the display in Proposition~\ref{Thomtang} is
$$
\frac{d+2w-3}{d+2w-2} \, U\cdot D^T V\, .
$$
\item
At the  boundary Yamabe weight $w=\frac 32-\frac d2 = 1-\frac n2$, the Proposition  states $$D^\Sigma_{U_{_\Sigma}} V_{_\Sigma} = -X\cdot\,  U \big(I\cdot D\big)^2 \, V\big|_\Sigma\, ,$$
and so recovers the holographic formula for the Yamabe operator of~\cite{GWasym}. 
\item
At the (excluded) bulk Yamabe weight  $w=1-\frac d2$ there is a version of the Proposition applying to the double D-operator $D^{AB}$. 
We will develop this for the specialization of the above Proposition to tractor forms below.
\item
At the  excluded weight  $w=2-\frac d2$, the  residue of the pole is the operator $(I.D)^2$ which is $\sigma^2$ times the bulk Paneitz operator~\cite{powerslap}, and therefore
vanishes along~$\Sigma$. In fact acting on tractor forms, this singularity is removable as we shall also  show below. 
\end{itemize}
\end{remark}

For the purposes of this Article we need to develop a variant of Proposition~\ref{Thomtang} that uses the exterior tractor D-operator. To that 
end, on Poincar\'e--Einstein structures we introduce the {\it tangential exterior tractor D-operator}
\begin{equation}\label{DT}
\scalebox{1.05}{$
\D^T:=\D + \I y + \frac{\textstyle 1}{\textstyle(h-1)(h-2)}\, \X y^2\, .
$}
\end{equation}
Here $\D^T:\ct^kM[w]\to\ct^{k+1}M[w-1]$ and is defined whenever $w\neq \frac32-\frac d2,2-\frac d2$.
When $w=2-\frac d2$ we define
\begin{equation}\label{special}
\D^T:=\D + \I y + \big(\Is\X\Dt-\X \Dt \Is\big)\, y\, .
\end{equation}
The tangential exterior tractor D-operator will play a central {\it r\^ole}  in our later study of detours and gauge operators thanks to the following result.

\begin{theorem}\label{dt}
Let $(M,c,\sigma)$ be Poincar\'e--Einstein with defining scale singularity set $\Sigma$ and let 
$\A\in\Gamma\ct^kM[w]$ be a local extension of $\A_{_\Sigma} \in \Gamma \ct^k\Sigma[w]$, 
with $w\neq \frac 32-\frac d2$ and subject to  $\Is \A=0$. Then
$$
\big(\D^T \A\big)\big|_\Sigma = \frac{d+2w-2}{d+2w-3} \, \D_{_\Sigma} \A_{_\Sigma}\, ,
$$
where $\D_{_\Sigma}$ is the exterior tractor D-operator of~$\ct^k\Sigma[w]$. 
When $w=\frac32-\frac d2$ and all other preconditions as above hold, then
$$
\big(\X y^2 \A\big)\big|_\Sigma =  -\D_{_\Sigma} \A_{_\Sigma}\, .
$$
\end{theorem}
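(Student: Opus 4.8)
The plan is to obtain both identities from Proposition~\ref{Thomtang} by passing to the skew (exterior) action in the free tractor index. The structural fact that makes this work is that the exterior tractor D-operator is the exterior action $\D=\varepsilon(\slashed D)$ of the modified Thomas D-operator (and likewise $\D_{_\Sigma}=\varepsilon(\slashed D^\Sigma)$ on the boundary), that $\I=\varepsilon(I)$, $\X=\varepsilon(X)$, and that $y=-I_A\slashed D{}^A$. Thus the definition~\nn{DT} is precisely the exterior action of the tangential (modified) Thomas D-operator $\slashed D^T_A=\slashed D_A-I_A(I\cdot\slashed D)+\tfrac{1}{(h-1)(h-2)}X_A(I\cdot\slashed D)^2$, with the normalisation read on the \emph{output} weight; this is exactly why~\nn{DT} carries the coefficient $\tfrac{1}{(d+2w-3)(d+2w-4)}$ on $\X y^2$ and is excluded at $w=\tfrac32-\tfrac d2$ and $w=2-\tfrac d2$, the two places where that coefficient has a pole.

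First I would check that $\D^T\A$ restricts to a genuine boundary tractor form, i.e.\ that $\Is\,\D^T\A\big|_\Sigma=0$, since $N=I|_\Sigma$ and $N^\perp\cong\ct^{k+1}\Sigma[w-1]$. This is a short computation in the scale algebra: using $\{\Is,\D\}=-y$ (Proposition~\ref{Yform}), $\{\Is,\I\}=1$, $\{\Is,\X\}=\sigma$ and $[\Is,y]=0$, together with the hypothesis $\Is\A=0$, one finds $\Is\D\A=-y\A$, $\Is\I y\A=y\A$, and $\Is\X y^2\A=\sigma y^2\A=O(\sigma)$, so the three contributions cancel along $\Sigma$ and $\D^T\A$ restricts into $N^\perp$.

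Next I would take the free-index form of Proposition~\ref{Thomtang}, valid for all $U\in N^\perp$ (so the transversality hypothesis $I\cdot V=0$ is realised exactly by $\Is\A=0$), apply it to $\slashed D$, and project its free index onto $N^\perp$ via $T_A\mapsto T_A-(I\cdot T)I_A$. Taking the exterior action and using $(I\cdot\slashed D)=-y$ converts this into
\begin{equation*}
\D_{_\Sigma}\A_{_\Sigma}=\Big[\tfrac{d+2w-3}{d+2w-2}\big(\D\A+\I y\A\big)+\tfrac{1}{(d+2w-2)(d+2w-4)}\,\X y^2\A\Big]\Big|_\Sigma\, .
\end{equation*}
Comparing with $\tfrac{d+2w-3}{d+2w-2}\,\D^T\A$ as given by~\nn{DT}, the two $\X y^2$ terms agree because $\tfrac{d+2w-3}{d+2w-2}\cdot\tfrac{1}{(d+2w-3)(d+2w-4)}=\tfrac{1}{(d+2w-2)(d+2w-4)}$ — this cancellation is the whole point of the output-weight normalisation — so the first formula $\D^T\A|_\Sigma=\tfrac{d+2w-2}{d+2w-3}\D_{_\Sigma}\A_{_\Sigma}$ follows. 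For the exceptional weight $w=\tfrac32-\tfrac d2$ the factor $\tfrac{d+2w-3}{d+2w-2}$ vanishes, the $\D\A+\I y\A$ contribution drops out, and the surviving term has coefficient $\tfrac{1}{(1)(-1)}=-1$, giving $\X y^2\A|_\Sigma=-\D_{_\Sigma}\A_{_\Sigma}$; at $w=2-\tfrac d2$ one instead uses the regularised operator~\nn{special}, the pole being removable just as for the Paneitz residue discussed after Proposition~\ref{Thomtang}.

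I expect the main obstacle to be establishing the free-index, exterior version of Proposition~\ref{Thomtang} for the \emph{modified} operator $\slashed D$ rather than $D$: one must verify that the curvature correction $\slashed D-D=-X\,\Omega^{\hash\hash}$ does not alter the coefficients on a Poincar\'e--Einstein structure. This is controlled by the orthogonality $I_A\Omega^{\hash\hash}=0$ on Einstein scales together with the anticommutators $\{\I,\D\}=0=\{\Is,\Ds\}$ of Proposition~\ref{yproof}, with the four-dimensional case handled by the tractor $\widetilde W/(d-4)$ of Remark~\ref{Wtilde}. The remaining $N^\perp$-projection bookkeeping is routine, and the numerical factor $\tfrac{d+2w-2}{d+2w-3}$ then emerges automatically from the normalisation in~\nn{DT}.
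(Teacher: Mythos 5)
Your architecture is sensible and your numerology is exactly right: the transversality check $\Is\,\D^T\A=O(\sigma)$ is a correct application of $\{\Is,\D\}=-y$, $\{\Is,\I\}=1$, $\{\Is,\X\}=\sigma$ and $[\Is,y]=0$; and the coefficient bookkeeping --- reading $\tfrac{1}{(h-1)(h-2)}$ on the output weight, so that $\tfrac{d+2w-3}{d+2w-2}\cdot\tfrac{1}{(d+2w-3)(d+2w-4)}=\tfrac{1}{(d+2w-2)(d+2w-4)}$, and the specialisation $\tfrac{1}{(1)(-1)}=-1$ at $w=\tfrac32-\tfrac d2$ --- reproduces both displays of the theorem. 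But the step you yourself flag as ``the main obstacle'' is a genuine gap, and the facts you invoke do not close it. Your argument reduces Theorem~\ref{dt} to a free-index, exterior version of Proposition~\ref{Thomtang} for the \emph{modified} operator $\slashed D$, and the identity $I_AW^{ABCD}=0$ together with the anticommutators of Proposition~\ref{yproof} only governs contractions of the scale tractor into the curvature correction. It says nothing about the relation, along $\Sigma$, between the bulk correction $\X\,\Omega^{\hash\hash}$ hidden inside $\D=\varepsilon(\slashed D)$ (with $\Omega$ normalised by $1/(d-4)$, or $\widetilde W/(d-4)$ in dimension four, Remark~\ref{Wtilde}) and the \emph{intrinsic} boundary correction $\X_{_\Sigma}\Omega_{_\Sigma}^{\hash\hash}$ hidden inside $\D_{_\Sigma}$, whose normalisation is $1/(n-4)$ with $n=d-1$. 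There is also a second curvature leak you have not tracked: since $X\cdot I=\sigma$, one has $y=-I\cdot\slashed D=-(I\cdot D)+\sigma\,\Omega^{\hash\hash}$, and because $[I\cdot D,\sigma]=h$ in the solution generating algebra, the difference $\X y^2\A-\X(I\cdot D)^2\A$ does \emph{not} vanish along $\Sigma$ but contributes a further term proportional to $\X\,\Omega^{\hash\hash}\A\big|_\Sigma$ (only the middle term is innocent, as $\I\,\sigma\,\Omega^{\hash\hash}\A=O(\sigma)$). These curvature contributions must cancel against one another and against the boundary correction; proving that cancellation --- a Gau\ss-type compatibility for the hash action of the $W$-tractor on a Poincar\'e--Einstein structure, including the $(d-4)$ versus $(n-4)$ mismatch --- is essentially the content of the theorem itself, and Proposition~\ref{Thomtang}, being a statement purely about the unmodified Thomas D-operator, cannot supply it. As written, the core of your argument is circular.

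For comparison, the paper does not deduce Theorem~\ref{dt} from Proposition~\ref{Thomtang}: the two are siblings, each proved independently in Appendix~\ref{AMBIENT} by a direct ambient computation in which $\D$, $\D_{_\Sigma}$, $y$ and the tangential operators are realised by the ambient $\extd$, $\cod$, $\FL$ and $\aNd_{\sI}$, so all curvature corrections are built into the ambient exterior calculus and the theorem reduces to a short anticommutator computation using $\{\iaI,\extd\}=\aNd_{\sI}$, $\{\cod,\eaI\}=\aNd_{\sI}$ and $[\aNd_{\sI},\cod]=0$. That ambient realisation (or the $\widetilde W$ technology of Remark~\ref{Wtilde}) is exactly where the missing curvature compatibility would have to be proved if you wish to salvage your route. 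Note finally that your appeal to Proposition~\ref{Thomtang} at $w=2-\tfrac d2$ is not licit as stated, since that weight is excluded there; for $\D^T$ as defined in~\nn{special} one instead needs the removable-singularity observation of Remark~\ref{tangDD}, which the paper's ambient proof handles by multiplying through before specialising the homogeneity.
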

The proof of this Theorem is given in Appendix~\ref{AMBIENT}. 
 
 \begin{remark}\label{tangDD}
Similar remarks apply as for Proposition~\ref{Thomtang}: 
\begin{itemize}
\item
The equalities are in the sense of the isomorphism between the orthogonal component
of the bundle $\ct^\bullet M[\, .\, ]$ along $\Sigma$ and $\ct^\bullet \Sigma[\, .\, ]$. The condition $\Is \A=0$ along~$\Sigma$ implies
$\iota(N)\A|_\Sigma=0$. Moreover, given any extension of $\A$ of $\A_{_\Sigma}$, we can always construct another extension in the kernel of~$\Is$
by multiplying by the projector $\Is\I$.
\item
The {\it tangential exterior double D-operator} is defined by $$D_{[2]}^T:=-\X \Dt^T\, .$$ 
With the same conditions as the Theorem and defining the $(\Sigma,c_{_\Sigma})$ version of 
$\Dt_{_\Sigma}$
also using   Definition~\ref{WHAT}, we have
$$
\big(\D^T_{[2]} \A\big)\big|_\Sigma=-\big(\X\Dt^T \A\big)\big|_\Sigma =  -\X_{_\Sigma} \Dt_{_\Sigma} \A_{_\Sigma}=\D_{[2]}^{\scriptscriptstyle \Sigma} \A_{_\Sigma}\, .
$$
\item
The formula~\nn{special} for $\D^T$ at the bulk Paneitz weight $w=2-\frac d2$ can be understood by noting that away from $w=2-\frac d2$
$$
\D^T:=\D + \I y + \frac1{h-1}\big(\Is\X\Dt-\X \Dt \Is\big)\, y-\frac{1}{(h-1)(h-2)}\, x\, \D\,  y\, ,
$$
where the singularity has been removed by discarding the last term which vanishes along~$\Sigma$.
\end{itemize}
\end{remark}
For later use, it is convenient to make the following definition.
\begin{definition}\label{baronD}
Acting on $\Gamma\ct^k M[w]$, we define  the operator
$$
\oD := \left\{\begin{array}{cc}(h-1) \Dt^T\, ,&\quad w\neq 1-\frac d2,1-\frac n2\\[1mm]
-\X y^2 \, ,& \quad w=1-\frac n2\, .\end{array}\right.
$$ 
\end{definition}
\noindent
This is useful because $\big(\oD\A\big)\big|_\Sigma = \D_{_\Sigma} \A_{_\Sigma}$ for $\A\in \ker \Is$ an extension of $\A_{_\Sigma}\in\Gamma\ct^k\Sigma[w]$.

\subsection{Boundary conditions for tractor forms}\label{bctractors}

An almost Einstein structure provides a simple construction of natural boundary conditions
for differential forms. These can be classified by  insertions of forms in tractors according
to the analysis of Section~\ref{compinsert}. Here we focus on the ``western case'' needed later in the Article.

Suppose $A_{_\Sigma}\in\Gamma\ce^k\Sigma[w+k]$ is some boundary form which we shall view as the boundary data
for our problem.
The aim is to canonically 
extend this to an interior tractor $\A\in\Gamma\ct^k M[w]$. In Section~\ref{formssol} we will study an extension problem $y\A=0$
(see also Problem~\ref{canextprob}) as far as possible uniquely determining an $\A$ in the space of canonical extensions.

There are two obvious approaches to extending $A_{_\Sigma}$ to a bulk form tractor:
First extend to an interior differential form and thereafter embed in a tractor form, or secondly
embed~$A_{_\Sigma}$ 
 in a boundary tractor and then 
extend to an interior tractor form. 
\begin{equation}\label{Bryan}
\begin{array}{rcccl}
&\Gamma\ce^k\Sigma[w+k] & \stackrel{\rm ext}{-\!\!\!-\!\!\!-\!\!\!-\!\!\!\longrightarrow} & \Gamma\ce^kM[w+k]&\\
&|&&|&\\[-3mm]
q_W^{\Sigma}\hspace{-13mm}&|&&|&\hspace{-14mm}\, q_W^{\phantom{\Sigma}}\\[-4mm]
&|&&|&\\[-5mm]
&\downarrow & & \downarrow &\\
&\Gamma\ct^k\Sigma[w] & \stackrel{\rm Ext}{-\!\!\!-\!\!\!-\!\!\!-\!\!\!\longrightarrow} & \Gamma\ct^kM[w]&\\
\end{array}
\end{equation}

The key to writing tractor problems that precisely encode differential form problems is constructing 
extensions so this diagram commutes. For encoding boundary conditions we must find extensions so 
that this holds along~$\Sigma$.
\begin{proposition}\label{extExt}
Suppose $w\neq k-d,k-n$. Then
any extensions ${\rm  ext}$ and ${\rm Ext}$ 
$$
{\rm ext}:\Gamma\ce^k\Sigma[w+k]\longrightarrow \ker \wiota\subset \Gamma\ce^kM[w+k]
$$
and
$$
{\rm Ext}:\Gamma\ct^k\Sigma[w] \longrightarrow \ker(\Is,\wDs,\Xs)\subset \Gamma\ct^kM[w]\, ,
$$
obey
$$
r\circ q_W \circ {\rm ext} = r\circ {\rm Ext} \circ q_W^{\Sigma}\, ,
$$
where ${\rm r}$ denotes restriction to $\Sigma$.
\end{proposition}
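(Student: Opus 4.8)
The plan is to prove the two composites agree on $\Sigma$ by showing they are computed by one and the same bulk tractor. Set $A_0:=\mathrm{ext}(A_{_\Sigma})\in\ker\wiota$. First I would verify that $q_W A_0$ is already a legitimate value of $\mathrm{Ext}$, i.e. that $q_W A_0\in\ker(\Is,\wDs,\Xs)$. By the western Lemma~\ref{West} the image of $q_W$ is precisely $\ker(\wDs,\Xs)$, so only $\Is(q_W A_0)=0$ needs checking. Once this is in hand, both $q_W A_0$ and $\mathrm{Ext}\circ q_W^\Sigma(A_{_\Sigma})$ lie in $\ker(\Is,\wDs,\Xs)$ and share the same western slot $A_{_\Sigma}$ along~$\Sigma$ (for the second map because the boundary splitting isomorphism~\nn{boundaryisom} leaves the western slot fixed), and the result follows once one knows that an element of $\ker(\Is,\wDs,\Xs)$ is determined along~$\Sigma$ by its western slot there.

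For the membership $\Is(q_W A_0)=0$ I would read off $\Is$ from its matrix~\nn{Iext} and apply it to $q_W A_0$, whose slots are $0,\ A_0,\ 0,\ -\tfrac{1}{d+w-k}\cod A_0$ by Lemma~\ref{West}. The northern and eastern entries vanish identically; the western entry equals $\tfrac{1}{d+w-k}\bigl[(d+w-k)\iota(n)A_0-\sigma\cod A_0\bigr]=\tfrac{1}{d+w-k}\,\wiota A_0=0$ by Definition~\ref{wdef}; and the southern entry is $\tfrac{1}{d+w-k}\iota(n)\cod A_0$. To kill the latter I would hit $\wiota A_0=0$ with $\iota(n)$: since $\iota(n)^2=0$ this gives $-\sigma\,\iota(n)\cod A_0=0$, so $\iota(n)\cod A_0=0$ off~$\Sigma$ and hence everywhere. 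Note this step, together with the very definition of $q_W$, needs $w\neq k-d$.

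The last ingredient is the uniqueness claim, where I expect the only genuine subtlety to lie. Any $\A\in\ker(\Is,\wDs,\Xs)$ is $q_W F$ with $F$ its (bulk) western slot satisfying $\wiota F=0$; by Lemma~\ref{West} its northern and eastern slots vanish, its western slot is $F$, and its southern slot is $-\tfrac{1}{d+w-k}\cod F$. I would show the restriction of this southern slot depends only on $F|_\Sigma$: if $F,G\in\ker\wiota$ have the same boundary value then $F-G=\sigma E$, and $[\sigma,\wiota]=0$ (Corollary~\ref{walgebra}) forces $\wiota E=0$, so at the weight $w+k-1$ of $E$ one gets $(d+w-k-1)\iota(n)E|_\Sigma=0$, i.e. $\iota(n)E|_\Sigma=0$ provided $w\neq k-n$; then $[\cod,\sigma]=\iota(n)$ from~\nn{comms} yields $\cod(F-G)|_\Sigma=\iota(n)E|_\Sigma=0$. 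Hence $q_W F|_\Sigma$ is a fixed expression in $F|_\Sigma$, so $q_W A_0$ and $\mathrm{Ext}\circ q_W^\Sigma(A_{_\Sigma})$ restrict to the same section of $\ct^kM[w]|_\Sigma$, which is exactly the asserted identity $r\circ q_W\circ\mathrm{ext}=r\circ\mathrm{Ext}\circ q_W^\Sigma$. The main obstacle is the bookkeeping that makes the two excluded weights $w=k-d$ and $w=k-n$ surface precisely at the two places (the divisions by $d+w-k$ in step two and by $d+w-k-1=n+w-k$ in step three) where they are needed.
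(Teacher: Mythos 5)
Your proof is correct, and its skeleton largely matches the paper's: both arguments verify that $q_W\circ{\rm ext}$ lands in $\ker(\Is,\wDs,\Xs)$ by reading off $\Is$ from the matrix~\nn{Iext} (with the western entry proportional to $\wiota A_0$, and the southern entry $\iota(n)\cod A_0$ killed by hitting $\wiota A_0=0$ with $\iota(n)$ — the paper runs this same computation, inside the proof of Lemma~\ref{deltadelta}), and both close by noting that the boundary splitting isomorphism~\nn{boundaryisom} acts as the identity on west-type sections. Where you genuinely diverge is the final identification of the restriction. The paper proves the sharper, explicit statement $(n+w-k)\,(\cod A)\big|_\Sigma=(d+w-k)\,\cod_{_\Sigma}A_{_\Sigma}$ (Lemma~\ref{deltadelta}), via the decomposition $1=2\rho\sigma+\iota(n)\varepsilon(n)+\varepsilon(n)\iota(n)$, and thereby computes $\big(q_W A_0\big)\big|_\Sigma=q_W^{\scriptscriptstyle\Sigma}A_{_\Sigma}$ outright. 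You instead prove only extension-independence: if $F,G\in\ker\wiota$ agree along~$\Sigma$ then $F-G=\sigma E$, the weight-adapted commutation $[\sigma,\wiota\,]=0$ of Corollary~\ref{walgebra} forces $\wiota E=0$, whence $(n+w-k)\,\iota(n)E|_\Sigma=0$ at the shifted weight — this is exactly where $w\neq k-n$ enters for you, the same coefficient the paper divides by — and then $[\cod,\sigma]=\iota(n)$ from~\nn{comms} gives $\cod(F-G)|_\Sigma=0$; equality of the two restrictions follows abstractly, since both sides of the asserted identity are restrictions of elements of $\ker(\Is,\wDs,\Xs)=q_W(\ker\wiota)$ sharing the same boundary western slot. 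Your route is slightly more economical and localises the two excluded weights cleanly, at the cost of never producing the explicit bulk-to-boundary codifferential relation; the paper packages that relation as a standalone lemma precisely because it is reused downstream, in the proofs of~\nn{normalcomp} and~\nn{formrobin}.
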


\begin{proof} 
We begin by setting up the structures involved and start with ext.
Since $\Lambda^\bullet \Sigma$ is naturally a subbundle of $\Lambda^\bullet M|_\Sigma$,
so too is $\ce^k \Sigma[w]$ of $\ce^k M[w]|_\Sigma$.
Thus we choose any smooth weighted form $A\in \Gamma\ce^k M[w+k]$ such that
$$
A|_\Sigma = A_{_\Sigma}\, .
$$
Calculating in some choice of $g\in c$, this extension of $A_{_\Sigma}$  obeys
\begin{equation}\label{iotaA}
\iota(n) A + \sigma \phi = 0\, ,
\end{equation}
for some $\phi\in \Gamma\ce^{k-1}M[w+k-2]$, since necessarily $\big(\iota(n) A\big)\big|_\Sigma = 0$.
By continuity $\iota(n)\phi$ is zero everywhere.

The normal component of $A$, {\it i.e.} $\iota(n)A$, vanishes along~$\Sigma$,  but its normal derivative 
does not and is encoded by $\phi$ along $\Sigma$ as follows from~\nn{iotaA}:
\begin{equation}\label{normcomp}
\phi_{_\Sigma}=-\Big(\nabla_n\big[\iota(n) A\big]\Big)\Big|_\Sigma\, .
\end{equation}

Looking ahead, we want to construct a west tractor. This suggests an ansatz for $\A\in \Gamma\ct^k M[w]$ by its expression in the scale $g\in c$
\begin{equation}\label{XAform}
\A\ :\stackrel{g}=\ \begin{pmatrix}0\\ \ A \ \\ 0 \\ \phi \end{pmatrix}\, ,
\end{equation}
which satisfies $\Xs\A=0$.
At weights $w\neq k-d$, $\phi$ can be tied to $A$  by imposing
$$
\wDs\A = 0\, .
$$
The west Lemma~\ref{West} now applies so
$$
\A=q_WA\stackrel{g}=\begin{pmatrix}0\\ A\\0\\-\frac{1}{d+w-k}\ \cod A\end{pmatrix} \, ,\quad w\neq k-d \, ,
$$
and indeed we must have $\phi=-\frac{1}{d+w-k}\ \cod A$. 

Compatibility of the above ansatz with the condition $\Is \A =0$,  suggested by commutativity of the diagram,  yields (via Equation~\nn{iotaA})
\begin{equation}\label{COULOMB_GAUGE}
\big[\sigma\cod -(d+w-k) \iota(n)\big] A=\wiota A =0\, .
\end{equation}
We have arrived canonically at the generalised divergence equation of Problem~\ref{deltaprob} which was solved (to some order) for arbitrary boundary data $A_\Sigma$ with weights $w\neq k-n$  in Section~\ref{coulomb}. The compatibility with that problem is critical to subsequent developments.

To summarise, extending $A_{_\Sigma}\in\Gamma \ct^k\Sigma[w+k]$ to $A\in \Gamma\ct^kM[w+k]$ subject to Equation~\nn{COULOMB_GAUGE} and then inserting this in a west tractor $\A=q_W A\in \Gamma\ct^kM[w]$ produces a solution to
\begin{equation}\label{allbcs}
\Is\A = 0 = \wDs\A = \Xs \A\, ,\qquad \big(q^* \A\big)\big|_\Sigma = A_{_\Sigma}\, .
\end{equation}

To establish Proposition~\ref{extExt}, it only remains to show equality of $\A|_\Sigma$ and $q_W^\Sigma A_{_\Sigma}$.
Along~$\Sigma$ Equation~\nn{COULOMB_GAUGE} implies $A|_\Sigma=A_{_\Sigma}\in\Gamma \ce^k\Sigma[w+k]$ 
and so 
$$
\left.\begin{pmatrix}0\\ A\\0\\-\frac{1}{d+w-k}\ \cod A\end{pmatrix}\right|_{\scalebox{1.1}{$\Sigma$}}=
\begin{pmatrix}0\\ A_{_\Sigma}\\0\\-\frac{1}{n+w-k}\ \cod_{_\Sigma} A_{_\Sigma}\end{pmatrix}\, ,
$$
which is a consequence of  Lemma~\ref{deltadelta}, which follows. This completes the proof since the inverse of the boundary splitting isomorphism, given in Equation~\nn{boundaryisom}, acts as the identity 
on such sections.

\end{proof}

\begin{lemma}\label{deltadelta}
Let $A\in \Gamma \ce^k M[w+k]$ be an extension 
of $A_{_\Sigma}\in \Gamma \ce^k\Sigma[w+k]$,
subject to Equation~\nn{COULOMB_GAUGE}. Then
$$
(n+w-k)\, \big(\cod A\big)\big|_\Sigma
=(d+w-k)\,\cod_{_\Sigma} A_{_\Sigma}\,  .
$$
\end{lemma}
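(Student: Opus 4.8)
The plan is to extract two independent relations for $(\cod A)\big|_\Sigma$ along $\Sigma$ and then eliminate a common normal-derivative term. Throughout I work in the fixed scale $g\in c$ in which the data is presented, so that $n=\nabla\sigma$ is the conormal, everything reduces to ordinary Riemannian tensor calculus, and the codifferential is $\cod=\iota(\nabla)$ (cf.\ the commutators~\nn{comms}). First observe that the hypothesis~\nn{COULOMB_GAUGE} reads $\sigma\,\cod A=(d+w-k)\,\iota(n)A$; restricting to $\Sigma=\{\sigma=0\}$ gives $(d+w-k)\,\iota(n)A\big|_\Sigma=0$, so that $\iota(n)A\big|_\Sigma=0$, i.e.\ $A\big|_\Sigma=A_{_\Sigma}$ is genuinely tangential (consistent with $A_{_\Sigma}\in\Gamma\ce^k\Sigma[w+k]$). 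I will write $\nabla_n:=n^a\nabla_a$, and the single auxiliary quantity I need to cancel is $B:=\big(\iota(n)\nabla_n A\big)\big|_\Sigma$.

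The \emph{first relation} comes from differentiating the gauge condition normally. Applying $\nabla_n$ to $\sigma\,\cod A=(d+w-k)\,\iota(n)A$ and using $n=\nabla\sigma$ with $\nabla_n\sigma=|n|^2_g=1$ along $\Sigma$ (the AH normalisation~\nn{unit conormal}) kills the term $\sigma\,\nabla_n\cod A$ and leaves, on $\Sigma$,
$$
\big(\cod A\big)\big|_\Sigma=(d+w-k)\,\big[\iota(\nabla_n n)A+\iota(n)\nabla_n A\big]\big|_\Sigma .
$$
Here the term $\iota(\nabla_n n)A\big|_\Sigma$ vanishes: since $n$ is a gradient, $\nabla_a n_b$ is symmetric, so $\nabla_n n=\tfrac12\nabla|n|^2$, and because $|n|^2\equiv 1$ on $\Sigma$ its tangential derivative along $\Sigma$ is zero, forcing $\nabla_n n\big|_\Sigma$ to be purely normal (equivalently, one may read this off directly from the almost-Einstein equations~\nn{almostEinstein}, which give $\nabla_n n\big|_\Sigma=-\rho\,n$); contracting a purely normal covector into the tangential $A$ gives zero. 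Hence
$$
\big(\cod A\big)\big|_\Sigma=(d+w-k)\,B .
$$

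The \emph{second relation} is the hypersurface decomposition of $\cod$, a consequence of the Gau\ss\ formula. In an orthonormal frame $\{n,e_i\}$ along $\Sigma$ one splits $\cod A=\iota(\nabla)A=\iota(n)\nabla_n A+\sum_i\iota(e_i)\nabla_{e_i}A$. Taking tangential components, the normal piece contributes exactly $B$, while in the tangential piece the Gau\ss\ formula replaces $\nabla_{e_i}$ by $\nabla^\Sigma_{e_i}$; the difference consists solely of normal Christoffel (second fundamental form) terms, each of which either multiplies a component $\iota(n)A\big|_\Sigma=0$ or forces a normal index into the tangential $A$ and so also vanishes on $\Sigma$. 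Thus the tangential divergence is precisely $\cod_{_\Sigma}A_{_\Sigma}$, giving
$$
\big(\cod A\big)\big|_\Sigma = B + \cod_{_\Sigma}A_{_\Sigma}.
$$
Finally I eliminate $B$: substituting $B=\big(\cod A\big)\big|_\Sigma/(d+w-k)$ from the first relation into the second yields $\big(d+w-k-1\big)\big(\cod A\big)\big|_\Sigma=(d+w-k)\,\cod_{_\Sigma}A_{_\Sigma}$, which is the claim since $n=d-1$. The step I expect to be the real work is the second relation: one must verify carefully that \emph{all} extrinsic-curvature corrections drop out for a tangential form, and keep the sign convention $\cod=\iota(\nabla)$ (and the matching $\cod_{_\Sigma}=\iota(\nabla^\Sigma)$) consistent throughout, since the entire nontrivial content of the coefficient is the difference between $d+w-k$ and $n+w-k=d+w-k-1$.
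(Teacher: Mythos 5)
Your proof is correct, and it reaches the identity by a genuinely different mechanism than the paper's. The paper's argument is purely operator-algebraic: it inserts the $I^2=1$ completeness relation $1=2\rho\sigma+\iota(n)\varepsilon(n)+\varepsilon(n)\iota(n)$, rewrites the gauge condition~\nn{COULOMB_GAUGE} algebraically as $\iota(n)A=\tfrac{\sigma}{d+w-k}\,\cod A$, uses the commutator $[\cod,\sigma]=\iota(n)$ of~\nn{comms} to strip off the factor of $\sigma$ along~$\Sigma$, quotes the bulk--boundary relation $\big(\cod\,\iota(n)\varepsilon(n)A\big)\big|_\Sigma=\cod_{_\Sigma}A_{_\Sigma}$ as known, and notes $\iota(n)\cod A=0$; this yields the self-referential equation $\big(\cod A\big)\big|_\Sigma=\cod_{_\Sigma}A_{_\Sigma}+\tfrac{1}{d+w-k}\big(\cod A\big)\big|_\Sigma$, which it then solves. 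You instead extract the first normal jet of the constraint (applying $\nabla_n$ to $\sigma\,\cod A=(d+w-k)\,\iota(n)A$, using $\nabla_n\sigma=1$ along~$\Sigma$ and the fact that $\nabla_n n|_\Sigma$ is purely normal, which indeed follows either from your gradient argument or directly from~\nn{almostEinstein}), and pair it with an orthonormal-frame Gau\ss-formula split of $\cod$, eliminating the auxiliary $B=\big(\iota(n)\nabla_n A\big)\big|_\Sigma$. Note that your second relation is precisely a from-scratch proof of the bulk--boundary codifferential relation that the paper simply cites, so your route is more self-contained and elementary, at the cost of being longer; and your observation that $\iota(n)B=0$ (so that the comparison with the tangential $\cod_{_\Sigma}A_{_\Sigma}$ makes sense) plays the role of the paper's step $\iota(n)\cod A=0$. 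One small caveat: your final elimination divides by $d+w-k$, which vanishes at $w=k-d$; the paper dispatches this case first, since there~\nn{COULOMB_GAUGE} forces $\cod A=0$ outright. Your argument needs no such case split if you reorganise the elimination: substituting $B=\big(\cod A\big)\big|_\Sigma-\cod_{_\Sigma}A_{_\Sigma}$ from the second relation into the first gives $(d+w-k-1)\big(\cod A\big)\big|_\Sigma=(d+w-k)\,\cod_{_\Sigma}A_{_\Sigma}$ directly, valid at every weight, and $d+w-k-1=n+w-k$ is then the claim.
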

\begin{proof}
When $w=k-d$, Equation~\nn{COULOMB_GAUGE} immediately says $\cod A=0$, so we may assume $w\neq k-d$.
We perform  a direct computation of $\cod A$ along~$\Sigma\, $
\begin{eqnarray*}
 \big(\cod A\big)\big|_\Sigma
&=&\big(\cod\, (\iota(n)\varepsilon(n) + \varepsilon(n)\iota(n))\, A\big)\big|_\Sigma\\
&=&\cod_{_\Sigma} A_{_\Sigma} + \frac1{d+w-k}\big(\iota(n) \varepsilon(n) \cod A \big)\big|_\Sigma\\[1mm]
&=&\cod_{_\Sigma} A_{_\Sigma} + \frac1{d+w-k}\big(1-\varepsilon(n) \iota(n) \big)\cod A \big|_\Sigma\, .
\end{eqnarray*}
On the first line we inserted $1=2\rho \sigma+ \iota(n)\varepsilon(n) + \varepsilon(n)\iota(n)$
and used that $\big(\iota(n)A\big)\big|_\Sigma=0$.
To obtain the second line we used that $\iota(n) A = \frac1{d+w-k}\, \cod A$ as well as the relationship between
bulk and boundary codifferentials
$$
\big(\cod\,  \iota(n) \varepsilon(n) A\big)\big|_\Sigma= \cod_{_\Sigma} A_{_\Sigma}\, .
$$
Finally, $\iota(n) \cod A = 0$ as can be seen by acting upon Equation~\nn{COULOMB_GAUGE} with $\iota(n)$ which completes the proof.
\end{proof}

\begin{remark}
The weight $w=k-n$ corresponds to a special case of the west Lemma~\ref{West} along the boundary. Indeed the above Lemma then imposes
the condition $\cod_{_\Sigma}A_{_\Sigma}=0$ which leads to a qualitatively different natural boundary problem.\end{remark}

Now we turn to distinguished weights.
Consider first a tractor $\A$ satisfying~\nn{allbcs} at the value 
 $w=k-d$. Then from Lemma~\ref{West}, we have
$$
\ker(\wDs,\Xs)\subset\ct^kM[w]\ni\A\stackrel{g}{:=} \begin{pmatrix}0\\ A\\0\\\phi\end{pmatrix} \, ,\mbox{ such that } \cod A = 0 =\cod\,\!  \phi\, .
$$
Remarkably the above display reduces to a west tractor along~$\Sigma$. To see this we need to show $\phi|_\Sigma =\phi_{_{\Sigma}} =
\cod_{_{\Sigma}} A_{_{\Sigma}}$. This follows from a rapid computation using only $\Is \A=0$:
\begin{eqnarray}
0 &=& \big(\cod A\big)\big|_\Sigma\nonumber\\
&=&\big(\cod\,  (2\rho\sigma + \iota(n)\varepsilon(n) + \varepsilon(n)\iota(n))A\big)\big|_\Sigma\nonumber\\
&=&\cod_{_{\Sigma}} A_{_{\Sigma}} - \big(\cod \varepsilon(n)\sigma\phi\big)\big|_\Sigma\nonumber\\
&=&\cod_{_{\Sigma}} A_{_{\Sigma}} - \phi_{_\Sigma}\, .\label{phideltaA}
\end{eqnarray}
This result is encapsulated by the following version of Proposition~\ref{extExt}.
\begin{proposition}\label{bcs1}
Given as data $A_{_\Sigma}\in \Gamma\ce^k\Sigma[2k-d]$, the following constructions of a tractor along~$\Sigma$ agree:
\begin{enumerate}[(i)]
\item 
Take any coclosed extension of $A_{_\Sigma}$ to $A\in \Gamma \ce^k M[2k-d]$ and pair it with a coclosed
form $\phi\in\Gamma\ce^{k-1} M[2k-d-2]$ by requiring $\A:=q_W(A,\phi)\in \ker \Is$, which is well-defined because
$$(A,\phi) \in\ker(\cod,\cod) \subset\big(\Gamma \ce^k M[2k-d]\oplus\Gamma\ce^{k-1} M[2k-d-2]\big)\, .$$
Then take the restriction.
\item Map $A_{_\Sigma}$ to $\A_{_\Sigma} \in \Gamma \ct^k\Sigma[k-d]$ with the boundary insertion operator 
$q_W^{\scriptscriptstyle \Sigma}$.
\end{enumerate}
That is 
$$
\A\big|_\Sigma=\A_{_\Sigma}\in\ker\big(\wDs_{_\Sigma},\Xs_{_\Sigma}\big)\, .
$$
\end{proposition}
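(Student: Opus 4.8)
The equality $\A|_\Sigma=\A_{_\Sigma}$ is to be read, as throughout, under the isomorphism $N^\perp\cong\ct\Sigma$ of Section~\ref{tractorsontheedge}. My plan is to compute the image of $\A|_\Sigma$ under the boundary splitting isomorphism~\nn{boundaryisom} and to check that it agrees slotwise with $q_W^{\Sigma}A_{_\Sigma}$, both sections manifestly lying in $\ker(\wDs_{_\Sigma},\Xs_{_\Sigma})$. First I would fix the shape of $\A$: since $w=k-d$ is exactly the critical weight of the western Lemma~\ref{West}, the relevant insertion operator is the one of~\nn{qWspecial}, so $\A=q_W(A,\phi)$ has vanishing northern and eastern slots and western and southern slots $A$ and $\phi$.

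Before restricting I would check that construction~(i) is internally consistent. Reading $\Is\A=0$ off the matrix~\nn{Iext} gives precisely $\iota(n)A+\sigma\phi=0$ together with $\iota(n)\phi=0$; hence $\phi=-\sigma^{-1}\iota(n)A$ away from $\Sigma$, which is smooth because $\iota(n)A|_\Sigma=0$ ($A|_\Sigma=A_{_\Sigma}$ being a form on $\Sigma$). Applying $\cod$ to $\sigma\phi=-\iota(n)A$ and using $\{\cod,\iota(n)\}=0$ and $[\cod,\sigma]=\iota(n)$ from~\nn{comms}, together with $\cod A=0$, gives $\sigma\cod\phi=0$, so $(A,\phi)\in\ker(\cod,\cod)$ and $q_W(A,\phi)$ is indeed well-defined.

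For the restriction itself, on a Poincar\'e--Einstein structure $I^2=1$, so $I|_\Sigma=N$ and $\Is\A=0$ forces $\iota(N)\A|_\Sigma=0$; thus $\A|_\Sigma\in N^\perp$ and~\nn{boundaryisom} applies. Since the surviving entries $A_{_\Sigma}=A|_\Sigma$ and $\phi_{_\Sigma}=\phi|_\Sigma$ are annihilated by $\iota(n)$, every $H^g$-term in~\nn{boundaryisom} drops, and the image is the boundary tractor with western slot $A_{_\Sigma}$ and southern slot $\phi_{_\Sigma}$. To identify the southern slot I would simply quote the computation already displayed in~\nn{phideltaA}: inserting $1=2\rho\sigma+\iota(n)\varepsilon(n)+\varepsilon(n)\iota(n)$—an identity that itself encodes $I^2=1$—into $0=(\cod A)|_\Sigma$ yields $\phi_{_\Sigma}=\cod_{_\Sigma}A_{_\Sigma}$.

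It then remains to compare with~(ii). At $w=k-d$ the boundary is $n=d-1$ dimensional, so $w=k-n-1\neq k-n$ and $q_W^{\Sigma}$ is the \emph{generic} western operator of Lemma~\ref{West}, whose southern coefficient $-1/(n+w-k)=-1/(-1)=1$ produces $q_W^{\Sigma}A_{_\Sigma}$ with western slot $A_{_\Sigma}$ and southern slot $\cod_{_\Sigma}A_{_\Sigma}$; the two constructions therefore agree, and membership in $\ker(\wDs_{_\Sigma},\Xs_{_\Sigma})$ is automatic from the codomain of $q_W^{\Sigma}$. I expect the main hazard to be the restriction step: one must verify that the $H^g$-dressing in~\nn{boundaryisom} genuinely cancels (it does, by $\iota(n)A_{_\Sigma}=\iota(n)\phi_{_\Sigma}=0$) and, more conceptually, that the single Einstein normalisation $I^2=1$ is what simultaneously delivers $N=I|_\Sigma$, the resolution of the identity used in~\nn{phideltaA}, and the value $1$ of the boundary coefficient, so that these three appearances conspire to make the diagram of~\nn{Bryan} commute along $\Sigma$.
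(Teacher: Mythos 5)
Your proposal is correct and takes essentially the same route as the paper: the crux of both arguments is the computation \nn{phideltaA}, inserting $1=2\rho\sigma+\iota(n)\varepsilon(n)+\varepsilon(n)\iota(n)$ (the identity encoding $I^2=1$) into $0=(\cod A)|_\Sigma$ to obtain $\phi_{_\Sigma}=\cod_{_\Sigma}A_{_\Sigma}$, which is then matched against the southern coefficient $-1/(n+w-k)=1$ of the generic boundary operator $q_W^{\scriptscriptstyle \Sigma}$ at $w=k-d=k-n-1$. Your supplementary checks --- that $\Is\A=0$ together with $\cod A=0$ forces $\cod\phi=0$ via $\{\cod,\iota(n)\}=0$, and that the $H^g$-terms in \nn{boundaryisom} drop on sections annihilated by $\iota(n)$ --- are correct elaborations of points the paper disposes of through Lemma~\ref{West} and the proof of Proposition~\ref{extExt}, not a different method.
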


Next, observe that 
the case $w=k-n$ is also distinguished because it corresponds to a special case for the west Lemma along~$\Sigma$.
Again the details are interesting and surprising.
Suppose that $\A\in \ker(\Ds,\Xs,\Is)$
with $w=k-n$, then for $g\in c$ 
$$
\A\stackrel{g}=\begin{pmatrix}0\\A\\0\\\phi\end{pmatrix} \mbox{ with } \phi=-\cod A \mbox{ and } \iota(n) A +\sigma \phi = 0 = \iota(n) \phi\, .
$$
Write $A_{_\Sigma}:=A|_\Sigma$ and consider
\begin{eqnarray*}
\cod_{_\Sigma} A_{_\Sigma}=\big(\cod\, \iota(n)\varepsilon(n)A\big)\big|_\Sigma
= \big(\cod(-\varepsilon(n)\iota(n)  + 1-2\rho\sigma)A\big)\big|_\Sigma
=0\, .
\end{eqnarray*}
This implies we can no longer use the boundary form $A_{_\Sigma}$ as Dirichlet data.
Instead the extension~$A$ of $A_{_\Sigma}$ carries the independent Neumann data $\phi_{_\Sigma}$ according to~\nn{normcomp}.
Moreover, $\phi_{_\Sigma}$ is also coclosed along $\Sigma$ because
$$  
\cod_{_\Sigma} \phi_{_\Sigma}=\big(\cod\, \iota(n)\varepsilon(n)\phi\big)\big|_\Sigma
= \big(\cod(-\varepsilon(n)\iota(n)  + 1-2\rho\sigma)\phi\big)\big|_\Sigma
=0\, ,\quad
$$
(using $\cod \phi = - \cod^2 A=0$). But exactly because $w=k-n$, by virtue of Lemma~\ref{West}, the coclosed boundary forms $(A_{_\Sigma},\phi_{_\Sigma})$ are isomorphic to a boundary tractor $\A_{_\Sigma}\in \ker(\wDs_{_\Sigma},\Xs_{\Sigma})$.

Thus for this case we have a modified version of Proposition~\ref{extExt} giving a Dirichlet boundary condition for the pair of coclosed forms $(A_{_\Sigma},\phi_{_\Sigma})$.
\begin{proposition}\label{bcs2}
Given the data $$(A_{_\Sigma},\phi_{_\Sigma})\in \ker(\cod_{_\Sigma},\cod_{_\Sigma})\subset\big(\Gamma\ce^k\Sigma[2k-n]\oplus \Gamma\ce^{k-1}\Sigma[2k-n-2]\big)\, ,$$
the following constructions of a tractor form along~$\Sigma$ agree:
\begin{enumerate}[(i)]
\item Take any extension of  $A_{_\Sigma}$ to $A \in \Gamma \ce^k M[2k-n]$,
where $A_{_\Sigma}=A|_\Sigma$ is Dirichlet data and $\phi_{_\Sigma}=-\big(\nabla_n\big[\iota(n) A\big]\big)\big|_\Sigma$ is Neumann data, and
such that 
$
\A:=q_W A\in\Gamma\ct^k M[k-n]
$
satisfies $\Is \A=0$. Then take the restriction. 
\item Map $(A_{_\Sigma},\phi_{_\Sigma})$ to $\A_{_\Sigma} \in \Gamma \ct^k\Sigma[k-n]$ with the boundary insertion operator 
$q_W^{\scriptscriptstyle \Sigma}$.
\end{enumerate}
That is 
$$
\A\big|_\Sigma=\A_{_\Sigma}\in\ker\big(\wDs_{_\Sigma},\Xs_{_\Sigma}\big)\, .
$$
\end{proposition}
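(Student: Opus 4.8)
The plan is to run the construction in part~(i) explicitly and then recognise its restriction to $\Sigma$ as the critical-weight instance of the boundary west insertion. First I would form $\A:=q_W A\in\Gamma\ct^kM[k-n]$ via the west Lemma~\ref{West}; since $w=k-n$ gives $d+w-k=1$, this reads $\A\stackrel{g}{=}(0,A,0,\phi)^T$ with $\phi=-\cod A$. Imposing $\Is\A=0$ and reading off the matrix expression~\nn{Iext} for $\Is$ yields the two scalar conditions $\iota(n)A+\sigma\phi=0$ and $\iota(n)\phi=0$. In particular $\iota(n)A=-\sigma\phi$ vanishes along $\Sigma$, so $A|_\Sigma=A_{_\Sigma}$ is genuinely a form on $\Sigma$; differentiating $\iota(n)A=-\sigma\phi$ normally and using $\nabla_n\sigma=|n|^2=1-2\rho\sigma$ (which follows from $I^2=1$) recovers $\phi|_\Sigma=-\big(\nabla_n[\iota(n)A]\big)\big|_\Sigma=\phi_{_\Sigma}$, so the restriction of $\phi$ is exactly the Neumann datum of~\nn{normcomp}.

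The core of the argument is a pair of coclosedness computations, both carried out as in Lemma~\ref{deltadelta}. I would insert the identity $1=2\rho\sigma+\iota(n)\varepsilon(n)+\varepsilon(n)\iota(n)$ (valid since $I^2=1$), apply the commutator $[\cod,\sigma]=\iota(n)$ from~\nn{comms} together with the bulk--boundary relation $\big(\cod\,\iota(n)\varepsilon(n)A\big)\big|_\Sigma=\cod_{_\Sigma}A_{_\Sigma}$, and use $\iota(n)A|_\Sigma=0$ to conclude $\cod_{_\Sigma}A_{_\Sigma}=0$. The same manipulation applied to $\phi$, this time using $\iota(n)\phi=0$ and $\cod\phi=-\cod^2A=0$, gives $\cod_{_\Sigma}\phi_{_\Sigma}=0$. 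Hence the pair $(A_{_\Sigma},\phi_{_\Sigma})$ lands in $\ker(\cod_{_\Sigma},\cod_{_\Sigma})$, which is precisely what is needed to feed it into the boundary insertion operator.

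The conceptual point, and the step I expect to require the most care, is that $w=k-n$ is exactly the critical weight $k-d_\Sigma$ for the west Lemma on the $n$-dimensional boundary. Thus $q_W^{\scriptscriptstyle\Sigma}$ is not the generic splitting but the special isomorphism~\nn{qWspecial} on $\Sigma$, whose domain is $\ker(\cod_{_\Sigma},\cod_{_\Sigma})$, whose image lies in $\ker(\wDs_{_\Sigma},\Xs_{_\Sigma})$, and which acts by $(A_{_\Sigma},\phi_{_\Sigma})\mapsto(0,A_{_\Sigma},0,\phi_{_\Sigma})^T$. To finish I would restrict $\A$: since $\Is\A=0$ and $N=I|_\Sigma$ force $\iota(N)\A|_\Sigma=0$, the section $\A|_\Sigma$ lies in $N^\perp$, and the boundary splitting isomorphism~\nn{boundaryisom} acts as the identity on sections of the shape $(0,A,0,\phi)^T$ once $\iota(n)A|_\Sigma=0$. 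Therefore $\A|_\Sigma=(0,A_{_\Sigma},0,\phi_{_\Sigma})^T=\A_{_\Sigma}\in\ker(\wDs_{_\Sigma},\Xs_{_\Sigma})$, the claimed equality. The only genuine subtlety is ensuring the two coclosedness conditions hold, so that $(A_{_\Sigma},\phi_{_\Sigma})$ sits in the domain of the critical-weight $q_W^{\scriptscriptstyle\Sigma}$; the remainder is the bookkeeping of~\nn{Iext} and~\nn{boundaryisom}.
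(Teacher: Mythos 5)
Your proposal is correct and takes essentially the same route as the paper: reading off $\iota(n)A+\sigma\phi=0=\iota(n)\phi$ from $\Is\A=0$, establishing $\cod_{_\Sigma}A_{_\Sigma}=0=\cod_{_\Sigma}\phi_{_\Sigma}$ by inserting $1=2\rho\sigma+\iota(n)\varepsilon(n)+\varepsilon(n)\iota(n)$ as in Lemma~\ref{deltadelta}, and recognising $w=k-n$ as the critical weight for the boundary west Lemma so that the special isomorphism~\nn{qWspecial} identifies the coclosed pair with $\A_{_\Sigma}\in\ker\big(\wDs_{_\Sigma},\Xs_{_\Sigma}\big)$. The extra bookkeeping you supply---deriving $\phi|_\Sigma=\phi_{_\Sigma}$ by normally differentiating $\iota(n)A=-\sigma\phi$, and checking that the boundary splitting isomorphism~\nn{boundaryisom} acts as the identity on sections of the shape $(0,A,0,\phi)^T$---is exactly what the paper leaves implicit via~\nn{normcomp} and the end of the proof of Proposition~\ref{extExt}.
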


\subsection{Holographic boundary projectors}\label{holographicproj}

Our aim at this stage is to construct projectors which solve the extension problem to be introduced in the next Section.
A first step is to 
construct holographic formul\ae~for projectors implementing the tractor  boundary conditions introduced above, while at the same time solving the 
scale-transversality conditions. In equations, we seek a tractor $\A$ obeying
\begin{equation}\label{stc}
\Is \A=
\wDs \A=
\Xs \A=0\, 
\end{equation}
subject to
$$
\A\big|_\Sigma=\A_{_\Sigma}\in\ker\big(\wDs_{_\Sigma},\Xs_{_\Sigma}\big)\, .
$$
There are however three separate cases: $w=k-n$, $w=-k$ and $w$ generic. We begin with the last case.

\subsubsection{Holographic projectors at  generic weights}\label{generic}
Recall  from Section~\ref{compinsert} the projector $\Gamma\ct^k\Sigma[w]\to \ker(\wDs_{_\Sigma},\Xs_{_\Sigma})\subset \Gamma\ct^k\Sigma[w]$ for $w\neq -k,k-n$ given by
$$\Pi^{{}_{^\Sigma}}_W:
=\scalebox{.9}{$\frac{1}{(w+k)(n+w-k)}$}\, \wDs_{_\Sigma}\Xs_{_\Sigma}\wD^{\phantom{\star}}_{_\Sigma}\XsS \, ,
\quad\mbox{satisfying}\quad \big(\Pi^{{}_{^\Sigma}}_W\big)^2=\Pi^{{}_{^\Sigma}}_W\, .$$
There is a holographic formula for this {\it boundary western projector}.
First we develop some new tools.
\begin{definition}
Let $(M,c,\sigma)$ be an almost Riemannian structure. The {\it holographic interior} and {\it exterior triple D-operators} are
defined, respectively, by
$$ \D_{[3]} := \wD\,  \X \I\, \qquad\mbox{ and } \qquad \Ds_{[3]}:=\wDs\Xs\Is\, ,$$
where 
$$
\D_{[3]} :\Gamma\ct^kM[w]\longrightarrow \Gamma\ct^{k+3}M[w]\, \qquad \mbox{ and } \qquad \Ds_{[3]}:\Gamma\ct^kM[w]\longrightarrow \Gamma\ct^{k-3}M[w]\, .
$$
For a choice of $g\in c$ one has
\begin{equation}\label{tripleD}
\D_{[3]} \stackrel{g}{=} \begin{pmatrix}0&0&\:\: 0\:\:&\:0\:\\[1mm]
0&0&0&0\\[1mm]
-\varepsilon(n)\extd&\wepsilon&0&0\\[1mm]
0&\varepsilon(n) \extd& 0 & 0\end{pmatrix}\, \qquad
\mbox{ and }\qquad
\Ds_{[3]} \stackrel{g}{=} \begin{pmatrix}0&0&0&0\\[1mm]
-\cod\iota(n)&0&\wiota&0\\[1mm]
0&0&0&0\\[1mm]
0&0& \cod\iota(n) & \:0\:\end{pmatrix}\, ,
\end{equation}
where
$$\wepsilon=\varepsilon(n) (w+\degree) - \sigma \extd\qquad \mbox{ and } \qquad
\wiota = \iota(n) (d+w-\degree)-\sigma \cod
\, .
$$
\end{definition}
\begin{remark}\label{DDD}
Like their double D ancestors, the holographic triple D-operators obey a graded Leibnitz rule.
Furthermore, we view these as holographic formul\ae\  for  the boundary double {\it and} (extrinsic) triple D-operators because along~$\Sigma$ they restrict to 
 $\varepsilon(N) \wD_{_\Sigma}\X_{_\Sigma}$ and  $\wDs_{_\Sigma}\Xs_{_\Sigma} \iota(N)$. 
\end{remark} 

From their definitions, or the explicit expressions displayed for a choice of $g\in c$, we immediately see that these operators are  tangential because they commute with the scale operator ~$x=\sigma$
\begin{center}
$
\big[\D_{[3]}, x\big] = 0 = \big[\Ds_{[3]}, x\big]\, .
$
\end{center}
Moreover $\operatorname{ran}\D_{[3]}\subset\ker(\wD,\X)$ and $\operatorname{ran}\Ds_{[3]}\subset\ker(\wDs,\Xs)$.
On an almost Riemannian structure we may  write $\D_{[3]}=\I \wD \X$ and $\Ds_{[3]}=\Is \wDs \Xs$. This is obvious for almost Einstein structures, but is also easily verified in the more general almost Riemannian setting.
Thus
$$\operatorname{ran}\D_{[3]}\subseteq \ker(\I,\wD,\X)\qquad \mbox{ and } \qquad \operatorname{ran}\Ds_{[3]}\subseteq\ker(\Is,\wDs,\Xs)\, .$$
 For most 
weights $\subseteq$ may be replaced by equality, see Section~\ref{coho}.

\begin{definition}
Let $w\neq-k,k-n$. 
We  define the {\it holographic boundary (west) projector}
$$\Pi:\Gamma\ct^kM[w]\longrightarrow\ker(\Is,\wDs,\Xs)\subset\Gamma\ct^kM[w]\, ,$$
by
$$
\Pi:=\scalebox{.9}{$\frac{1}{(w+k)(n+w-k)}$}\, \Ds_{[3]}\,  \D_{[3]}\, .
$$
\end{definition}

Although we term $\Pi$  the  holographic boundary projector,
it is only idempotent along~$\Sigma$: we view $\Pi$ as a holographic formula for the boundary west projector~$\Pi_{_\Sigma}$. In Lemma~\ref{failure} we shall show that bulk idempotence holds on the kernel of the  extension operator $y=-I\cdot \slashed D$.

\begin{proposition}\label{projprop}
On a conformally compact manifold,
 the holographic boundary  operator~$\Pi$
obeys
\begin{equation}\label{pihat}
\Is \Pi = 0=\wDs\Pi =\Xs \Pi \, .
\end{equation}
If $\A\in\ker(\Is,\wDs,\Xs)\subset\Gamma\ct^kM[w]$, $w\neq-k,k-n$, then $$\big(\Pi \A\big)\big|_\Sigma=\A_{_\Sigma}\, .$$
Moreover $\Pi$ is tangential and if $\A\in\Gamma\ct^kM[w]$ with $\big(\Is \A\big)\big|_\Sigma=0$ then
$$\qquad
\big(\Pi \A\big)\big|_\Sigma = \Pi_W^{\scriptscriptstyle \Sigma} \A_{_\Sigma}\, ,\qquad \A_{_\Sigma}:=\A|_\Sigma.
$$
\end{proposition}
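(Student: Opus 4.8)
My plan is to treat the four assertions of Proposition~\ref{projprop} in order of increasing computational weight, reducing the two restriction identities to a single boundary calculation. First I would dispose of \eqref{pihat}: since $\Pi$ is a nonzero scalar multiple of $\Ds_{[3]}\D_{[3]}$, its range is contained in $\operatorname{ran}\Ds_{[3]}$, and the inclusion $\operatorname{ran}\Ds_{[3]}\subseteq\ker(\Is,\wDs,\Xs)$ recorded just before the definition of $\Pi$ yields $\Is\Pi=\wDs\Pi=\Xs\Pi=0$ at once. Tangentiality of $\Pi$ I would read off from $[\D_{[3]},x]=0=[\Ds_{[3]},x]$: a composite of operators commuting with $x=\sigma$ again commutes with $x$, so $\Pi$ is tangential and descends to a well-defined map on $\Gamma\ct^kM\big|\!\big|_\Sigma\cong\Gamma\ct^kM|_\Sigma$.

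The computational core is the identity of the final claim. Here I would first invoke that a conformally compact structure is almost scalar constant in a collar of $\Sigma$, so that $I|_\Sigma=N$ and $N_AN^A=1$. The hypothesis $(\Is\A)|_\Sigma=0$ then reads $\iota(N)\A_{_\Sigma}=0$ (as $\Is|_\Sigma=\iota(N)$), placing $\A_{_\Sigma}:=\A|_\Sigma$ in $N^\perp\cong\ct\Sigma$ as a genuine boundary tractor --- this is exactly the condition that makes the intrinsic operators $\wD_{_\Sigma},\X_{_\Sigma}$ act on it. Because $\Ds_{[3]}$ and $\D_{[3]}$ are tangential, the restriction of their composite equals the composite of their individual boundary restrictions, which by Remark~\ref{DDD} are $\wDs_{_\Sigma}\Xs_{_\Sigma}\iota(N)$ and $\varepsilon(N)\wD_{_\Sigma}\X_{_\Sigma}$. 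Composing gives $(\Ds_{[3]}\D_{[3]}\A)|_\Sigma=\wDs_{_\Sigma}\Xs_{_\Sigma}\,\iota(N)\varepsilon(N)\,\wD_{_\Sigma}\X_{_\Sigma}\A_{_\Sigma}$. Since $\wD_{_\Sigma}\X_{_\Sigma}\A_{_\Sigma}\in\ct\Sigma\cong N^\perp$ is killed by $\iota(N)$, the relation $\{\iota(N),\varepsilon(N)\}=N^2=1$ collapses $\iota(N)\varepsilon(N)$ to the identity there, so dividing by $(w+k)(n+w-k)$ yields $(\Pi\A)|_\Sigma=\Pi_W^{\scriptscriptstyle\Sigma}\A_{_\Sigma}$.

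The second claim I would then obtain as a special case: the global hypothesis $\Is\A=0$ supplies the precondition above, while the boundary-condition analysis of Propositions~\ref{extExt},~\ref{bcs1} and~\ref{bcs2} guarantees that the restriction $\A_{_\Sigma}$ of a bulk western tractor lies in $\ker(\wDs_{_\Sigma},\Xs_{_\Sigma})$ (the excluded weights $w=-k,k-n$ being precisely those where these statements or the normalisation of $\Pi_W^{\scriptscriptstyle\Sigma}$ fail). As $\Pi_W^{\scriptscriptstyle\Sigma}$ is the idempotent boundary western projector fixing that kernel, $\Pi_W^{\scriptscriptstyle\Sigma}\A_{_\Sigma}=\A_{_\Sigma}$, so $(\Pi\A)|_\Sigma=\A_{_\Sigma}$ as required.

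The step I expect to be most delicate is not any single calculation but the bulk-to-boundary reduction underlying the main claim: justifying that the restriction of the tangential composite $\Ds_{[3]}\D_{[3]}$ is genuinely the composite of the two boundary restrictions of Remark~\ref{DDD}, and that the identification $N^\perp\cong\ct\Sigma$ via $I|_\Sigma=N$ correctly converts the bulk operators $\wD,\X,\wDs,\Xs$ into their intrinsic boundary counterparts. Once this is secured, the $\iota(N)\varepsilon(N)=1$ reduction on $N^\perp$ is what produces exactly the factor $(w+k)(n+w-k)$ and the projector $\Pi_W^{\scriptscriptstyle\Sigma}$, so I would take particular care to track the weight and degree dependence of these coefficients through the restriction, since they must match the defining constant of $\Pi_W^{\scriptscriptstyle\Sigma}$.
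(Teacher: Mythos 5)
Your proof is correct and takes essentially the same approach as the paper: equation~\nn{pihat} and tangentiality follow from the recorded properties of $\D_{[3]}$ and $\Ds_{[3]}$, and the restriction identities follow via Remark~\ref{DDD} together with the fact that $\iota(N)\varepsilon(N)$ is unity on boundary tractors, which is precisely the route the paper indicates (alongside a direct matrix verification in a chosen scale using~\nn{doubleD} and~\nn{tripleD}). Your only variation --- obtaining the middle claim as a corollary of the final one via Propositions~\ref{extExt} and~\ref{bcs1} rather than by direct computation --- is a harmless reorganization.
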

\begin{proof}
The first set of equalities are obvious using the discussion above the Proposition and nilpotency of the operators $(\Is,\wDs,\Xs)$.
Tangentiality follows by construction because  $\Ds_{[3]}$ and $\D_{[3]}$  are  separately tangential. The remaining two claims are verified by
choosing a scale $g\in c$ and employing the explicit matrix expressions~\nn{doubleD} and~\nn{tripleD} for the double and triple D-operators.
The last display also follows immediately from Remark~\ref{DDD} and the fact that the product $\iota(N)\, \varepsilon(N)$ is unity on boundary tractors.
\end{proof}

Later we will need finer information about the failure of the holographic boundary projector to be a linear projection in the bulk.
\begin{lemma}\label{failure}
Let $(M,c,\sigma)$ be almost Einstein. Then, acting on $\A \in\ker(\Is,\wDs,\Xs)\subset \Gamma\ct^kM[w]$,  $w\neq -k,k-n$, we have
$$
\Pi \A = \Big(1+ \frac{1}{(w+k)(n+w-k)}\, xy\Big)\A\, .
$$
More generally, at any weight $w$
$$
\Ds_{[3]}\, \D_{[3]} \A = \big((w+k)(n+w-k) + xy\big)\A\, .
$$
\end{lemma}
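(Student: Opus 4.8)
The plan is to establish the weight-unrestricted second identity first; the displayed formula for $\Pi\A$ then follows at once by dividing by $(w+k)(n+w-k)$ (legitimate precisely when $w\neq-k,k-n$) and recalling $\Pi=\tfrac{1}{(w+k)(n+w-k)}\Ds_{[3]}\D_{[3]}$. I work on the almost Einstein structure in a chosen scale $g\in c$, using the factorisations $\D_{[3]}=\I\wD\X$ and $\Ds_{[3]}=\Is\wDs\Xs$ valid in this setting. Since $\A\in\ker(\wDs,\Xs)$ it is a west tractor, $\A\stackrel{g}{=}(0,A,0,\phi)$ with $\phi$ tied to $A$ by Lemma~\ref{West}; the extra hypothesis $\Is\A=0$ yields, from the matrix~\nn{Iext} for $\Is$, the transversality relations $\iota(n)A=-\sigma\phi$ and $\iota(n)\phi=0$.

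First I would apply $\D_{[3]}$ to $\A$ using the explicit triple-$D$ matrix~\nn{tripleD} (equivalently, $\X$, $\wD=\D/h$ and $\I$ in turn via~\nn{X},~\nn{doubleD},~\nn{Iext}), obtaining the eastern tractor $\D_{[3]}\A\stackrel{g}{=}(0,0,\wepsilon A,\varepsilon(n)\extd A)$, where the eastern slot is recognised as $\wepsilon A=\big((w+k)\varepsilon(n)-\sigma\extd\big)A$. Applying $\Ds_{[3]}$, again by~\nn{tripleD}, produces $\Ds_{[3]}\D_{[3]}\A\stackrel{g}{=}(0,\wiota\wepsilon A,0,\cod\iota(n)\wepsilon A)$. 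By the range property $\operatorname{ran}\Ds_{[3]}\subseteq\ker(\Is,\wDs,\Xs)$ this is a west tractor, so by Lemma~\ref{West} it is determined by its western slot $\wiota\wepsilon A$; it therefore suffices to analyse this one form.

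Next I would expand $\wiota\wepsilon A=\big((n+w-k)\iota(n)-\sigma\cod\big)\big((w+k)\varepsilon(n)-\sigma\extd\big)A$, separating the $\sigma$-free part. Using the Poincar\'e--Einstein normalisation $I^2=1$, i.e.\ $\{\Is,\I\}=1$, which gives $\{\iota(n),\varepsilon(n)\}=|n|^2_g=1-2\sigma\rho$, together with the transversality $\iota(n)A=-\sigma\phi$, the algebraic piece of $\iota(n)\varepsilon(n)A$ is simply $A$; hence the $\sigma$-free part of $\wiota\wepsilon A$ is $(n+w-k)(w+k)A$ (here I use $d-1=n$). All remaining terms carry an explicit factor of $\sigma$. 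This already yields the announced leading coefficient and, since $\sigma|_\Sigma=0$, recovers the along-$\Sigma$ idempotence $(\Pi\A)|_\Sigma=\A_{_\Sigma}$ of Proposition~\ref{projprop}.

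The remaining, and principal, task is to identify the $O(\sigma)$ remainder $\wiota\wepsilon A-(n+w-k)(w+k)A$ with the western slot of $xy\A$. I expect this to be the main obstacle, since it is the only point at which bulk, second-order information enters. The efficient route is to use that on $\ker\Is$ one has $y\A=-\{\Is,\D\}\A=-\Is\D\A$, whence $xy\A=-\sigma\,\Is\D\A$, and to compare western slots; alternatively one substitutes the explicit matrix expression for $y=-I\cdot\slashed{D}$ recorded in Proposition~\ref{Yform} (its Robin part $\delta_R$ and Yamabe-type part $\square_Y$) and matches the remainder term by term---the bookkeeping of the $\rho A$, $\varepsilon(n)\phi$, $\iota(n)\extd A$ and $\cod\varepsilon(n)A$ contributions against $\delta_R$ and $\square_Y$ is the substance of the computation. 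Because nowhere in this argument does one divide by $(w+k)(n+w-k)$, the resulting identity $\Ds_{[3]}\D_{[3]}\A=\big((w+k)(n+w-k)+xy\big)\A$ is valid at every weight, as asserted, and the first statement of the Lemma is recovered by the division noted at the outset.
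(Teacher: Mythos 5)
Your proposal is correct in outline, but it takes a genuinely different route from the paper. The paper's proof is purely operator-algebraic: it factorises $\Ds_{[3]}\D_{[3]}\A=\Ds_{[2]}\, \Is\I\, \D_{[2]}\A$, evaluates $\Ds_{[2]}\D_{[2]}\A=(w+k)(d+w-k)\A$ via Proposition~\ref{WESTPROJECTOR}, and then computes the commutator $[\Ds_{[2]},\Is\I]$ using the algebra of Propositions~\ref{Iformalg},~\ref{hatstotilde} and~\ref{diffsplit} together with $[x,y]=h$, pushing all interior operators to the right where they annihilate $\A$; validity at \emph{all} weights is then obtained by a naturality argument (the left-hand side is polynomial in the weight, so the generic-weight singularities are removable). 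You instead compute in a fixed scale with the explicit matrices~\nn{tripleD} and~\nn{Iext}, correctly obtaining $\Ds_{[3]}\D_{[3]}\A\stackrel{g}{=}\big(0,\wiota\, \wepsilon A,0,\cod\, \iota(n)\, \wepsilon A\big)$ and reducing the claim to the weighted-form identity $\wiota\, \wepsilon A=(w+k)(n+w-k)A+(\mbox{western slot of }xy\A)$. This is legitimate: it is precisely the identity the paper itself verifies later, by the same device $y\A=-\Is\D\A$ on $\ker\Is$ that you propose, as Lemma~\ref{aerm} (whose proof the paper notes can alternatively be run through Lemma~\ref{failure} --- so you must prove it directly, as you intend, to avoid circularity). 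Your route buys concreteness and makes the along-$\Sigma$ idempotence of Proposition~\ref{projprop} manifest; the paper's buys scale-independence and a clean $\mathfrak{sl}(2)$-style mechanism.

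Two points need attention before your argument is complete. First, the pivotal step --- matching the $O(\sigma)$ remainder of $\wiota\, \wepsilon A$ against the western slot of $xy\A$ --- is announced but not carried out; since this is where all the second-order information enters, it is the proof rather than bookkeeping, and must be done (your route via $xy\A=-\sigma\, \Is\D\A$ does work and is the efficient one). Second, comparing western slots only establishes the tractor identity if the right-hand side $\big((w+k)(n+w-k)+xy\big)\A$ is itself known to lie in $\ker(\wDs,\Xs)$; at generic weights this follows from $[\Ds,y]=0=[\Is,y]$ (Proposition~\ref{Yform}) and the $\Ds x$ and $y\Xs$ identities of Proposition~\ref{Iformalg} applied with $\Ds\A=\Xs\A=\Is\A=0$, but you should say so. Moreover, at the critical weight $w=k-d$ Lemma~\ref{West} degenerates to the form~\nn{qWspecial}, so a tractor in $\ker(\wDs,\Xs)$ is \emph{not} determined by its western slot and the southern slot $\cod\, \iota(n)\, \wepsilon A$ must be matched separately. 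Hence your closing claim that all-weights validity follows because no division by $(w+k)(n+w-k)$ occurs does not quite suffice: the special weights enter through the slot-reduction step, not through the formulae. Either compare all four slots directly (the matrix computation is polynomial in $w$, so this yields the uniform statement), or borrow the paper's removable-singularity argument.
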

\begin{proof} In order to prove the second identity one has to push all the interior operators to the right where they annihilate $\A$.
First we write
\begin{eqnarray*}
\Ds_{[3]}\, \D_{[3]} \A&=&\Ds_{[2]}\, \Is\!\I \, \D_{[2]} \A\\[1mm]
&=&\big[\Ds_{[2]},\Is\!\I\big]\D_{[2]} \A + \Is\!\I \, (w+k)(d+w-k)\A\\[1mm]
&=&\big[\Ds_{[2]},\Is\!\I\big]\D_{[2]} \A +  (w+k)(d+w-k)\A\, ,
\end{eqnarray*}
where the second line employed Proposition~\ref{WESTPROJECTOR}
while the third  used that $\Is\! \I \A= \A$.

We claim that the first term above equals  $(xy-(w+k))\A$. To demonstrate this we first use the algebra of Proposition~\ref{Iformalg}, acting on generically weighted tractors, to find
$$
[\Ds_{[2]},\Is\!\I]= \Is \Big(\wDs x+y\frac 1h \Xs\Big)=\Is\Big(x\wDs+\frac 1 h\,   y \Xs\Big) \, .
$$
Recall from Proposition~\ref{hatstotilde} that the exterior double D-operator obeys $\D_{[2]}=\wD\X=-\X\Dt$ so that 
$\wDs\D_{[2]}=-\wD\wDs\X$ and  $\Xs\D_{[2]}=\X\Xs\Dt$. This allows us to use Proposition~\ref{diffsplit}, at generic weights $w$,  to obtain
$\wDs\D_{[2]} \A = -(d+w-k) \wD\A$ and $\Xs \D_{[2]}\A= (w+k)\X \A$. Thereafter using the obvious identities $\Is \D \A = -y \A$, $\Is \X \A = x \A$   and the fundamental identity $[x,y]=h$
gives the result for generic weights. We need the result for all weights. In the process of the above computation, there are terms which could become singular.
However, since the left hand side of the identity to be proved is manifestly a natural formula with coefficients polynomial in the weight, any singularities are removable.
\end{proof}

\begin{remark}\label{explicitpi}
It is interesting to explicate the holographic boundary projector $\Pi$ for some $g\in c$ to see how it manages to solve the scale-transversality conditions $\Is \A =\wDs \A=\Xs \A =0$.
(The latter two are no mystery, since they are the content of the west Lemma~\ref{West}.) Taking $\A\in \Gamma\ct^kM[w]$  for $g\in c$ to be
$$
\A\stackrel{g}{:=}\begin{pmatrix}\psi\\ A+\frac1{w+k}\,  \extd \psi\\[1mm] B \\ \phi\end{pmatrix}\, ,\qquad\qquad
$$
an explicit computation (valid away from $w=-k,k-d,k-n$) gives 
\begin{equation}\label{PIA=qA}\qquad\qquad
\Pi \,  \A \stackrel{g}=\ 
\begin{pmatrix}
0\\ 
\widetilde A
\\
0\\
-\frac1{d+w-k}\, \cod \, \widetilde A
\end{pmatrix}\ = q_W\big(\widetilde A\, \big)
\in \ker(\Is,\wDs,\Xs)\, ,
\end{equation}
where 
\begin{equation}\label{6letters}
\widetilde A = 
\big(\iota(n) -\frac{1}{n+w-k}\, \sigma \cod\big)\big(\varepsilon(n)  -\frac1{w+k} \sigma \extd \big)A\, .
\end{equation}
The operator appearing above is exactly the holographic projector solution to the Coulomb gauge extension problem
given in Proposition~\ref{holpro1}. 
\end{remark}

\subsubsection{True forms}
When $w=-k$ the holographic boundary projector~$\Pi$ can no longer be used to solve the scale-transversality conditions.
So instead we solve a weaker problem that suffices for later purposes based around an operation we call $\widehat \Pi$.
\begin{equation}\label{notwell}
 \Omega^kM \ni A
\stackrel{\widehat\Pi}
\longmapsto \,-\frac{1}{n-2k} \  \Ds_{[3]}\I\X q_{(N)} A\in \Gamma\ct^kM[-k]
\, , \quad k\neq \frac n2\, ,
\end{equation}
Here $q_{(N)}:\ce^{k-1}M[w+k]\to \coker(\Xs,\X;\Gamma\ct^kM[w])$ via ({\it cf.} Remark~\ref{obvious})
$$
q_{(N)}A\stackrel{g}=\begin{pmatrix}A\\ * \\ * \\ *\end{pmatrix}\, .
$$
So the map~\nn{notwell} depends on the choice of a coset representative for the $(\Xs,\X)$ cokernel.
However, upon
composition with the canonical map 
$$
\pi_{_\Sigma}:
\Gamma\ct^kM[-k]\longrightarrow \Gamma\ct^kM[-k]\big|\!\big|_{\Sigma}\, ,
$$
this determines a well defined map $\pi_{_\Sigma}\circ \widehat\Pi$.
(Recall that the notation $\bullet\, \big|\!\big|_{\Sigma}$ denotes equivalence classes of sections as in~\nn{barbar}.)
To show this 
we calculate on $A\in\Gamma\ce^k M[-k]$,  $k\neq \frac{n}2$, for some $g\in c$ using
the explicit expressions~\nn{tripleD},~\nn{Iext} and~\nn{X} for the operators in~$\widehat \Pi$,
 and find
$$
\widehat{\Pi}\, A\, 
\stackrel{g}=\, 
\begin{pmatrix}
0\\ \iota(n)\varepsilon(n) A \\ 0 \\ -\frac{1}{n-2k}\ \cod\,  \iota(n)\varepsilon(n) A
\end{pmatrix}\, \quad \mbox{along~$\Sigma$.}
$$
This  is precisely $q^{\scriptscriptstyle\Sigma}_W A_\Sigma$ with $A_{_\Sigma}=A|_\Sigma$.

On the other hand, given a choice of cokernel representative determining $q_{(N)}A$, then~$\widehat{\Pi} A$ gives a representative of  $\Gamma\ct^kM[-k]\big|\!\big|_{\Sigma}$ 
that obeys the analog of~\nn{pihat}
$$
\Is \widehat\Pi  A= 0=\wDs\widehat\Pi A=\Xs \widehat\Pi A \, .
$$
So we have now established an analog of Proposition~\ref{projprop}.

\begin{proposition}\label{projprophat}
Suppose $k\neq \frac n2$. 
 Then 
$$\pi_{_\Sigma}\circ\widehat\Pi:\Omega^kM\longrightarrow\Gamma\ct^kM[-k]\big|\!\big|_\Sigma\, ,$$
is a well defined map where
$$
\widehat\Pi:=-\scalebox{.9}{$\frac{1}{n-2k}$} \  \Ds_{[3]}\I\X q_{(N)}\, ,
$$
obeys
\begin{equation*}
\Is \widehat\Pi = 0=\wDs\widehat\Pi =\Xs \widehat\Pi \, .
\end{equation*}
Moreover $ \widehat\Pi$ is tangential and if $A\in\Omega^kM$ with $A|_\Sigma=:A_{_\Sigma}\in \Omega^k \Sigma$, then
$$\qquad
\big(\widehat\Pi A\big)\big|_\Sigma = q_W^{\scriptscriptstyle \Sigma} A_{_\Sigma}\, .
$$
\end{proposition}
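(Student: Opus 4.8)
The plan is to establish the four assertions of the Proposition in turn, leaning on the explicit matrix expressions \nn{tripleD}, \nn{Iext} and \nn{X} for $\Ds_{[3]}$, $\I$ and $\X$, together with the range and commutation properties of the triple D-operator recorded when it was introduced. The first point to settle is well-definedness of $\pi_{_\Sigma}\circ\widehat\Pi$. I would write a coset representative of $q_{(N)}A$ in a scale $g\in c$, with $A$ in the northern slot and arbitrary western, eastern and southern entries. Applying $\X$ via \nn{X} annihilates the eastern and southern ambiguities outright, leaving $A$ in the eastern slot and the western ambiguity in the southern slot; applying $\I$ and then $\Ds_{[3]}$ one checks that the outcome depends on the western ambiguity only through a term carrying an explicit factor of $\sigma$. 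Since $\wiota$ and $\cod\iota(n)$ send $\sigma(\,\cdot\,)$ into $O(\sigma)$, this term lies in the $O(x)$ equivalence class and is killed by $\pi_{_\Sigma}$, so the composite descends to a well-defined map into $\Gamma\ct^kM[-k]\big|\!\big|_\Sigma$.

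The kernel identities $\Is\widehat\Pi=\wDs\widehat\Pi=\Xs\widehat\Pi=0$ are then immediate from $\operatorname{ran}\Ds_{[3]}\subseteq\ker(\Is,\wDs,\Xs)$, established above, because $\widehat\Pi$ is a scalar multiple of $\Ds_{[3]}$ precomposed with $\I\X q_{(N)}$. Tangentiality follows because the tractor part $\Ds_{[3]}\I\X$ commutes with $x=\sigma$: one has $[\Ds_{[3]},x]=0$ by construction, while $\I$ and $\X$ commute with multiplication by the scalar density $\sigma$; consequently $(\widehat\Pi A)|_\Sigma$ depends only on $A|_\Sigma$.

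Finally I would identify $(\widehat\Pi A)|_\Sigma$ for $A\in\Omega^kM$ with $A|_\Sigma=A_{_\Sigma}\in\Omega^k\Sigma$. Specializing the computation above to $\Sigma$ gives the explicit form recorded just before the statement, whose western and southern slots are $\iota(n)\varepsilon(n)A$ and a multiple of $\cod\iota(n)\varepsilon(n)A$. Using $I^2=1$, so that $n$ is a unit conormal along $\Sigma$ and $\{\iota(n),\varepsilon(n)\}=1$, together with the tangentiality $\iota(n)A_{_\Sigma}=0$, the western slot restricts to $A_{_\Sigma}$; and the bulk--boundary codifferential relation $\big(\cod\iota(n)\varepsilon(n)A\big)\big|_\Sigma=\cod_{_\Sigma}A_{_\Sigma}$ --- the identity underpinning Lemma~\ref{deltadelta} --- reduces the southern slot to the corresponding multiple of $\cod_{_\Sigma}A_{_\Sigma}$. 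Comparing with the west Lemma~\ref{West} at weight $w=-k$ (where $n+w-k=n-2k$, whence the hypothesis $k\neq\frac n2$) identifies this tractor with $q_W^{\scriptscriptstyle\Sigma}A_{_\Sigma}$, as required.

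I expect the main obstacle to be the bookkeeping in the well-definedness step: tracking precisely which slots of the $(\Xs,\X)$-cokernel representative survive each of $\X$, $\I$ and $\Ds_{[3]}$, and confirming that the surviving western-slot dependence genuinely carries a factor of $\sigma$ rather than contributing at order $O(1)$ along $\Sigma$. Once this is pinned down, the remaining steps are routine applications of the exterior tractor algebra and the unit-conormal identities.
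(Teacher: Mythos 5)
Your proposal is correct and takes essentially the same route as the paper's own argument: an explicit slot-by-slot computation of $\Ds_{[3]}\I\X q_{(N)}A$ in a scale $g\in c$ using the matrix forms~\nn{tripleD},~\nn{Iext},~\nn{X} (with the western cokernel ambiguity surviving only via a $\sigma$-multiplied eastern entry, hence annihilated by $\pi_{_\Sigma}$, while its southern-slot image is killed outright by $\Ds_{[3]}$), kernel identities from $\operatorname{ran}\Ds_{[3]}\subseteq\ker(\Is,\wDs,\Xs)$, tangentiality from commutation with $x=\sigma$, and the boundary identification with $q_W^{\scriptscriptstyle \Sigma}A_{_\Sigma}$ via the unit-conormal projector $\iota(n)\varepsilon(n)$ and the bulk--boundary codifferential relation, exactly as in the discussion surrounding~\nn{notwell}. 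No gaps; the bookkeeping you flag as the main obstacle works out precisely as you describe.
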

\noindent
Note that a true scale $\tau$ determines a {\it map} $\widehat \Pi_\tau :\Omega^kM\longrightarrow\Gamma\ct^kM[-k]$
with \begin{equation}\label{pihattau}\widehat \Pi_\tau= -\frac{1}{n-2k}\, \Ds_{[3]}\I\X q_{(N)}^\tau\end{equation} where
\begin{equation}\label{qNtau}
q_{(N)}^{\, \tau} A \:\stackrel{\tau}=\: \begin{pmatrix}\, A\, \\0\\0\\0\end{pmatrix}\, .
\end{equation}
The above Proposition applies to the map $\widehat \Pi_\tau$ {\it mutatis mutandis}.

\subsubsection{Dual weight true forms}\label{dforms}
The remaining problem weight is $n+w-k=0$ which is the special case of  the  west Lemma, see~\ref{West} as specialized to the boundary.
Based on Proposition~\ref{bcs2}, at this weight one must consider coclosed boundary data $(A_{_\Sigma},\phi_{_\Sigma})$.

Locally along $\Sigma$ we may write
$$
A_{_\Sigma}=\cod_{_\Sigma} B_{_\Sigma}\qquad\mbox{ and }\qquad 
\phi_{_\Sigma}=\cod_{_\Sigma} \psi_{_\Sigma}\, ,
$$
where $[B_{_\Sigma}]\in\coker\big(\cod_{_\Sigma},\Gamma\ce^{k+1}\Sigma[2k-n+2]\big)$
and $[\psi_{_\Sigma}]\in\coker\big(\cod_{_\Sigma},\Gamma\ce^{k}\Sigma[2k-n]\big)$.
To simplify the discussion, we will assume that this holds globally.
So we take $\big([B_{_\Sigma}],[\psi_{_\Sigma}]\big)$ as our boundary data. We proceed by working with representatives
$(B_{_\Sigma},\psi_{_\Sigma})$; our solutions to the Proca system of the next Section will not depend on this choice.
We then extend these to bulk forms $B\in\Gamma\ce^{k+1}M[2k-n+2]$ and $\psi\in\Gamma\ce^{k}M[2k-n]$ and insert them in a bulk tractor
$$
\B\stackrel{g}{=}q_{(N)} B + q_{(E)} \psi \in \Gamma \ct^{k+2} M[k-n]\, .
$$
(In fact, in the above, we really view  $(B_{_\Sigma},\psi_{_\Sigma})$ as components, in a scale $g_{_\Sigma}\in c_{_\Sigma}$, of a representative 
section of $\coker\!\big(\Xs_{_\Sigma},\Gamma\ct^{k+2}\Sigma[k-n]\big)$. Our final solutions will not  depend on this choice of scale.)
Now let us compute the following using~\nn{tripleD}
$$
\Ds_{[3]}\I\B\stackrel{g}=
\begin{pmatrix}
0\\
\cod\, \iota(n)\varepsilon(n) B\\
0\\
\cod\, \iota(n)\varepsilon(n) \psi
\end{pmatrix}
+ O(\sigma)\, .
$$
Here we have used that, acting on $\Gamma\ce^{k+1}M[2k-n+1]$, the operator $\wiota=-\sigma\cod$. Moreover we have the identity $\cod \iota(n) B = O(\sigma)$.
The latter holds because
 $B$ is a smooth extension of~$B_{_\Sigma}$, whence $\iota(n)B=\sigma C$ for some $C$ so $\cod \iota(n) B = \iota(n) C+O(\sigma)$,
which vanishes along~$\Sigma$ by continuity. 

Along~$\Sigma$, the operator $\cod\, \iota(n)\varepsilon(n) = \cod_{_\Sigma}$ so  the above display equals $q^{\scriptscriptstyle \Sigma}_W (A_{_\Sigma},\phi_{_\Sigma})$ there.
Thus, we see that the above display, modulo $O(\sigma)$, is also independent of our choice of representatives $(B_{_\Sigma}, \phi_{_\Sigma})$.
Hence we  have established the following result.
\begin{proposition}\label{superprop}
Let $(A_{_\Sigma},\phi_{_\Sigma})\in\ker(\cod_{_\Sigma},\cod_{_\Sigma})\subset\Gamma\ce^{k}\Sigma[2k-n]\oplus\Gamma\ce^{k-1}\Sigma[2k-n-2]$
and $\A\in \Gamma\ct^kM[k-n]$ be any extension of $q_W^{\scriptscriptstyle \Sigma}(A_{_\Sigma},\phi_{_\Sigma})$. Then 
$$
\A = \Ds_{[3]}\I\B + O(\sigma)\, ,
$$
for some $\B\in\Gamma \ct^{k+2} M[k-n]$. Moreover, modulo $\sigma$, we may view $\B$ as an element of $\coker\Xs$. 
\end{proposition}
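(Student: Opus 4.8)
The plan is to exhibit, in a scale $g\in c$, a tractor $\B$ for which $\Ds_{[3]}\I\B$ restricts along $\Sigma$ to the western boundary tractor $q_W^{\scriptscriptstyle\Sigma}(A_{_\Sigma},\phi_{_\Sigma})$. Since any two bulk tractors in $\Gamma\ct^kM[k-n]$ restricting to the same boundary section agree modulo $O(\sigma)$, this at once yields $\A=\Ds_{[3]}\I\B+O(\sigma)$. First I would use coclosedness to write $A_{_\Sigma}=\cod_{_\Sigma}B_{_\Sigma}$ and $\phi_{_\Sigma}=\cod_{_\Sigma}\psi_{_\Sigma}$ with representatives $B_{_\Sigma}\in\Gamma\ce^{k+1}\Sigma[2k-n+2]$ and $\psi_{_\Sigma}\in\Gamma\ce^{k}\Sigma[2k-n]$ (granting, as in the discussion preceding the Proposition, that the classes $[B_{_\Sigma}],[\psi_{_\Sigma}]\in\coker\cod_{_\Sigma}$ are globally realised), extend these arbitrarily to bulk forms $B,\psi$ of the same degree and weight, and set $\B:=q_{(N)}B+q_{(E)}\psi\in\Gamma\ct^{k+2}M[k-n]$, so that $\B$ carries $B$ in its northern and $\psi$ in its eastern slot.

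The core of the argument is a direct matrix computation of $\Ds_{[3]}\I\B\in\Gamma\ct^kM[k-n]$ from the explicit expressions~\nn{tripleD} and~\nn{Iext}. The decisive simplification is dimensional: the eastern slot of $\I\B$ lies in $\Gamma\ce^{k+1}M[2k-n+1]$, where the coefficient $d+w-\degree$ multiplying $\iota(n)$ in $\wiota$ equals $d+(k-n)-(k+1)=d-n-1=0$, so that $\wiota=-\sigma\cod$ there. Consequently every term $\wiota$ would contribute to the western slot is $O(\sigma)$; in particular the potentially obstructing $\psi$-contribution drops out. Together with $\iota(n)^2=0$ --- which forces $\cod\,\iota(n)B=O(\sigma)$, since $\iota(n)B=\sigma C$ gives $\sigma\,\iota(n)C=0$ and hence $\iota(n)C=O(\sigma)$ --- and the tangentiality $[\Ds_{[3]},\sigma]=0$, the computation collapses to
$$
\Ds_{[3]}\I\B\stackrel{g}{=}\begin{pmatrix}0\\ \cod\,\iota(n)\varepsilon(n)B\\ 0\\ \cod\,\iota(n)\varepsilon(n)\psi\end{pmatrix}+O(\sigma)\, .
$$
I expect this verification --- that the $\psi$- and $\rho B$-type terms all vanish --- to be the main obstacle, precisely because it is the coincidence $d=n+1$ together with $\iota(n)^2=0$ that guarantees the output is purely western.

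To finish I would restrict to $\Sigma$ and use the bulk-to-boundary identity $\cod\,\iota(n)\varepsilon(n)=\cod_{_\Sigma}$ (already exploited in the proof of Lemma~\ref{deltadelta}) to obtain
$$
\big(\Ds_{[3]}\I\B\big)\big|_\Sigma\stackrel{g}{=}\begin{pmatrix}0\\ \cod_{_\Sigma}B_{_\Sigma}\\ 0\\ \cod_{_\Sigma}\psi_{_\Sigma}\end{pmatrix}=\begin{pmatrix}0\\ A_{_\Sigma}\\ 0\\ \phi_{_\Sigma}\end{pmatrix}=q_W^{\scriptscriptstyle\Sigma}(A_{_\Sigma},\phi_{_\Sigma})\, .
$$
Because $\cod_{_\Sigma}^2=0$ this is independent of the choice of representatives, and because $\operatorname{ran}\Ds_{[3]}\subset\ker(\Is,\wDs,\Xs)$ with $I|_\Sigma=N$, the restriction lies in $N^\perp$ and is a western tractor, on which the boundary splitting isomorphism~\nn{boundaryisom} acts as the identity; hence the identification with $q_W^{\scriptscriptstyle\Sigma}(A_{_\Sigma},\phi_{_\Sigma})$ is unambiguous and $\A-\Ds_{[3]}\I\B=O(\sigma)$, as required.

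For the final assertion I would show that $\Ds_{[3]}\I$ descends, modulo $O(\sigma)$, to the cokernel of $\Xs$. Using $\{\I,\Xs\}=\sigma$, $\{\Is,\Xs\}=0$, $\Xs^2=0$ and $[\Ds_{[3]},\sigma]=0$, one computes for any $\mathcal C$
$$
\Ds_{[3]}\I\Xs\mathcal C=\Ds_{[3]}\big(\sigma-\Xs\I\big)\mathcal C=\sigma\,\Ds_{[3]}\mathcal C-\wDs\Xs\big(\Is\Xs\big)\I\mathcal C=O(\sigma)\, ,
$$
since $\Is\Xs=-\Xs\Is$ makes $\Ds_{[3]}\Xs=-\wDs\Xs^2\Is=0$. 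Thus $\Ds_{[3]}\I\B$ depends on $\B$ only through its class in $\coker\Xs$ --- equivalently, only through the northern and eastern data $(B,\psi)$ --- which is exactly the claim that $\B$ may be regarded, modulo $\sigma$, as an element of $\coker\Xs$.
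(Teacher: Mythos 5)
Your proposal is correct and follows essentially the same route as the paper's own argument: the same local potentials $A_{_\Sigma}=\cod_{_\Sigma}B_{_\Sigma}$, $\phi_{_\Sigma}=\cod_{_\Sigma}\psi_{_\Sigma}$, the same ansatz $\B=q_{(N)}B+q_{(E)}\psi$, and the same matrix computation via~\nn{tripleD}, using the weight coincidence that makes $\wiota=-\sigma\cod$ on the relevant slot, the continuity argument giving $\cod\,\iota(n)B=O(\sigma)$, and $\cod\,\iota(n)\varepsilon(n)=\cod_{_\Sigma}$ along~$\Sigma$. Your only departure is that you verify the final $\coker\Xs$ assertion by the explicit identity $\Ds_{[3]}\I\Xs=\sigma\Ds_{[3]}=O(\sigma)$, where the paper simply records this interpretation as a parenthetical consequence of the construction; this is a harmless and welcome sharpening.
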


\section{Higher form Proca equations}\label{Procasect}

The Proca equation~\cite{Proca}
$$
\cod\extd A - m^2 A = 0\, ,
$$
for a one-form $A$, first arose in the 1930's as a relativistic extension of Maxwell's equations to describe massive vector excitations.
If the constant $m\neq 0$, then acting with the codifferential $\cod$ immediately yields a constraint
$$
\cod A = 0\, .
$$
Often it is convenient to consider, therefore, the equivalent system
$$
\big(\FL-m^2\big) A = 0 = \cod A\, .
$$
At $m^2=0$ and in Lorentzian signature, these can be viewed as Maxwell's equations in a Feynman gauge choice.

These equations generalise immediately to the case where $A$ is a form of arbitrary degree. They do not enjoy a conformal invariance,
apart from {\it Maxwell systems} (meaning $m^2=0$ and arbitrary degree) in even dimensions with form degree $\frac d2-1$. 
Remarkably, it is possible to unify these systems using the
conformal tractor calculus by coupling to scale through the scale tractor. In~\cite{Gover:2008pt}, the authors considered a tractor vector
 $V^A\in \ct^AM[w]$ satisfying the equations
 \begin{equation}\label{MAX}
 I_A F^{AB} = 0 = D_A V^A\, ,
\end{equation}
where $F^{AB}:=D^A V^B - D^B V^A$ was called a {\it tractor Maxwell field strength}. Both equations enjoy the gauge invariance
$$
V^A\sim V^A + D^A \alpha\, , \qquad \alpha \in \ce M[w+1]\, .
$$
For generic weights and for Einstein structures, the above system of equations describes massive Proca excitations
with masses dictated by the tractor weight~$w$. At $w=1-\frac d2$ the Branson--Deser--Nepomechie equation arises while Maxwell's
equations appear at $w=-1$, see the work~\cite{Gover:2008pt}. 

Here
we study the generalization of the system~\nn{MAX} to higher rank tractor forms on an almost Einstein structure. We start with a tractor field strength formulation of
the higher form Proca systems by letting $\F\in \ct^{k+1}M[w-1]$. 
The natural higher form generalization of the field strength formulation of the vector model of~\cite{Gover:2008pt} is
\begin{equation}\label{DDsF}
\Ds \F = 0 = \D \F\, .
\end{equation}
The coupling to scale is then achieved by generalizing the relation $I_A F^{AB}=0$ to
\begin{equation}\label{IsF}
\Is \F=0\, .
\end{equation}
Since $\{\Is,\D\}=-y$,  an integrability condition follows.
\begin{proposition}\label{ynorth}
Equations~\nn{DDsF} and~\nn{IsF} imply
$$
y\F=0\, .
$$
\end{proposition}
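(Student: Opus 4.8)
The plan is to read $y$ directly off the supersymmetry identity of Proposition~\ref{Yform}. That Proposition establishes, on an almost Einstein structure, the anticommutator relation $\{\Is,\D\}=-y$ (equivalently $\{\I,\Ds\}=-y$). Since the Proca field strength $\F$ is assumed to satisfy both halves of the system, the strategy is simply to substitute these data into that identity.

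Concretely, I would expand
$$
-y\,\F=\{\Is,\D\}\,\F=\Is\big(\D\F\big)+\D\big(\Is\F\big)\, .
$$
The first term vanishes because $\D\F=0$ by~\nn{DDsF}, and the second term vanishes because $\Is\F=0$ by~\nn{IsF}. Hence $y\F=0$, as claimed. Note that only the $\D\F=0$ half of~\nn{DDsF} is actually needed, together with the scale-coupling constraint~\nn{IsF}; the companion equation $\Ds\F=0$ plays no role, and one could equally run the argument through the dual identity $\{\I,\Ds\}=-y$ had the data been phrased the other way.

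There is no genuine obstacle here: the entire content is carried by the anticommutator relation of Proposition~\ref{Yform}, and the only thing to verify is that its hypotheses are in force. This is immediate, since throughout this Section we work on an almost Einstein structure, which is precisely the setting in which $\{\Is,\D\}=-y$ was derived (using also $\{\I,\D\}=0$ from Proposition~\ref{yproof} and the parallel scale tractor $I$). Thus the proof is a one-line consequence of the exterior calculus of scale developed above.
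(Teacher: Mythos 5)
Your proof is correct and matches the paper's own argument: the text preceding Proposition~\ref{ynorth} derives it exactly from the identity $\{\Is,\D\}=-y$ of Proposition~\ref{Yform}, applied to $\D\F=0$ and $\Is\F=0$. Your side remarks are also accurate --- only the $\D\F=0$ half of~\nn{DDsF} is used, and the dual identity $\{\I,\Ds\}=-y$ would require $\I\F=0$, which is not among the given data.
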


By calculating away from distinguished weights, in the preferred interior scale  with
metric $g^o=\sigma^{-2}\bg$ (away from~$\Sigma$), we
can easily characterise the above system, $\F\in\ker(\D,\Ds,\Is)$,
as a Proca one. We record this in the following.
\begin{proposition}\label{northproca}
Let $\F\in\ct^{k+1}M[w-1]$ with $w\neq 2-\frac d2, -k$, and $(M,c,\sigma)$ be almost Einstein. Then, away from~$\Sigma$ in the Einstein scale $g^o$, the equations
\begin{center}
$
\D \F = \Ds \F = \Is \F =0\, ,
$
\end{center}
capture precisely the Proca equation $\cod \extd A - m^2 A =0$ with $\cod A=0$ when $m=0$,
$A\in\Gamma\ce^{k}M[w+k]$ and   mass--Weyl weight relationship
\begin{equation*}
m^2=-\frac{2\J}{d}\, (w+k)(n+w-k)\, .
\end{equation*}
\end{proposition}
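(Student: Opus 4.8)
The plan is to evaluate the three tractor equations directly in the preferred Einstein scale $\g=\sigma^{-2}\bg$ away from $\Sigma$, where the scale data trivialise. In this scale $\sigma$ is the constant section $1$, so that $n=\nabla^{\g}\sigma=0$ and $\rho=-\tfrac1d(\Delta+\J)\sigma=-\tfrac\J d$; moreover, since $(M,c,\sigma)$ is Einstein, $\J$ is constant and $\Rho_{ab}=\tfrac\J d g_{ab}$, so the Schouten endomorphism satisfies $\P=\tfrac\J d\,\id$ on $TM$, whence $\J-2\P$ acts as the scalar $\tfrac\J d(d-2\degree)$ on forms of each degree. It is precisely this collapse that kills the off-diagonal curvature commutators $[\J-2\P,\extd]$ and $[\J-2\P,\cod]$ once coclosedness is imposed. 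Writing $\F\in\Gamma\ct^{k+1}M[w-1]$ in the splitting $(F^+,F,F^{+-},F^-)$, I identify the claimed potential as the northern slot $A:=F^+\in\Gamma\ce^kM[w+k]$.

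First I would impose the scale constraint $\Is\F=0$. Reading $\Is$ off \nn{Iext} with $n=0$, $\sigma=1$, $\rho=-\tfrac\J d$ gives at once $F^{+-}=0$ and $F^-=\tfrac\J d\,F^+=\tfrac\J d\,A$, so that $\F$ is determined by $A$ together with its western slot $F$. Feeding this reduced $\F$ into the matrix \nn{D} for $\D$ (with tractor weight $w-1$, and the form-degree operator $\degree$ acting slotwise), the northern component of $\D\F=0$ reads $(d+2w-4)\big(-\extd A+(w+k)F\big)=0$; using the hypotheses $w\neq2-\tfrac d2$ and $w\neq-k$ this determines $F=\tfrac1{w+k}\extd A$. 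The northern component of $\Ds\F=0$, read from \nn{Ds}, is $-(d+2w-4)\cod A=0$, which (again using $w\neq 2-\tfrac d2$) is exactly the transversality condition $\cod A=0$; this is the explicit form of the integrability statement of Proposition~\ref{ynorth}.

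It remains to extract the second-order equation from the surviving slots. Substituting $F=\tfrac1{w+k}\extd A$, $F^-=\tfrac\J d A$ and $\cod A=0$ (so that $\FL A=\cod\extd A$) into, say, the eastern component of $\D\F=0$, the Einstein evaluation of $\J-2\P$ together with the algebraic identity $(\J-2\P)+\tfrac\J d(d+2w-2)=\tfrac{2\J}d(d+w-k-1)$ on $k$-forms collapses the equation to
$$
\tfrac{w+k-2}{w+k}\,\cod\extd A+(w+k-2)\,\tfrac{2\J}d\,(d+w-k-1)\,A=0.
$$
Dividing by the prefactor gives $\cod\extd A=-\tfrac{2\J}d(w+k)(d+w-k-1)A$, and since $n=d-1$ this is exactly $\cod\extd A-m^2A=0$ with $m^2=-\tfrac{2\J}d(w+k)(n+w-k)$, as claimed.

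The main obstacle is bookkeeping and internal consistency rather than any conceptual difficulty: the eastern slot of $\D\F$, the western slot of $\Ds\F$, and the two southern slots each reproduce the Proca equation, but multiplied by different weight-dependent prefactors — for instance $w+k-2$ above, versus $d+w-k-2$ for the western slot of $\Ds\F$. One must verify that these equations are mutually consistent and that their common content is the single stated equation at every admissible weight, so that no slot imposes a spurious extra condition. The reassuring point, which I would check explicitly, is that the two prefactors $w+k-2$ and $d+w-k-2$ vanish together exactly when $w=2-\tfrac d2$, while $w+k=0$ is the only other degeneracy; thus the breakdown of the derivation occurs precisely at the excluded weights $w=2-\tfrac d2$ and $w=-k$, and nowhere else. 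As an organising alternative one may instead write $\F=q_NA$ via Lemma~\ref{northlemma}, so that $\D\F=\Ds\F=0$ holds automatically and $\Is\F=0$ alone yields both $\cod A=0$ (from $F^{+-}=0$) and the Proca equation (from $F^-=\tfrac\J d A$); this is cleaner but degenerates at the extra weights where $q_N$ fails to be an isomorphism, so the direct scale computation is what matches the stated hypotheses.
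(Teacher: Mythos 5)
Your proof is correct, and it reaches the paper's conclusion by a slightly but genuinely different route. The paper first solves $\D\F=0=\Ds\F$ wholesale via the northern Lemma~\ref{northlemma}, writing $\F=q_N A$, and only then imposes $\Is\F=0$ in the Einstein scale, extracting $\cod A=0$ from the vanishing of the eastern slot and the eigenvalue equation from matching the southern slot against $\tfrac{\J}{d}A$; since the hypotheses of Lemma~\ref{northlemma} exclude $w'=-\tfrac d2$ and $w'=k'-d-2$ (that is, $w=1-\tfrac d2$ and $w=k-d$, which are \emph{not} excluded in the Proposition), the paper must additionally ``calculate carefully the special cases not covered there''. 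You reverse the order: imposing $\Is\F=0$ first (using \nn{Iext} with $n=0$, $\sigma=1$, $\rho=-\tfrac\J d$) reduces $\F$ to the data $(A,F)$, and then the slot-by-slot evaluation of \nn{D} and \nn{Ds} only ever requires $d+2w-4\neq0$, $w+k\neq0$, and the non-simultaneous vanishing of the prefactors $w+k-2$ and $d+w-k-2$ — all guaranteed by the stated hypotheses. Your ordering therefore covers the weights $w=1-\tfrac d2$ and $w=k-d$ uniformly, which is exactly the small advantage you identify in your closing remark, and your consistency analysis of the redundant slots supplies the converse direction (Proca data reconstructs a solution of the tractor system) that ``capture precisely'' demands. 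Two loose statements are worth tightening, though neither affects the logic. First, the Einstein scale does not kill the commutators: $[\J-2\P,\extd]=-\tfrac{2\J}{d}\extd\neq0$ there; rather, the southern slot of $\D\F$ assembles into $-\tfrac{1}{w+k}\extd$ applied to the Proca equation, while the southern slot of $\Ds\F$ reduces via $[\J-2\P,\cod]=\tfrac{2\J}{d}\cod$ to a multiple of $\cod A$ and so vanishes identically — it does not ``reproduce the Proca equation''. Second, the prefactors $w+k-2$ and $d+w-k-2$ vanish simultaneously only when \emph{in addition} $k=\tfrac d2$, i.e.\ at the single point $(w,k)=(2-\tfrac d2,\tfrac d2)$; your phrase ``exactly when $w=2-\tfrac d2$'' is one-directional, but the conclusion you draw — that at every admissible weight at least one of the eastern slot of $\D\F$ and the western slot of $\Ds\F$ carries a nonvanishing prefactor and hence yields $\cod\extd A=-\tfrac{2\J}{d}(w+k)(n+w-k)A$ — is right.
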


\newcommand{\Js}{\scalebox{.8}{\!\J}}

\begin{proof}
For the preferred interior scale~$g^o\in c$, we have
$$\Is\stackrel{\, g^o}= \begin{pmatrix}
0&0&-1&0\\
-\frac\Js d&0&0&1\\
0&0&0&0\\
0&0&-\frac\Js d&0
\end{pmatrix}\, .
$$
Moreover, in the preferred scale the almost Einstein equations~\nn{almostEinstein}
tell us
$$
\P\, \stackrel{g^o}=\ \frac{\J}{d}\, \degree\, .
$$
Thus, together with the result for $\ker(\D,\Ds)$ given in the northern Lemma~\ref{northlemma} (and calculating carefully the special cases not covered there) we find
$$\cod A =0\, ,$$
and
$$
\Big(\FL+\frac{2\J}{d}\, (w+k)(n+w-k)\Big)\, A = 0\, ,
$$
which completes the proof. 
\end{proof}

\begin{remark}
Since the independent field content of the above field strength tractor formulation of the Proca system appears in the northern slot,
we have depicted it and its Hodge dual model obtained by replacing $\F\to \star \F$, at the northern point of the compass in 
Figure~\ref{stopsign}.
\end{remark}

\begin{figure}\label{stopsign}
\begin{center}
\scalebox{.75}{
\begin{picture}(300,300)(40,40)
\put(80,300){$\ker(\D,\Ds,\Is)$}
\put(220,300){$\ker(\D,\Ds,\I)$}
\put(182,307){$\star$}
\put(160,303){\vector(1,0){50}}
\put(170,303){\vector(-1,0){10}}
\put(110,290){\vector(-3,-4){45}}
\put(255,290){\vector(3,-4){45}}
\put(0,210){$\ker(\Ds\D,\Is\D)\big/\im\D$}
\put(260,210){$\ker(\D\Ds,\I\Ds)\big/\im\Ds$}
\put(299,197){\vector(-3,-4){45}}
\put(66,197){\vector(3,-4){45}}
\put(35,120){$\ker(\Is\D\Ds)\big/\im(\D,\Ds)$}
\put(220,120){$\ker(\I\D\Ds)\big/\im(\D,\Ds)$}
\put(182,127){$\star$}
\put(160,123){\vector(1,0){50}}
\put(170,123){\vector(-1,0){10}}
\put(138,172){\includegraphics[scale=.41]{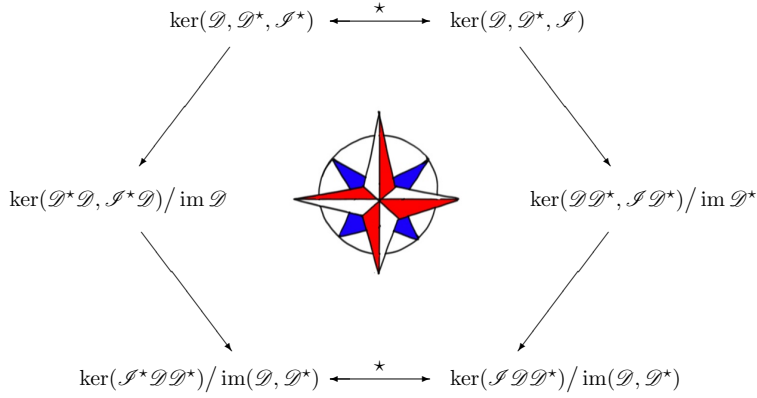}}
\end{picture}
}
\end{center}
\caption{Proca models with independent field content classified by the points of a compass.}\label{stopsign}
\end{figure}

The main focus of the remainder of the Article is a potential formulation of the Equations of Proposition~\ref{northproca}.
Recall that the tractor Maxwell field strength $\F\in\ct^{k+1}M[w-1]$ is subject to~$\D\F=0$.
Hence, by the cohomology result of Proposition~\ref{nocoho},  we can write
$$
\F=\D\A \, ,
$$
for some $\A\in \Gamma\ct^{k}M[w]$ so long as $w\neq1-\frac d2,2-\frac d2,-k,-k+2$. Viewing the potential~$\A$ as the independent field 
content, our system of equations now becomes
$$
\Ds\D \A = 0 =\Is \D \A\, .
$$
Since $\D$ is nilpotent, solutions are only defined up to the gauge invariance
\begin{equation}\label{GF}
\A\sim \A + \D \B \, ,
\end{equation}
for some $\B\in \Gamma\ct^{k-1}M[w+1]$.
Moreover, as the independent field content for the system was shown in Proposition~\ref{northproca} to be described by a degree~$k$ differential
form, this information must now reside in the western slot of the potential~$\A$; see Figure~\ref{stopsign}. 
To capture this  precisely we  firstly remove the gauge freedom~\nn{GF} by requiring 
$$
\Xs \A = 0\, .
$$
For generic weights, there always exists a suitable~$\B$ that achieves this, and the resulting~$\A$ is unique.
Thus, for generic weights, the system $\D \F = \Ds \F = \Is \F = 0$ is equivalent~to
\begin{equation}\label{second}
y\A =\Is \A=\wDs \A = \Xs \A  = 0\, ,
\end{equation}
which we shall term the {\it tractor Proca equations}.
Note that the first of these is  a Laplace--Robin equation of the type solved for general tractors in~\cite{GWasym} and
Section~\ref{products}. The latter three equations \begin{equation}\label{scaletrans}\Is \A=\wDs \A = \Xs \A  = 0\, ,\end{equation} are the scale-transversality conditions
encountered in our study of boundary conditions for tractor forms in Section~\ref{bctractors}. They underly the
 transversality condition $\cod A=0$.
Further note that the conditions $\wDs \A=0=\Xs \A$ are exactly those of the western Lem\-ma~\ref{West}.
This  reflects Remark~\ref{spinningcompass} which shows that a northern tractor~$\F$ can be written 
as the exterior tractor D-operator acting on a western tractor.
Also, observe that~$\wDs$ is well defined at all weights acting on $\A\in
\ker \Xs$.

We omit the proof of the above equivalence, because on the one hand the details are somewhat involved, and on the other we shall simply
adopt~\nn{second} as the primary system of interest. As we shall see, it uniformly recovers the {\it (higher form)  Proca system}
\begin{equation}\label{higherformproca}
\big(\FL-m^2\big) A = 0 =\cod A\, ,\qquad A\in \Gamma\ce^kM[w+k]\, .
\end{equation}
Thus the tractor Proca equations~(\ref{second}) 
will be the main focus of the latter part of this Article where we study solutions and obstructions.
This is because they uniformly describe the Proca system (or its massless Maxwell limit), as we shall now chronicle.

\begin{proposition}\label{westproca}
For $\A\in\Gamma\ct^{k} M[w]$ and $(M,c,\sigma)$ almost Einstein, away from~$\Sigma$ in the Einstein scale $g^o$, the tractor Proca equations
$
y \A = \Ds \A = \Xs \A   = \Is \A =0\, ,
$
precisely capture the Proca equations $\cod \extd A - m^2 A =0=\cod A$
for $A\in\Gamma\ce^{k}M[w+k]$ with a mass--Weyl weight relationship
\begin{equation}\label{mWw}
m^2=-\frac{2\J}{d}\, (w+k)(n+w-k)\, .
\end{equation}
\end{proposition}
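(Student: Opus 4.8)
The plan is to reduce the four tractor equations~\nn{second} to a pair of conditions on a single weighted form, by computing everything in the preferred interior Einstein scale $g^o=\sigma^{-2}\bg$, exactly as in the proof of Proposition~\ref{northproca}. This Proposition is the \emph{potential} counterpart of that field-strength result, so the two computations run in parallel; one could alternatively set $\F:=\D\A$ and invoke Proposition~\ref{northproca}, but since the independent content of $\F$ sits in its northern slot while here it is the western form $A$ directly, the self-contained computation is cleaner. Throughout I write a tractor $k$-form as a tuple $(F^+,F,F^{+-},F^-)$ of its northern, western, eastern and southern slots.

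First I would use the pair $\Xs\A=0$ and $\Ds\A=0$. Away from $w=-\tfrac d2$ the operators $\wDs$ and $\Ds$ have the same kernel (since $\wDs=\Ds\tfrac1h$ acts as $\tfrac1{d+2w}\Ds$), so these conditions place $\A$ in $\ker(\wDs,\Xs)$. The west Lemma~\ref{West} then realises $\A=q_W A$ for a unique $A\in\Gamma\ce^kM[w+k]$, namely $\A\stackrel{g^o}{=}\big(0,\,A,\,0,\,-\tfrac1{d+w-k}\cod A\big)$. Next I impose $\Is\A=0$: in the Einstein scale the scale tractor is $I\stackrel{g^o}{=}(0,0,-\tfrac{\J}{d})$ (so in~\nn{Iext} one has $\sigma=1$, $n=\nabla\sigma=0$, $\rho=-\tfrac{\J}{d}$), and a one-line application of the resulting matrix for $\Is$ shows that $\Is\A$ is purely western and equals $-\tfrac1{d+w-k}\cod A$. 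Hence $\Is\A=0\iff\cod A=0$, which both supplies the transversality condition and kills the southern slot, leaving $\A\stackrel{g^o}{=}(0,A,0,0)$.

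The remaining equation $y\A=0$ is best handled by avoiding the explicit form of $y=-I\cdot\slashed D$ and instead using $\{\Is,\D\}=-y$ from Proposition~\ref{Yform}: since $\Is\A=0$ this gives $y\A=-\Is\D\A$. Applying the matrix~\nn{D} to $(0,A,0,0)$ with $\cod A=0$ yields the four slots $(w+k)(d+2w-2)A$, $(d+2w-2)\extd A$, $0$, and $-\FL A-(w+k)(\J-2\P)A$; applying $\Is$ to this collapses everything into the western slot, so that
\[
y\A \stackrel{g^o}{=}\Big(0,\ \FL A+(w+k)(\J-2\P)A+\tfrac{\J}{d}(w+k)(d+2w-2)A,\ 0,\ 0\Big).
\]
Finally I would invoke the Einstein-scale identity $\P\stackrel{g^o}{=}\tfrac{\J}{d}\degree$ (from~\nn{almostEinstein}), so that $(\J-2\P)A=\tfrac{\J}{d}(d-2k)A$ on the degree-$k$ form $A$. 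Since $(d-2k)+(d+2w-2)=2(n+w-k)$ with $n=d-1$, the equation $y\A=0$ becomes $\FL A+\tfrac{2\J}{d}(w+k)(n+w-k)A=0$, i.e. $\FL A=m^2A$ with $m^2=-\tfrac{2\J}{d}(w+k)(n+w-k)$. Together with $\cod A=0$ this is precisely $\cod\extd A-m^2A=0=\cod A$, because $\cod A=0$ turns $\FL A$ into $\cod\extd A$.

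I expect the main obstacle to be purely bookkeeping: correctly tracking the weight- and degree-dependent scalar coefficients through the two matrix products (the entries of~\nn{D} carry factors $d+2w-2$, $d+2w$ and $w+\degree$ whose numerical values depend on the slot acted upon), and combining the two distinct Schouten contributions---one from $\D$ and one from the $-\tfrac{\J}{d}$ entries of $\Is$---into the single clean factor $(n+w-k)$. I would also flag the distinguished weights $w=k-d$ (where the west Lemma degenerates) and $w=-\tfrac d2$ (where $\wDs$ is not simply proportional to $\Ds$); since every expression above is natural with coefficients polynomial in $w$, these singularities are removable and handled by continuity, consistent with the generic-weight caveat already in force for the equivalence~\nn{second}.
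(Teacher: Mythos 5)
Your proposal is correct and takes essentially the same route as the paper, whose proof simply declares itself ``identical in method to that of Proposition~\ref{northproca} except that the western Lemma~\ref{West} is called upon instead of its northern counterpart'' --- i.e.\ compute in the Einstein scale $g^o$, realise $\A$ via the west Lemma, apply the explicit $\Is$ matrix, and invoke $\P\stackrel{g^o}{=}\tfrac{\J}{d}\degree$, exactly as you do. Your coefficient bookkeeping checks out (in particular $(d-2k)+(d+2w-2)=2(n+w-k)$), and replacing a direct evaluation of $y=-I\cdot\slashed D$ by $y\A=-\Is\D\A$ via Proposition~\ref{Yform} is a legitimate use of the paper's own machinery rather than a different method.
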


\begin{proof}
The proof is identical in method to that of Proposition~\ref{northproca} except that the western Lemma~\ref{West} is called upon
instead of its northern counterpart.
\end{proof}

\begin{remark}
An important feature of the
tractor Proca equations~\nn{second} is their conformal invariance in the interior. The two equations $y\A=0=\Is\A$ 
prescribe how the system couples to the defining  scale;  these arise from a canonical conformally invariant pairing of scale (mediated by the scale tractor)
and derivatives of the fundamental fields. This is striking, since the Proca system, being massive, is traditionally viewed as a non-conformally
invariant system. 

Furthermore, the tractor system automatically includes massless and massive models:
When $w=-k$, Equation~\nn{mWw} gives $m^2=0$ so $\FL A = 0 = \cod A$ which is the potential version of the higher form Maxwell system in a Feynman gauge choice. 
\end{remark}

\begin{remark}
Notice, that $w=k-n$ also recovers the Maxwell system. This underlies an important duality of the tractor approach that we will utilize when studying solutions.
\end{remark}

\newcommand{\cc}{{\mathcal C}}

\begin{remark} 
Corollary~\ref{northcor} of the northern Lemma~\ref{northlemma} shows that, for generic weights,  $\F\in\Gamma\ct^{k+1}M[w-1]$ subject to $\D\F=0=\Ds\F$ can be written as $\F=\Ds\D\B$ for some $\B
\in\Gamma\ct^{k+1}M[w+1]$. Since $\Is\F=0$, we obtain the model depicted at the southern compass point in Figure~\ref{stopsign}
with only a single equation of motion
$$
\Is\Ds\D\B=0\, .
$$
This model is  strikingly similar to the model of $(p,q)$ form K\"ahler electromagnetism proposed in~\cite{Cherney:2009vg} where the Dolbeault operators
$(\partial,\overline\partial)$ 
play the {\it r\^ole}  of $\D$, $\Ds$ and the K\"ahler trace corresponds to $\Is$.
Just as in that case, the model here has a pair of gauge invariances 
$$
\B\sim\B + \D \cc+\Ds\cc'\, ,
$$
with $\cc\in \Gamma\ct^{k}M[w+2]$ and $\cc'\in \Gamma\ct^{k+2}M[w+2]$.
The same Corollary also shows that we may generically fix this invariance by choosing $\X\B=0=\Xs\B$. In that case the equations
of motion are 
$$\Is \Ds\D\B=\Xs\B=\X\B=0\, .$$
Then, in the Einstein scale $g^o$ and away from~$\Sigma$, one recovers the Proca system with mass--Weyl weight relationship~\nn{mWw}.

\end{remark}

\begin{remark}
To complete the classification of independent field content describing Proca systems according to east side of the diagram in Figure~\ref{stopsign},
one applies tractor Hodge duality.
\end{remark}

Having surveyed formulations of the  model we now turn to detailed solutions which are facilitated by the tractor Proca equations.

\subsection{Solution generating operators for differential forms}\label{formssol}

We seek to solve the following problem on a Poincar\'e--Einstein structure $(M,c,\sigma)$.
\begin{problem}\label{laylanguage}
Given $A|_\Sigma=A_\Sigma\in \Gamma \ce^k \Sigma[w_0+k]$, $w_0\neq k-n$,  and an arbitrary extension $A_0\in \Gamma\ce^kM[w_0+k]$
find $A_i\in \Gamma\ce^kM[w_0+k-1]$  such that
$$
A^{(\ell)}= A_0 + \sigma A_1 + \sigma^2 A_2 +\cdots + O\big(\sigma^{\ell+1}\big)
$$
solves asymptotically the Proca system 
 \begin{equation}\label{Procao}
\big(\sigma^2\, \cod^o \extd^o\ - (w_0+k)(n+w_0-k)\big) A = 0 = \sigma\cod^o A\, ,
\end{equation}
off $\Sigma$, for $\ell\in {\mathbb N}\cup \infty$ as high as possible.
\end{problem}

Equation~\nn{Procao} is the higher form Proca system~\nn{higherformproca}
\begin{equation}\label{interiorProca}
\big(\cod\,\! \extd -(w_0+k)(n+w_0-k)\big) A = 0 = \cod A\, ,
\end{equation}
where $\cod$ and $\extd$ are given in the Einstein scale.
Moreover recall that  $\cod^o$ and  $\extd^o$ are the usual interior and exterior differential operators on form densities determined by the Levi-Civita connection of the Poincar\'e--Einstein scale defined in~\nn{d0delta0}. Although these recast $\cod$ and $\extd$ in terms of a scale that extends to the boundary,  they are nevertheless singular along~$\Sigma$. However the
operators $\wiota$ and $\wepsilon$ of Definition~\ref{wdef} are well-defined everywhere and agree with  $-\sigma \cod^o$ and $-\sigma \extd^o$  off~$\Sigma$.
Therefore, 
Equations~\nn{Procao}  extend to~$\Sigma$ as
\begin{equation}\label{usemelater}
\big(\wiota\, \wepsilon - (w_0+k)(n+w_0-k)\big) A = 0 = \wiota A\, .
\end{equation}
In Lemma~\ref{aerm} these are proven to be equivalent to the tractor Proca equations~\nn{second}.

Evaluated along~$\Sigma$, these equations say 
\begin{equation}\label{alongsigma}(w_0+k)(n+w_0-k) 
\big[\big(\iota(n)\varepsilon(n)-1 \big)A\big]\big|_\Sigma=0 =
(d+w_0-k)\big[\iota(n) A\big]\big|_\Sigma\, .\end{equation}
This is consistent with 
the requirement that $A_0$, and therefore $A^{(\ell)}$, extends $A_\Sigma\in \Gamma\ce^{k}\Sigma[w_0+k]$ to~$M$.

The normal derivatives of $A$ along~$\Sigma$ are also
determined; understanding the details is critical to linking Problem~\ref{laylanguage} to its equivalent tractor formulation in 
Problem~\ref{solprob} below.  
\begin{proposition}
Solutions $A\in \Gamma\ce^kM[w_0+k]$ to Problem~\ref{laylanguage} satisfy
\begin{equation}\label{normalcomp}
\Big(\nabla_n\big[\iota(n) A\big]\Big)\Big|_\Sigma = \frac{1}{n+w_0-k}\, \cod_{_\Sigma} A_{_\Sigma}\, ,
\end{equation}
and, when $w_0\neq 1-\frac d2$, 
\begin{equation}\label{formrobin}
\big[\big(\nabla_n-\varepsilon(n) \nabla_n\,  \iota(n) - w_0 H^g) A\big]\big|_\Sigma=0\, .
\end{equation}
\end{proposition}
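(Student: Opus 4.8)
The plan is to establish the two normal-derivative relations \nn{normalcomp} and \nn{formrobin} by combining the transversality condition with the equation of motion in the Poincar\'e--Einstein scale, working to first subleading order in $\sigma$. The key input is that any solution $A$ solves \nn{usemelater}, together with the already-established boundary facts that $A_0$ extends $A_\Sigma\in\Gamma\ce^k\Sigma[w_0+k]$, so that $\big(\iota(n)A\big)\big|_\Sigma=0$, and the structural identities \nn{comms} for the commutators $[\extd,\sigma]=\varepsilon(n)$, $[\cod,\sigma]=\iota(n)$ along with the recursion \nn{notsocool} obtained from the generalised divergence Problem~\ref{deltaprob}.

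For \nn{normalcomp}, I would start from the transversality condition $\wiota A=0$, i.e.\ $\big[(d+w_0-k)\iota(n)-\sigma\cod\big]A=0$ off $\Sigma$. Writing $A=A_0+\sigma A_1+O(\sigma^2)$ and extracting the $O(\sigma)$ coefficient via the recursion \nn{notsocool} at the appropriate index relates $\iota(n)A_1$ to $\cod A_0$. The quantity $\nabla_n[\iota(n)A]|_\Sigma$ is, by the chain rule and $n=\nabla\sigma$, essentially $\iota(n)A_1|_\Sigma$ up to curvature corrections in $\nabla n$, which vanish on restriction because $\iota(n)A_0|_\Sigma=0$. Then, using Lemma~\ref{deltadelta}, the bulk codifferential restricted to $\Sigma$ converts to the boundary codifferential $\cod_{_\Sigma}A_{_\Sigma}$, and tracking the coefficient $(d+w_0-k)$ against $(n+w_0-k)$ produces the stated factor $\tfrac{1}{n+w_0-k}\cod_{_\Sigma}A_{_\Sigma}$. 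The dimensional bookkeeping $d=n+1$ and the weight $w_0+k$ must be handled carefully so the coefficients match.

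For \nn{formrobin}, the natural route is to decompose $A$ along $\Sigma$ into its tangential and normal parts using the projector identity $1=\iota(n)\varepsilon(n)+\varepsilon(n)\iota(n)$ (modulo the $2\rho\sigma$ term that vanishes on $\Sigma$), and to read off the Robin-type boundary condition from the equation of motion $\big(\wiota\wepsilon-(w_0+k)(n+w_0-k)\big)A=0$ at leading order. The expression $\nabla_n-\varepsilon(n)\nabla_n\iota(n)-w_0H^g$ is precisely the tangential part of a conformal Robin operator acting on the weighted form, with the mean-curvature term $w_0H^g$ arising from the density weight $w_0$ of the holographic normal $\wepsilon$ as recorded in Definition~\ref{wdef}; here the restriction $w_0\neq1-\tfrac d2$ is exactly the condition ensuring the leading coefficient in $\wepsilon$ is invertible so that the Robin condition, rather than a degenerate one, is obtained. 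I would extract this by applying $\wepsilon$ to $A$, using $\wepsilon A=[w_0\varepsilon(n)-\sigma\extd]A$, and computing $\big(\nabla_n[\cdots]\big)|_\Sigma$ with the PE relations \nn{almostEinstein} governing $\nabla_a n_b$ to identify the mean-curvature contribution $H^g=\tfrac{1}{d-1}\nabla^T_a n^a$.

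The main obstacle will be the careful tracking of density weights and the conversion between bulk operators $\cod,\extd,\nabla_n$ and their restrictions to $\Sigma$, since the factors $(d+w_0-k)$, $(n+w_0-k)$, and $(w_0+k)$ all enter and must cancel correctly; in particular distinguishing the coefficient in the transversality recursion from the coefficient in the equation of motion, and ensuring the curvature terms $\nabla_a n_b=-\sigma\Rho_{ab}-\rho g_{ab}$ contribute the correct mean-curvature and Schouten pieces rather than spurious bulk terms. I expect the cleanest presentation simply expands $A$ to first order in $\sigma$, substitutes into \nn{usemelater}, and isolates the $O(\sigma^0)$ and $O(\sigma^1)$ coefficients, invoking Lemma~\ref{deltadelta} and \nn{normcomp} for the boundary translation, rather than manipulating the tractor equations directly.
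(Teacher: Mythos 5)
Your strategy is essentially the paper's: both identities are extracted from the transversality condition $\wiota A=0$ together with the first Proca equation at first subleading order along~$\Sigma$ (the paper implements your ``isolate the $O(\sigma^1)$ coefficient'' step by acting with $\nabla_n$ and restricting), with Lemma~\ref{deltadelta} supplying the bulk-to-boundary conversion of the codifferential and the parallel-tractor relations~\nn{almostEinstein}, via $\rho|_\Sigma=-H^g$, producing the mean-curvature term. One presentational point: the paper sidesteps your worry about ``curvature corrections'' in $\nabla_n[\iota(n)A]$ by writing $\iota(n)A=-\sigma\phi$ exactly (smoothness plus $\iota(n)A|_\Sigma=0$), so that $\nabla_n[\iota(n)A]|_\Sigma=-\phi|_\Sigma$ with $\phi=-\tfrac{1}{d+w_0-k}\,\cod A$ forced by $\wiota A=0$, cf.~\nn{normcomp}; your series route gives the same answer, but only after the $O(\sigma)$-shifting normalisation built into~\nn{notsocool}, since an extension satisfying merely $\iota(n)A_0|_\Sigma=0$ carries a nonvanishing Neumann part $\nabla_n[\iota(n)A_0]|_\Sigma$ that must be absorbed into $A_1$ — your parenthetical claim that these corrections ``vanish on restriction'' is not quite right as stated.

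Two of your attributions need correcting. First, the excluded weight: the coefficient of $\varepsilon(n)$ in $\wepsilon$ acting on $\Gamma\ce^kM[w_0+k]$ is $w_0+k$, which degenerates at the true-form weight $w_0=-k$, \emph{not} at $w_0=1-\tfrac d2$; the factor whose vanishing forces the exclusion is the overall prefactor $(d+2w_0-2)$ that multiplies the entire Robin equation~\nn{yform}, and it emerges only from the combined computation after $\nabla_n$ hits the composite $\wiota\,\wepsilon$ (it is the same $h_0-2$ that degenerates $I\cdot D$ as a Robin operator at the bulk Yamabe weight). Second, \nn{formrobin} is not visible ``at leading order'': restricting the Proca equation to $\Sigma$ reproduces only the algebraic condition~\nn{alongsigma}; the Robin condition is genuinely an $O(\sigma)$ statement, as your closing paragraph correctly anticipates. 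Finally, be aware that converting the $\varepsilon(n)\phi$ term of~\nn{yform} into the $-\varepsilon(n)\nabla_n\,\iota(n)A$ piece of~\nn{formrobin} is not automatic from the projector decomposition: it requires substituting $\phi|_\Sigma=-\tfrac{1}{n+w_0-k}\,\cod_{_\Sigma}A_{_\Sigma}$ and massaging with the anticommutator $\{\cod,\varepsilon(n)\}=\nabla_n-\sigma(\P-\J)+\rho(\degree-d)$; your sketch gestures at this but the step must be made explicit for the coefficients to close.
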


\begin{proof}
In essence,
equation~\nn{normalcomp} was already derived in Section~\ref{bctractors}; indeed at $w_0=k-d$ it follows by combining Equations~\nn{normcomp} 
and \nn{phideltaA}. At all other weights $w_0\neq k-n$, by virtue of $\wiota A=0$, we have
$$
\Big(\nabla_n\big[\iota(n) A\big]\Big)\Big|_\Sigma=\frac{1}{d+w_0-k}\, \big(\cod  A\big)\big|_\Sigma= \frac{1}{n+w_0-k}\, \cod_{_\Sigma} A_{_\Sigma}\, ,
$$
where the last equality used Lemma~\ref{deltadelta}.

To derive equation~\nn{formrobin}, we first
use that $\iota(n)A = - \sigma \phi$, for some $\phi$, to rewrite $\wiota A = 0$ as
$$
\cod A = -(d+w_0-k) \phi\, .
$$
Then using these facts as well as $(\nabla_n \sigma)|_\Sigma=1$,
we act on the first equation of~\nn{Procao} with~$\nabla_n$ and evaluate the result along~$\Sigma$.
 Employing the operator identities
\begin{eqnarray*}
 \{\cod,\varepsilon(n)\}&=&\nabla_n- \sigma(\P-\J)+\rho(\degree-d)\, ,\nonumber\\[2mm]
 \{\iota(n),\extd\}&=&\nabla_n- \sigma\P-\rho\degree\, ,
\end{eqnarray*}
which are readily obtained from the parallel conditions~\nn{almostEinstein}, plus $\rho|_\Sigma=-H^g$, allows that equation to be written as
\begin{equation}\label{yform}
(d+2w_0-2)\big[(\nabla_n-w_0H^g)A+ \varepsilon(n)\phi\big]\big|_\Sigma=0\, .
\end{equation}
In Section~\ref{bctractors} we showed that $\phi|_\Sigma$ was the southern slot of a boundary west tractor so that
$$
\phi|_\Sigma = -\frac{1}{n+w_0-k}\, \cod_{_\Sigma} A_{_\Sigma} = 
-\frac{1}{n+w_0-k}\big(\cod\,  \iota(n) \varepsilon(n) A\big)\big|_\Sigma\, .
$$
This allows us to compute the last term of Equation~\nn{yform} as follows
$$
\big(\varepsilon(n) \cod \iota(n) \varepsilon(n) A\big)\big|_\Sigma\!=\!
\big(\varepsilon(n) \big[\cod   - \{\cod,\varepsilon(n)\}\iota(n) \big]A\big)\big|_\Sigma
\!=\!
-\big(\varepsilon(n)\big[(d+w_0-k)\phi   + \nabla_n\iota(n) A\big] \big)\big|_\Sigma\, .
$$

The combination of the first operator identity above and Lemma~\ref{deltadelta} can be used to show
$$
\big[\varepsilon(n)\cod A_0\big]_\Sigma=(d+w_0-k) \big[\varepsilon(n)\nabla_n\iota(n) A_0\big|_\Sigma\, .
$$
In the first step we used $\{\iota(n),\varepsilon(n)\}=1-2\rho\sigma$ while for the second we used the first operator
identity above as well as the formula given for $\cod A$. Thereafter, elementary algebra gives the quoted result for Equation~\nn{formrobin}.
\end{proof}

\begin{remark}
By construction $\nabla_n-\varepsilon(n) \nabla_n\,  \iota(n) - w_0 H^g$ as an operator on forms in the kernel of $\iota(n)$  is conformally invariant 
along~$\Sigma$ so gives a differential 
forms generalisation of the conformal Robin operator~\cite{cherrier,BrGoOps}. Moreover, in the form given by Equation~\nn{yform}
of the above proof, it can be written as
$$
(d+2w_0-2)\delta_R \A\big|_\Sigma=-y\A\big|_\Sigma=0\, ,
$$
where $\A\in \ker\Xs \subset \Gamma\ct^k M[w_0]$ is a tractor given  by
the expression~\nn{XAform}
and $\delta_R$ is the Robin operator on tractor forms given in Equation~\nn{Robinform}.
Note that
the weight $w_0=1-\frac d2$ is the first in a series of exceptional weights that will be studied 
in detail.
\end{remark}

\begin{remark}\label{wtd}
In Equation~\nn{interiorProca}
the parameter
$m^2$ is $(w_0+k)(n+w_0-k)$. If we trivialise the density bundles with respect to the Einstein scale so that $A$ is a differential form, then this interior system is
unchanged under the replacement
$$
w_0\mapsto -n-w_0\, .
$$
This {\it weight duality} leaves the parameter $m^2$ invariant.

Although weight duality is seen to be a symmetry of the interior
equations, it acts non-trivially on the boundary data.  Shortly we
will use this symmetry as a map between solutions with distinct
boundary behaviours.
\end{remark}

The case $w_0=k-n$ has been excluded from the statement of
Problem~\ref{laylanguage} because the boundary problem is canonically
of mixed Dirichlet-Neumann type, see Proposition~\ref{bcs2}. All
weights are, however, handled uniformly in the tractor statement of
our extension problem, to follow.

\begin{problem}\label{solprob}
Given $\A|_\Sigma$ isomorphic to $\A_{_\Sigma}\in\ker(\wDs_{_\Sigma},\Xs_{_\Sigma})\subset \Gamma\ct^k\Sigma M[w_0]$ and
an
arbitrary extension $\A_0\in \Gamma\ct^k M[w_0]$ of this subject to
$$
y\A_0=\Is \A_0 = \wDs \A_0 = \Xs \A_0 = O(\sigma)\, ,
$$
find $\A_i\in \Gamma\ct^k M[w_0-i]$ such that
$$
\A^{(\ell)}:= \A_0 + \sigma \A_1 + \sigma^2 \A_2 + \cdots + O\big(\sigma^{\ell+1}\big)
$$
solves the tractor Proca equations $$y \A=O(\sigma^{\ell})\, ,\qquad \Is \A = \wDs \A = \Xs \A = 0\, ,$$ off $\Sigma$, for $\ell\in {\mathbb N}\cup \infty$ as high as possible.
\end{problem}

Regarding the equivalence of  Problems~\ref{laylanguage} and~\ref{solprob}:
recall that Proposition~\ref{westproca}
showed that the tractor Proca equations  yield the Proca system  uniformly for all weights in the interior.
Moreover, the tractor Proca equations extend to the boundary and therefore determine boundary conditions
naturally associated with the interior equations.

We are now well-positioned to solve these Equations because
the first Proca equation, $y\A=0$, is solved by the solution generating operator technique of~\cite{GWasym} which is explained in Section~\ref{EXTENSION}.
Moreover,
the final three scale-transversality equations $\Is \A = \wDs \A = \Xs \A = 0$, are solved by the holographic projector~$\Pi$ of Proposition~\ref{projprop} for weights
$w_0\neq -k,k-n$. It remains to show compatibility of these results and handle the special weights. The latter encompass some of the most interesting features
of the theory related to obstructions to smoothness, see Section~\ref{OBSTQD}.

A critical ingredient, especially for establishing the abovementioned compatibility, is the following result. 

\begin{lemma}\label{moving}
$$
[x,\Ds_{[3]}]=0=[y,\Ds_{[3]}]\, ,
$$
\vspace{.1cm}
$$
[x,\D_{[3]}]=0=[y,\D_{[3]}]\, .
$$
Hence
$$
y \, \Pi = \Pi\,  y\, .
$$
 \end{lemma}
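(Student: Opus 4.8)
The plan is to prove the two sets of bracket relations for the triple D-operators and then deduce the commutation of $y$ with $\Pi$ as an immediate corollary. Recall that $\Ds_{[3]}=\wDs\Xs\Is$ and $\D_{[3]}=\wD\X\I$ on an almost Riemannian (here Poincar\'e--Einstein, hence almost Einstein) structure, and that by the definition just above $\Pi=\frac{1}{(w+k)(n+w-k)}\,\Ds_{[3]}\,\D_{[3]}$.

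First I would establish the commutation with $x=\sigma$. This was in fact already noted in the text immediately following the definition of the triple D-operators: the displayed relation $[\D_{[3]},x]=0=[\Ds_{[3]},x]$ follows directly from the explicit matrix expressions~\nn{tripleD} in a choice of $g\in c$, since each entry of those matrices commutes with multiplication by $\sigma$ (the operators $\wepsilon,\wiota$ built into them do so by the third line of Corollary~\ref{walgebra}, and $\cod\iota(n)$, $\varepsilon(n)\extd$ act purely on the form slots). So only $[y,\Ds_{[3]}]=0=[y,\D_{[3]}]$ requires genuine work.

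For the $y$-commutation the cleanest route is algebraic, exploiting the almost Einstein identities already assembled. I would write $\Ds_{[3]}=\Is\wDs\Xs$ (the alternative factorisation recorded in the text just before the definition of $\Pi$) and use that $y$ anticommutes or commutes with each factor. Concretely: by Proposition~\ref{Yform}, $y$ commutes with $\Is$, $\wDs=\Ds\frac1h$ (note $[h,\Ds]=-2\Ds$ and $[h,y]=-2y$ are compatible, and $[\Ds,y]=0$ from Proposition~\ref{Yform}, so $[\wDs,y]=0$), and with $\Xs$ — the last because $\Xs=\iota(X)$ and the relevant commutator $[y,\Xs]$ is controlled by the fourth identity of Proposition~\ref{Iformalg}, namely $(h-2)\,y\Xs-h\,\Xs y=2\,x\Ds-h(h-2)\Is$. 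The key point is that when this operator acts within the image of $\Is$ (equivalently on $\ker\Is$-type data, i.e.\ composed appropriately with the remaining factors $\wDs\Is$ that project onto the western kernel), the right-hand side terms $x\Ds$ and $\Is$ are arranged to cancel against contributions from the other factors, leaving a genuine commutation. The same argument, dualised via the tractor Hodge star (or by the parallel set of identities in Propositions~\ref{Yform} and~\ref{Iformalg} with $\I,\D$ in place of $\Is,\Ds$), handles $[y,\D_{[3]}]=0$.

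The main obstacle I expect is the bookkeeping in the $\Xs$-commutation: $y$ does not commute with $\Xs$ outright, so the cancellation of the inhomogeneous terms $2x\Ds-h(h-2)\Is$ must be verified by pushing these through the remaining $\wDs$ and $\Is$ factors and using nilpotency $\Is^2=0$, $(\wDs)^2=0$ together with $\{\Is,\Ds\}=-y$ from Proposition~\ref{Yform}. An honest alternative, which sidesteps the subtlety, is simply to verify $[y,\Ds_{[3]}]=0$ by direct matrix computation in the Einstein scale $g^o$, using the explicit form of $-y$ recorded in the proof of Proposition~\ref{Yform} together with~\nn{tripleD}; this is routine but lengthy. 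Once both bracket families are in hand, the final claim is immediate: since $y$ commutes with $\Ds_{[3]}$ and with $\D_{[3]}$, and the numerical factor $\frac1{(w+k)(n+w-k)}$ is constant on each weight eigenspace, we get $y\,\Pi=\Pi\,y$.
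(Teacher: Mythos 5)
Your handling of the $x$-commutators (via the explicit matrix expressions~\nn{tripleD}, equivalently the Leibniz property of the double D-operators noted in Remark~\ref{DDD}) and your final deduction of $y\,\Pi=\Pi\,y$ from the bracket relations are both correct and match the paper. But your main line for the $y$-commutators contains two concrete errors. First, the claim $[\wDs,y]=0$ is false: by Definition~\ref{HAT}, $\wDs=\Ds\,\tfrac1h$, and since $[h,y]=-2y$, on an $h$-eigenvalue-$h_0$ eigenspace one finds $y\,\wDs=\tfrac{1}{h_0}\,y\Ds$ whereas $\wDs\,y=\tfrac{1}{h_0-2}\,y\Ds$; only the unhatted relation $[\Ds,y]=0$ of Proposition~\ref{Yform} holds, and the weight-dependent factor $\tfrac1h$ does not slide through $y$. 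Second, you cite ``$\{\Is,\Ds\}=-y$ from Proposition~\ref{Yform}'', but that Proposition gives the \emph{mixed} anticommutators $\{\Is,\D\}=-y=\{\I,\Ds\}$; the like-starred anticommutator is $\{\Is,\Ds\}=0$, from Proposition~\ref{yproof}. Since these are precisely the tools you propose for cancelling the inhomogeneous terms $2x\Ds-h(h-2)\Is$ arising from the $y$--$\Xs$ exchange, your cancellation mechanism as sketched does not go through. The identity is in fact true in your ordering $\Ds_{[3]}=\Is\wDs\Xs$, but verifying it there requires, in addition, $\{\Is,\Xs\}=0$, the $\Ds x$ identity of Proposition~\ref{Iformalg}, the $\Ds\Xs$ swap from Proposition~\ref{XDalgebra}, and a genuinely nontrivial numerical cancellation (of the shape $-h_0^2+4=-(h_0-2)(h_0+2)$) between the $\Xs y$ contributions and the $h$-factor discrepancy from the failed $\wDs$ slide --- none of which appears in your sketch.

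The paper sidesteps all of this by a better-chosen factorisation: using Proposition~\ref{hatstotilde} it writes $\Ds_{[3]}=\wDs\Xs\Is=-\Xs\Dts\Is$, with $\Xs$ \emph{leftmost}. Then in $y\,\Ds_{[3]}=-y\Xs\Dts\Is$ the single exchange of $y$ past $\Xs$ (the fifth identity of Proposition~\ref{Iformalg}) deposits the inhomogeneous terms $2x\Ds-h(h-2)\Is$ directly onto $\Dts\Is$, where they vanish immediately by $\Ds^2=0$ and $\Is\Ds\Is=-\Ds\Is^2=0$ (using $\{\Is,\Ds\}=0$). What remains,
$$
-\frac{h}{h-2}\,\Xs\, y\,\Dts\Is \;=\; -\frac{1}{h-2}\,\Xs\, y\,\Ds\Is \;=\; -\Xs\Dts\Is\, y \;=\; \Ds_{[3]}\, y\, ,
$$
follows from $[\Ds,y]=0=[\Is,y]$ and weight bookkeeping alone, with the apparent singularities removable. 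Your fallback of a direct matrix verification in the scale $g^o$ is legitimate in principle (the identity extends to $\Sigma$ by continuity), but it is not executed, so as written the proposal has a genuine gap rather than a complete alternative proof.
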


\begin{proof}
As observed in Remark~\ref{DDD}, the equalities on the left hand side  hold, because the exterior and interior double D-operators
obey the Leibnitz rule and their commutators with $x$  are proportional to $\Xs\Is$ and $\I\X$,
respectively. 

The commutators on the right hand side only require a simple application of the algebra of Proposition~\ref{Iformalg}. For example,
\begin{eqnarray*}
y\, \Ds_{[3]} &=&-y\Xs\Dts \Is  \:=\: -\frac{h}{h-2}\, \Xs y\,\Dts\Is\\[2mm]
&=&-\frac{1}{h-2}\, \Xs y\,\Ds\Is
\:=\: -\Xs\Dts\Is y \:=\: \Ds_{[3]} y\, .
\end{eqnarray*}
The apparent singularities above for exceptional weights are all removable.
\end{proof}

\begin{remark}
The above Lemma is easily extended to demonstrate that the  commutators of the interior and exterior  triple D-operators
with $x^\alpha$ (for any $\alpha\in {\mathbb C}$) and the log density~$\log x$ all vanish.
\end{remark}

The above Lemma and pair of Remarks establish an all orders solution to  the tractor Proca equations for generic weights.

\begin{theorem}\label{BIGTHEOREM}
For $h_0:=d+2w_0\notin {\mathbb Z}_{\geq 2}$  and $w_0\neq -k, k-n$, Problem~\ref{solprob} has an order  $\ell=\infty$
solution given by
\begin{equation}\label{generic}
\A=\Pi \ \colon K^{h_0}(z) \colon\  \A_0\, .
\end{equation}
\end{theorem}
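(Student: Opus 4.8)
The plan is to assemble the result from three facts already established: that the normal-ordered operator $\colon K^{h_0}(z)\colon$ generates the all-orders solution of the Laplace--Robin equation $y\A=0$ (Proposition~\ref{firstsolprop} together with the $\ell\to\infty$ Bessel limit~\nn{besselfn}); that the holographic boundary projector $\Pi$ enforces the scale-transversality conditions $\Is\A=\wDs\A=\Xs\A=0$ identically (Proposition~\ref{projprop}); and that the two mechanisms are mutually compatible (Lemma~\ref{moving}). The substance of the theorem is exactly this compatibility, so my task reduces to checking in turn the three requirements of Problem~\ref{solprob}.

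First I would observe that the hypothesis $h_0\notin{\mathbb Z}_{\geq2}$ is precisely what renders $\colon K^{h_0}(z)\colon$ well defined, the excluded integers being the poles of the product formula~\nn{product solution}. For such $h_0$, the operator $\colon K^{h_0}(z)\colon\A_0$ solves $y(\,\cdot\,)=0$ to all orders in $\sigma$; this is immediate from the identity~\nn{yleft}, namely $y\,c_1\cdots c_\ell=x^\ell y^{\ell+1}$, in the limit $\ell\to\infty$, or equivalently from the second-order relation~\nn{besselop} satisfied by $\colon K(z)\colon$ together with the fact that $K^{h_0}$ solves the associated Bessel equation. This holds for the arbitrary extension $\A_0$ and, since the normalisation gives leading coefficient $K^{h_0}(0)=1$, the restriction obeys $\big(\colon K^{h_0}(z)\colon\A_0\big)\big|_\Sigma=\A_0|_\Sigma$.

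Next I would apply $\Pi$ and verify the requirements. The scale-transversality conditions hold automatically: by Proposition~\ref{projprop} one has $\Is\Pi=\wDs\Pi=\Xs\Pi=0$, so $\Is\A=\wDs\A=\Xs\A=0$ identically, whatever $\Pi$ acts on. For the Laplace--Robin equation, Lemma~\ref{moving} supplies $[y,\D_{[3]}]=[y,\Ds_{[3]}]=0$ and, since $\D_{[3]}$ and $\Ds_{[3]}$ commute with $x=\sigma$ and hence preserve orders in $\sigma$, applying $\Pi$ to the first-kind solution leaves it a solution:
\begin{equation*}
y\A=y\,\Pi\,\colon K^{h_0}(z)\colon\A_0=\Pi\,y\,\colon K^{h_0}(z)\colon\A_0=\Pi\cdot O(\sigma^\infty)=O(\sigma^\infty),
\end{equation*}
which is the desired $\ell=\infty$ solution. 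Finally, for the boundary data, the hypothesis $w_0\neq-k,k-n$ guarantees $\Pi$ exists, and the equality $\big(\Is\colon K^{h_0}(z)\colon\A_0\big)\big|_\Sigma=(\Is\A_0)|_\Sigma=0$ (since $\Is$ is algebraic and $\Is\A_0=O(\sigma)$) lets me invoke the last clause of Proposition~\ref{projprop}: $\A|_\Sigma=\Pi_W^{\scriptscriptstyle\Sigma}\A_{_\Sigma}$. As $\A_{_\Sigma}\in\ker(\wDs_{_\Sigma},\Xs_{_\Sigma})$ is fixed by the boundary west projector, this equals $\A_{_\Sigma}$, matching the prescribed data.

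The main obstacle is the compatibility packaged in Lemma~\ref{moving}: a priori, $\colon K^{h_0}(z)\colon$ solves the interior Laplace--Robin problem while $\Pi$ solves an unrelated algebraic transversality problem, and it is not evident that projecting a Laplace--Robin solution preserves that property. Once the commutators $[y,\D_{[3]}]=[y,\Ds_{[3]}]=0$ and the tangentiality $[x,\D_{[3]}]=[x,\Ds_{[3]}]=0$ are in hand, the theorem is a short assembly; the genuine work lies in those commutators, which rest on the triple-$D$ operators obeying a graded Leibnitz rule and on the scale algebra of Proposition~\ref{Iformalg}.
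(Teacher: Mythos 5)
Your proposal is correct and follows essentially the same route as the paper, which offers no separate proof beyond the remark that Lemma~\ref{moving} (the commutators $[y,\D_{[3]}]=[y,\Ds_{[3]}]=0$ together with tangentiality) combined with Proposition~\ref{projprop} and the solution generating operator of Proposition~\ref{firstsolprop} ``establish an all orders solution''; your write-up is precisely that assembly, including the correct verification of the boundary data via $\big(\Is\colon K^{h_0}(z)\colon\A_0\big)\big|_\Sigma=0$ and $\Pi_W^{\scriptscriptstyle\Sigma}\A_{_\Sigma}=\A_{_\Sigma}$. You have also correctly identified Lemma~\ref{moving} as the genuine content, with the remaining steps being routine.
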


\subsubsection{Solutions of the second kind}\label{sol2}

The Proca equations, being second order, admit a second homogeneous solution. We consider therefore a more general problem $y\A=0$
where $\A=\sigma^\alpha \big(\overline \A_0+O(\sigma)\big)$ for some $\alpha\in {\mathbb C}$. Recall from Proposition~\ref{WESTPROJECTOR} that
\begin{equation}\label{free}
[y,x^\alpha]=-\alpha\, x^{\alpha-1}(h+\alpha-1)\, ,
\end{equation}
from which it follows, if $\A_0$ is of weight $w_0$, that 
$$
\alpha=0\quad \mbox{ or } \quad\alpha = h_0-1\, ,
$$
in order that $\overline \A_0$ is $O(\sigma^0)$.
Since the case $\alpha=0$ was solved above in Theorem~\ref{BIGTHEOREM},
this brings us to the following Problem.

\begin{problem}\label{solprob2}
Given $\overline\A_0|_\Sigma$ isomorphic to $\overline\A_{_\Sigma}\in\ker(\wDs_{_\Sigma},\Xs_{_\Sigma})\subset \Gamma\ct^k\Sigma [-w_0-n]$ and
an
arbitrary extension $\overline\A_0\in \Gamma\ct^k M[-w_0-n]$ of this subject to
$$
y\, \overline\A_0=\Is \, \overline\A_0 = \wDs \, \overline\A_0 = \Xs \, \overline\A_0 = O(\sigma)\, ,
$$
find $\overline\A_i\in \Gamma\ct^k M[-w_0-n-i]$ such that
$$
\overline\A^{(\ell)}:= \sigma^{h_0-1}\Big(\, \overline\A_0 + \sigma \, \overline\A_1 + \sigma^2 \, \overline\A_2 + \cdots + O\big(\sigma^{\ell+1}\big)\Big)\in \Gamma\ct^kM[w_0]
$$
solves the tractor Proca equations $$y \, \overline\A=O(\sigma^{\ell+h_0-1})\, ,\qquad \Is \, \overline\A = \wDs \, \overline\A = \Xs \, \overline\A = 0\, ,$$ off $\Sigma$, for $\ell\in {\mathbb N}\cup \infty$ as high as possible.
\end{problem}

Problem~\ref{solprob2} amounts to 
Problem~\ref{laylanguage} but instead with a solution of the form
\begin{equation}\label{laidagain}
\overline A^{(\ell)}= \sigma^{h_0-1}\Big(\, \overline A_0 + \sigma \, \overline A_1 + \sigma^2 \, \overline A_2 +\cdots + O\big(\sigma^{\ell+1}\big)\Big)\, .
\end{equation}
Solutions to this problem can be obtained from solutions to Problem~\ref{laylanguage}
 by what we term the {\em scale duality map},
which is related to the weight symmetry alluded to above. The scale duality map couples
the defining scale $\si$ with an existing solution to yield a new
solution of the same mass, as in the following result. 
\begin{theorem}\label{swap}[Scale Duality.]
Suppose $\A\in\Gamma\ct^kM[-w_0-n]$, 
$w_0\neq -k,k-n$, solves
 $$y \A=O(\sigma^{\ell})\, ,\qquad \Is \A = \wDs \A = \Xs \A = 0\, .$$
 Then 
 $$\overline \A = \sigma^{h_0-1}\Pi\A$$
 solves
  $$y \, \overline\A=O(\sigma^{\ell+h_0-1})\, ,\qquad \Is \, \overline\A = \wDs \, \overline\A = \Xs \, \overline\A = 0\, .$$
\end{theorem}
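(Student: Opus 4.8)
The plan is to verify that the scale-duality ansatz $\overline{\A}=\sigma^{h_0-1}\Pi\A=x^{h_0-1}\Pi\A$ satisfies the four stated conditions, exploiting the algebraic relations already assembled. The three scale-transversality conditions $\Is\overline{\A}=\wDs\overline{\A}=\Xs\overline{\A}=0$ are the easiest: since $\Pi\A\in\ker(\Is,\wDs,\Xs)$ by Proposition~\ref{projprop} (the defining property of the holographic boundary projector), and since multiplication by the scale $x=\sigma$ commutes with each of $\Is,\wDs,\Xs$ — recall $[\wepsilon,\sigma]=0=[\sigma,\wiota\,]$ from Corollary~\ref{walgebra}, and $\Is,\Xs$ commute with $\sigma$ by the identities in Section~\ref{extcalcsc} — the factor $x^{h_0-1}$ passes harmlessly through. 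Thus $\Is\overline{\A}=x^{h_0-1}\Is\Pi\A=0$, and identically for $\wDs$ and $\Xs$.

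The substantive content is the Laplace--Robin equation. First I would commute $y$ through the factor $x^{h_0-1}$. Using the extended enveloping-algebra identity~\nn{corarb}, namely $[y,x^\alpha]=-\alpha\,x^{\alpha-1}(h+\alpha-1)$, applied with $\alpha=h_0-1$, and noting that $\Pi\A$ carries weight $-w_0-n$ so that $h$ acts on it as the eigenvalue $d+2(-w_0-n)=2-h_0$, the correction term involves $(h+\alpha-1)=(2-h_0)+(h_0-1)-1=0$. Hence $y\,x^{h_0-1}\Pi\A=x^{h_0-1}y\,\Pi\A$ exactly, with no remainder. This is the crucial vanishing already flagged in the Remark preceding Problem~\ref{second soln}: at the dual weight the shift operator $x^{h_0-1}$ commutes cleanly with $y$. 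Next I would invoke Lemma~\ref{moving}, which gives $y\,\Pi=\Pi\,y$, to obtain
$$
y\,\overline{\A}=x^{h_0-1}y\,\Pi\A=x^{h_0-1}\Pi\,y\A\, .
$$
By hypothesis $y\A=O(\sigma^\ell)$, so $\Pi\,y\A=O(\sigma^\ell)$ as well (since $\Pi$ is tangential, it preserves the order of vanishing in $\sigma$). Therefore $y\,\overline{\A}=x^{h_0-1}\,O(\sigma^\ell)=O(\sigma^{\ell+h_0-1})$, which is precisely the claimed order of solution.

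The main obstacle, such as it is, is bookkeeping rather than conceptual: one must confirm that the weight eigenvalue of $\Pi\A$ is genuinely $2-h_0$ so that the correction term in~\nn{corarb} vanishes identically (not merely to leading order), and that $\Pi$ indeed commutes with $y$ at this specific weight without encountering one of the removable singularities that pepper the triple-$D$ algebra. The former is immediate from $h_0:=d+2w_0$ together with $\A,\Pi\A\in\Gamma\ct^kM[-w_0-n]$, since $d+2(-w_0-n)=d-2w_0-2n=-(d+2w_0)+2(d-n)=2-h_0$ using $d=n+1$. The latter is exactly the content of Lemma~\ref{moving}, whose proof already certifies that the apparent poles at exceptional weights are removable. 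I would close by remarking that $\overline{\A}$ is thereby of the form~\nn{laidagain} demanded by Problem~\ref{solprob2}, so that the scale-duality map realises the promised correspondence exchanging solutions of the first and second kind.
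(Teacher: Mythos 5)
Your treatment of the Laplace--Robin equation is correct and is essentially the paper's own argument: the weight bookkeeping $d+2(-w_0-n)=2-h_0$ is right, so the identity~\nn{corarb} (equivalently~\nn{free}) gives $[y,\sigma^{h_0-1}]=0$ \emph{exactly} on weight $-w_0-n$ sections, and combining this with $y\,\Pi=\Pi\,y$ from Lemma~\ref{moving} yields $y\,\overline\A=\sigma^{h_0-1}\Pi\,y\A=O(\sigma^{\ell+h_0-1})$, just as in the paper.

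The gap is in your justification of the scale-transversality conditions. You assert that multiplication by $\sigma$ commutes with $\wDs$, citing Corollary~\ref{walgebra}; but that corollary concerns the operators $\wepsilon$ and $\wiota$ acting on weighted \emph{forms}, not the interior tractor D-operator $\wDs$, and the claimed commutation is false as an operator statement: $\Ds$ is a genuine differential operator (its matrix~\nn{Ds} contains $\extd$, $\cod$ and $\FL$), and Proposition~\ref{Iformalg} gives $(h-2)\,\Ds x - h\, x\Ds = 2\,\Xs y + h(h-2)\,\Is$, whose right-hand side does not vanish identically. (Your parallel claims for $\Is$ and $\Xs$ are fine, since those are algebraic bundle maps.) The conclusion nevertheless survives, and the paper secures it with a tool you already invoke: since $x\,\Pi=\Pi\,x$ by Lemma~\ref{moving}, one has $\overline\A=\sigma^{h_0-1}\Pi\A=\Pi\big(\sigma^{h_0-1}\A\big)$, whereupon $\Is\,\overline\A=\wDs\,\overline\A=\Xs\,\overline\A=0$ is automatic from $\Is\Pi=\wDs\Pi=\Xs\Pi=0$ (Proposition~\ref{projprop}, Equation~\nn{pihat}). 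Alternatively your route can be repaired by hand: on $\ker(\Is,\wDs,\Xs)$ the obstruction terms do vanish, because the last identity of Proposition~\ref{Iformalg} forces $\Xs y\,\B=0$ whenever $\Xs\B=\Ds\B=\Is\B=0$, so $\Ds(\sigma\B)=0$ and one may induct on the power of $\sigma$ --- but this is exactly the extra algebra that the one-line appeal to Lemma~\ref{moving} lets you bypass.
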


\begin{proof}
Firstly from Equation~\nn{free}, acting on $\Pi\A$,  we have $[y,\sigma^{h_0-1}]=0$
and from Lemma~\nn{moving} $y\, \Pi=\Pi\, y$ and $x\, \Pi=\Pi\, x$ so
$$
y\, \overline \A \, = \, \sigma^{h_0-1} y \, \Pi\,  \A \, = \, \sigma^{h_0-1} \Pi\,  y\,  \A\,  = \, \sigma^{h_0-1} \Pi\,  O(\sigma^{\ell}) \, = \, O(\sigma^{\ell+h_0-1})\, .
$$
Also
  we have $\overline \A = \Pi\, \sigma^{h_0-1}\A$, so automatically $\Is \, \overline\A = \wDs \, \overline\A = \Xs \, \overline\A = 0$.
 \end{proof}
 
 \begin{corollary}\label{swapc}
When $w_0\neq -k,k-n$, there is a bijection between solutions of the
weight $-w_0-n$ version of 
Problem~\ref{solprob} and solutions of
Problem~\ref{solprob2} given by
$$
\A\longmapsto \, \overline \A = \sigma^{h_0-1}\Pi\A\, .
$$
 \end{corollary}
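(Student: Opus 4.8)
The forward direction is already the content of Theorem~\ref{swap}: it guarantees that the map $\Phi\colon\A\mapsto\overline\A=\sigma^{h_0-1}\Pi\A$ carries a solution of the weight $-w_0-n$ instance of Problem~\ref{solprob} to a solution of Problem~\ref{solprob2}. The plan is therefore to establish bijectivity by exhibiting an explicit two-sided inverse, namely
$$
\Psi\colon\overline\A\longmapsto\sigma^{1-h_0}\,\Pi\,\overline\A\, .
$$
The single structural fact that makes this work is that the normalising weight $(w+k)(n+w-k)$ of $\Pi$ is invariant under the weight duality $w\mapsto-n-w$ of Remark~\ref{wtd} (equivalently $h_0\leftrightarrow 2-h_0$). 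Since $\sigma^{\pm(h_0-1)}$ interchanges exactly the dual slots $w_0$ and $-w_0-n$, and since $\D_{[3]}$ and $\Ds_{[3]}$ commute with every power $x^\alpha$ (Lemma~\ref{moving} and the Remark following it), this invariance promotes the tangentiality of $\Pi$ to the exact commutations $\Pi\,\sigma^{h_0-1}=\sigma^{h_0-1}\,\Pi$ and $\Pi\,\sigma^{1-h_0}=\sigma^{1-h_0}\,\Pi$ between these two weights — the very identity already invoked in the proof of Theorem~\ref{swap}.

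With this in hand I would first check that $\Psi$ lands in the correct solution set. A weight count gives $\sigma^{1-h_0}\Pi\overline\A\in\Gamma\ct^kM[-w_0-n]$ with leading behaviour $\sigma^0$, since $\overline\A$ has leading behaviour $\sigma^{h_0-1}$ and $\Pi$ preserves the order of vanishing in $\sigma$. Because $[y,\sigma^{1-h_0}]=0$ on weight $w_0$ sections by~\nn{free} (the factor $h+1-h_0-1$ vanishes there), and $y\Pi=\Pi y$ by Lemma~\ref{moving},
$$
y\,\Psi\overline\A=\sigma^{1-h_0}\,\Pi\,y\,\overline\A=\sigma^{1-h_0}\,\Pi\,O(\sigma^{\ell+h_0-1})=O(\sigma^{\ell})\, .
$$
The scale-transversality conditions $\Is\Psi\overline\A=\wDs\Psi\overline\A=\Xs\Psi\overline\A=0$ then hold automatically, since $\Psi\overline\A=\Pi(\sigma^{1-h_0}\overline\A)\in\operatorname{ran}\Pi\subseteq\ker(\Is,\wDs,\Xs)$ by Proposition~\ref{projprop}.

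Finally I would verify that $\Psi$ is genuinely two-sided inverse to $\Phi$. Using the commutation above together with $\Pi^2=\Pi$ on $\ker y$ (Lemma~\ref{failure}),
$$
\Phi\Psi\,\overline\A=\sigma^{h_0-1}\,\Pi\,\sigma^{1-h_0}\,\Pi\,\overline\A=\Pi^2\,\overline\A=\Pi\,\overline\A=\overline\A\, ,
$$
where the last step uses that $\overline\A\in\ker(\Is,\wDs,\Xs)$ satisfies $y\overline\A=O(\sigma^{\ell+h_0-1})$, so Lemma~\ref{failure} gives $\Pi\overline\A=\overline\A$ to the relevant order; the computation $\Psi\Phi\A=\A$ is identical. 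This delivers the asserted bijection, with the explicit formula $\A\mapsto\sigma^{h_0-1}\Pi\A$.

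The step I expect to be the main obstacle is the \emph{exact} (not merely asymptotic) commutation $[\Pi,\sigma^{h_0-1}]=0$; the rest is bookkeeping within the $\mathfrak{sl}(2)$ and scale-tractor algebra of Sections~\ref{LRalg} and~\ref{extcalcsc}. The delicacy is that $\Pi$ carries a weight-dependent normalisation and does \emph{not} commute with a generic power of $\sigma$: it is precisely the weight-duality symmetry of $(w+k)(n+w-k)$, matching $h_0\leftrightarrow 2-h_0$, that rescues the argument for the single exponent $h_0-1$ bridging the two dual weights. I would take care to state this commutation as a standalone observation before running the bijection argument, so that both $\Phi\Psi=\operatorname{id}$ and $\Psi\Phi=\operatorname{id}$ reduce transparently to $\Pi^2=\Pi$.
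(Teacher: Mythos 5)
Your proposal is correct and follows essentially the same route as the paper: the forward map comes from Theorem~\ref{swap}, the inverse is exhibited as $\overline\A\mapsto\sigma^{1-h_0}\Pi\,\overline\A$, and both compositions collapse to $\Pi^2\A=\A+O(\sigma^{\ell+1})$ via Lemma~\ref{failure}, exactly as in the paper's proof. Your standalone observation that the weight-duality invariance of the normalisation $(w+k)(n+w-k)$ underwrites the exact commutation $[\Pi,\sigma^{h_0-1}]=0$ is a welcome clarification of what the paper compresses into the remark ``$\Pi\,x=x\,\Pi$'', though you might also record, as the paper does, that $\overline\A_0=\Pi\A_0$ satisfies the initial-data conditions of Problem~\ref{solprob2}.
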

 
 \begin{proof}
 It only remains to establish that the boundary conditions are correctly mapped from those of one Problem to the other.
 Observing that $\overline \A_0=\Pi \A_0$ (because $\Pi \, x = x\,  \Pi$) the conditions $\Is \, \overline \A_0 = \wDs\, \overline \A_0 = \Xs \, \overline \A_0=O(\sigma)$ hold.
 Similarly $y \, \overline \A_0 =y\, \Pi \A_0 = \Pi \, y \A_0 = \Pi \, O(\sigma)=O(\sigma)$.
 
 Remembering that solutions are defined up to $O(\sigma^{\ell})$, the inverse map is 
 $$
 \overline \A \longmapsto \A = \sigma^{1-h_0} \Pi \, \overline \A\, ,
 $$
 because
 \begin{equation}
 \sigma^{1-h_0}\, \Pi\: \sigma^{h_0-1}\, \Pi \A \, =\,  \Pi^2 \A =
 \Big(1+ \frac{1}{(w+k)(w+k-n)}\, xy\Big)^2\A= \A+O(\sigma^{\ell+1})\, ,
 \end{equation}
 where the second equality used Lemma~\ref{failure}.
  \end{proof}
 
 Although not strictly needed for our subsequent discussion, it is worth noting that a strong global statement is available.
 \begin{theorem}\label{swapg}[Global scale duality.]
 Given a global solution~$\A$ to the tractor Proca equations  of weight $w_0\neq -k,-k-n$, then a solution of weight $-w_0-n$ is
 $$
 \overline \A = \sigma^{1-h_0}\A\, .
 $$
 \end{theorem}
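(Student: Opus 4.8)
The plan is to show directly that $\overline\A=\sigma^{1-h_0}\A$ solves all four tractor Proca equations $y\overline\A=\Is\overline\A=\wDs\overline\A=\Xs\overline\A=0$, now at the dual weight. First I would check the weight bookkeeping: as $\A\in\Gamma\ct^kM[w_0]$ and $\sigma^{1-h_0}$ carries weight $1-h_0=1-d-2w_0$, the product $\overline\A$ has weight $w_0+1-d-2w_0=1-d-w_0=-n-w_0$ (using $d=n+1$), matching the target. Throughout, the statement is read on the interior $M^+$ where $\sigma\neq0$, so the (generically negative) power $\sigma^{1-h_0}$ is harmless; its effect is to exchange the two boundary asymptotics, cf.\ Remark~\ref{drem}.

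The Laplace--Robin equation is the clean algebraic ingredient. Writing $x^{1-h_0}$ for multiplication by $\sigma^{1-h_0}$ and using the extended commutator~\nn{corarb} at $\alpha=1-h_0$,
$$
y\,\overline\A=x^{1-h_0}\,y\A+[y,x^{1-h_0}]\,\A=x^{1-h_0}\,y\A-(1-h_0)\,x^{-h_0}\,(h-h_0)\,\A\, .
$$
Since $\A$ is a genuine (global) solution we have $y\A=0$, and since $\A$ has definite weight $w_0$ we have $h\A=h_0\A$, so $(h-h_0)$ annihilates $\A$; hence $y\overline\A=0$. This is exactly the mechanism underlying Theorem~\ref{swap} (see the scale-duality Remark following Problem~\ref{second soln}). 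Two of the scale--transversality conditions are equally immediate: $\Is=\iota(I)$ and $\Xs=\iota(X)$ are tensorial and commute with multiplication by the density $\sigma$, so $\Is\overline\A=\sigma^{1-h_0}\Is\A=0$ and $\Xs\overline\A=\sigma^{1-h_0}\Xs\A=0$.

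The condition $\wDs\overline\A=0$ is the crux, because $\wDs=\Ds/h$ is genuinely differential and does not commute with $\sigma$. I would settle it by passing to a metric $g\in c$ and invoking the western Lemma~\ref{West}: away from the critical weight $w_0=k-d$, the equations $\Xs\A=\wDs\A=0$ force $\A=q_WA$ for a unique weighted form $A\in\Gamma\ce^kM[w_0+k]$, while $\Is\A=0$ is precisely the generalised divergence condition~\nn{COULOMB_GAUGE}, $\wiota A=0$, i.e.\ $\iota(n)A=\tfrac{1}{d+w_0-k}\,\sigma\cod A$. Applying the Leibniz rule $\cod(\sigma^{\beta}A)=\sigma^{\beta}\cod A+\beta\,\sigma^{\beta-1}\iota(n)A$ (from $[\cod,\sigma]=\iota(n)$ in~\nn{comms}) with $\beta=1-h_0$ and inserting this divergence identity collapses the correction term to a multiple of $\sigma^{1-h_0}\cod A$, giving
$$
\cod\big(\sigma^{1-h_0}A\big)=\frac{1-w_0-k}{\,d+w_0-k\,}\,\sigma^{1-h_0}\,\cod A\, .
$$
The same computation shows $\iota(n)\big(\sigma^{1-h_0}A\big)=\tfrac{\sigma}{\,1-w_0-k\,}\,\cod\big(\sigma^{1-h_0}A\big)$. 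Since $d+(-n-w_0)-k=1-w_0-k$, these two identities say exactly that $\overline\A=q_W\big(\sigma^{1-h_0}A\big)$ at the dual weight $-n-w_0$; hence $\overline\A\in\ker(\wDs,\Xs)$ and $\Is\overline\A=0$, completing the verification.

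I expect this third step to be the main obstacle, both because $\wDs$ is differential and because the density-weight bookkeeping inside $\cod(\sigma^{1-h_0}A)$ must be tracked with care. The decisive cancellation is that the divergence condition $\iota(n)A=\tfrac{\sigma\cod A}{d+w_0-k}$ turns the spurious $\beta\sigma^{\beta-1}\iota(n)A$ term into a scalar multiple of $\sigma^{1-h_0}\cod A$, and the combinatorics (through $1-h_0=1-d-2w_0$) conspire to reproduce precisely the southern--slot normalisation $-\tfrac{1}{d+w_0'-k}$ of $q_W$ at $w_0'=-n-w_0$. A purely algebraic alternative would promote the identity $(h-2)\Ds x-h\,x\Ds=2\Xs y+h(h-2)\Is$ of Proposition~\ref{Iformalg} to arbitrary complex powers $x^{\alpha}$---just as~\nn{core} was promoted to~\nn{corarb}---and apply it to $\A$: every source term ($x\Ds$, $\Xs y$, $\Is$) annihilates $\A$, leaving a nonvanishing coefficient times $\Ds(\sigma^{1-h_0}\A)$, whence $\wDs\overline\A=0$. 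The excluded weights $w_0=-k,\,k-n$ of the theorem (and the critical $w_0=k-d$, handled by the special form~\nn{qWspecial} of $q_W$) must be omitted or treated separately.
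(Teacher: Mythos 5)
Your proposal is correct, but it reaches the transversality conditions by a genuinely different route than the paper. The paper's proof is a three-line argument built entirely on the projector machinery: $y\,\overline\A=0$ follows from \nn{corarb} exactly as you argue, and then, since $y\A=0$ globally, Lemma~\ref{failure} gives $\Pi\A=\A$, while Lemma~\ref{moving} (and the remark following it) lets $\Pi$ slide past $\sigma^{1-h_0}$, so $\overline\A=\sigma^{1-h_0}\Pi\A=\Pi\,\sigma^{1-h_0}\A$ lands in $\ker(\Is,\wDs,\Xs)$ by Proposition~\ref{projprop} --- all three transversality conditions are dispatched at once, scale-independently, and with no case analysis beyond the theorem's excluded weights (at which the normalisation of $\Pi$ degenerates, on either side of the duality). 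You instead observe that $\Is$ and $\Xs$ are tensorial, so those two conditions are trivial, and isolate the genuinely differential condition $\wDs\,\overline\A=0$, which you settle by choosing $g\in c$, writing $\A=q_W A$ via the western Lemma~\ref{West}, and running the Leibniz identity $[\cod,\sigma^{\beta}]=\beta\sigma^{\beta-1}\iota(n)$ from \nn{comms} against the divergence constraint $\wiota A=0$; your coefficient computations are correct, and the identity $\cod(\sigma^{1-h_0}A)=\tfrac{1-w_0-k}{d+w_0-k}\,\sigma^{1-h_0}\cod A$ does reproduce exactly the southern-slot normalisation of $q_W$ at the dual weight, so $\overline\A=q_W(\sigma^{1-h_0}A)$. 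What your route buys is an explicit, elementary picture: it re-derives scale duality at the level of weighted forms, making visible how the generalised divergence condition transforms (which connects directly to Remark~\ref{drem}), without invoking $\Pi$ at all. What it costs is scale-dependence in the intermediate steps and the two edge cases $w_0=k-d$ and $w_0=1-k$ (where $q_W$, at the source or dual weight respectively, only exists in the special form \nn{qWspecial}); these are not excluded by the theorem, so your flagged "separate treatment" is genuinely needed --- though your own identities essentially supply it, since at those weights they show the relevant slots become coclosed, matching \nn{qWspecial}. Your sketched algebraic alternative via a $x^{\alpha}$-extension of Proposition~\ref{Iformalg} is plausible and in the spirit of \nn{corarb}, but as stated it is only a sketch; the $q_W$ computation is what carries your proof.
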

 
 \begin{proof}
 The equation $y\, \overline \A=0$ is again obvious from
 Equation~\nn{free}. Moreover, by virtue of Lemma~\nn{failure}, $\Pi
 \A = \A$ on global solutions. Thus $\sigma^{1-h_0}\A=
 \sigma^{1-h_0}\Pi \A = \Pi \sigma^{1-h_0} \A$ so $\Is \, \overline \A =
 \wDs \, \overline \A = \Xs\, \overline \A =0$.
 \end{proof}
 
\begin{remark}\label{drem}
Note the solutions $\A$ and $\overline \A = \sigma^{1-h_0}\A$ in the
Theorem have the {\em same mass}, in that they solve the Proca system
\nn{proca1} for the same fixed parameter value $m^2$, see Remark
~\ref{wtd}. It follows from the analysis in this work that on a Poincar\'e--Einstein manifold,
for generic $m^2$ (and from the established theory for problems of
this sort~\cite{MaMe,AG-BGops}), one expects global solutions of the
form
$$
\A_{\rm global}=\A + \si^{h_0-1}{\, \overline{\A}}\, ,
$$ where $\A|_\Sigma$ may be viewed as the ``Dirichlet data'' and
$\overline{\A}|_\Sigma$ the ``Neumann data'' of the solution~$\A_{\rm
  global}$. Thus the scale duality map takes the weight $w_0$ solution  $\A_{\rm
  global}$ of the Proca system \nn{proca1} to the weight $-w_0-n$ solution
$$
\A'_{\rm global}=\si^{1-h_0}\A_{\rm global}={\overline{\A}} + \si^{1-h_0}\A,
$$ and we see the that  {\it r\^oles} of the Dirichlet and Neumann parts are swapped
in the new solution (of the {\em same} interior Proca system \nn{proca1}).

Note that away from the boundary we may work in the defining scale
$g^o=\si^{-2}\bg$ and trivialise density bundles using $\si$. Then, in
this trivialisation, $\si$ is represented by the constant function 1 and
so in the interior the two solutions $\A_{\rm global}$ and
$\A'_{\rm global}$ (which are sections of true form bundles) then
appear indistinguishable. The solutions $\A_{\rm global}$ and
$\A'_{\rm global}$ differ by their boundary behaviour, but this
information is lost when we work on the interior in the ``$\si=1$'' scale.
\end{remark}

 \subsubsection{Log solutions}\label{logsect}

To treat  weights $w_0$ such that $h_0\in{\mathbb N}$ we need to draw log-type solutions into the picture.
This is captured by the following Problem; see~\cite{GWasym} for the definition of the log densities $\log\sigma$
and $\log \tau$.

\begin{problem}\label{logfire}
Let $h_0=d+2w_0\in {\mathbb Z}_{\geq 2}$ and $w_0\neq -k,k-n$.	
Then, given $\A|_\Sigma$ isomorphic to $\A_{_\Sigma}\in\ker(\wDs_{_\Sigma},\Xs_{_\Sigma})\subset \Gamma\ct^k\Sigma [w_0]$ and
an
extension $\A_0\in\Gamma\ct^kM[w_0]$ of this satisfying
$$
y\A_0= \Is \A_0 = \wDs \A_0 = \Xs \A_0 = O(\sigma)\, ,
$$
find $\A_i\in \Gamma\ct^k M[w_0-i]$ and $\overline \A_i\in \Gamma\ct^k M[-n-w_0-i]$ such that
\begin{eqnarray*}
\A^{(\ell)}&:=& \big(\A_0 + \sigma \A_1 + \sigma^2 \A_2 + \cdots\big) + \sigma^{h_0-1}(\log\sigma-\log\tau)\big(\, \overline{\A}_0+\sigma \, \overline{\A}_1+\sigma^2 \, \overline{\A}_2+\cdots \big)
\\[2mm]&+& O\big(\sigma^{\ell+1}\big)\, +\, O\big(\sigma^{\ell+1}\log(\sigma/\tau)\big)
\end{eqnarray*}
solves the Proca equations $$y \A=O\big(\sigma^\ell\big)+O\big(\sigma^\ell\log(\sigma/\tau)\big)\, ,\qquad\Is \A = \wDs \A = \Xs \A = 0\, ,$$ off $\Sigma$, for $\ell\in {\mathbb N}\cup \infty$ as high as possible.

When $h_0=1$, we set $\A_0=0$ and take non-vanishing initial data $\overline\A_0|_\Sigma$.
\end{problem}

A key aspect of this Problem is solved in~\cite{GWasym}, which explains the set up and its special case $h_0=1$.
Recall from there (see also  Equation~\nn{besselop}) that the function $K^{h_0}(z)$ 
characterizing the solution generating operator $\colon K^{h_0}(z) \colon$ can be obtained by solving the ordinary differential equation
$$
z K''-(h_0-2) K' + K=0\, ,
$$
in a series expansion by the Frobenius method. However, for weights $h_0=2,3,\ldots$ the power series solution breaks down. For $h_0=2,4,\ldots$ 
the obstruction to a power series solution $\big[y^{h_0-1} \A_0\big]\big|_\Sigma$ vanishes for almost Einstein structures~\cite{GWasym}. When $h_0=3,5,\ldots$,
power series solutions are no longer possible, but solutions beyond the obstruction are obtainable by introducing a second, nowhere vanishing,  scale $\tau\in 
\Gamma\ce M[1]$ and including terms $\sigma^k (\log \sigma - \log \tau)$ in  the series expansion. Since~$\tau$ is arbitrary, there is limited
control over its algebra with differential operators $y, \D, \Ds$. In~\cite{GWasym}, this difficulty was circumvented by carefully 
ordering operators. The extension problem $y\A=0$ was solved via 
$$
\A={\mathcal O} \A_0\, ,
$$
where the new
 solution generating operator~$\cO$ is given by
\begin{eqnarray}\label{O}
\cO&:=&\colon F_{h_0-2}(z)\colon -\frac{\colon z^{h_0} \, \H(z)\colon}{(h_0-1)!(h_0-2)!}\nonumber\\
&-&\frac
{x^{h_0-1} \log x \ \colon K^{\oh_0}(z)\colon\  y^{h_0-1}\  -\ x^{h_0-1} \ \colon K^{\oh_0}(z)\colon\, \big(  y^{h_0-1}\,\log \tau\big)_{\rm W}}{(h_0-1)!(h_0-2)!} \, .
\end{eqnarray}
Explicit expressions for the order $h_0-2$ polynomial~$F_{h_0-2}$ and power series $B(z)$ can be found in~\cite{GWasym}.
The notation $\big( \bullet  \big)_W$ denotes the Weyl ordering $\frac12 \big\{  y^{h_0-1},\log \tau\big\}$.
In~\cite{GWasym} it was shown that the above operator only depends on  the log densities $\log\sigma$ and $\log\tau$
in the combination $\log\sigma-\log\tau=\log(\sigma/\tau)$ and $\sigma/\tau$ is a $C^\infty$ defining function for~$\Sigma$. This implies
that the operator $\cO$ maps smooth sections of $\ct^kM[w]$ to sections of  $\ct^kM[w]$ whose failure to be smooth is controlled by the term of order $(\sigma/\tau)^{h_0-1}\log(\sigma/\tau)$.

The key feature of $\cO$ is that keeping terms up to order in $x^\ell$ and $x^\ell\log x$, and denoting this $\cO^{(\ell)}$,  one has the operator statement
$$
y\, \cO^{(\ell)} = O(\sigma^\ell)+O(\sigma^\ell\log\sigma)\, .
$$
(Note the log term is only present for $\ell\geq h_0-1$.) This machinery can now be combined with the tools developed above to handle log solutions for forms.
We are first focussing on the cases $w_0\neq -k,k-n$ so that a version of the holographic projector can still be employed.

\begin{theorem}\label{mainl}
For weights $w_0\neq -k,k-n$ and $h_0=2,3,4,\ldots$, Problem~\ref{logfire} has an $\ell=\infty$ solution given by
$$\A=\scalebox{.9}{$\frac{1}{(w+k)(n+w-k)}$}\ \Ds_{[3]} \, \cO\, \D_{[3]} \A_0 \, .$$
\end{theorem}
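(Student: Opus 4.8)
The plan is to check separately the three features that define a solution to Problem~\ref{logfire}: that the scale-transversality constraints $\Is\A=\wDs\A=\Xs\A=0$ hold exactly, that the Laplace--Robin equation $y\A=0$ holds to all orders up to the admissible log terms, and that the prescribed Dirichlet data $\A_{_\Sigma}$ is recovered along $\Sigma$. The conceptual point is that the operator $\frac{1}{(w+k)(n+w-k)}\,\Ds_{[3]}(\,\cdot\,)\D_{[3]}$ performs a curved translation of the scalar/tractor solution generating operator $\cO$ of~\cite{GWasym} into the western Proca setting, and the commutators of Lemma~\ref{moving} guarantee that this translation disturbs neither the equation nor the boundary value.

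First, the constraints. Writing $\A=\frac{1}{(w+k)(n+w-k)}\,\Ds_{[3]}\big(\cO\,\D_{[3]}\A_0\big)$, the section $\A$ lies in $\operatorname{ran}\Ds_{[3]}$. Since on an almost Einstein (hence almost Riemannian) structure $\Ds_{[3]}=\Is\wDs\Xs$, the containment $\operatorname{ran}\Ds_{[3]}\subseteq\ker(\Is,\wDs,\Xs)$ recorded in Section~\ref{holographicproj} forces $\Is\A=\wDs\A=\Xs\A=0$ identically, not merely asymptotically. Second, the equation: by Lemma~\ref{moving} the triple D-operators commute with both $x=\sigma$ and $y$, so
$$
y\A=\frac{1}{(w+k)(n+w-k)}\,\Ds_{[3]}\,\big(y\,\cO\big)\,\D_{[3]}\A_0.
$$
Because $\D_{[3]}$ preserves the conformal weight, $\D_{[3]}\A_0$ again has weight $w_0$, so the Cartan eigenvalue relevant to $\cO$ is the same $h_0=d+2w_0$; moreover $y\,\D_{[3]}\A_0=\D_{[3]}\,y\A_0=O(\sigma)$ by hypothesis, so $\D_{[3]}\A_0$ is an admissible seed for the operator $\cO$. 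The defining property $y\,\cO^{(\ell)}=O(\sigma^\ell)+O(\sigma^\ell\log\sigma)$ of~\cite{GWasym} then gives $y\,\cO\,\D_{[3]}\A_0=O(\sigma^\ell)+O(\sigma^\ell\log(\sigma/\tau))$, and applying $\Ds_{[3]}$ — tangential ($[x,\Ds_{[3]}]=0$) and commuting with $\log x=\log\sigma$ by the Remark after Lemma~\ref{moving}, while its commutator with the smooth log density $\log\tau$ contributes only further $O(\sigma^\ell)$ terms — yields $y\A=O(\sigma^\ell)+O(\sigma^\ell\log\sigma)$ for arbitrarily large $\ell$.

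Third, the boundary data and the log-ansatz. With $\cO$ normalised so that $\cO f_0=f_0+O(\sigma)+\sigma^{h_0-1}\log(\sigma/\tau)\,(\overline f_0+\cdots)$, which is exactly the expansion demanded in Problem~\ref{logfire}, the tangentiality of $\Ds_{[3]}$ and $\D_{[3]}$ together with $[\log\sigma,\Ds_{[3]}]=0$ carry a series of this shape to another of the same shape, the $\log\tau$-commutators merely perturbing the analytic coefficients. Thus $\A$ has the required form. Its leading, $\sigma$-independent part is $\frac{1}{(w+k)(n+w-k)}\,\Ds_{[3]}\D_{[3]}\A_0=\Pi\A_0$, while every log contribution occurs at order $\sigma^{h_0-1}$ and hence vanishes on $\Sigma$ since $h_0\geq2$. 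As $(\Is\A_0)|_\Sigma=0$ and $\A_{_\Sigma}=\A_0|_\Sigma\in\ker(\wDs_{_\Sigma},\Xs_{_\Sigma})$, Proposition~\ref{projprop} gives $\A|_\Sigma=(\Pi\A_0)|_\Sigma=\Pi_W^{\scriptscriptstyle\Sigma}\A_{_\Sigma}=\A_{_\Sigma}$, recovering the Dirichlet data.

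The main obstacle is the bookkeeping in the last two steps: one must be sure that sandwiching $\cO$ between $\Ds_{[3]}$ and $\D_{[3]}$ alters neither the order of the error term nor the leading boundary value. This is precisely where the commutators of Lemma~\ref{moving} and $[\log\sigma,\Ds_{[3]}]=0$ are indispensable. The genuinely delicate point is that the $\log\tau$ piece of $\cO$ does \emph{not} commute with $\Ds_{[3]}$; one must verify, using that $\log\tau$ is a smooth log density whereas the singular part of the expansion is carried by $\log\sigma$, that the resulting commutator terms are harmless and feed only into the analytic part of the series, leaving both the $O(\sigma^\ell\log\sigma)$ bound and the restriction to $\Sigma$ intact.
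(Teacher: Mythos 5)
Your proof is correct and follows essentially the same route as the paper's: commute $y$ past $\Ds_{[3]}$ via Lemma~\ref{moving}, invoke the operator property $y\,\cO^{(\ell)}=O(\sigma^\ell)+O(\sigma^\ell\log\sigma)$ from~\cite{GWasym}, read the constraints off $\operatorname{ran}\Ds_{[3]}\subseteq\ker(\Is,\wDs,\Xs)$, and recover the Dirichlet data from the leading term $\Pi\A_0$ along $\Sigma$. Your explicit bookkeeping of the $\log\tau$-commutators and the restriction argument via Proposition~\ref{projprop} simply unpack details the paper compresses into ``clear from Lemma~\ref{moving} along with the form of $\cO$''.
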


\begin{proof}
To show $y\A=O(\sigma^\ell)+O(\sigma^\ell\log\sigma)$, we employ Lemma~\ref{moving} to write
\begin{eqnarray*}
y\ \Ds_{[3]} \, \cO^{(\ell)}\, \D_{[3]} \A_0
&=& \Ds_{[3]} \ y \,  \cO^{(\ell)}\ \D_{[3]} \A_0\\[1mm]
&=&\Ds_{[3]} \, \big(O(\sigma^\ell)+O(\sigma^\ell\log\sigma)\big)\\[1mm]
&=&O(\sigma^\ell)+O(\sigma^\ell\log\sigma)\, .
\end{eqnarray*}
By construction $\A\in \ker (\Is,\wDs,\Xs)$ so it remains only to verify that $\A|_\Sigma = \A_0|_\Sigma$. This is clear from Lemma~\ref{moving} 
along with  the form of $\cO$ in Equation~\nn{O}.
\end{proof}

\begin{remark}
As shown in~\cite{GWasym}, when $h_0=2,4,\ldots$ and $(M,c,\sigma)$ is almost Einstein (as here), the coefficients of the log terms in $\cO$ vanish.
So the solutions are still of the type given by~\nn{O} without the terms displayed on the second line.
\end{remark}

A solution of the form $\A=\Pi\, \cO\A_0$ could also have been used in the above, but the expression given adapts easily to the exceptional cases $w_0=-k$ or $w_0=k-n$. These solutions necessarily differ by some amount of a solution of the second kind which we now consider:
The point here is that at the dual weight $-w_0-n$, we have dual $h$-weight $1-h_0$ which is necessarily negative. 
For those values, there is no obstruction to Dirichlet-type solutions. {\it I.e.}, Problem~\ref{solprob} admits an $\ell=\infty$ solution at the dual weight.
Hence Theorem~\ref{swap} applies and generates an $\ell=\infty$ solution of the second kind.

It remains to discuss the case of weights $w_0$ such that  $h_0=1$. This value of $h_0$  is invariant under the weight duality $w_0\to -n-w_0$.
As found in~\cite{GWasym}, there are two solutions. The one of the first kind has now  as its leading behaviour $\log(\sigma/\tau)$. 
Thus, there is no interesting boundary operator appearing as an obstruction to smooth solutions. The solution of the second kind now has leading behaviour
$\sigma^{h_0-1}=1$ here, so is in fact Dirichlet.
For the case of true forms $w_0=-k$, (and their duals at $w_0=k-n$) this weight corresponds to middle boundary forms of degree $-n/2$ for $n$ even.

\subsubsection{True forms}

We now treat the case where our Dirichlet boundary data is given by  a true form  $A_\Sigma\in \Omega^k \Sigma=\Gamma\ce^k\Sigma[0]$ so $k\in \{0,1,\ldots , n\}$ which 
corresponds to a west tractor of weight $w_0=-k$ and $h_0\in\{n+1,n,\ldots,1-n\}$. 
Thus the cases where the degree $k\in \{0,1,\ldots, \big\lfloor\frac d2\big\rfloor\}$ potentially involve log terms.
Since the boundary data is now given in terms of a true form, we modify our problem slightly.

\begin{problem}\label{trueproblem}
Let $h_0=d+2w_0\in {\mathbb Z}_{\geq 2}$ and $w_0= -k$.	
Take $A_{_\Sigma}:=A|_\Sigma\in\Omega^k\Sigma$ and
an extension $A_0\in\Omega^kM$ of this.
Find $\A_i\in \Gamma\ct^k M[w_0-i]$ such that
\begin{eqnarray*}
\A^{(\ell)}&:=& \big(\A_0 + \sigma \A_1 + \sigma^2 \A_2 + \cdots\big) + \sigma^{h_0-1}(\log\sigma-\log\tau)\big(\, \overline{\A}_0+\sigma \, \overline{\A}_1+\sigma^2 \, \overline{\A}_2+\cdots \big)
\\[2mm]&+& O\big(\sigma^{\ell+1}\big)\, +\, O\big(\sigma^{\ell+1}\log(\sigma/\tau)\big)\, ,
\end{eqnarray*}
where along~$\Sigma$
$$
\A_0=q_W^{\scriptscriptstyle \Sigma}(A_{_\Sigma})\, ,
$$
and $\A^{(\ell)}$ solves the tractor Proca equations $$y \A=O(\sigma^\ell)+O\big(\sigma^\ell\log(\sigma/\tau)\big)\, ,\qquad\Is \A = \wDs \A = \Xs \A = 0\, ,$$ off $\Sigma$, for $\ell\in {\mathbb N}\cup \infty$ as high as possible.
\end{problem}

The main ingredients for solving this problem are again the solution generating operator $\cO$ including log terms of above and 
the operator $\widehat \Pi_\tau$ of Equation~
\nn{pihattau},
the analog of the holographic boundary projector at these weights. Combining these gives our result.

\begin{theorem}\label{tpsol}
For weights $w_0=-k\in\{0,-1,\ldots, -\lfloor \frac {n-1}2\rfloor\}$, Problem~\ref{trueproblem} has an $\ell=\infty$ solution given by
$$
\A=-\scalebox{.9}{$\frac{1}{n-2k}$} \  \Ds_{[3]}\, \cO\, \I\X q_{(N)}^\tau A_0\, .
$$
\end{theorem}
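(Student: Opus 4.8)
The plan is to mirror the proof of Theorem~\ref{mainl}, replacing the holographic boundary projector $\Pi$---which develops a pole at the exceptional weight $w_0=-k$, since its normalisation carries a factor $(w_0+k)^{-1}$---by the operator $\widehat\Pi_\tau=-\frac{1}{n-2k}\,\Ds_{[3]}\I\X q_{(N)}^\tau$ of Equation~\nn{pihattau}, which is precisely the residue-type replacement valid at this weight (Proposition~\ref{projprophat}). Writing $\A=-\frac{1}{n-2k}\,\Ds_{[3]}\,\cO\,\I\X q_{(N)}^\tau A_0$, I would verify in turn: (i) that $\A$ solves the Laplace--Robin equation $y\A=O(\sigma^\ell)+O(\sigma^\ell\log(\sigma/\tau))$ asymptotically; (ii) that the scale-transversality conditions $\Is\A=\wDs\A=\Xs\A=0$ hold exactly; and (iii) that $\A$ restricts correctly to the prescribed Dirichlet datum, $\A|_\Sigma=q_W^{\scriptscriptstyle\Sigma}A_{_\Sigma}$, with the expansion in $\sigma$ and $\sigma^{h_0-1}\log(\sigma/\tau)$ stipulated in Problem~\ref{trueproblem}.

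For (i) I would push $y$ to the right through $\Ds_{[3]}$ using $[y,\Ds_{[3]}]=0$ from Lemma~\ref{moving}, landing it on $\cO$; the defining property of the solution generating operator of~\cite{GWasym}, namely $y\,\cO^{(\ell)}=O(\sigma^\ell)+O(\sigma^\ell\log\sigma)$ (valid here because $h_0=d-2k\in{\mathbb Z}_{\geq 2}$ for the given range $k\le\lfloor\frac{n-1}{2}\rfloor$, and because the $\frak{sl}(2)$ triple $\{x,h,y\}$ acts on tractor forms by Proposition~\ref{Iformalg}), then gives $y\,\cO\,\I\X q_{(N)}^\tau A_0=O(\sigma^\ell)+O(\sigma^\ell\log\sigma)$. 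Since $\Ds_{[3]}$ is tangential and, by the remark following Lemma~\ref{moving}, commutes with $x$, $x^\alpha$ and $\log x$, it preserves these orders, yielding (i). For (ii) I would simply invoke $\operatorname{ran}\Ds_{[3]}\subseteq\ker(\Is,\wDs,\Xs)$, established in Section~\ref{holographicproj}, which makes the transversality conditions hold identically rather than merely asymptotically.

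For (iii) the key observation is that $\cO$ acts as the identity modulo terms vanishing along~$\Sigma$: its power-series part has leading term $1$ and all remaining terms carry a factor $x=\sigma$ (or $\sigma^{h_0-1}\log(\sigma/\tau)$ with $h_0-1\geq1$), all of which vanish on the zero locus of $\sigma$. Combined with $[x,\Ds_{[3]}]=0$, this gives $\A|_\Sigma=(\widehat\Pi_\tau A_0)|_\Sigma$, whereupon Proposition~\ref{projprophat} identifies the restriction with $q_W^{\scriptscriptstyle\Sigma}A_{_\Sigma}$; the prescribed form of the full expansion then follows from the explicit structure of $\cO$ recorded in Equation~\nn{O}. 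I expect the main obstacle to be the boundary bookkeeping: one must check that the dependence of $q_{(N)}^\tau$ on the true scale $\tau$ and on a choice of cokernel representative, together with the $\tau$-dependence entering $\cO$ through $\log(\sigma/\tau)$, genuinely washes out along~$\Sigma$, so that the composite is well-defined in the sense of $\pi_{_\Sigma}\circ\widehat\Pi_\tau$ from Proposition~\ref{projprophat}. The removable singularities arising when passing $y$ and $x$ through the triple-D identities (as already flagged in the proof of Lemma~\ref{moving}) also require monitoring, but the exclusion $k\neq n/2$ guaranteed by $k\le\lfloor\frac{n-1}{2}\rfloor$ keeps the normalisation $\frac{1}{n-2k}$ finite throughout.
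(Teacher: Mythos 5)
Your proposal is correct and follows essentially the same route as the paper's own proof: one commutes $y$ past $\Ds_{[3]}$ via Lemma~\ref{moving}, uses the defining property $y\,\cO^{(\ell)}=O(\sigma^\ell)+O\big(\sigma^\ell\log(\sigma/\tau)\big)$ together with tangentiality of $\Ds_{[3]}$, obtains the scale-transversality conditions from $\operatorname{ran}\Ds_{[3]}\subseteq\ker(\Is,\wDs,\Xs)$, and identifies the boundary value $\A|_\Sigma=q_W^{\scriptscriptstyle\Sigma}A_{_\Sigma}$ via Proposition~\ref{projprophat}, since along~$\Sigma$ one has $\A=\widehat\Pi_\tau A_0$. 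Your extra bookkeeping concerning the $\tau$- and cokernel-representative dependence and the removable singularities is sound and is implicitly handled in the paper by Proposition~\ref{projprophat} and the remark following Lemma~\ref{moving}.
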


\begin{proof}
Along $\Sigma$ we have 
$$
\A= \widehat \Pi_\tau A_0\, ,
$$
so by Proposition~\ref{projprophat}, $\A|_\Sigma= q_W^\Sigma(A_{_\Sigma})$.

Given the formula for $\A$, it only remains to check that $y\A =0$.
Using 
Proposition~\ref{moving} twice
we have
\begin{eqnarray*}y\,   \Ds_{[3]}  \cO^{(\ell)} \I\X q_{(N)}^\tau A_0&=&
 \Ds_{[3]}\,y\,  \cO^{(\ell)}\, \I\X q_{(N)}^\tau A_0\\
&=&\Ds_{[3]} \Big(O(\sigma^\ell)+O\big(\sigma^\ell\log(\sigma/\tau)\big)\Big)\\
& =& O(\sigma^\ell)+O\big(\sigma^\ell\log(\sigma/\tau)\big)\, .
\end{eqnarray*}
This  shows that $\A$ solves the tractor Proca equations to any given order.
\end{proof}

\noindent
For the remaining true form weights the argument simplifies considerably as there are no log terms, we may use Propostition~\ref{projprophat}, so the solution is simply $\A =\colon K^{h_0}(z) \colon\,  \widehat \Pi \, A_0$. 
 
An important feature of the log solution is the appearance of the second solution generating operator multiplied by $y^{h_0-1}$, see Equation~\nn{O}.
In Section~\ref{OBSTQD} we shall show  that the operator $y^{h_0-1}$  yields a holographic formula for the BG operators of~\cite{BrGodeRham}. In particular, 
this means that $\big(y^{h_0-1}\A_0\big)\big|_\Sigma$ consists of a pair of boundary forms in the range of the codifferential $\cod_{_\Sigma}$ for any smooth western tractor $\A_0$.

\subsubsection{Weight dual true forms}

We now treat the case $w_0=k-n$, in which case  we take boundary data  given by  a pair of coclosed  weighted  forms  
 $(A_{_\Sigma},\phi_{_\Sigma})\in \ker(\cod_{_\Sigma},\cod_{_\Sigma})\subset\big(\Gamma\ce^k\Sigma[2k-n]\oplus \Gamma\ce^{k-1}\Sigma[2k-n-2]\big)$.
 We  focus on degrees $k\in\{0,1,\ldots, \lfloor \frac {n-1}2\rfloor\}$, {\it i.e.,} $h_0=-n+1,-n,\ldots,0$, to avoid the log solutions. 
 The latter can be obtained by applying, in the obvious way,  the weight/scale duality map to true form log solutions.
 On the other hand, the same map, applied to the solutions we derive in this Section, generates solutions of the second kind for the true form problem.
 The problem we solve is thus stated as follows.
 \begin{problem}\label{solprobdualtrue}
Given $(A_{_\Sigma},\phi_{_\Sigma})\in \ker(\cod_{_\Sigma},\cod_{_\Sigma})\subset\big(\Gamma\ce^k\Sigma[2k-n]\oplus \Gamma\ce^{k-1}\Sigma[2k-n-2]\big)$,  $k\in\{0,1,\ldots, \lfloor \frac {n-1}2\rfloor\}$,
consider an extension of these
$A_0\in \Gamma\ce^kM[2k-n]$, satisfying   the mixed Dirichlet--Neumann conditions
$$
A_0|_\Sigma = A_{_\Sigma}\, \qquad \mbox{ and }\qquad 
\big[\nabla_n \big(\iota(n) A_0\big)\big]\big|_\Sigma =-\phi_{_\Sigma}\, .
$$
Let
$$
\A_0:=q_W(A_0)\in \Gamma\ct^kM[2k-n]\, .$$
Find $\A_i\in \Gamma\ct^k M[k-n-i]$ such that
$$
\A^{(\ell)}:= \A_0 + \sigma \A_1 + \sigma^2 \A_2 + \cdots + O\big(\sigma^{\ell+1}\big)
$$
solves the tractor Proca equations $$y \A=O(\sigma^{\ell})\, ,\qquad \Is \A = \wDs \A = \Xs \A = 0\, ,$$ off $\Sigma$, for $\ell\in {\mathbb N}\cup \infty$ as high as possible.
\end{problem}

\begin{theorem}\label{tpdsol}
Problem~\ref{solprobdualtrue} has an $\ell=\infty$ solution given by
\begin{equation}\label{Kqsol}
\A = 
\colon K^{h_0}(z) \colon\,  q_W A_0\, .
\end{equation}
where $h_0=2k-n+1$.
\end{theorem}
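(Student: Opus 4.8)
The plan is to verify directly that $\A=\colon K^{h_0}(z)\colon q_W A_0$ meets all the requirements of Problem~\ref{solprobdualtrue}: the Laplace--Robin equation $y\A=O(\sigma^\infty)$, the three scale-transversality conditions $\Is\A=\wDs\A=\Xs\A=0$, and the prescribed boundary behaviour. The first is immediate from the product-solution machinery of Section~\ref{products}. Since the degree constraint $k\in\{0,\ldots,\lfloor\frac{n-1}2\rfloor\}$ forces $h_0=2k-n+1\leq 0$, we have $h_0\notin{\mathbb Z}_{\geq 2}$, so by Proposition~\ref{firstsolprop}, applied to the definite-weight section $q_W A_0\in\Gamma\ct^kM[k-n]$, the operator $\colon K^{h_0}(z)\colon$ produces a formal series with $y\A=O(\sigma^\infty)$. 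By Lemma~\ref{xright} this series depends only on $A_0|_\Sigma$, so I am free to adjust the extension $A_0$ in what follows.

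The crux is scale-transversality, which at this weight cannot be enforced by the holographic projector $\Pi$ because the factor $(n+w_0-k)=0$ makes $\Pi$ singular. Instead I would argue that $q_W A_0$ already lies in $\ker(\Is,\wDs,\Xs)$ and that $\colon K^{h_0}(z)\colon$ preserves this kernel. That $q_W A_0\in\ker(\wDs,\Xs)$ is the content of the western Lemma~\ref{West}. For $\Is q_W A_0=0$ I would pass to a Coulomb-gauge extension, choosing $A_0$ with $\wiota A_0=0$ to all orders: this is the generalised divergence Problem~\ref{deltaprob}, and although $d+w_0-k=1\in{\mathbb Z}_{\geq 1}$ places its potential obstruction at order one, that obstruction is precisely $\cod_{_\Sigma}A_{_\Sigma}$, which vanishes because the boundary datum is coclosed (Proposition~\ref{series}). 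A short computation using the explicit form of $\Is$ in~\nn{Iext} then gives $\Is q_W A_0=0$, with the Neumann datum $\phi_{_\Sigma}$ realised as in Proposition~\ref{bcs2}. To propagate the kernel through the operator I would use the scale algebra: $\Is$ commutes with $x$, $y$ and $h$ (Proposition~\ref{Yform} together with $[\Is,x]=0=[\Is,h]$), hence with the power series $\colon K^{h_0}(z)\colon$, so $\Is\A=\colon K^{h_0}(z)\colon\Is q_W A_0=0$. For $\Xs$ and $\wDs$ one checks, using Proposition~\ref{Iformalg}, that both $x$ and $y$ map $\ker(\Ds,\Xs,\Is)$ to itself: the identity $h\,\Xs y=(h-2)\,y\Xs-2x\Ds+h(h-2)\Is$ shows $\Xs y$ annihilates this kernel, and the companion identity for $\Ds x$ does the same, so the monomials $x^jy^j$ and therefore $\colon K^{h_0}(z)\colon$ preserve it. Since $h_0<0$ one has $\ker\wDs=\ker\Ds$ via Definition~\ref{HAT}, yielding $\Xs\A=\wDs\A=0$.

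Finally, the boundary behaviour follows because $\colon K^{h_0}(z)\colon=1+O(z)=1+O(\sigma)$, whence $\A|_\Sigma=(q_W A_0)|_\Sigma$, which by Proposition~\ref{bcs2} equals the boundary western tractor $q_W^{\scriptscriptstyle\Sigma}(A_{_\Sigma},\phi_{_\Sigma})\in\ker(\wDs_{_\Sigma},\Xs_{_\Sigma})$. The main obstacle I anticipate is the exceptional sub-case $h_0=0$ (occurring for $n$ odd and $k=\frac{n-1}2$, i.e. the middle-degree weight $w_0=-\frac d2$): there the relation $\wDs=\Ds/h$ degenerates and the step $\Xs y\A=0$ can fail at the top weight, so the kernel-preservation argument must be rerun with the residue-defined operator $\wDs$ of Definition~\ref{HAT}, or established by treating $h_0$ as a parameter and removing the resulting singularity. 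Confirming that this edge case goes through cleanly, and that the Coulomb-gauge extension can be chosen so as to realise an arbitrary coclosed Neumann datum $\phi_{_\Sigma}$, are the two points that will require the most care.
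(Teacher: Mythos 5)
Your route is genuinely different from the paper's and is sound in the generic range $h_0<0$, but it has two weak points, one of which is a real gap. First, the assertion that, by Lemma~\ref{xright}, the series depends only on $A_0|_\Sigma$ is false: $\colon\! K^{h_0}(z)\colon$ annihilates $O(\sigma)$ \emph{tractor} terms, so the solution depends on $\big(q_W A_0\big)\big|_\Sigma$, whose southern slot is $-\big(\cod A_0\big)\big|_\Sigma$ and hence carries the Neumann datum $\phi_{_\Sigma}$; indeed replacing $A_0\to A_0+\sigma B$ shifts the southern slot along~$\Sigma$ by $-\iota(n)B|_\Sigma$. You may therefore only adjust the extension in ways preserving \emph{both} the Dirichlet and Neumann conditions. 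This is repairable: a direct computation (essentially the one that opens the paper's own proof) shows that the Dirichlet--Neumann conditions together with $\cod_{_\Sigma}A_{_\Sigma}=0$ already force $\wiota\, A_0=O(\sigma^2)$, and the recursion~\nn{notsocool} is unobstructed at orders $i\geq 2$ (the coefficient $i-1$ is nonzero), so the Coulomb correction can be made at $O(\sigma^2)$ without disturbing either datum. This also settles affirmatively your worry about realising an arbitrary coclosed $\phi_{_\Sigma}$, though note your identification of the order-one obstruction as $\cod_{_\Sigma}A_{_\Sigma}$ is only valid once the zeroth coefficient is normalised to carry no normal-derivative content.

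The genuine gap is the sub-case $h_0=0$ ($n$ odd, $k=\frac{n-1}{2}$, so $w_0=-\frac d2$), which lies squarely within the theorem's stated range. There your kernel-preservation argument fails at the very first step: the identity of Proposition~\ref{Iformalg} yields only $h_0\, \Xs y\, \A'=0$ on $\A'\in\ker(\Ds,\Xs,\Is)$ of weight $w_0$, which is vacuous when $h_0=0$, and the fixes you sketch are doubtful---preservation of $\ker(\Ds,\Xs,\Is)$ is not a natural identity polynomial in the weight, so ``removing the singularity in $h_0$'' has no evident meaning, while rerunning with the residue operator $\wDs$ of Definition~\ref{HAT} still requires $\Xs y\, q_W A_0=0$, which your identities do not deliver. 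The paper sidesteps all weight sensitivity by a different mechanism: after verifying $\big(q_WA_0\big)\big|_\Sigma=q_W^{\scriptscriptstyle\Sigma}(A_{_\Sigma},\phi_{_\Sigma})$ via a southern-slot computation, it invokes Proposition~\ref{superprop} to write $q_WA_0=\Ds_{[3]}\I\B+O(\sigma)$, whereupon Lemma~\ref{xright} and Lemma~\ref{moving} give
$$
\A=\colon\! K^{h_0}(z)\colon \Ds_{[3]}\I\B = \Ds_{[3]}\, \colon\! K^{h_0}(z)\colon \I\B\, ,
$$
so that $\Is\A=\wDs\A=\Xs\A=0$ holds identically at \emph{every} weight because $\operatorname{ran}\Ds_{[3]}\subseteq\ker(\Is,\wDs,\Xs)$. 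If you adopt that factorisation at $h_0=0$ (or uniformly throughout), your argument closes; as written it proves the theorem only for $h_0<0$.
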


\begin{proof}
Firstly we calculate the southern slot of $q_W A_0$ along~$\Sigma$ (see Lemma~\ref{West})
$$
-\cod A =\! -\cod \big(\iota(n) \varepsilon(n) + \varepsilon(n) \iota(n) + 2\rho\sigma\big) A_0
\stackrel{\Sigma}= -\cod_{_\Sigma} A_{_\Sigma} + \big(\cod\varepsilon(n)\sigma \phi \big)\!\big|_\Sigma
=\big(\iota(n)\varepsilon(n) \phi \big)\!\big|_\Sigma=\phi_{_\Sigma} .
$$
This establishes that $\big(q_W A_0\big)\big|_\Sigma = q^{\Sigma}_W (A_{_\Sigma},\phi_{_\Sigma})$. Thus we may 
employ Proposition~\ref{superprop} from which we have $\A = \colon K^{h_0}(z) \colon \, \Ds_{[3]} \I \B = \Ds_{[3]} \, \colon K^{h_0}(z) \colon  \I \B$
because $\colon K^{h_0}(z) \colon$ annihilates terms $O(\sigma)$. (Lemma~\ref{moving}
was used for the last equality.) Thus, this expression 
 is  annihilated by $y,\Is,\Xs,\Ds$. Also by virtue of Proposition~\ref{superprop},  along~$\Sigma$ we have $\Ds_{[3]} \I \B=q_W^{\scriptscriptstyle \Sigma}(\A_{_\Sigma},\phi_{_\Sigma})$. This agrees with $q_W A_0$ thanks to Proposition~\ref{bcs2}.
\end{proof}

One might wonder how general the simple form of the solution~\nn{Kqsol} for $w_0=-k-n$ is.
By construction, $q_W A_0\in \ker(\wDs,\Xs)$ (when $w_0\neq k-d$) so let us consider Equation~\nn{Kqsol} at generic weights. 
In fact,  for $A_0$ an extension of $A_{_\Sigma}$, we have $q_W A_0+\sigma {\mathcal C} \in \ker \Is$ for some smooth tractor ${\mathcal C}$. Thus
$\colon K^{h_0}(z) \colon\,  q_W A_0=\colon K^{h_0}(z) \colon\, \Pi\, q_W A_0$. So our generic solution~\nn{generic} amounts to $\A=\colon K^{h_0}(z) \colon\,  q_W A_0$.
Hence, the solution to Problem~\ref{laylanguage} is obtained by extracting the western slot of this expression
$$
A=q^* \, \colon K^{h_0}(z) \colon\,  q_W A_0\in \Gamma\ce^kM[w_0+k]\, ,\quad w_0\neq -k,k-d,\quad  h_0\notin{\mathbb Z}_{\geq 1} \, .
$$ 

\subsection{The product solution}\label{prodf}

In this Section, we show generically that  there is a simple and explicit formula for solutions
of the Proca system which can be stated without recourse to tractor formalism. 
Nevertheless, the tractor machinery plays
the central {\it r\^ole}  in obtaining this.  These solutions   use the product formula for the solution generating
operator developed in Section~\ref{products} specialised to  weighted tractor forms.

The main technical tool to translate between western tractors and weighted differential forms is as follows.
Recall from the west
Lemma~\ref{West} that, for tractor weights $w\neq k-d$, a solution~$\A\in\Gamma\ct^kM[w]$ to $\wDs \A = \Xs \A =
0$ is isomorphic to a weighted differential form
$A\in\Gamma\ce^kM[w+k]$ via
$$
\A = q_W(A)\, .
$$

We now investigate how key operators between western tractors are intertwined by~$q_W$.
Let us assume tractor weights $w_0\neq -k,k-n$ so that the scale-transversality equations $$\Is\A=0=\wDs\A = \Xs \A$$ are solved via
$$
\A=\Pi\, \A_0\, ,\qquad \A_0\in \Gamma\ct^kM[w_0]\, .
$$
Now we consider 
$
\A_0=q_W(A)
$
so that
$$
\A=\Pi\,  q_W(A)\, ,
$$
for some $A\in\Gamma \ce^kM[w+k]$.
Putting together the result of Proposition~\ref{holpro1} and the computation given in Remark~\ref{explicitpi}
gives the following result.
\begin{proposition}\label{qWPi}
Suppose $w\neq -k,k-d,k-n$ and $A\in \Gamma\ce^kM[w+k]$, then
$$
\Pi\,  q_W(A)=\scalebox{.9}{$\frac1{(w+k)(n+w-k)}$}\,q_W(\wiota\, \wepsilon A)\, .
$$
\end{proposition}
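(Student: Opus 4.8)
The plan is to reduce the statement to the explicit computation of the holographic boundary projector already recorded in Remark~\ref{explicitpi}, so that almost no new calculation is required. First I would observe that, by the west Lemma~\ref{West}, the tractor $q_W(A)$ is precisely the section of $\Gamma\ct^kM[w]$ whose northern and eastern slots vanish, whose western slot is $A$, and whose southern slot is $-\tfrac{1}{d+w-k}\,\cod A$. This is exactly the ansatz of Remark~\ref{explicitpi} upon setting $\psi=0$ and $B=0$; the particular value of the southern slot $\phi$ is immaterial, since the formula there for $\Pi\,\A$ depends only on the western datum. Because the hypotheses $w\neq -k,k-d,k-n$ coincide with the validity conditions of that Remark, I may apply it directly to conclude
\[
\Pi\, q_W(A) \;=\; q_W\big(\widetilde A\big),\qquad
\widetilde A \;=\; \Big(\iota(n)-\tfrac{1}{n+w-k}\,\sigma\cod\Big)\Big(\varepsilon(n)-\tfrac1{w+k}\,\sigma\extd\Big)A.
\]

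Next I would rewrite $\widetilde A$ in terms of the holographic normals of Definition~\ref{wdef}, tracking degrees and weights carefully. Acting on $A\in\Gamma\ce^kM[w+k]$, the inner factor is $\big(\varepsilon(n)-\tfrac1{w+k}\sigma\extd\big)A=\tfrac{1}{w+k}\big[(w+k)\varepsilon(n)-\sigma\extd\big]A=\tfrac{1}{w+k}\,\wepsilon A$, which is the weight-$(w+k)$ instance of $\wepsilon$ and lands in $\Gamma\ce^{k+1}M[w+k+1]$. Applying $\wiota$ to a degree-$(k+1)$, weight-$(w+k+1)$ form produces the coefficient $d+(w+k+1)-2(k+1)=n+w-k$ (using $d=n+1$), so the outer factor equals $\tfrac{1}{n+w-k}\wiota$ acting on $\wepsilon A$. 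Composing the two factors yields $\widetilde A=\tfrac{1}{(w+k)(n+w-k)}\,\wiota\,\wepsilon A$, which is exactly the identification already flagged in Remark~\ref{explicitpi}, namely that $\widetilde A$ is the holographic projector solution to the Coulomb-gauge problem of Proposition~\ref{holpro1}. Substituting back gives $\Pi\,q_W(A)=\tfrac{1}{(w+k)(n+w-k)}\,q_W(\wiota\,\wepsilon A)$, as required.

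The argument is essentially bookkeeping, so I do not expect a serious obstacle. The one point demanding care is the weight/degree tracking in the second step, where one must feed the correct weight into each holographic normal so that the numerical coefficients $w+k$ and $n+w-k$ emerge in the right places; a secondary point worth verifying is the passage from $q_W(A)$ to the ansatz of Remark~\ref{explicitpi}, i.e.\ confirming both that the southern slot of $q_W(A)$ fits that ansatz and that, as the output $q_W(\widetilde A)$ makes manifest, its precise value does not enter $\Pi\,q_W(A)$.
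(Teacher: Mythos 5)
Your argument is correct and matches the paper's own proof, which likewise obtains the result by "putting together the result of Proposition~\ref{holpro1} and the computation given in Remark~\ref{explicitpi}": you specialise the ansatz of that Remark to $\psi=0$, $B=0$, apply its explicit formula for $\Pi\,\A$, and identify $\widetilde A$ with $\frac{1}{(w+k)(n+w-k)}\,\wiota\,\wepsilon A$. Your weight/degree bookkeeping for the holographic normals (coefficient $w+k$ on the $\wepsilon$ factor, $d+(w+k+1)-2(k+1)=n+w-k$ on the $\wiota$ factor) is exactly right, so nothing further is needed.
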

\noindent
So the map 
$$
A\mapsto \scalebox{.9}{$\frac1{(w+k)(n+w-k)}$}\,\wiota\, \wepsilon\,  A
$$
solves the scale-transversality equations via the ``intertwiner''~$q_W$.

It remains to solve the Laplace--Robin equation $y\A=0$. For that we use the following result capturing~$y$ in terms of an operator on weighted forms,
which also demonstrates the equivalence of the tractor Proca equations~\nn{second} with the  Proca system in Equations~\nn{Procao} and~\nn{usemelater}.
\begin{lemma}\label{aerm}
Let $A\in \Gamma\ce^k [w+k]$ with $w\neq k-d$ and $\A=q_W (\wiota\, \wepsilon A)$. Then
$$
xy \A = q_W \Big[\big(\wiota \, \wepsilon  -(w+k)(n+w-k) \big)\wiota \, \wepsilon A \Big]\, .
$$
\end{lemma}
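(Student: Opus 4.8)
The plan is to exploit the algebraic machinery already assembled, reducing everything to \textbf{Lemma~\ref{failure}} and \textbf{Proposition~\ref{qWPi}}. The first step is to check that the tractor $\A=q_W(\wiota\,\wepsilon A)$ actually lies in $\ker(\Is,\wDs,\Xs)$, since this is the hypothesis needed to invoke Lemma~\ref{failure}. Membership in $\ker(\wDs,\Xs)$ is automatic from the west Lemma~\ref{West} (valid since $w\neq k-d$). For the remaining condition write $B:=\wiota\,\wepsilon A\in\Gamma\ce^kM[w+k]$, the western slot of $\A$. By Corollary~\ref{walgebra} we have $\wiota^{\,2}=0$, hence $\wiota B=\wiota^{\,2}\wepsilon A=0$; this is precisely the generalised divergence (scale-transversality) condition which, as established in the proof of Proposition~\ref{extExt} via Equation~\nn{COULOMB_GAUGE}, guarantees $\Is\, q_W(B)=0$. (Concretely, evaluating $\Is q_W(B)$ in a scale $g\in c$ using the matrix~\nn{Iext} produces western slot $\tfrac{1}{d+w-k}\wiota B$ and southern slot $\tfrac{1}{d+w-k}\iota(n)\cod B$, both of which vanish once $\wiota B=0$.) Thus $\A\in\ker(\Is,\wDs,\Xs)$.

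With this in hand, and assuming temporarily the generic weight restriction $w\neq -k,k-n$, Lemma~\ref{failure} applies directly:
$$
\Pi\A=\Big(1+\tfrac{1}{(w+k)(n+w-k)}\,xy\Big)\A\,,\qquad\text{so}\qquad
xy\A=(w+k)(n+w-k)\big(\Pi\A-\A\big)\,.
$$
To evaluate $\Pi\A=\Pi\, q_W(B)$ I would feed Proposition~\ref{qWPi} (also valid for $w\neq -k,k-d,k-n$) with $B$ in place of $A$, giving
$$
\Pi\, q_W(B)=\tfrac{1}{(w+k)(n+w-k)}\,q_W\big(\wiota\,\wepsilon B\big)
=\tfrac{1}{(w+k)(n+w-k)}\,q_W\big(\wiota\,\wepsilon\,\wiota\,\wepsilon A\big)\,.
$$
Substituting and using $\A=q_W(B)=q_W(\wiota\,\wepsilon A)$ cancels the prefactor on the first term and yields
$$
xy\A=q_W\big(\wiota\,\wepsilon\,\wiota\,\wepsilon A\big)-(w+k)(n+w-k)\,q_W\big(\wiota\,\wepsilon A\big)
=q_W\Big[\big(\wiota\,\wepsilon-(w+k)(n+w-k)\big)\wiota\,\wepsilon A\Big]\,,
$$
which is the asserted identity.

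The final step is to drop the auxiliary restrictions $w\neq-k,k-n$ so as to recover the stated generality $w\neq k-d$. The route above is legitimate only at generic weights because $\Pi$ and Proposition~\ref{qWPi} carry poles in $\tfrac{1}{(w+k)(n+w-k)}$ at exactly those two values; however these poles cancel, and both sides of the final identity are natural differential operators $\Gamma\ce^kM[w+k]\to\Gamma\ct^kM[w]$ whose coefficients depend rationally on $w$ with the only genuine singularity (coming from $q_W$) located at $w=k-d$. Agreement on the open dense set of generic weights therefore forces agreement at $w=-k$ and $w=k-n$ by naturality. This removable-singularity argument, together with the verification of the $\ker\Is$ membership, is the only delicate point; the rest is bookkeeping within the established algebra. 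As a fallback that sidesteps $\Pi$ altogether, one could instead insert the explicit matrix form of $y=\{\Is,\D\}$ from Proposition~\ref{Yform} into $q_W(B)$ and compute $xy\,q_W(B)$ slot by slot, which is valid at all weights $w\neq k-d$ but considerably more laborious.
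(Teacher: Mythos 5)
Your argument is correct and is essentially the alternate proof the paper itself sketches for this Lemma: apply Lemma~\ref{failure} to $\A=q_W(\wiota\,\wepsilon A)$ and then Proposition~\ref{qWPi}, with the details you supply --- checking that $\wiota^{\,2}=0$ places $\A$ in $\ker(\Is,\wDs,\Xs)$ so Lemma~\ref{failure} applies, and extending past the poles at $w=-k,k-n$ by the same removable-singularity naturality argument the paper uses elsewhere --- filling in exactly what the paper leaves implicit. The paper's primary proof is the direct computation you mention as a fallback: since $\Is\A=0$ gives $y\A=-\Is\D\A$, one verifies the identity slot by slot in a scale $g\in c$.
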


\begin{proof}
Since $\Is \A=0$, it follows that $y\A = -\Is \D \A$, so it is a simple matter of writing out $\Is$ and $\D$ explicitly for some $g\in c$
to verify the above. An alternate proof is to apply Lemma~\ref{failure} to $ \A=q_W(A)$ and then use Proposition~\ref{qWPi} above.
\end{proof}

Combining this result with the product solution of Proposition~\ref{firstsolprop}, we immediately have a product solution to the tractor Proca equations 
on weighted forms.
\begin{proposition} \label{fsol}
For 
$w_0\neq -k, k-d,k-n$ Problem~\ref{solprob} has a solution 
to order 
$$
\ell=\left\{\begin{array}{cl}\infty\, ,&h_0\neq 2,3,\ldots\\[1mm]h_0-2\, ,&h_0=2,3,\ldots\, ,\end{array}\right.
$$
given by
$$
\A^{(\ell)}=q_W(A^{(\ell)})\, ,
$$
with
\begin{equation}\label{Al}A^{(\ell)}=\frac{\wiota\, \wepsilon}{(w_0+k)(n+w_0-k)}\, \Big[\prod_{j=1}^\ell \frac{\wiota\, \wepsilon -(w_0+k-j)(n+w_0-k-j)}{j(n+2w_0-j)}\Big]\, A_0\, .\end{equation}
\end{proposition}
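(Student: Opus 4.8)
The plan is to feed the product form of the solution generating operator through the intertwiner $q_W$, using the two translation identities Proposition~\ref{qWPi} and Lemma~\ref{aerm}. Writing $h_0=d+2w_0$ and $c_j=xy+j(h-j-1)$, recall from Proposition~\ref{firstsolprop} that $\prod_{j=1}^\ell \tfrac{c_j}{j(h_0-j-1)}$ turns a Dirichlet extension into an order-$\ell$ solution of $y\A=O(\sigma^\ell)$; the denominator $j(h_0-j-1)=j(n+2w_0-j)$ is nonzero for all $j\leq h_0-2$ and first vanishes at $j=h_0-1$, so the construction runs to $\ell=\infty$ when $h_0\notin{\mathbb Z}_{\geq2}$ and terminates at $\ell=h_0-2$ otherwise, which already accounts for the stated range of $\ell$. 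Since $\Pi$ commutes with $x$, $y$ and $h$ (Lemma~\ref{moving} and its proof), it commutes with every $c_j$ and preserves $O(\sigma^\ell)$; hence $\A^{(\ell)}:=\Pi\,\prod_{j=1}^\ell \tfrac{c_j}{j(h_0-j-1)}\,q_W(A_0)$ simultaneously solves $y\A=O(\sigma^\ell)$ and the scale-transversality equations $\Is\A=\wDs\A=\Xs\A=0$, so it is exactly the solution of Problem~\ref{solprob} whose western slot realises Problem~\ref{laylanguage} via Proposition~\ref{westproca}. The entire content of the Proposition is therefore to rewrite this operator in the form $q_W(A^{(\ell)})$.

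First I would apply $\Pi$ to $q_W(A_0)$ using Proposition~\ref{qWPi}, giving $\Pi\,q_W(A_0)=\tfrac{1}{(w_0+k)(n+w_0-k)}\,q_W(\wiota\,\wepsilon A_0)$, an element of $\ker(\Is,\wDs,\Xs)$ of definite tractor weight $w_0$. Because $q_W(\,\cdot\,)$ retains this definite weight throughout (the generator $xy$ is weight preserving), the Cartan generator $h$ may be replaced by its eigenvalue $h_0$ at every stage. The linchpin is then the elementary weight identity
\[
(w_0+k)(n+w_0-k)-j(h_0-j-1)=(w_0+k-j)(n+w_0-k-j)\, ,\qquad h_0-j-1=n+2w_0-j\, ,
\]
which converts each normalised algebraic factor into its form counterpart: applying Lemma~\ref{aerm} to any object of shape $\wiota\,\wepsilon\,C$ gives $xy\,q_W(\wiota\,\wepsilon\,C)=q_W[(\wiota\,\wepsilon-(w_0+k)(n+w_0-k))\,\wiota\,\wepsilon\,C]$, whence
\[
\tfrac{c_j}{j(h_0-j-1)}\,q_W(\wiota\,\wepsilon\,C)=q_W\!\Big(\tfrac{\wiota\,\wepsilon-(w_0+k-j)(n+w_0-k-j)}{j(n+2w_0-j)}\,\wiota\,\wepsilon\,C\Big)\, .
\]

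I would then run this as an induction on the number of factors applied. The observation keeping the induction alive is that each form factor is a polynomial in $\wiota\,\wepsilon$, hence commutes with $\wiota\,\wepsilon$; thus at every stage the accumulated object is again $q_W$ of something of shape $\wiota\,\wepsilon\,(\cdot)$, so Lemma~\ref{aerm} reapplies with the same weight-$w_0$ normalisation. Collecting the $\ell$ factors together with the initial constant from Proposition~\ref{qWPi} produces precisely
\[
\A^{(\ell)}=q_W\!\Big(\tfrac{\wiota\,\wepsilon}{(w_0+k)(n+w_0-k)}\,\prod_{j=1}^\ell\tfrac{\wiota\,\wepsilon-(w_0+k-j)(n+w_0-k-j)}{j(n+2w_0-j)}\,A_0\Big)=q_W(A^{(\ell)})\, ,
\]
the asserted formula. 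That $\A^{(\ell)}$ genuinely lies in $\ker(\Is,\wDs,\Xs)$ is then re-read off the output: since $A^{(\ell)}=\tfrac{1}{(w_0+k)(n+w_0-k)}\,\wiota\,\wepsilon\,(Q\,A_0)$ with $Q$ the product of form factors, Proposition~\ref{qWPi} identifies $q_W(A^{(\ell)})$ with $\Pi\,q_W(Q A_0)$, so it sits in the image of $\Pi$, while membership of $\ker(\wDs,\Xs)$ is automatic from the West Lemma~\ref{West}.

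The main obstacle I anticipate is purely the bookkeeping in the induction: one must verify that after $m$ factors the intermediate form is \emph{exactly} $\wiota\,\wepsilon$ applied to a polynomial in $\wiota\,\wepsilon$ acting on $A_0$, so that both the hypothesis of Lemma~\ref{aerm} and the weight-$w_0$ evaluation $h\mapsto h_0$ persist, and that all constants track through the identity above with no stray poles beyond the expected one at $j=h_0-1$. Once that is checked, the coincidence of the algebraic product $\prod c_j/(j(h_0-j-1))$ with the form product is forced by the single arithmetic identity, and the remaining assertions are immediate consequences of the already-established intertwining relations.
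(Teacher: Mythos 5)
Your proposal is correct and is essentially the paper's own argument: the paper obtains Proposition~\ref{fsol} precisely by intertwining the product solution of Proposition~\ref{firstsolprop} through $q_W$ via Proposition~\ref{qWPi} and Lemma~\ref{aerm}, with the arithmetic identity $(w_0+k)(n+w_0-k)-j(n+2w_0-j)=(w_0+k-j)(n+w_0-k-j)$ doing the same work you assign it. Your inductive bookkeeping (each intermediate term being $q_W$ of $\wiota\,\wepsilon$ applied to a polynomial in $\wiota\,\wepsilon$ acting on $A_0$, at fixed weight $w_0$) simply makes explicit what the paper compresses into ``immediately,'' and your accounting of the order $\ell$ via the first vanishing denominator at $j=h_0-1$ matches the stated range.
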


Thus we have the following Theorem.
\begin{theorem}
The solution to the higher form Proca system of Problem~\ref{laylanguage}
with the same weight and order conditions as in Proposition~\ref{fsol}, is
given by~$A^{(\ell)}$ as in~\nn{Al}.
\end{theorem}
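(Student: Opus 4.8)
The plan is to deduce the statement from the already-established tractor result of Proposition~\ref{fsol} together with the intertwining of the tractor Proca operators with their weighted-form counterparts under the splitting operator $q_W$. Write $\mu:=(w_0+k)(n+w_0-k)$ and set $\A^{(\ell)}:=q_W(A^{(\ell)})$. By Proposition~\ref{fsol}, this $\A^{(\ell)}$ solves the tractor Problem~\ref{solprob} to the stated order; in particular $y\A^{(\ell)}=O(\sigma^\ell)$, the scale-transversality conditions $\Is\A^{(\ell)}=\wDs\A^{(\ell)}=\Xs\A^{(\ell)}=0$ hold, and $\A^{(\ell)}\big|_\Sigma=q_W^{\scriptscriptstyle\Sigma}(A_\Sigma)$. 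Since $q^*$ is a left inverse of $q_W$ (west Lemma~\ref{West}) and, being an algebraic bundle map, commutes with restriction to $\Sigma$, extracting the western slot of this last identity gives $A^{(\ell)}\big|_\Sigma=A_\Sigma$, which is exactly the boundary datum demanded in Problem~\ref{laylanguage}.

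First I would dispose of the transversality equation. Factoring $A^{(\ell)}=\wiota\,\wepsilon\,\tilde A$ with $\tilde A:=\mu^{-1}\big[\prod_{j=1}^\ell\cdots\big]A_0$ as in~\nn{Al}, nilpotency $\wiota^{\,2}=0$ of Corollary~\ref{walgebra} gives $\wiota A^{(\ell)}=\wiota^{\,2}\wepsilon\,\tilde A=0$ identically. Off $\Sigma$ one has $\wiota=-\sigma\cod^o$ (Definition~\ref{wdef}), so this is precisely the constraint $\sigma\cod^o A^{(\ell)}=0$ of~\nn{Procao}, and it holds everywhere rather than merely asymptotically.

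The main step is the Laplace--Robin equation. Applying Lemma~\ref{aerm} with $A=\tilde A$ (of weight $w_0$, the excluded weights $w_0\neq k-d$ guaranteeing its hypotheses), so that $\A^{(\ell)}=q_W(\wiota\,\wepsilon\,\tilde A)=q_W(A^{(\ell)})$, yields
$$
xy\,\A^{(\ell)}=q_W\Big[\big(\wiota\,\wepsilon-\mu\big)A^{(\ell)}\Big].
$$
Because $x$ is multiplication by $\sigma$ and $y\A^{(\ell)}=O(\sigma^\ell)$, the left-hand side is $O(\sigma^{\ell+1})$. The western slot of $q_W(B)$ is $B$ by the west Lemma~\ref{West} and $q^*$ preserves orders in $\sigma$, so applying $q^*$ gives $\big(\wiota\,\wepsilon-\mu\big)A^{(\ell)}=O(\sigma^{\ell+1})$. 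Finally, off $\Sigma$ the identities $\wepsilon=-\sigma\extd^o$, $\wiota=-\sigma\cod^o$ of Definition~\ref{wdef} and the commutativity $[\sigma,\extd^o]=[\sigma,\cod^o]=0$ of Lemma~\ref{taut} give $\wiota\,\wepsilon=\sigma^2\cod^o\extd^o$, so the displayed estimate is exactly the first equation of~\nn{Procao} to the required order. Together with the transversality and boundary conditions above, this shows $A^{(\ell)}$ solves Problem~\ref{laylanguage}, with $\ell$ exactly as in Proposition~\ref{fsol}; equivalently~\nn{usemelater} holds to that order, consistent with the interior equivalence recorded in Proposition~\ref{westproca}.

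The only genuinely delicate point is the order bookkeeping across $q_W$ and $q^*$: one must check that these splitting/projection operators neither create nor destroy factors of $\sigma$, so that the $O(\sigma^{\ell+1})$ estimate for the tractor $xy\,\A^{(\ell)}$ transfers verbatim to the western slot, and that the distinguished-weight exclusions $w_0\neq -k,k-d,k-n$ keep $q_W$ an isomorphism onto $\ker(\wDs,\Xs)$ and keep Lemma~\ref{aerm} applicable at the weight $w_0$ occurring throughout the product~\nn{Al}. Everything else is a direct transcription of the tractor statement of Proposition~\ref{fsol} into weighted-form language.
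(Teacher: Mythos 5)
Your proof is correct, but it is not the route the paper takes: the paper's own proof is deliberately \emph{tractor-free}, while yours is exactly the tractor transcription the paper chose to bypass. You deduce the theorem by citing Proposition~\ref{fsol} for the tractor solution $\A^{(\ell)}=q_W(A^{(\ell)})$, then transfer the estimate $xy\,\A^{(\ell)}=O(\sigma^{\ell+1})$ back to forms via Lemma~\ref{aerm} and the left inverse $q^*$, checking (correctly) that $q^*$ is algebraic and so preserves orders in $\sigma$, that $q_W$ is an isomorphism onto $\ker(\wDs,\Xs)$ at the allowed weights, and that the boundary splitting isomorphism~\nn{boundaryisom} leaves the western slot of a west tractor untouched so $A^{(\ell)}|_\Sigma=A_\Sigma$. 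This is logically sound: Lemma~\ref{aerm} applies with $A=\tilde A$ at weight $w_0\neq k-d$, the product in~\nn{Al} is a polynomial in the single weight-preserving operator $\wiota\,\wepsilon$ so your factorisation $A^{(\ell)}=\wiota\,\wepsilon\,\tilde A$ is legitimate, and $\wiota^{\,2}=0$ disposes of the transversality constraint identically. The paper instead proves the statement directly at the level of weighted forms, \emph{without} invoking $q_W$, $q^*$, or the intertwined $[x,y]=h$: it introduces $\hat c_j:=\wiota\,\wepsilon-(w-j)(n+w-j-2k)$, observes that $\hat y:=\sigma^{-1}\hat c_0$ extends smoothly to $\Sigma$ on $\ker\wiota$, and uses only the commutativity $[\sigma,\wiota]=0=[\sigma,\wepsilon]$ (whence $\sigma^\alpha\hat c_j=\hat c_{j+\alpha}\sigma^\alpha$) to run the telescoping computation $\hat y\,\hat c_1\cdots\hat c_\ell=\sigma^\ell\hat y^{\ell+1}$, the form-level analogue of~\nn{yleft}. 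What your approach buys is economy: given Proposition~\ref{fsol} and Lemma~\ref{aerm} are already in hand, the theorem is a two-step corollary. What the paper's approach buys is independence and conceptual content: it shows the solution-generating algebra descends to weighted forms intrinsically, and the extension of the identity to $\Sigma$ follows from smoothness of $\hat y$ on $\ker\wiota$ rather than from tractor bookkeeping, giving a self-contained check of the whole tractor machinery at this point.
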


\begin{proof}
We give an alternative proof of this Theorem that does not rely on tractors, and in particular 
 without employing the intertwined version of the central relationship $[x,y]=h$ of Proposition~\ref{thesl2}.
 Instead, the key ingredient in the commutativity of $\sigma$ with the interior and exterior normals.
This  is as follows:
Firstly we have an intertwined formula for the operators $c_k$ of Equation~\ref{ci}
$$
c_j \, q_W\big(\wiota\, \wepsilon A_0\big) = q_W\big(\hat c_j\, \wiota\, \wepsilon A_0\big)\, ,
$$
where acting on $\Gamma \ce^k M[w]$ we have
$$
\hat c_j := \wiota\, \wepsilon - (w-j)(n+w-j-2k)\, .
$$
Acting on $\ker \wiota$, the operator $\hat c_0$ is the intertwined version of $xy$ and indeed, from Equation~\nn{alongsigma}, $\hat c_0 \wiota\, \wepsilon A_0=0$ along $\Sigma$.
Hence, on the kernel of $\wiota$, the operator
$$
\hat y := \sigma^{-1} \hat c_0
$$
extends smoothly to~$\Sigma$. 

Now, since $\sigma$ commutes with both $\wiota$ and $\wepsilon$ (see Corollary~\ref{walgebra}) and $\sigma$ has weight~$1$, we immediately
have the operator identity
$$
\sigma^\alpha \hat c_j = \hat c_{j+\alpha} \sigma^\alpha\, .
$$
This is in fact valid everywhere in the interior of $M$ for any $\alpha\in {\mathbb C}$. 

Since $\wiota\,{}^2 =0$, the second part of Equation~\nn{Procao} holds trivially. 
To complete the proof, we need to establish Equation~\nn{yleft}, namely $\hat y \, \hat c_1 \hat c_2\ldots \hat c_\ell =  \sigma^\ell \hat y^{\ell+1}$; here and henceforth we work on the 
kernel of $\wiota$. Using the above identity and calculating in the interior of~$M$ we have
\begin{eqnarray*}
\hat y\,  \hat c_1 \hat c_2 \ldots \hat c_\ell
&=&\sigma^{-1} \hat c_0\hat c_1 \hat c_2 \ldots \hat c_\ell\\
&=& \sigma^{-1} \hat c_1 \hat c_2 \ldots \hat c_\ell \hat c_0\\
&=&\hat c_0 \hat c_1 \ldots \hat c_{\ell-1} \hat y \\
&=&\sigma \hat y\,  \hat c_1 \ldots \hat c_{\ell-1} \hat y \\
&\vdots& \\
&=&\sigma^\ell  \hat y^{\ell+1}\, .
\end{eqnarray*}
Acting with $\hat y$ maps sections in $\ker \wiota$ to sections in $\ker \wiota$, so this 
identity extends to~$\Sigma$. 
\end{proof}

Let us end this Section with a rewriting of the product solution~\ref{Al} which is useful because it makes direct contact with the Laplace operator acting on true forms. Firstly we need  a pair of technical Lemmas.
\begin{lemma}\label{poly}
Let $P(z)$ be any polynomial and call $$\zeta:=\wiota\, \wepsilon\, \qquad\mbox{ and }
\qquad {\mathcal L}:= \{\wiota , \wepsilon\}\, .$$
Then we have the operator identity
$$
\zeta\,  P(\zeta) = \wiota\, P({\mathcal L})\, \wepsilon\, .  
$$
\end{lemma}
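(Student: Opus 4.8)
The plan is to reduce the identity to the single structural fact, recorded in Corollary~\ref{walgebra}, that both holographic normals are nilpotent: $\wepsilon^2 = 0 = \wiota^2$. Since both sides of the claimed relation $\zeta\, P(\zeta) = \wiota\, P({\mathcal L})\, \wepsilon$ are linear in $P$, it suffices to treat the monomials $P(z) = z^m$, i.e.\ to establish the family of operator identities
\begin{equation*}
\zeta^{m+1} = \wiota\, {\mathcal L}^m\, \wepsilon\, ,\qquad m \in {\mathbb Z}_{\geq 0}\, ,
\end{equation*}
where $\zeta = \wiota\wepsilon$ and ${\mathcal L} = \{\wiota,\wepsilon\} = \wiota\wepsilon + \wepsilon\wiota$. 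All of these are operators $\Gamma\ce^kM[w]\to\Gamma\ce^kM[w]$, so degree/weight bookkeeping is automatic; I would work at the level of operators on the full graded bundle, where $\wepsilon^2 = 0 = \wiota^2$ hold as genuine operator identities.

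First I would isolate the one commutation rule that does all the work,
\begin{equation*}
{\mathcal L}\, \wepsilon = \wepsilon\, \zeta\, .
\end{equation*}
This is immediate: ${\mathcal L}\wepsilon = (\wiota\wepsilon + \wepsilon\wiota)\wepsilon = \wiota\wepsilon^2 + \wepsilon\wiota\wepsilon = \wepsilon(\wiota\wepsilon) = \wepsilon\zeta$, the term $\wiota\wepsilon^2$ vanishing by nilpotency of $\wepsilon$. (Dually, $\wiota{\mathcal L} = \zeta\wiota$ follows from $\wiota^2 = 0$.) Iterating ${\mathcal L}\wepsilon = \wepsilon\zeta$ gives ${\mathcal L}^m\wepsilon = \wepsilon\, \zeta^m$ for all $m\geq 0$, and then multiplying on the left by $\wiota$ yields $\wiota\, {\mathcal L}^m\wepsilon = \wiota\wepsilon\, \zeta^m = \zeta\, \zeta^m = \zeta^{m+1}$, which is exactly the monomial identity. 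Summing against the coefficients of $P$ recovers $\zeta\, P(\zeta) = \wiota\, P({\mathcal L})\, \wepsilon$ and completes the argument.

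There is essentially no serious obstacle: the content is carried entirely by the nilpotency relations of Corollary~\ref{walgebra}, and the only point requiring care is that $\wepsilon$, $\wiota$ are weight-dependent in a fixed trivialisation yet nonetheless satisfy $\wepsilon^2 = 0 = \wiota^2$ as operator identities across the grading, so that the repeated use of these relations inside ${\mathcal L}^m$ is legitimate. If one prefers a more pedestrian route, the same conclusion follows by a one-line induction on $m$: assuming $\zeta^m = \wiota{\mathcal L}^{m-1}\wepsilon$, one writes ${\mathcal L}^m = {\mathcal L}^{m-1}{\mathcal L}$ and applies ${\mathcal L}\wepsilon = \wepsilon\zeta$ to pass from $m$ to $m+1$.
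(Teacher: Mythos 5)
Your proof is correct and is essentially the paper's own argument: both reduce by linearity to monomials and then telescope ${\mathcal L}^m$ through using nilpotency from Corollary~\ref{walgebra}, exactly as in the usual exterior calculus of $\cod$ and $\extd$. The only (immaterial) difference is that you commute ${\mathcal L}$ past $\wepsilon$ from the right via $\wepsilon^2=0$, whereas the paper absorbs ${\mathcal L}$ from the left via $\wiota^2=0$ to reach the same expression $\wiota(\wepsilon\,\wiota)^m\wepsilon=\zeta^{m+1}$.
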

\begin{proof}
It suffices to prove the statement for a monomial $\zeta\,  \zeta^\ell$ for some integer $\ell$. Recalling from Proposition~\ref{walgebra} that $\wiota^{\, 2}=0$  we have, in parallel to the usual exterior calculus of $\cod$ and $\extd$, 
$$
\wiota \, {\mathcal L}^\ell\, \wepsilon =
\wiota \, \big(\wiota \, \wepsilon+\wepsilon\, \wiota\big) {\mathcal L}^{\ell-1}\, \wepsilon
=
\cdots 
 =\wiota \big(\wepsilon\,  \wiota\big)^\ell\wepsilon=\zeta\, \zeta^\ell\, .
$$ 
\end{proof}

\begin{lemma}\label{prodprod2}
Let $A\in \Gamma\ce^k M[w]$. Then, away from~$\Sigma$,
we have the following results. First 
$$
\wiota\,  A = -\sigma^{d+w-2k+1} \, \cod \, \sigma^{2k-w-d} A\qquad
\mbox{ and } \qquad
\wepsilon\,  A = - \sigma^{w+1}\, \extd \, \sigma^{-w}  A\, .
$$
Moreover
$$
{\mathcal L}\,  A := \{\wiota,\wepsilon\}\, A = \sigma^w\,  \widehat{\mathcal L} \ \sigma^{-w} A\, ,
$$
so that for a polynomial $P(z)$,
$$
P({\mathcal L}) \, A = \sigma^w\, P\big(\widehat{\mathcal L}\, \big)\, \sigma^{-w} \, A\, ,
$$
where the operator $\widehat{\mathcal L}:\Omega^kM\to\Omega^kM$
is given by
\begin{equation}\label{Lhat}
\widehat{\mathcal L}:= \sigma^2 \FL + 
(2k-d) \, \big[\sigma\pounds_n + \varepsilon(n)\iota(n)\big]
+2\sigma\,  \big[\varepsilon(n) \cod + \iota(n) \extd \big]\, .
\end{equation}
\end{lemma}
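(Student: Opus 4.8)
The plan is to reduce everything to conjugating the scale $\sigma$ through the Levi--Civita operators $\extd$ and $\cod$. The only inputs are the commutators $[\extd,\sigma]=\varepsilon(n)$, $[\cod,\sigma]=\iota(n)$ of~\nn{comms} and the fact that $\varepsilon(n),\iota(n)$ are scalar-linear, hence commute with $\sigma$. Since $\extd$ and $\cod$ are derivations, these promote on the open dense set $\{\sigma\neq0\}$ (the ``away from $\Sigma$'' locus, where negative powers of $\sigma$ are legitimate) to $[\extd,\sigma^m]=m\sigma^{m-1}\varepsilon(n)$ and $[\cod,\sigma^m]=m\sigma^{m-1}\iota(n)$ for every exponent $m$, via $[\extd,\sigma^m]=\varepsilon(\nabla^\tau\sigma^m)=m\sigma^{m-1}\varepsilon(n)$ and dually. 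Expanding $-\sigma^{w+1}\extd\,\sigma^{-w}A=-\sigma\extd A+w\varepsilon(n)A$ and $-\sigma^{d+w-2k+1}\cod\,\sigma^{2k-w-d}A=-\sigma\cod A+(d+w-2k)\iota(n)A$ then reproduces exactly $\wepsilon A$ and $\wiota A$ of Definition~\ref{wdef}, giving the two first-order identities. (Equivalently these are just $\wepsilon=-\sigma\extd^o$, $\wiota=-\sigma\cod^o$ together with $\extd^o=\sigma^w\extd\,\sigma^{-w}$ and $\cod^o=\sigma^{d+w-2k}\cod\,\sigma^{2k-d-w}$.)

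For the anticommutator I would compose these conjugated forms, being careful to insert each operator at the weight and degree at which it acts. The intermediate powers of $\sigma$ telescope to give, on $\Gamma\ce^kM[w]$,
\[
\wiota\wepsilon=\sigma^{d+w-2k}\cod\,\sigma^{2k-d+2}\extd\,\sigma^{-w},\qquad \wepsilon\wiota=\sigma^{w}\extd\,\sigma^{d-2k+2}\cod\,\sigma^{2k-w-d}.
\]
Conjugating the sum $\mathcal L=\{\wiota,\wepsilon\}$ by $\sigma^{\mp w}$ then clears every explicit $w$ and yields the manifestly $w$-independent operator on true forms
\[
\widehat{\mathcal L}=\sigma^{-w}\mathcal L\,\sigma^{w}=\sigma^{d-2k}\cod\,\sigma^{2k-d+2}\extd+\extd\,\sigma^{d-2k+2}\cod\,\sigma^{2k-d}.
\]
(As a check this also follows from $\mathcal L=\sigma^2\FL^{\!o}$ of Corollary~\ref{walgebra} by conjugating $\FL^{\!o}=\{\cod^o,\extd^o\}$.)

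Finally I would expand $\widehat{\mathcal L}$, pushing each $\sigma$ to the left with the promoted commutators. With $p:=2k-d$ the first summand becomes $\sigma^2\cod\extd+(p+2)\sigma\,\iota(n)\extd$ and the second $\sigma^2\extd\cod+p\sigma\,\extd\iota(n)+2\sigma\,\varepsilon(n)\cod+p\,\varepsilon(n)\iota(n)$. Regrouping $(p+2)\iota(n)\extd+p\,\extd\iota(n)=p\{\iota(n),\extd\}+2\iota(n)\extd$ and using Cartan's formula $\{\iota(n),\extd\}=\pounds_n$ collects these into
\[
\widehat{\mathcal L}=\sigma^2\FL+(2k-d)\big[\sigma\pounds_n+\varepsilon(n)\iota(n)\big]+2\sigma\big[\varepsilon(n)\cod+\iota(n)\extd\big],
\]
as claimed. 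The polynomial statement is then immediate from multiplicativity of conjugation: since $\mathcal L$ preserves $\ce^kM[w]$ and $\widehat{\mathcal L}$ preserves $\Omega^kM$, the identity $\mathcal L A=\sigma^w\widehat{\mathcal L}\,\sigma^{-w}A$ iterates to $\mathcal L^mA=\sigma^w\widehat{\mathcal L}^m\sigma^{-w}A$, whence $P(\mathcal L)A=\sigma^wP(\widehat{\mathcal L})\sigma^{-w}A$.

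The only real obstacle is bookkeeping: one must track the $\sigma$-exponents correctly in the composition, where $\wepsilon$ and $\wiota$ act at shifted weights. It is worth stressing why I would route through the conjugation identities rather than expand $\{\wiota,\wepsilon\}$ naively in the fixed scale: the brute-force expansion produces a messy, manifestly $w$-dependent expression with unequal coefficients of $\iota(n)\varepsilon(n)$ and $\varepsilon(n)\iota(n)$ and a residual $|n|^2=\{\iota(n),\varepsilon(n)\}$ term --- exactly the data that the lemma repackages into the single clean conjugation $\sigma^w\widehat{\mathcal L}\sigma^{-w}$. The conjugation route makes this reorganisation automatic and is the heart of the argument.
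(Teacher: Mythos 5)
Your argument is correct and is essentially the paper's own proof written out in full: the paper likewise derives the two first-order identities directly from the commutator relations in \nn{comms} and obtains \nn{Lhat} by a ``slightly more intricate application'' of those same relations together with Cartan's magic formula $\{\extd,\iota(n)\}=\pounds_n$, which is exactly the computation your conjugation bookkeeping carries out (and your exponent accounting, including the weight/degree shifts in composing $\wiota$ with $\wepsilon$, checks out). One cosmetic quibble: $\cod$ is not a derivation of the exterior algebra, though the identity $[\cod,\sigma^m]=m\,\sigma^{m-1}\iota(n)$ you rely on is nevertheless exact, since it only uses the first-order Leibniz property of the Levi--Civita connection against multiplication by powers of $\sigma$.
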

\begin{proof}
The first two identities follow immediately from first two relations in Equation~\nn{comms}.
Equation~\nn{Lhat} follows by a slightly more intricate  application of these as well as Cartan's magic formula
for the Lie derivative
$$
\{\extd,\iota(n)\}=\pounds_n\, .
$$
\end{proof}
\begin{remark}
In the formul\ae\ above, the right hand sides are clearly not defined along~$\Sigma$, however,
as is clear from Definition~\ref{wdef}, the left hand sides  are.
When using expressions, such as those on the right hand sides, to express objects defined everywhere on $M$
we will use the notation $\stackrel{\scriptstyle \rm lim}=$.
\end{remark}
Applying these two Lemmas to the solution displayed in Equation~\nn{Al} gives the following result.
\begin{proposition}\label{prodprod}
The solution to the higher form Proca system of Problem~\ref{laylanguage}
with the same weight and order conditions as in Proposition~\ref{fsol}, is
given by
\begin{equation}\label{Alprod}
A^{(\ell)}\, \stackrel{\scriptstyle \rm lim}=\, \sigma^{d+w_0-k}\, \cod\, \sigma^{2k-d+2}\, P^{(\ell)}\big(\widehat{\mathcal L}\, \big)\, 
 \extd\,  \sigma^{-w_0-k}\, A_0\, ,\end{equation}
where the polynomial $P^{(\ell)}$ is given by
$$
P^{(\ell)}(z)=\frac{1}{(w_0+k)(n+w_0-k)}\, \Big[\prod_{j=1}^\ell \frac{z -(w_0+k-j)(n+w_0-k-j)}{j(n+2w_0-j)}\Big]
$$
and 
$
\widehat{\mathcal L}
$ 
is as displayed in Equation~\nn{Lhat}.
\end{proposition}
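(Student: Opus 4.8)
The plan is to treat Proposition~\ref{prodprod} as a purely computational rewriting of the form solution already established in Proposition~\ref{fsol}, using only the two algebraic Lemmas~\ref{poly} and~\ref{prodprod2}. First I would observe that the expression~\nn{Al} is precisely of the shape $\wiota\,\wepsilon\,P^{(\ell)}(\wiota\,\wepsilon)\,A_0$: writing $\zeta:=\wiota\,\wepsilon$, the overall prefactor $\frac{\wiota\,\wepsilon}{(w_0+k)(n+w_0-k)}$ together with the displayed product is exactly $\zeta\,P^{(\ell)}(\zeta)$ with $P^{(\ell)}$ the polynomial named in the statement. Hence $A^{(\ell)}=\zeta\,P^{(\ell)}(\zeta)\,A_0$, and Lemma~\ref{poly} immediately converts this into
\[
A^{(\ell)}=\wiota\,P^{(\ell)}({\mathcal L})\,\wepsilon\,A_0,\qquad {\mathcal L}:=\{\wiota,\wepsilon\}.
\]

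The second and main step is to substitute the three conjugation identities of Lemma~\ref{prodprod2}, being careful about the weight and degree on which each operator acts. Since $A_0\in\Gamma\ce^kM[w_0+k]$, I would apply the $\wepsilon$-identity at weight $w=w_0+k$ to obtain $\wepsilon\,A_0=-\sigma^{w_0+k+1}\extd\,\sigma^{-w_0-k}A_0$, which lies in $\Gamma\ce^{k+1}M[w_0+k+1]$. Then $P^{(\ell)}({\mathcal L})$ acts on this degree-$(k{+}1)$, weight-$(w_0{+}k{+}1)$ form, so the polynomial conjugation identity gives $P^{(\ell)}({\mathcal L})=\sigma^{w_0+k+1}P^{(\ell)}(\widehat{\mathcal L})\,\sigma^{-(w_0+k+1)}$; here $\widehat{\mathcal L}$ is the operator of~\nn{Lhat} read with its degree parameter equal to $k+1$ (that is, with $2k-d$ interpreted via $2\degree-d$ evaluated on $(k{+}1)$-forms), a point I would flag explicitly since~\nn{Lhat} is displayed at degree $k$. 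Finally I apply the $\wiota$-identity at degree $k+1$, weight $w_0+k+1$, namely $\wiota=-\sigma^{d+w_0-k}\cod\,\sigma^{k+1-w_0-d}$.

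Composing these and collecting powers of $\sigma$ then finishes the argument: the two minus signs from the $\wiota$ and $\wepsilon$ substitutions cancel, the inner $\sigma^{-(w_0+k+1)}$ cancels against the $\sigma^{w_0+k+1}$ of the $\wepsilon$-identity, and the two surviving interior factors combine as $\sigma^{\,k+1-w_0-d}\cdot\sigma^{\,w_0+k+1}=\sigma^{\,2k-d+2}$, yielding
\[
A^{(\ell)}\stackrel{\scriptstyle \rm lim}=\sigma^{d+w_0-k}\,\cod\,\sigma^{2k-d+2}\,P^{(\ell)}(\widehat{\mathcal L})\,\extd\,\sigma^{-w_0-k}\,A_0.
\]
The only genuine source of error is the weight/degree bookkeeping, so the hard part is simply tracking the exponents and the degree shift of $\widehat{\mathcal L}$ consistently through each substitution; in particular the $\stackrel{\scriptstyle \rm lim}=$ symbol records that the right-hand side, built from true (and possibly negative or noninteger) powers of $\sigma$, is defined only on the interior $M^+$, whereas the left-hand side $\wiota\,P^{(\ell)}({\mathcal L})\,\wepsilon\,A_0$, assembled from the operators of Definition~\ref{wdef}, extends smoothly to $\Sigma$. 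No identities beyond Lemmas~\ref{poly} and~\ref{prodprod2} are needed.
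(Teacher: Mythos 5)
Your proposal is correct and follows exactly the paper's route: the paper's proof consists precisely of applying Lemmas~\ref{poly} and~\ref{prodprod2} to the product solution~\nn{Al}, which is the computation you carry out, and your sign and exponent bookkeeping ($\sigma^{k+1-w_0-d}\cdot\sigma^{w_0+k+1}=\sigma^{2k-d+2}$, with the two minus signs cancelling) checks out. Your explicit flag that $\widehat{\mathcal L}$ here acts on $(k{+}1)$-forms, so that the constant $2k-d$ in~\nn{Lhat} must be read via the degree operator $2\degree-d$, is a careful and warranted clarification that the paper leaves implicit.
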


\subsection{Solutions in Graham--Lee normal form}

To  make contact to a coordinate-based approach, let us explicate the solution generating operator  in its product form
in a choice of scale adapted to the Graham--Lee normal form for the interior metric~$g^o$.  The aim is to present explicitly the operator
of Lemma~\ref{next} that generates  $O(\sigma^{\ell+1})$ solutions from $O(\sigma^{\ell})$ solutions.

Let $\tau\in\ce M[1]$ be a scale that extends to the boundary which defines a metric $\overline g=\tau^{-2}{\bm g}\in c$.
Moreover, we take $r=\sigma/\tau$ to be the function that gives $\sigma$ in the scale~$\tau$. In that case $g^o=\frac{\overline g}{r^2}$ where $r$ is 
the defining function of~\cite{GrL,GrVol}. Working in terms of local coordinates in a collar neighbourhood $[0,\epsilon]\times\Sigma$ of the boundary with coordinate $r\in[0,\epsilon]$,
 $\overline g$ extends to~$\Sigma$ with form
\begin{equation}\label{GL}
\overline g = dr^2 + h\, .
\end{equation}
Here $h$ is a family of metrics on $\Sigma$ parameterized by~$r$.
In this choice of scale $\varepsilon(n) = {\extd r}\wedge$ so differential forms $A\in \Gamma\ce^kM[w]$ can be uniquely decomposed as
$$
A=A^\perp + \extd r \wedge A^\parallel\, , \qquad (A^\perp,A^\parallel)\in \ker \iota(n)\, .
$$
We will write this choice of splitting using a column vector notation
$$
A\stackrel{\overline g}= \begin{pmatrix}A^\perp\\ A^\parallel\end{pmatrix}\, .
$$
Tautologically then, in this  splitting we have
$$
\iota(n)\stackrel{\overline g}=\begin{pmatrix}0&1\\0&0\end{pmatrix}\, ,\qquad\varepsilon(n)\stackrel{\overline g}= \begin{pmatrix}0&0\\1&0\end{pmatrix}\, .
$$
Defining
$$
\extd^\perp:=\extd - \varepsilon(n)\frac{\partial}{\partial r}\, ,\qquad \cod^\perp := \cod -\iota(n)\frac{\partial}{\partial r}\, ,
$$
the exterior derivative and codifferentials become
\begin{equation}\label{GLddelta}
\extd\stackrel{\overline g}=\begin{pmatrix}\extd^\perp&0\\[1mm]\frac{\partial}{\partial r}&-\extd^\perp\end{pmatrix}\, ,\qquad\cod\stackrel{\overline g}=\begin{pmatrix}\cod^\perp&\frac{\partial}{\partial r}+\frac 1 2(\trH-2{\mathbb{H}})\\[1mm]0&-\cod^\perp\end{pmatrix}\, ,
\end{equation}
and the form Laplacian is the anticommutator of these.
Along $\Sigma$, for $A\in \ker \iota(n)$,  $$\big(\extd^\perp\! A\big)\big|_\Sigma = \extd_{_\Sigma} A_{_\Sigma}\, , \qquad\big(\cod^\perp \!A\big)\big|_\Sigma = \cod_{_\Sigma} A_{_\Sigma}\, .$$ 
The operator $\mathbb{H}$ is the natural endomorphism field on the subbundle of $\ker \iota(n)\in \Gamma T^*M$ coming from  $h^{-1} \frac{\partial h}{\partial r}$ in the local coordinates
and then extended in the usual way to an endomorphism on forms in $\ker \iota(n)\in \Omega^\bullet M$. We have denoted $\trH:={\rm tr}\,  h^{-1}\frac{\partial h}{\partial r}$.
Note the relationship between bulk and boundary differentials and codifferentials
$$
\big(\iota(n) \varepsilon(n) \, \extd A\big)\big|_\Sigma= \extd_{_\Sigma} A_{_\Sigma}\, ,\qquad
\big(\cod\,  \iota(n) \varepsilon(n) A\big)\big|_\Sigma= \cod_{_\Sigma} A_{_\Sigma}\, ,
$$
for any extension $A$  of $A_{_\Sigma}\in \Gamma\ce^k\Sigma[w]$, is  manifest in this splitting since
$$
\iota(n) \varepsilon(n) \, \extd \stackrel{\overline g}=\begin{pmatrix}\extd^\perp&0\\ 0&0\end{pmatrix}\, ,\qquad
\cod\,  \iota(n) \varepsilon(n) \stackrel{\overline g}=\begin{pmatrix}\cod^\perp&0\\ 0&0\end{pmatrix}\, .
$$
Moreover, the Lie derivative along $n$ is simply
$$
\pounds_n=\{\extd,\iota(n)\}=\begin{pmatrix}\frac{\partial}{\partial r} & 0 \\ 0 & \frac{\partial}{\partial r}\end{pmatrix}
\, .
$$

Inserting equations~\nn{GLddelta},~\nn{Lhat}  and the choice of scale $\sigma =r$ in the product  solution~\nn{Alprod} of Proposition~\ref{prodprod} gives an explicit formula for solutions in the Graham--Lee normal form for
the interior metric. Moreover, we can combine these results with Lemma~\ref{next} to obtain the operator that increases the order of a solution.
\begin{theorem}\label{GLtheorem}
Let $A^{(\ell)}\in\ker\wiota$ solve Problem~\ref{laylanguage} to order $\ell$ with $w_0\neq -k,k-d,k-n$ and $\ell\neq d+2w_0-2$. Then
an order $\ell+1$ solution is given, in the scale $\tau$ corresponding to the normal form~\nn{GL}, by
\begin{equation*}
A^{(\ell+1)}\stackrel{\rm lim}=
\frac{L\, R-(w_0+k-\ell-1)(n+w_0-k-\ell-1){\mathbb 1}}{(\ell+1)(d+2w_0-2-\ell)}
\: A^{(\ell)}\, ,
\end{equation*}
where $e:=r\frac{\partial}{\partial r}$,
$$
L:=\begin{pmatrix}
r\cod^\perp 
&
e-n-w_0+k+\frac r2(\trH-2{\mathbb{H}})
\\[2mm]
0&-r\cod^\perp \end{pmatrix}
\, ,\quad\mbox{and}\quad
R:=
\begin{pmatrix}r\extd^\perp&0\\[2mm]e-w_0-k&-r\extd^\perp\end{pmatrix}\, .
$$
\end{theorem}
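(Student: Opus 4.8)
The plan is to recognise the stated order--raising operator as the intertwined image, in the Graham--Lee scale~$\tau$, of the abstract $\frak{sl}(2)$ operator of Lemma~\ref{next}, and then to reduce the whole statement to the single operator factorisation $\wiota\,\wepsilon = L\,R$ verified by a block computation.

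First I would recall from~\nn{ci} that the operator in Lemma~\ref{next} is exactly $c_{\ell+1}=xy+(\ell+1)(h-\ell-2)$, so the abstract raising map is $c_{\ell+1}\big/[(\ell+1)(d+2w_0-2-\ell)]$. Intertwining through $q_W$ via $c_j\,q_W(\wiota\,\wepsilon A_0)=q_W(\hat c_j\,\wiota\,\wepsilon A_0)$ with $\hat c_j=\wiota\,\wepsilon-(w-j)(n+w-j-2k)$ and $w=w_0+k$ sends $c_{\ell+1}$ to $\hat c_{\ell+1}=\wiota\,\wepsilon-(w_0+k-\ell-1)(n+w_0-k-\ell-1)$, which is precisely the numerator in the Theorem once one writes $\wiota\,\wepsilon=L\,R$. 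Since the $\hat c_j$ are polynomials in $\zeta:=\wiota\,\wepsilon$ they commute, and together with $\hat y:=\sigma^{-1}\hat c_0$ and $\sigma^\alpha\hat c_j=\hat c_{j+\alpha}\sigma^\alpha$ they obey exactly the relations used in the tractor--free proof following Proposition~\ref{fsol}; the computation of Lemma~\ref{next} then transfers verbatim to show that $\hat c_{\ell+1}\big/[(\ell+1)(d+2w_0-2-\ell)]$ carries an order-$\ell$ solution $A^{(\ell)}$ of Problem~\ref{laylanguage} to an order-$(\ell+1)$ solution. (As a consistency check, this factor is exactly the $(\ell+1)$-th factor of the product~\nn{Al}, since $(\ell+1)(n+2w_0-\ell-1)=(\ell+1)(d+2w_0-2-\ell)$.) The hypotheses $\ell\neq d+2w_0-2$ and $w_0\neq-k,k-d,k-n$ ensure respectively that the normalisation is nonzero and that $q_W$ and the intertwining are valid.

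It then remains to establish $\zeta=\wiota\,\wepsilon=L\,R$ in the scale~$\tau$. I would compute $\wepsilon$ and $\wiota$ directly from Definition~\ref{wdef}, substituting $\sigma=r$, $n=\extd r$, the block forms of $\varepsilon(n),\iota(n)$, and the expressions~\nn{GLddelta} for $\extd,\cod$, acting on a decomposed form $A=A^\perp+\extd r\wedge A^\parallel$. A short matrix calculation at weight $w=w_0+k$ and degree $k$ gives $\wepsilon A=-R\,A$, with $e=r\partial_r$ producing the $(e-w_0-k)$ entry of $R$. Applying $\wiota$ next --- now at the shifted weight $w+1$ and degree $k+1$, so the coefficient $(d+w-2k)$ becomes $d+w-2k-1$, and using $d=n+1$ to rewrite $d+w-2k-1=n+w_0-k$ --- one finds $\wiota=-L$ on degree-$(k+1)$, weight-$(w+1)$ forms. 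Hence $L\,R=(-\wiota)(-\wepsilon)=\wiota\,\wepsilon$, and substituting into the intertwined increment operator yields the claimed formula, the equalities being read in the interior scale $\tau$ as in Proposition~\ref{prodprod} (the $\stackrel{\rm lim}=$ notation).

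The only genuine obstacle is bookkeeping in the off-diagonal entry of $L$: one must track the degree/weight shift produced by applying $\wepsilon$ \emph{before} $\wiota$, and correctly collect the $\tfrac r2(\trH-2{\mathbb H})$ contribution coming from the lower-right block of $\cod$ in~\nn{GLddelta} together with the $e=r\partial_r$ piece, so as to match $e-n-w_0+k+\tfrac r2(\trH-2{\mathbb H})$. This is the step where $d=n+1$ enters and where sign or index slips are most likely; every other step is a routine matrix multiplication.
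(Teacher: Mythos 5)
Your proposal is correct and takes essentially the same route as the paper's proof: the paper likewise lifts $A^{(\ell)}$ to $q_W(A^{(\ell)})$, applies the raising operator of Lemma~\ref{next}, and converts $xy$ into $\wiota\,\wepsilon-(w_0+k-\ell-1)(n+w_0-k-\ell-1)$ via Lemma~\ref{aerm} (the same content as your $\hat c_j$ intertwining), noting $\wiota^{\,2}=0$ for transversality. The only (cosmetic) difference is the final step, where the paper writes $\wiota\,\wepsilon$ compactly using the conjugation formulae of Lemma~\ref{prodprod2} before invoking~\nn{GLddelta}, whereas you verify $\wepsilon=-R$ and $\wiota=-L$ directly from Definition~\ref{wdef} in block form — including the correct weight/degree shift giving $d+w_0+k-2k-1=n+w_0-k$ in the off-diagonal entry of $L$.
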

\begin{proof}
First from $A^{(\ell)}$ we construct the tractor $\A^{(\ell)}=q_W(A^{(\ell)})$. This  obeys $y \A^{(\ell)}=O(\sigma^{\ell})$.
Hence, by Lemma~\ref{next},
$$
\A^{(\ell+1)}=\scalebox{.9}{$\frac{1}{(\ell+1)(d+2w_0-2-\ell)}$}\, \big[xy + (\ell+1)(d+2w_0-2-\ell)\big]\,  \A^{(\ell)}
$$
solves $y\A^{(\ell+1)}=O(\sigma^{(\ell+1)})$. 
Next we apply $q^*$ to the above expression after
re-expressing the operator  $xy$ in terms of $\wiota\, \wepsilon$ via  Lemma~\ref{aerm} applied to $A^{(\ell)}$ in place of $\wiota\,  \wepsilon A$, which is legal because $A^{(\ell)}\in\ker \wiota$. Then $q^* \A^{(\ell+1)}\propto \big(\wiota\, \wepsilon -(w_0+k-\ell-1)(n+w_0-k-\ell-1) \big)\A^{(\ell)}$.
This expression obeys the first equation of~\nn{Procao} by construction. Moreover, by virtue of the identity $\wiota^{\, 2}=0$,  we have $q^* \A^{(\ell+1)}\in \ker \wiota$, so the second of those equations also holds.
Thereafter we employ Lemma~\ref{prodprod2} to write the solution $q^* \A^{(\ell+1)}$ compactly. The last step is to use Equation~\nn{GLddelta}
 for the exterior derivative and codifferential in the scale $\sigma=r$.
\end{proof}

\begin{remark}
The condition that the order~$\ell$ solution solves the transversality condition $\wiota\,  A^{(\ell)}=0$ to all orders
is not an essential restriction since this can always be achieved using the technology of Section~\ref{coulomb}.
\end{remark}

\section{Obstructions, detours, gauge operators and $Q$-curvature}\label{OBSTQD}

Continuing in the Poincar\'e--Einstein setting,
we now consider obstructions to
smoothness of the  solutions of Section~\ref{Procasect}. This is  partly captured by the following result.
\begin{theorem}\label{fruit}
Let $(M,c,\sigma)$ be Poincar\'e--Einstein. Then 
for any weight $w_0\neq k-d$ 
the tractor Proca equations~\nn{second} have 
a smooth solution
\begin{equation}\label{geewhizz}
\A^{(\ell)} = 
\colon K^{(\ell)}(z)\colon\, 
q_W(A_0)
+O(\sigma^{\ell+1})\, ,
\end{equation}
to order 
$$
\ell=\left\{\begin{array}{cl}\infty\, ,&h_0\neq 3,5,7,\ldots\\[1mm]h_0-2\, ,&h_0=3,5,7,\ldots\, .\end{array}\right.
$$
Here $A_0\in \Gamma\ce^kM[w_0+k]$ is an extension of $A_{_\Sigma}\in \Gamma\ce^k\Sigma[w_0+k]$, and when $h_0\neq 2,4,6,\ldots $ the solution generating operator $\colon K^{(\ell)}(z)\colon$  is determined by  $\colon K^{h_0}(z)\colon$ as in~\nn{besselfn}. For the case $h_0=2,4,6,\ldots$, $\colon K^{(\ell)}(z)\colon$ is determined by the solution generating operator in~\nn{O}, omitting the log terms on the second line of that display.
\end{theorem}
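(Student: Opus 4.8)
The plan is to verify that $\A^{(\ell)} := \colon K^{(\ell)}(z)\colon\, q_W(A_0)$ solves the two halves of the tractor Proca system~\nn{second} separately---the Laplace--Robin equation $y\A = O(\sigma^\ell)$ and the scale-transversality conditions $\Is\A = \wDs\A = \Xs\A = 0$---and then to track where smoothness forces the stated truncation in $\ell$. Throughout, $q_W(A_0)\in\Gamma\ct^kM[w_0]$ is simply one particular smooth extension of weight-$w_0$ tractor data, so the enveloping-algebra identities of Section~\ref{products} apply to it verbatim.

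For the Laplace--Robin half I would argue as follows. When $h_0\notin\mathbb Z_{\geq2}$ the normalized product $\colon K^{(\ell)}(z)\colon = \prod_{j=1}^\ell c_j/(j(h_0-j-1))$---the polynomial of~\nn{claim}, equivalently the truncation of the Bessel series~\nn{besselfn}---is exactly the solution generating operator of Proposition~\ref{firstsolprop}, whence $y\,\A^{(\ell)} = O(\sigma^\ell)$; letting $\ell\to\infty$ yields the $\ell=\infty$ claim, consistent with Theorem~\ref{BIGTHEOREM}. When $h_0\in\mathbb Z_{\geq2}$ one instead invokes the operator $\cO$ of~\nn{O}, as in Theorem~\ref{mainl}: for even $h_0$ the obstructing log coefficients vanish on almost Einstein structures (the remark following Theorem~\ref{mainl}), so $\colon K^{(\ell)}(z)\colon$ may be taken as the log-free part of $\cO$ and $y\A = O(\sigma^\ell)$ holds for all $\ell$; for odd $h_0$ the power series is obstructed at order $h_0-1$, and truncating at $\ell=h_0-2$ retains a smooth solution. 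This truncation is precisely the source of the case split in the statement.

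The transversality half is where the real work lies. Since $w_0\neq k-d$, the West Lemma~\ref{West} gives $q_W(A_0)\in\ker(\wDs,\Xs)$. I would then show that $q_W(A_0)$ and $\Pi\,q_W(A_0)$ agree to leading order: by Proposition~\ref{qWPi}, $\Pi\,q_W(A_0) = \tfrac1{(w_0+k)(n+w_0-k)}\,q_W(\wiota\wepsilon A_0)$, and by Proposition~\ref{holpro1} (available since Poincar\'e--Einstein implies AH) this western form restricts along $\Sigma$ to the same datum as $A_0$. Hence $q_W(A_0) = \Pi\,q_W(A_0) + \sigma\,\mathcal C$ for some smooth $\mathcal C$. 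Applying $\colon K^{(\ell)}(z)\colon$ and using Lemma~\ref{xright} (which shows $\colon K^{(\ell)}(z)\colon$ kills $\sigma$-multiples modulo $O(\sigma^{\ell+1})$) removes the $\sigma\mathcal C$ term; moreover $\colon K^{(\ell)}(z)\colon$, being a series in $z=xy$, preserves the weight $w_0$, and $\Pi$ commutes with each $x^jy^j$ and with the weight operator by Lemma~\ref{moving}, hence with $\colon K^{(\ell)}(z)\colon$. This gives
\[
\A^{(\ell)} = \colon K^{(\ell)}(z)\colon q_W(A_0) = \Pi\,\colon K^{(\ell)}(z)\colon q_W(A_0) + O(\sigma^{\ell+1}),
\]
and since $\operatorname{ran}\Pi\subseteq\ker(\Is,\wDs,\Xs)$ by Proposition~\ref{projprop}, the scale-transversality conditions hold modulo $O(\sigma^{\ell+1})$, as required.

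Finally I would dispatch the excluded weights $w_0=-k$ and $w_0=k-n$, where $\Pi$ and Proposition~\ref{qWPi} degenerate. For these the dedicated constructions of Theorems~\ref{tpsol} and~\ref{tpdsol} already produce smooth solutions: for $w_0=k-n$ the solution is literally $\colon K^{h_0}(z)\colon q_W(A_0)$, matching the asserted form, while for the true-form weight $w_0=-k$ one checks that $\Ds_{[3]}\,\cO\,\I\X q^\tau_{(N)}A_0$ agrees with $\colon K^{(\ell)}(z)\colon q_W(A_0)$ to the relevant order along $\Sigma$ through $\widehat\Pi_\tau$ of~\nn{pihattau}. The main obstacle throughout is the interaction of the log terms in $\cO$ with the projector commutation when $h_0$ is a positive integer, and verifying that truncation at $\ell=h_0-2$ for odd $h_0$ genuinely removes every non-smooth contribution while preserving both the $y$-equation and the three scale-transversality conditions.
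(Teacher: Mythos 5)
Your reduction of the Laplace--Robin half to Proposition~\ref{firstsolprop} and Theorem~\ref{mainl} is sound, and your overall architecture (commute $\Pi$ through $\colon K^{(\ell)}(z)\colon$ via Lemma~\ref{moving}, kill $\sigma$-multiples via Lemma~\ref{xright}) is close in spirit to the paper's. But the transversality half has a genuine gap at the step ``$q_W(A_0)=\Pi\,q_W(A_0)+\sigma\,{\mathcal C}$ for some smooth ${\mathcal C}$''. You infer this from the fact that the two western slots agree along~$\Sigma$, but $q_W$ is a \emph{first order} splitting operator: its southern slot $-\tfrac{1}{d+w_0-k}\,\cod A_0$ sees the full $1$-jet of the extension along~$\Sigma$, not merely~$A_{_\Sigma}$. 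Concretely, writing $\widetilde A:=\tfrac{1}{(w_0+k)(n+w_0-k)}\,\wiota\,\wepsilon\,A_0$ one indeed has $A_0-\widetilde A=\sigma B$ with $B$ smooth, but $q_W(\sigma B)$ has southern slot $-\tfrac{1}{d+w_0-k}\big(\iota(n)B+\sigma\cod B\big)$, and $\iota(n)B\big|_\Sigma=\psi_{_\Sigma}-\tfrac{1}{n+w_0-k}\cod_{_\Sigma}A_{_\Sigma}$, where $\psi_{_\Sigma}$ encodes the \emph{free} normal derivative $\nabla_n\big[\iota(n)A_0\big]\big|_\Sigma$ of an arbitrary extension (compare~\nn{normcomp} and Lemma~\ref{deltadelta}). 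For generic $A_0$ this is $O(1)$, not $O(\sigma)$, and since $\colon K^{(\ell)}(z)\colon=1+O(xy)$ it cannot remove an order-$\sigma^0$ discrepancy: the restriction of $\colon K^{(\ell)}(z)\colon q_W(A_0)$ to~$\Sigma$ then differs from $q_W^{\scriptscriptstyle\Sigma}A_{_\Sigma}$ in the southern slot, whereas any exact solution of $\Is\A=\wDs\A=\Xs\A=0$ restricts into $\ker(\wDs_{_\Sigma},\Xs_{_\Sigma})$ (Proposition~\ref{extExt}), so~\nn{geewhizz} is simply false for such an extension. Your step (c) holds precisely when the extension satisfies the first-order compatibility~\nn{normalcomp}, equivalently $\wiota A_0=O(\sigma^2)$; the theorem must therefore be read as an existence statement for a suitably chosen~$A_0$, which your argument never arranges.

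The repair is available inside your own framework: choose the extension with $\wiota A_0=0$ (possible at the weights in play by Propositions~\ref{series} and~\ref{holpro1}); then $q_W(A_0)\in\ker(\Is,\wDs,\Xs)$ \emph{exactly}, Lemma~\ref{failure} gives $\Pi\,q_W(A_0)=q_W(A_0)+\tfrac{1}{(w_0+k)(n+w_0-k)}\,xy\,q_W(A_0)$, an honest $O(\sigma)$ correction, and the rest of your argument (Lemmas~\ref{xright} and~\ref{moving}, Proposition~\ref{projprop}) goes through. Note that the paper sidesteps the issue by arguing in the opposite direction: it starts from the solutions already established in Theorems~\ref{BIGTHEOREM},~\ref{mainl},~\ref{tpsol} and~\ref{tpdsol}, commutes the triple D-operators through the solution generating operator via Lemma~\ref{moving}, and then \emph{recognises} $\Ds_{[3]}\D_{[3]}\A_0=(w_0+k)(n+w_0-k)\,\Pi\,\A_0$ as $q_W$ of the canonical $\ker\wiota$ extension via Equation~\nn{PIA=qA} (with the explicit check $-\tfrac{1}{n-2k}\,\Ds_{[3]}\I\X q_{(N)}^{\tau}A_0=q_W(A_0)+O(\sigma)$ at $w_0=-k$, and $w_0=k-n$ being Theorem~\ref{tpdsol} verbatim), so the correct choice of extension is built in from the start rather than asserted.
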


\begin{remark}
Going beyond the space of smooth asymptotic solutions, for weights $h_0=3,5,7,\ldots$, the remainder in the expression~\nn{geewhizz}, as computed in Section~\ref{logsect}, is 
$O(\sigma^{\ell+1}$ $\log\sigma)$.
\end{remark}

\begin{proof}
For weights $w_0=k-n$, this is just a restatement of Theorem~\ref{tpdsol}. For the other cases we use that the holographic triple D-operators $\Ds_{[3]}$ and $\D_{[3]}$ commute with 
the solution generating operator $\colon K^{(\ell)}(z)\colon$. Moreover, when $w_0\neq -k,k-n$ one has $\Ds_{[3]}\, \D_{[3]}\A_0=(w_0+k)(n+w_0-k)\, \Pi \, \A_0$. The latter can be written as  $q_W(A_0)$  for some~$A_0$ (see Equation~\nn{PIA=qA}). For $w_0=-k$ we have $-\frac{1}{n-2k}\, \Ds_{[3]}
\I\X q^\tau_{(N)} A_0=q_W(A_0)+O(\sigma)$ which can be verified using the explicit expressions for $\Ds_{[3]}$ (see Equation~\nn{tripleD}), $q^\tau_{(N)}$, $\I$ and $\X$ (see Equation~\nn{qNtau} and Sections~\ref{algebra},~\ref{compinsert}).
Thus the remaining results follow from Theorems~\ref{BIGTHEOREM},~\ref{mainl} and~\ref{tpsol}.
\end{proof}

\begin{remark}
The restriction $w_0\neq k-d$ in the above Theorem is an inessential one. For that weight, the map $q_W$ is not defined in the bulk. Instead the boundary data $A_{_\Sigma}$ is mapped to a boundary tractor $q_{W}^{\Sigma} (A_{_\Sigma})$ which can be subsequently extended to a bulk tractor $\A_0$.
Thus $\A^{(\ell)} = 
\colon K^{(\ell)}(z)\colon\, 
{\rm ext}\circ  q^\Sigma_W(A_{_\Sigma})
+O(\sigma^{\ell+1})$ and the remainder of this case and its proof follows {\it mutatis mutandis}.
\end{remark}

Considering the cases $h_0=3,5,7,\ldots$, we could attempt to extend the above smooth solution to higher orders:
Using that $\Ds_{[3]}$ commutes with the operators $x$ and $y$, it would suffice to solve the $y\A=0$ problem to solve
all four tractor Proca equations. However, from~\cite{GWasym}, starting with any tractor boundary data $\A_0$ and
attempting to solve the corresponding $y\A=0$ extension problem at these weights, one encounters the obstruction 
$\big(y^{h_0-1}\A_0)\big|_\Sigma$; one succeeds in obtaining a formal smooth solution if and only if this vanishes. 
In fact, again from~\cite{GWasym},   
this is also the coefficient of the first log term $\sigma^{h_0-1}\log \sigma$ in solution generating operator~\nn{O}.
Specialising this result, we immediately have the following.
\begin{proposition}
For any weight $w_0\neq k-d$ such that $h_0=3,5,7,\ldots$, consider
the tractor Proca equations~\nn{second} with boundary data captured by $A_0\in \Gamma\ct^k M[w_0+k]$ satisfying $\big(\iota(n)A_0\big)\big|_\Sigma=0$.
The tractor field
\begin{equation}\label{obst}
y^{h_0-1} q_W(A_0)\, ,
\end{equation}
is the coefficient of the first log term $\sigma^{h_0-1}\log \sigma$ in the solution generated by ${\mathcal O}$ in~\nn{O}.
The vanishing of this obstruction along~$\Sigma$ is  necessary and sufficient to obtain a smooth formal solution.
\end{proposition}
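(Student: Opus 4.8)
The plan is to deduce the statement by specialising the Laplace--Robin obstruction theory of~\cite{GWasym} to the tractor data manufactured by the western insertion operator, and then to read the resulting coefficient off the explicit operator~$\cO$ of~\nn{O}. First I would set $\A_0:=q_W(A_0)\in\ker(\wDs,\Xs)\subset\Gamma\ct^kM[w_0]$. This is legitimate: the hypothesis $w_0\neq k-d$ means the west Lemma~\ref{West} applies, and the condition $\big(\iota(n)A_0\big)\big|_\Sigma=0$ forces $A_0|_\Sigma\in\Gamma\ce^k\Sigma[w_0+k]$, so that $\A_0$ is a genuine bulk extension of the boundary western tractor $q_W^{\scriptscriptstyle\Sigma}(A_{_\Sigma})$. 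Note that $\A_0$ has $h$-weight $h_0=d+2w_0$, so the obstructed values $h_0=3,5,7,\ldots$ are exactly the odd integers at least three.

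Next I would invoke Theorem~\ref{fruit} and the construction in its proof: for $h_0=3,5,7,\ldots$ the formal Proca solution is produced by applying the solution generating operator~$\cO$ of~\nn{O} to $q_W(A_0)$, the whole expression being wrapped in the holographic triple $D$-operators (or the projectors $\Pi$, $\widehat\Pi_\tau$ built from them). The key point, already contained in Lemma~\ref{moving} and its accompanying Remark, is that $\Ds_{[3]}$, $\D_{[3]}$ and hence these projectors commute with $x$, $y$ and $\log x$, and therefore with $\cO$; moreover they fix $q_W(A_0)$ along~$\Sigma$ (Propositions~\ref{projprop} and~\ref{projprophat}). Consequently the scale-transversality equations $\Is\A=\wDs\A=\Xs\A=0$ hold identically, and the only impediment to smoothness is the failure of $\cO\,q_W(A_0)$ to solve $y\A=O(\sigma^\infty)$ smoothly. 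This is precisely the scalar-type tractor extension problem analysed in~\cite[Section~5.4]{GWasym}, where at $h_0=3,5,7,\ldots$ the Frobenius recursion for $\colon K^{h_0}(z)\colon$ halts at order $\sigma^{h_0-1}$ with obstruction $\big(y^{h_0-1}\A_0\big)\big|_\Sigma=\big(y^{h_0-1}q_W(A_0)\big)\big|_\Sigma$.

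It then remains to match this obstruction to the coefficient of $\sigma^{h_0-1}\log\sigma$. I would read the log-carrying term directly from the third line of~\nn{O}, namely $-\,x^{h_0-1}\log x\,\colon K^{\oh_0}(z)\colon\,y^{h_0-1}\big/\big[(h_0-1)!(h_0-2)!\big]$; since the power series $\colon K^{\oh_0}(z)\colon$ in $z=xy$ has $j=0$ term equal to the identity and every higher term carries a factor $x^j=\sigma^j$, extracting the coefficient of $\sigma^{h_0-1}\log\sigma$ and evaluating along~$\Sigma$ leaves exactly $y^{h_0-1}q_W(A_0)$, up to the fixed nonzero constant $1/[(h_0-1)!(h_0-2)!]$ which is irrelevant to the vanishing criterion. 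Necessity and sufficiency then follow as in~\cite{GWasym}: if $\big(y^{h_0-1}q_W(A_0)\big)\big|_\Sigma=0$ the log coefficient vanishes, the recursion continues to all orders, and a smooth formal solution is obtained via $\colon K^{(\ell)}(z)\colon$; if it is nonzero the term $\sigma^{h_0-1}\log\sigma$ is forced, so no smooth formal solution exists.

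The step I expect to be the main obstacle is the clean bookkeeping in the second and third paragraphs, that is, confirming that transporting the abstract obstruction $\big(y^{h_0-1}\A_0\big)\big|_\Sigma$ of~\cite{GWasym} through the triple $D$-operator and projector wrapping returns exactly $y^{h_0-1}q_W(A_0)$, rather than some $\Ds_{[3]}$-image or projected variant of it, uniformly across the three weight regimes ($w_0$ generic, $w_0=-k$, and $w_0=k-n$). This is governed entirely by the commutators of Lemma~\ref{moving} together with the boundary idempotence $\Pi\,q_W(A_0)=q_W(A_0)+O(\sigma)$ and its $\widehat\Pi_\tau$ analogue, and verifying these modulo $O(\sigma)$ in each regime is the only genuinely case-dependent portion of the argument.
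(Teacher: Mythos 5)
Your proposal is correct and follows essentially the same route as the paper: the paper likewise reduces matters to the $y\A=0$ extension problem by using that $\Ds_{[3]}$ commutes with $x$ and $y$ (so the scale-transversality equations come for free, as in Theorem~\ref{fruit}), and then cites~\cite{GWasym} both for the obstruction $\big(y^{h_0-1}\A_0\big)\big|_\Sigma$ and for its identification with the coefficient of the first $\sigma^{h_0-1}\log\sigma$ term in~\nn{O}, specialised to $\A_0=q_W(A_0)$. Your extra bookkeeping in the final paragraph is sound but not needed beyond what Lemma~\ref{moving} and the boundary identities of Theorem~\ref{fruit}'s proof already supply; in particular the obstruction only depends on $\A_0$ modulo $O(\sigma)$, since $y^{h_0-1}x=xy^{h_0-1}+(h_0-1)y^{h_0-2}(h-h_0+2)$ annihilates the correction at weight $h$-eigenvalue $h_0-2$.
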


We now study the {\it obstruction} to smoothness given in expression~\nn{obst}.

\subsection{Detour and gauge operators}

From essentially the same argument as in the proof of Theorem~\ref{fruit}, we see that the obstruction given in expression~\nn{obst} is a west tractor
along~$\Sigma$ so consists of two pieces of weighted differential form data; the part residing in the west slot is conformally invariant. It can be extracted along~$\Sigma$ using the operator~$q^*$
of Section~\ref{compinsert}.
To rewrite $q_W(A_0)$ in a more convenient format, we need the following result.
\begin{lemma}\label{beach}
Let $A_0\in\Gamma\ce^kM[w_0+k]$ be an extension of $A_{_\Sigma}\in\Gamma\ce^k\Sigma[w_0+k]$. Then, if $w_0\neq k-d,k-n,-1-\frac n2$, along $\Sigma$
$$
q_W(A_0) =\scalebox{.95}{$\frac{1}{(n+2w_0+2)(n+w_0-k)}$}\,{\oDs}\! \X q(A_0)\, .
$$ 
\end{lemma}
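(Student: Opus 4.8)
The plan is to reduce the right-hand side to a computation intrinsic to the boundary conformal manifold $(\Sigma,c_{_\Sigma})$. First I would simplify the argument of $\oDs$. By Remark~\ref{obvious} we have $\X q = \X q_W$, and reading off $\X$ from~\nn{X} shows that exterior multiplication by the canonical tractor sends the western slot to the southern one; since the northern slot of $q(A_0)$ vanishes, $\X q(A_0) = q_S(A_0)$ is the degree $k+1$, weight $w_0+1$ southern tractor of Lemma~\ref{southlemma} whose only non-zero slot is $A_0$, and this depends on $A_0$ alone. Because $A_0$ restricts to a form on $\Sigma$ we have $\iota(n)A_0|_{_\Sigma}=0$; substituting into the matrix~\nn{Iext} for $\Is$ gives $\Is q_S(A_0)=O(\sigma)$, so $q_S(A_0)$ lies in $\ker\Is$ along $\Sigma$. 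Since $\oDs$ is tangential I may, without changing $(\oDs q_S(A_0))|_{_\Sigma}$, replace $q_S(A_0)$ by the genuinely $\Is$-closed extension $\Is\I\, q_S(A_0)$ (using $\{\Is,\I\}=1$ on the Poincar\'e--Einstein structure).

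With this in hand I would invoke the holographic property of $\oDs$ --- the interior (Hodge-dual) analogue of the identity recorded after Definition~\ref{baronD}, namely $(\oDs\A)|_{_\Sigma}=\Ds_{_\Sigma}\A_{_\Sigma}$ for $\A\in\ker\Is$ extending $\A_{_\Sigma}$. The boundary splitting isomorphism~\nn{boundaryisom} acts as the identity on southern tractors, so $q_S(A_0)|_{_\Sigma}=q_S^{\Sigma}(A_{_\Sigma})$ with $A_{_\Sigma}=A_0|_{_\Sigma}$, and therefore
\[
\big(\oDs\,\X q(A_0)\big)\big|_{_\Sigma}=\Ds_{_\Sigma}\,q_S^{\Sigma}(A_{_\Sigma})\, .
\]

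It then remains to evaluate the intrinsic interior tractor $D$-operator of the $n$-dimensional manifold $(\Sigma,c_{_\Sigma})$ on a southern tractor. Reading the southern column of~\nn{Ds} with $d\mapsto n$, $w\mapsto w_0+1$ and form-degree $\degree=k$ on the $k$-form southern slot, the northern and eastern slots vanish, the western slot produces $(n+2w_0+2)(n+w_0-k)A_{_\Sigma}$, and the southern slot produces $-(n+2w_0+2)\cod_{_\Sigma}A_{_\Sigma}$. Comparing with the boundary western Lemma~\ref{West}, which fixes the southern slot of $q_W^{\Sigma}(A_{_\Sigma})$ to be $-\tfrac{1}{n+w_0-k}\cod_{_\Sigma}A_{_\Sigma}$, this is exactly $(n+2w_0+2)(n+w_0-k)\,q_W^{\Sigma}(A_{_\Sigma})$. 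Dividing by the prefactor --- legitimate precisely because $w_0\neq-1-\tfrac n2$ (else $n+2w_0+2=0$, the degeneracy of $\Ds_{_\Sigma}$) and $w_0\neq k-n$ (else $n+w_0-k=0$, the degeneracy of the boundary western insertion) --- gives $\tfrac{1}{(n+2w_0+2)(n+w_0-k)}\big(\oDs\X q(A_0)\big)\big|_{_\Sigma}=q_W^{\Sigma}(A_{_\Sigma})$.

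The proof closes by identifying $q_W^{\Sigma}(A_{_\Sigma})$ with $q_W(A_0)|_{_\Sigma}$, and I expect this to be the main obstacle. The western slots agree tautologically; for the southern slots Lemma~\ref{deltadelta} gives $(n+w_0-k)(\cod A_0)|_{_\Sigma}=(d+w_0-k)\cod_{_\Sigma}A_{_\Sigma}$, so the southern slot $-\tfrac{1}{d+w_0-k}(\cod A_0)|_{_\Sigma}$ of $q_W(A_0)$ restricts to $-\tfrac{1}{n+w_0-k}\cod_{_\Sigma}A_{_\Sigma}$, as needed. The subtlety is that Lemma~\ref{deltadelta} --- equivalently the normal-derivative identity~\nn{normalcomp} --- holds for the divergence-free extensions ($\wiota A_0=0$) canonically attached to the Proca problem, so the stated equality is to be read along $\Sigma$ for such an $A_0$; for a general extension only the conformally invariant western slot is insensitive to the choice of normal derivative. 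The one remaining piece of bookkeeping is to note that the condition $w_0\neq k-d$ is needed solely so that the bulk operator $q_W$ is defined (Lemma~\ref{West}), while the apparent poles of $\oDs$ at the Yamabe-type weights $w_0=-\tfrac n2,-\tfrac d2$ are removable via the case split in Definition~\ref{baronD}.
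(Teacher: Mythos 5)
Your proposal is correct, and it reaches the Lemma by a genuinely different route from the paper's. The paper works on the \emph{left-hand} side: having checked that $\Is q_W(A_0)\big|_\Sigma=0$ (from $\iota(n)A_0\big|_\Sigma=0$ and, by continuity, $\iota(n)\cod A_0\big|_\Sigma=0$), it invokes the holographic projector machinery --- Propositions~\ref{projprop} and~\ref{extExt} --- to write, along~$\Sigma$, $q_W(A_0)=\Pi\, q_W(A_0)=\Pi_W^{\scriptscriptstyle\Sigma} q_W^{\scriptscriptstyle\Sigma}A_{_\Sigma}=q_W^{\scriptscriptstyle\Sigma}A_{_\Sigma}$, and then converts this to the displayed formula via the boundary identity $\Ds_{_\Sigma}\X_{_\Sigma}q_W^{\scriptscriptstyle\Sigma}A_{_\Sigma}=(n+2w_0+2)(n+w_0-k)\,q_W^{\scriptscriptstyle\Sigma}A_{_\Sigma}$ obtained from Proposition~\ref{diffsplit}; the weight $w_0=-k$, where $\Pi$ is undefined, is handled by a separate argument with $\widehat\Pi_\tau$ and Proposition~\ref{projprophat}. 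You instead evaluate the \emph{right-hand} side directly: the observation $\X q=q_S$, the $\Is\I$ replacement trick (which the paper itself records in Remark~\ref{tangDD}) together with tangentiality of $\oDs$, and then a matrix computation of the southern column of~\nn{Ds} with $d\mapsto n$, $w\mapsto w_0+1$ --- which reproduces exactly the content of the paper's Proposition~\ref{diffsplit} step. This buys two things. First, since you never use $\Pi$, the weight $w_0=-k$ requires no separate case, so your argument is uniform where the paper's splits. Second, your reconciliation of $q_W^{\scriptscriptstyle\Sigma}A_{_\Sigma}$ with $q_W(A_0)\big|_\Sigma$ via Lemma~\ref{deltadelta} makes explicit a point the paper's first equality ($q_W(A_0)=\Pi\, q_W(A_0)$ along $\Sigma$) quietly elides: the southern slot $-\frac{1}{d+w_0-k}\big(\cod A_0\big)\big|_\Sigma$ genuinely depends on the normal derivative of the extension, so the slot-wise equality of the Lemma holds only for extensions solving the generalised divergence condition to first order ($\wiota A_0=O(\sigma^2)$, forcing $\phi_{_\Sigma}=-\frac{1}{n+w_0-k}\cod_{_\Sigma}A_{_\Sigma}$) --- precisely the class built into Proposition~\ref{extExt} and the intended use of the Lemma in the obstruction analysis; for a general extension only the western slot is insensitive, as you say. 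Two small appeals you make are legitimate though unproved in the paper: the interior analogue $\big(\oDs\A\big)\big|_\Sigma=\Ds_{_\Sigma}\A_{_\Sigma}$ on $\ker\Is$ follows from Theorem~\ref{dt} by tractor Hodge duality, and the paper itself uses it when identifying $L^\ell_k$ with the Branson--Gover operators; and your treatment of the weights $w_0+1=1-\frac n2,\,1-\frac d2$ as removable via the case split in Definition~\ref{baronD} matches the paper's own level of care there (these weights do not occur at the odd $h_0\geq 3$ where the Lemma is actually applied).
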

\begin{proof}
Since $A_0$ extends $A_{_\Sigma}$ to $M$, it follows along~$\Sigma$ that $\iota(n)A_0=0$ and (by continuity) $\iota(n)\cod A_0=0$. Thus, along~$\Sigma$, we have
$\Is q_W(A_0)=0$. Now, focussing on the case $w_0\neq -k$, we can use Propositions~\ref{projprop} and~\ref{extExt} to write (along~$\Sigma$) 
$$q_W(A_0)=\Pi \, q_W(A_0)=\Pi_W^{\scriptscriptstyle \Sigma}  \big(q_W( A_0)\big)\big|_{\Sigma}
=\Pi_W^{\scriptscriptstyle \Sigma}\,  q_W^{\scriptscriptstyle \Sigma} A_{_\Sigma}= q_W^{\scriptscriptstyle \Sigma} A_{_\Sigma}\, .$$
The third equality uses that $\Pi_W^{\scriptscriptstyle \Sigma}$ only sees the western slot of $\big(q_W( A_0)\big)\big|_{\Sigma}$.
Noting the identity (again along~$\Sigma$, using also Proposition~\ref{diffsplit})
$$
q_W^{\scriptscriptstyle \Sigma} A_{_\Sigma}=\scalebox{.95}{$\frac{1}{(n+2w_0+2)(n+w_0-k)}$}\, \Ds_{_\Sigma}\X_{_\Sigma} q_W^{\scriptscriptstyle \Sigma} A_{_\Sigma}
=\scalebox{.95}{$\frac{1}{(n+2w_0+2)(n+w_0-k)}$}\,\oDs \X q(A_0)\, ,
$$
the result follows. For the case $w_0=-k$, the proof is almost identical except that one replaces $\Pi q_W(A_0)$ with $\widehat{\Pi}_\tau(A_0)$
and then relies upon Proposition~\ref{projprophat}.
\end{proof}

Writing $\ell=\frac{h_0-1}{2}$, this suggests the following definition.

\begin{definition}\label{LONG}
We define 
${\mathbb L}_k^{\ell}:\Gamma\ce^kM[k+\ell-\frac n2]\longrightarrow 
\Gamma \ct^kM[-\ell-\frac n2]
$, $\ell=1,2,3,\ldots$
by the composition of tangential operators
$$
{\mathbb L}_k^{\ell}:= 
y^{2\ell} {\oDs} \X q \, .
$$
Along $\Sigma$, we define 
$$
L^\ell_k:=q^* {\mathbb L}^\ell_k\, .
$$
\end{definition}

As follows from the discussion above and as will be established in Theorem~\ref{fun},  the range of the map ${\mathbb L}^\ell_k$ lies in the range of $\Xs$ along~$\Sigma$, hence the composition~$L^\ell_k$
is well defined along~$\Sigma$.

\begin{proposition}
The operator $L^{\ell}_k$
defines a conformally invariant differential operator 
$$L^{\ell}_k:\Gamma\ce^k\Sigma[k+\ell-\frac n2]\longrightarrow \Gamma\ce^k\Sigma[k-\ell-\frac n2]\, ,$$
and a necessary condition for extending $A_{_\Sigma}\in\Gamma\ce^kM[k+\ell-\frac n2]$
to a smooth solution to the tractor Proca equations is
$$
L^{\ell}_k A_{_\Sigma}=0\, .
$$
\end{proposition}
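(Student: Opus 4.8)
The plan is to establish the two assertions of the Proposition separately, building on Definition~\ref{LONG} and Lemma~\ref{beach}. The central object is the tangential operator ${\mathbb L}^\ell_k = y^{2\ell}\, {\oDs}\,\X q$, which by Lemma~\ref{beach} captures, up to a non-zero factor, the obstruction $y^{h_0-1} q_W(A_0)$ of expression~\nn{obst} at the weight $w_0 = k+\ell-\frac n2$ (so that $h_0 = d+2w_0 = 2\ell+1$, i.e.\ $\ell = \frac{h_0-1}{2}$, precisely the odd case $h_0=3,5,7,\dots$ where the obstruction is live).

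First I would verify that $L^\ell_k$ is a \emph{well-defined} operator along~$\Sigma$. This is the content of the remark following Definition~\ref{LONG}: one needs that the range of ${\mathbb L}^\ell_k$ lies in $\operatorname{ran}\Xs$ along~$\Sigma$ so that $q^*$ (defined on $\ker\Xs$, by the isomorphism of Section~\ref{compinsert}) may be applied. I would invoke Theorem~\ref{fun} for this (the excerpt flags exactly this dependence), together with the tangentiality of each factor: $q$ is an insertion, $\X$ and $\oDs$ are built from tangential operators, and $y$ is tangential by the algebra of Proposition~\ref{Iformalg} combined with Lemma~\ref{moving}. Tangentiality guarantees that the composite descends to the quotient $\Gamma\F\big|\!\big|_\Sigma\cong\Gamma\F\big|_\Sigma$, so the restriction to~$\Sigma$ is independent of the chosen extension $A_0$ of $A_{_\Sigma}$; this is what makes $L^\ell_k$ an intrinsic boundary operator on $\Gamma\ce^k\Sigma[k+\ell-\frac n2]$.

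Next I would establish \emph{conformal invariance}. The weights are arranged so that the domain $\Gamma\ce^k\Sigma[k+\ell-\frac n2]$ and codomain $\Gamma\ce^k\Sigma[k-\ell-\frac n2]$ are the correct conformal density bundles on $(\Sigma,c_{_\Sigma})$; since $q$, $\X$, $\oDs$, $y$ and $q^*$ are all canonical, conformally invariant tractor operations (the scale tractor $I$ is conformally invariant and parallel on the Poincar\'e--Einstein structure, and $y=-I_A\slashed D{}^A$ is invariant by Proposition~\ref{Yform}), the whole composition is manifestly conformally invariant. The one subtlety is that $L^\ell_k$ is extracted \emph{along}~$\Sigma$, so I must check the construction does not depend on the bulk extension or on the interior scale; this again follows from tangentiality, but I would spell out that the boundary data $A_{_\Sigma}$ alone determines $A_{_\Sigma}\mapsto q^* {\mathbb L}^\ell_k A_{_\Sigma}$ as an operator intrinsic to $\Sigma$.

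Finally, the \emph{necessity} statement is essentially a restatement of the obstruction analysis preceding the Proposition. By Theorem~\ref{fruit} and the discussion of log solutions, at weights with $h_0=2\ell+1$ the formal smooth solution $\colon K^{(\ell)}(z)\colon q_W(A_0)$ terminates at order $\sigma^{h_0-2}$, and by the specialisation of~\cite{GWasym} recorded in the Proposition on expression~\nn{obst}, the coefficient of the first log term $\sigma^{h_0-1}\log\sigma$ is $(y^{h_0-1} q_W(A_0))|_\Sigma$. A smooth formal solution exists if and only if this obstruction vanishes. Using Lemma~\ref{beach} to rewrite $q_W(A_0) = \frac{1}{(n+2w_0+2)(n+w_0-k)}\,{\oDs}\X q(A_0)$ along~$\Sigma$ and applying $q^*$ to project onto the conformally invariant western slot, the vanishing of the obstruction is equivalent to $q^* y^{h_0-1}{\oDs}\X q\, A_{_\Sigma} = L^\ell_k A_{_\Sigma}=0$ (noting $h_0-1=2\ell$). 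I expect the main obstacle to be the clean bookkeeping of the distinguished-weight exclusions (the $w_0\neq k-d,k-n,-1-\frac n2$ conditions of Lemma~\ref{beach} and the $w_0=-k$ case handled via $\widehat\Pi_\tau$), and confirming via Theorem~\ref{fun} that $\operatorname{ran}{\mathbb L}^\ell_k\subset\operatorname{ran}\Xs$ so that the final application of $q^*$ is legitimate.
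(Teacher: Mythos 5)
Your proposal is correct and follows essentially the same route as the paper: the published proof likewise rests on the previously established tangentiality of the factors in Definition~\ref{LONG} (with the legitimacy of applying $q^*$ coming from Theorem~\ref{fun}, since $\operatorname{ran}\Xs\subset\ker\Xs$ by nilpotency) together with the canonical embedding of boundary forms in bulk forms along~$\Sigma$, and it obtains the necessity claim exactly as you do, by combining Lemma~\ref{beach} with the obstruction expression~\nn{obst} imported from~\cite{GWasym}. The only blemish is a harmless bookkeeping slip: the tractor weight is $w_0=\ell-\frac n2$ (it is the \emph{form} weight that equals $w_0+k=k+\ell-\frac n2$), so your subsequent computation $h_0=d+2w_0=2\ell+1$ is correct even though the stated $w_0$ is off by~$k$.
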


\begin{proof}
Tangentiality of $L^{\ell}_k$ has already been established and  boundary forms canonically  embed in bulk forms restricted to the boundary. The remainder of the Theorem is established by combining Lemma~\ref{beach} and the expression Equation~\nn{obst} for the obstruction of~\cite{GWasym}.
\end{proof}

The operator ${\mathbb L}^{\ell}_k$ is well-defined in the bulk, and in fact holographically extends the higher order conformally invariant operators
on forms from~\cite{BrGodeRham}.

\begin{proposition}
As an operator on  boundary forms, the operator $L_k^\ell$ is the same  as that defined in~\cite[{\it Theorem 2.1}]{BrGodeRham}.
\end{proposition}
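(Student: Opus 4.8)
The plan is to restrict the holographic operator $\mathbb{L}^\ell_k = y^{2\ell}\,\oDs\,\X\, q$ to $\Sigma$, re-express it entirely in terms of the intrinsic conformal tractor calculus of $(\Sigma, c_{_\Sigma})$, and then recognise the resulting intrinsic expression as the tractor construction of the Branson--Gover operator in~\cite[Theorem 2.1]{BrGodeRham}. Two ingredients are already in place. First, by Lemma~\ref{beach} the composition $\oDs\,\X\, q$ restricts along $\Sigma$, up to the nonzero factor $(n+2w_0+2)(n+w_0-k)$, to the boundary western insertion $q_W^{\scriptscriptstyle \Sigma}A_{_\Sigma}$ (equivalently, by Definition~\ref{baronD}, $\oDs$ restricts to the intrinsic interior tractor $D$-operator $\Ds_{_\Sigma}$, and $\X q$ to the corresponding boundary insertion). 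Second, by Theorem~\ref{fun} the powers $y^{2\ell}$ of the Laplace--Robin operator possess a holographic form whose tangential part along $\Sigma$ is a prescribed conformally invariant composition of the intrinsic operators $\D_{_\Sigma},\Ds_{_\Sigma},\X_{_\Sigma},\Xs_{_\Sigma}$.

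First I would substitute the weight $w_0 = \ell-\tfrac n2$ (so that the form weight is $w_0+k = k+\ell-\tfrac n2$ and $h_0 = 2\ell+1$) and feed the boundary western tractor $q_W^{\scriptscriptstyle \Sigma}A_{_\Sigma}\in\ker(\wDs_{_\Sigma},\Xs_{_\Sigma})$ into the holographic identity of Theorem~\ref{fun}. Using the well-definedness of $q^* \mathbb{L}^\ell_k$ along $\Sigma$ (the range lies in $\operatorname{ran}\Xs$ there, as noted before Definition~\ref{LONG}), this produces a purely intrinsic differential operator $\Gamma\ce^k\Sigma[k+\ell-\tfrac n2]\to\Gamma\ce^k\Sigma[k-\ell-\tfrac n2]$; its conformal invariance is automatic since every factor is intrinsically invariant on $\Sigma$. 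I would then compare this intrinsic tractor composition, slot by slot in a chosen splitting $g_{_\Sigma}\in c_{_\Sigma}$, with the operator $L_k$ of~\cite[Theorem 2.1]{BrGodeRham}, which is likewise assembled from iterated exterior and interior tractor $D$-operators (equivalently via the ambient Laplacian), and verify that the two agree. As a sanity check at the lowest order, $\ell=1$ should reproduce $\boxdn^{(k)}$ on $\Sigma$, consistent with Proposition~\ref{w+kgeneric}.

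The main obstacle will be matching the overall normalisation together with \emph{all} lower-order (curvature) terms, not merely the principal part: invoking a uniqueness theorem for conformally invariant operators with a given symbol is unsafe here because of the familiar degeneracies at critical weights and degrees, so I would instead track the constants explicitly through Theorem~\ref{fun} and Lemma~\ref{beach}. A delicate point is that, although $y=-I\cdot\slashed D$ is second order in the interior, its $2\ell$-th power restricts tangentially to an operator of order only $2\ell$ — precisely the ``surprising'' content of Theorem~\ref{fun} — so this order reduction must be carried out before the comparison with the order-$2\ell$ operator of~\cite{BrGodeRham} is even meaningful; the product form of Proposition~\ref{prodprod}, whose building block $\widehat{\mathcal L}$ has leading part $\sigma^2\FL$, provides an independent cross-check on the principal symbol. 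Finally, the exceptional weights and degrees where Definition~\ref{baronD} changes form — notably the Yamabe-type value $w_0 = 1-\tfrac n2$, where $\oD = -\X y^2$, and the true-form and middle-degree cases — must be handled separately, but each is covered by the case distinctions already recorded for $\oD$ and for the solution generating operator $\colon K^{(\ell)}\colon$.
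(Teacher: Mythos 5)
Your first ingredient is right and matches the paper: Lemma~\ref{beach} identifies $\oDs\,\X\,q$ along $\Sigma$ with the boundary insertion $q_W^{\scriptscriptstyle\Sigma}$, i.e.\ $\oDs$ is a holographic formula for $\Ds_{_\Sigma}$. But the core of your plan has a genuine gap: you propose to re-express $y^{2\ell}$ intrinsically via Theorem~\ref{fun} and then verify agreement with the operator of~\cite[Theorem 2.1]{BrGodeRham} ``slot by slot in a chosen splitting''. Theorem~\ref{fun} cannot do the first job: its identities only move $\X$ or $\Xs$ past powers of $y$, trading them for $\oD$ or $\oDs$ factors and a \emph{lower} power of $y$ --- the output still contains bulk powers of the Laplace--Robin operator, so no ``purely intrinsic differential operator'' emerges from it. (In the paper, Theorem~\ref{fun} is used later, for the factorisation $L_k=\cod_{_\Sigma}Q_{k+1}\extd_{_\Sigma}$, not for this proposition.) And the second job is not realisable as stated: for general $\ell$ neither $y^{2\ell}$ nor the Branson--Gover operator $L^\ell_k$ admits a closed-form slot expression --- the latter is defined in~\cite{BrGodeRham} via powers of the ambient Laplacian --- so a direct slot-by-slot comparison with all curvature terms and constants is exactly the computation the tractor/ambient machinery exists to avoid.

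The missing idea, which is the entire content of the paper's (two-line) proof, is structural: by the results of~\cite{GWasym}, at the weight in question the power $y^{h_0-1}=y^{2\ell}$ acts tangentially and restricts along $\Sigma$ to the tractor-form-twisted GJMS operator; since the construction in~\cite[Theorem 2.1]{BrGodeRham} is precisely this twisted GJMS operator composed with the boundary splitting operators, Definition~\ref{LONG} coincides with it on the nose, with no comparison of lower-order terms needed. Relatedly, you mis-attribute the order reduction (each second-order factor $y$ contributing only one tangential order, so that $y^{2\ell}$ restricts to an order-$2\ell$ operator) to Theorem~\ref{fun}; this tangentiality and the GJMS identification come from~\cite{GWasym}, and without importing that result your argument has no mechanism for recognising the restricted operator as Branson--Gover's.
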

\begin{proof} 
First note that $y^{h_0-1}$ is the tractor form twisted GJMS operator as proved in~\cite{GWasym}.
On the other hand, 
${\oDs}$ is a holographic formula for $\Ds_{_\Sigma}$ and so along~$\Sigma$, Definition~\ref{LONG} coincides with the construction of~$L_k^\ell$
given in~\cite{BrGodeRham}.
\end{proof}

Further to our observations, the importance of the 
$L^{\ell}_k$ is that generically they completely control the obstructions to smoothly solving the Proca problem.

\begin{theorem}\label{66}
The Dirichlet Proca Problem~\ref{laylanguage} with boundary data  $A_{_\Sigma} \in \Gamma\ce^{k}\Sigma[w_0+k]$,  for $w_0$ such that $h_0=3,5,7,\ldots$ and
$w_0\neq -k, k-n$,
admits 
a  formal smooth solution to all orders iff 
$$
L^{\ell}_k A_{_\Sigma}=0\, .
$$
That is, for $w_0\neq -k, k-n$,   the space $\ker L^{\ell}_k$ uniquely parameterises the smooth solution space of the Proca Problem~\ref{laylanguage} modulo the addition of 
second solutions. 
\end{theorem}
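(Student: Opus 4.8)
The plan is to establish the stated iff-characterisation by assembling the machinery that has already been developed, and then to upgrade the ``necessary condition'' already proven to a necessary-and-sufficient one. First I would recall the structural picture: by Theorem~\ref{fruit}, for $h_0=3,5,7,\ldots$ and $w_0\neq -k,k-n$, the operator $\colon K^{(\ell)}(z)\colon\, q_W(A_0)$ produces a smooth solution only to order $\ell=h_0-2$, and the genuine obstacle to continuing past this order is encapsulated by the failure of the $y\A=0$ problem to admit a smooth power-series solution at these exceptional weights. By the results imported from~\cite{GWasym} (as stated in the discussion preceding and following expression~\nn{obst}), the unique obstruction to a smooth formal solution of $y\A=0$ with Dirichlet data $\A_0$ is precisely $\big(y^{h_0-1}\A_0\big)\big|_\Sigma$, which is also the coefficient of the first log term $\sigma^{h_0-1}\log\sigma$ in the solution generating operator $\cO$ of~\nn{O}.

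Second, I would connect this tractor obstruction to the boundary operator $L^\ell_k$. The key bridge is Lemma~\ref{beach}, which rewrites $q_W(A_0)=\frac{1}{(n+2w_0+2)(n+w_0-k)}\,\oDs\X q(A_0)$ along $\Sigma$, together with Definition~\ref{LONG}, in which ${\mathbb L}^\ell_k:=y^{2\ell}\,\oDs\X q$ with $2\ell=h_0-1$. Since the holographic triple D-operators commute with $x$ and $y$ (Lemma~\ref{moving}), and since $y$ commutes through $\oDs\X q$ up to the constant normalisation, the obstruction $\big(y^{h_0-1}q_W(A_0)\big)\big|_\Sigma$ is (up to the nonzero factor $(n+2w_0+2)(n+w_0-k)$, which is nonvanishing away from the excluded weights) exactly ${\mathbb L}^\ell_k A_0$ restricted to $\Sigma$. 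Applying $q^*$ extracts the conformally invariant western-slot datum and yields $L^\ell_k A_{_\Sigma}=q^*{\mathbb L}^\ell_k A_0\big|_\Sigma$, so the vanishing of the tractor obstruction is equivalent to $L^\ell_k A_{_\Sigma}=0$.

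Third, I would argue the sufficiency direction carefully, since the proven statement only gives necessity. Here the point is that $y^{h_0-1}q_W(A_0)$ is the \emph{sole} obstruction: once it vanishes, the solution generating operator $\cO$ of~\nn{O} (with its log terms absent precisely because their coefficient is the now-vanishing obstruction) produces a genuine smooth all-orders solution, and composing with the holographic projector $\Ds_{[3]}\,\cO\,\D_{[3]}$ (as in Theorem~\ref{mainl}) enforces the remaining scale-transversality equations $\Is\A=\wDs\A=\Xs\A=0$ by construction, using that $\Ds_{[3]}$ and $\D_{[3]}$ commute with $\cO$. Thus $L^\ell_k A_{_\Sigma}=0$ is both necessary and sufficient, and the solution space of Problem~\ref{laylanguage} (modulo second solutions of the form $\si^{h_0-1}(\cdots)$ produced by the scale duality of Corollary~\ref{swapc}) is canonically parameterised by $\ker L^\ell_k$.

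The main obstacle I anticipate is the sufficiency step, specifically verifying that $\big(y^{h_0-1}q_W(A_0)\big)\big|_\Sigma$ really is the unique obstruction for the \emph{coupled} system rather than merely for the scalar-type $y\A=0$ problem treated in~\cite{GWasym}. One must check that imposing the three scale-transversality conditions does not introduce a second, independent obstruction at order $\sigma^{h_0-1}$; this is controlled by Lemma~\ref{moving} (commutation of $y$ with the triple D-operators) and Lemma~\ref{failure} (idempotence of $\Pi$ on $\ker y$), which together ensure the projector and the solution generating operator act compatibly. The remaining bookkeeping --- tracking the nonvanishing of the normalisation constants $(n+2w_0+2)(n+w_0-k)$ at the weights under consideration, and confirming that the excluded weights $w_0=-k,k-n$ are genuinely the only ones requiring separate treatment --- is routine given the earlier Propositions.
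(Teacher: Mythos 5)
Your route is the same as the paper's (the obstruction theory of~\cite{GWasym}, Lemma~\ref{beach}, Definition~\ref{LONG}, and the commutation Lemma~\ref{moving}), but there is a genuine gap at the sufficiency step. You write that applying $q^*$ to the obstruction ``yields $L^\ell_k A_{_\Sigma}$, \emph{so} the vanishing of the tractor obstruction is equivalent to $L^\ell_k A_{_\Sigma}=0$.'' That ``so'' is a non sequitur: $q^*$ only reads off the western slot, and a priori the tractor $\big(y^{h_0-1}q_W(A_0)\big)\big|_\Sigma$ could carry independent data in its other slots (in particular the southern slot) even when the western slot vanishes. What is needed --- and what the paper's proof actually supplies --- is the structural fact that along~$\Sigma$ one has $q_W(A_0)=\Ds_{[3]}\B$ for some $\B$, so by $[y,\Ds_{[3]}]=0$ the obstruction lies in $\operatorname{ran}\Ds_{[3]}\subseteq\ker(\wDs,\Xs)$, i.e.\ it is a boundary \emph{west} tractor, now of weight $-n-w_0$. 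Since the hypothesis $w_0\neq -k$ is exactly the statement $-n-w_0\neq k-n$, the West Lemma~\ref{West} applies at this weight and shows the obstruction is \emph{completely determined} by its western slot (its southern slot being forced to be $\cod$ of the western one). Only then does $L^\ell_k A_{_\Sigma}=0$ force the entire tractor obstruction to vanish, which by~\cite{GWasym} gives the smooth all-orders solution.

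A symptom that the gap is real rather than cosmetic: in your argument the hypothesis $w_0\neq -k$ plays no role (you only use nonvanishing of the normalisation constants in Lemma~\ref{beach}), yet the equivalence you assert actually \emph{fails} at $w_0=-k$. There the obstruction takes values in weight $k-n$ tractors, the West Lemma no longer determines the southern slot from the western one, and the southern slot carries the independent gauge operator $G_k$ --- precisely the content of Section~\ref{truef}, where smoothness requires $L_kA_{_\Sigma}=0=G_kA_{_\Sigma}$. Your first worry (that the scale-transversality conditions might introduce a second obstruction) is legitimate and your resolution via Lemmas~\ref{moving} and~\ref{failure} is essentially what the paper uses; and your treatment of the uniqueness clause via the iterative construction of~\cite{GWasym} matches the paper. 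But without the west-tractor/weight argument above, your proof establishes only necessity of $L^\ell_k A_{_\Sigma}=0$, not the stated iff.
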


\begin{proof}
Returning to display~\nn{obst} we note that along~$\Sigma$, $q_W(A_0) = \Ds_{[3]} \B$ for some $\B$ and so since $y$ commutes with the holographic interior
triple D-operator,  $y^{h_0-1}q_W(A_0)$ is a weight $-n-w_0$ boundary west tractor. Thus, provided  $-n-w_0\neq k-n$ ({\it i.e.}, $w_0\neq -k$), by the West Lemma~\ref{West}
it is completely determined by its western slot. 

Uniqueness of the parameterisation of the solution space follows by the iterative construction of the solution given in~\cite{GWasym}.
\end{proof}

To understand fully  smoothness, it remains now to study true forms.

\subsection{True forms} \label{truef}

Here we  focus on even boundary dimensions $n$ and  weights $w_0=-k\in\{0,-1,\ldots ,1-\frac n2 \}$ such that $h_0=3,5,7,\ldots$.
We return to the full obstruction displayed in~\nn{obst}, which using Lemma~\ref{beach}
can be expressed as the restriction to~$\Omega^k \Sigma$ of the tangential {\it obstruction operator} 
\begin{equation}\label{fullobst}
y^{h_0-1} {\oDs} \X q= {\mathbb L}_k^{\frac n2 -k}  =:{\mathbb L}_k \, .
\end{equation}
Along~$\Sigma$, the range of this operator consists of two parts; its western slot  
\begin{equation}\label{holL}q^*y^{h_0-1} {\oDs} \X q=L^{\frac{n}2-k}_k=:L_k\, ,\end{equation}
dealt with above and its southern slot
\begin{equation}\label{holG}
q^* \Ys y^{h_0-1} {\oDs} \X q=:G_k\, .
\end{equation}
(The operator $\Ys$ used here is defined in Equation~\nn{Y} below.)
We note that, in contrast to the other cases,  $G_k$ is here not determined by~$L_k$ since the obstruction operator displayed in~\nn{fullobst}
takes values in tractors of weight $k-n$ (see Lemma~\ref{West} applied to boundary tractors).

The operator $G_k$ depends on a second true scale $\tau\in \Gamma\ce M[1]$  (through $\Ys$) and for each such gives a canonical, tangential, 
linear differential operator, which upon restriction to~$\Sigma$ is a map
\begin{equation}\label{boundG}G_k:\Omega^k\Sigma\longrightarrow \Gamma\ce^{k-1}\Sigma[2k-n-2]\, .\end{equation}
As an operator on  boundary forms, the operator $G_k$ of  display~\nn{boundG} is the same  as that defined in~\cite[{Expression (3)}]{BrGodeRham}.
Agreement follows immediately from the proof of Theorem~2.8 of~\cite{BrGodeRham}.

By analogy with the proof of Theorem~\ref{66}, we obtain  the result characterising the obstruction to smooth solutions for true forms.

\begin{theorem}
In even boundary dimension~$n$, the
Dirichlet Proca Problem~\ref{laylanguage} with boundary data  $A_{_\Sigma} \in \Omega^{k}\Sigma$ with $k\in\{0,1,\ldots ,\frac n2-1 \}$
(so weights $w_0=-k$ thus $h_0\in\{3,5,7,\ldots\}$)
admits 
a  formal smooth solution to all orders iff 
$$
L_k A_{_\Sigma}=0=G_k A_{_\Sigma}\, .
$$

\end{theorem}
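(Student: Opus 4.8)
The plan is to mirror the argument for Theorem~\ref{66}, adapting it to the true-form setting where the obstruction tractor lives at the critical weight $w_0=-k$ (so $n+w_0-k=n-2k\neq0$ while $-n-w_0=k-n$ is exactly the boundary west Lemma's critical weight). The starting point is the obstruction expression~\nn{obst}, namely $y^{h_0-1}q_W(A_0)$, which by the discussion preceding Theorem~\ref{fruit} and by Lemma~\ref{beach} can be rewritten along $\Sigma$ as the tangential obstruction operator ${\mathbb L}_k=y^{h_0-1}{\oDs}\X q$ of Equation~\nn{fullobst}. As established in Section~\ref{logsect} (and imported from~\cite{GWasym}), the vanishing of this quantity along $\Sigma$ is precisely the necessary and sufficient condition for the existence of a formal smooth solution to the $y\A=0$ problem; since $\Ds_{[3]}$ commutes with $x$ and $y$ (Lemma~\ref{moving}), solving $y\A=0$ together with the holographic projector machinery suffices to solve all four tractor Proca equations~\nn{second}. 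Thus the entire content of the theorem reduces to computing the obstruction tractor ${\mathbb L}_k A_{_\Sigma}$ and identifying its independent components.

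First I would observe the key structural difference from the generic case: here the obstruction ${\mathbb L}_k A_{_\Sigma}$ is a boundary west tractor of weight $-n-w_0=k-n$. By the west Lemma~\ref{West} specialised to the boundary, this is the critical weight at which a west tractor is \emph{not} determined by its western slot alone. Instead, as spelled out in Proposition~\ref{bcs2} and Section~\ref{dforms}, a west tractor at weight $k-n$ carries two independent pieces of coclosed boundary data: the western slot and the southern slot. Consequently, unlike in Theorem~\ref{66} where $L^\ell_k$ captured the full obstruction, here the obstruction tractor vanishes if and only if \emph{both} its western and southern slots vanish. The western slot is extracted by $q^*$ and equals $L_k A_{_\Sigma}$ by Equation~\nn{holL}; the southern slot is extracted by $q^*\Ys$ and equals $G_k A_{_\Sigma}$ by Equation~\nn{holG}.

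The main steps then are: (i) invoke Theorem~\ref{fruit} and the preceding Proposition to reduce smoothness to the vanishing of $y^{h_0-1}q_W(A_0)\big|_\Sigma$; (ii) use Lemma~\ref{beach} to identify this with ${\mathbb L}_k A_{_\Sigma}$ acting on the restriction $A_{_\Sigma}\in\Omega^k\Sigma$, legitimate because $w_0=-k$ satisfies $w_0\neq k-d,k-n,-1-\frac n2$ in the relevant degree range $k\in\{0,\ldots,\frac n2-1\}$; (iii) apply the boundary west Lemma at the critical weight $k-n$ to conclude that ${\mathbb L}_k A_{_\Sigma}=0$ iff its western \emph{and} southern slots both vanish; (iv) identify these two slots with $L_k A_{_\Sigma}$ and $G_k A_{_\Sigma}$ respectively via~\nn{holL} and~\nn{holG}. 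The conformal invariance and tangentiality of both operators, and their agreement with the Branson--Gover operators of~\cite{BrGodeRham}, have already been recorded in the preceding discussion, so they may be cited rather than reproven.

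The hard part will be step (iii)--(iv): carefully justifying that at the critical weight the southern slot is a genuinely independent obstruction, not one forced to vanish by the vanishing of the western slot. This requires checking that the operator $\Ys$ of Equation~\nn{Y} correctly extracts the southern component and that the coclosedness constraints from Proposition~\ref{bcs2} are exactly what the west Lemma~\ref{West} demands at weight $k-n$, so that the pair $(L_k A_{_\Sigma}, G_k A_{_\Sigma})$ faithfully parametrises the full obstruction tractor in $\ker(\wDs_{_\Sigma},\Xs_{_\Sigma})$. Once the independence of the two slots is confirmed, the biconditional follows immediately by the same iterative solution-construction argument used in the proof of Theorem~\ref{66}, now supplemented by the fact that a smooth solution additionally requires no log term obstruction in either slot.
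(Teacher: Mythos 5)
Your proposal takes essentially the same route as the paper: the paper likewise reduces smoothness to the vanishing along~$\Sigma$ of the full tangential obstruction operator ${\mathbb L}_k=y^{h_0-1}\oDs\X q$ of~\nn{fullobst} (obtained via Lemma~\ref{beach}), and settles your ``hard part'' exactly as you do, by noting that the obstruction takes values in boundary west tractors of the critical weight $k-n$, where by the west Lemma~\ref{West} the western slot ($L_k$, extracted by $q^*$ as in~\nn{holL}) no longer determines the southern slot ($G_k$, extracted via $\Ys$ as in~\nn{holG}), so both must vanish independently. The theorem then follows in the paper, as in your step-by-step outline, by the same iterative argument used for Theorem~\ref{66}.
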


\begin{remark}\label{injectively}
The differential operator $G_k$ is a {\it gauge companion/fixing} operator for the detour operator $L_k$,  meaning that the pair $(L_k,G_k)$
is graded injectively elliptic and~${\mathcal H}^k_L$ is used to denote its kernel which is finite dimensional for compact boundary~$\Sigma$ (see~\cite{BrGodeRham}). 

Thus, we have an analog for true forms of the last part of Theorem~\ref{66}, in that ${\mathcal H}^k_L$ uniquely parametrises, modulo second solutions, the formal smooth solutions to the 
gauge fixed higher form Maxwell system given by the formal  Proca problem~\ref{laylanguage} with $w_0=-k$.

For the global version of our problem, the second solutions are determined by the Dirichlet problem modulo
the addition of solutions that vanish along~$\Sigma$. More precisely, using results of~\cite{Ma-hodge}, it was 
shown in~\cite{AG-BGops} that the following sequence is exact
$$
0\longrightarrow H^k(M,\Sigma) \stackrel{{\rm i}}{\longrightarrow} K^k_\infty(M)\stackrel{{\rm r}}\longrightarrow{\mathcal H}^k_L(\Sigma)\longrightarrow 0\, .
$$
Here, $H^k(M,\Sigma)$ is the relative de Rham cohomology of $M$ which was shown to be isomorphic to $\ker_{L^2}\FL$ in~\cite{Ma-hodge}. The space $K^k_\infty(M)=:\ker\FL$ is the space of smooth harmonic forms on $M$. 
Also, i and r denote, respectively, inclusion and restriction.

Displays~\nn{holL} and~\nn{holG} give holographic formul\ae \ for the conformally invariant (detour operator, gauge companion) pair.
\end{remark}

\subsection{Fundamental holographic identities} \label{funSec}

A main aim of this Section is to show, from its holographic formula above, that there exists a  factorisation of the boundary operator 
\begin{equation}\label{plag}
L_k=\cod_{_\Sigma} Q_{k+1}\,  \extd_{_\Sigma}\, :\Omega^k\Sigma\longrightarrow \Omega_k \Sigma\, ,
\end{equation} 
where 
$$
\Omega_k \Sigma:=\Gamma\ce^{k}\Sigma[2k-n]\, ,
$$
and the boundary {\it Q-operator} $Q_{k+1}$ of~\cite[Theorem 2.8]{BrGodeRham} is the form  analog of the Branson Q-curvature~\cite{tomsharp}.
Thus we have a holographic  proof of  a  theorem of~\cite{BrGodeRham}.
\begin{theorem}\label{detids}
On $(\Sigma,c)$, 
for each $0\leq k \leq n-2$, there is a 
detour complex as follows
\begin{equation}\label{detour}
\Omega^0\Sigma\stackrel{\scriptstyle\extdS}{-\!\!\!\longrightarrow}\cdots\stackrel{\extdS}{-\!\!\!\longrightarrow}\Omega^{k-1}\Sigma\stackrel{\extdS}{-\!\!\!\longrightarrow}
\Omega^k\Sigma\stackrel{\textstyle L_k}{-\!\!\!-\!\!\!\longrightarrow}\Omega_k\Sigma\stackrel{\codS}{-\!\!\!\longrightarrow}\Omega_{k-1}\Sigma
\stackrel{\codS}{-\!\!\!\longrightarrow}
\cdots\stackrel{\codS}{-\!\!\!\longrightarrow}\Omega_0\Sigma\, .
\end{equation} 
\end{theorem}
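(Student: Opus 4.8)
The plan is to reduce Theorem~\ref{detids} to the single factorisation identity~\nn{plag}, namely $L_k=\codS\,Q_{k+1}\,\extdS$, together with the nilpotency $\extdS^2=0$ and $\codS^2=0$. Writing out the sequence~\nn{detour}, it splits into three segments: the incoming de Rham piece $\Omega^0\Sigma\to\cdots\to\Omega^k\Sigma$ built from $\extdS$, the central arrow $L_k$, and the outgoing dual de Rham piece $\Omega_k\Sigma\to\cdots\to\Omega_0\Sigma$ built from $\codS$. On the first and last segments the complex property is exactly $\extdS^2=0$ and $\codS^2=0$. Only the two junctions adjacent to $L_k$ carry new content: at $\Omega^{k-1}\Sigma\xrightarrow{\extdS}\Omega^k\Sigma\xrightarrow{L_k}\Omega_k\Sigma$ one needs $L_k\circ\extdS=0$, and at $\Omega^k\Sigma\xrightarrow{L_k}\Omega_k\Sigma\xrightarrow{\codS}\Omega_{k-1}\Sigma$ one needs $\codS\circ L_k=0$. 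Both follow immediately from~\nn{plag}: $L_k\circ\extdS=\codS\,Q_{k+1}\,\extdS\,\extdS=0$ and $\codS\circ L_k=\codS\,\codS\,Q_{k+1}\,\extdS=0$. Thus the entire burden lies in establishing~\nn{plag}.

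To establish the factorisation holographically I would begin from the holographic formula $L_k=q^*\,y^{h_0-1}\,\oDs\X\,q$ of Definition~\ref{LONG} (here $h_0=n+1-2k$, so $h_0-1=n-2k$), combined with Lemma~\ref{beach} which rewrites $q_W(A_0)$ through $\oDs\X q$ along~$\Sigma$. The engine is the supersymmetric square-root structure $y=-\{\I,\Ds\}=-\{\Is,\D\}$ of Proposition~\ref{Yform} and the commutativity $[\D,y]=[\Ds,y]=0$ recorded there, supplemented by the exterior--scale algebra of Proposition~\ref{Iformalg}; these are precisely the fundamental holographic identities of Theorem~\ref{fun}. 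The key observation is that on a western tractor $q_W(A_0)$ (which lies in $\ker(\Ds,\Is,\Xs)$ in the Poincar\'e--Einstein setting) one has $y\,q_W(A_0)=-\Ds\,\I\,q_W(A_0)$, so by $[\Ds,y]=0$ the leftmost factor persists under iteration, $y^{h_0-1}q_W(A_0)=-\Ds\big(y^{h_0-2}\,\I\,q_W(A_0)\big)$; dually the second square root exposes a rightmost derivative acting as $\extd$. Restricting to~$\Sigma$ and using that $\D$ and $\Ds$ induce $\extdS$ and $\codS$ on the boundary western tractors $\ker(\Ds,\Xs)\cong\Omega^\bullet\Sigma$ (Remark~\ref{remd} and its Hodge dual, via the western Lemma~\ref{West}), while $\oDs$ restricts to the boundary interior Thomas-D operator $\Ds_{_\Sigma}$, one reads off $L_k=\codS\,Q_{k+1}\,\extdS$ on $\Omega^k\Sigma$, with $Q_{k+1}$ the residual middle operator of order $h_0-3$. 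That this $Q_{k+1}$ coincides with the Branson--Gover $Q$-operator of~\cite[Theorem 2.8]{BrGodeRham} then follows from the already-established agreement of the holographic $L_k$ and $G_k$ with the Branson--Gover operators.

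I expect the factorisation step, and not the formal complex property, to be the main obstacle. The delicate points are: (i) extracting the outer $\Ds$ and $\D$ factors cleanly from the high power $y^{h_0-1}$ while controlling the apparent singularities at the distinguished weights, which are removable exactly as in Lemmas~\ref{moving} and~\ref{failure}; (ii) checking that the boundary restrictions $\D\rightsquigarrow\extdS$, $\Ds\rightsquigarrow\codS$ and $\oDs\rightsquigarrow\Ds_{_\Sigma}$ are mutually compatible so that no spurious extrinsic-curvature corrections survive along~$\Sigma$; and (iii) identifying the surviving middle operator $Q_{k+1}$ and matching it with the Branson--Gover $Q$-operator. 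Once Theorem~\ref{fun} supplies the requisite power identities for the Laplace--Robin operator, all three reduce to bookkeeping with the tractor exterior algebra of Section~\ref{algebra} and the scale algebra of Proposition~\ref{Iformalg}, after which the detour complex~\nn{detour} follows for every $0\le k\le n-2$ by the nilpotency argument above.
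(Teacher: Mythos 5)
Your top-level reduction is exactly the paper's: Theorem~\ref{detids} is deduced from the factorisation~\nn{plag} together with $\extdS^2=0=\codS^2$, and your inventory of ingredients (Theorem~\ref{fun}, Lemma~\ref{beach}, Propositions~\ref{Yform} and~\ref{Iformalg}) is the right one. The genuine gap is in the mechanism you propose for the factorisation itself. From $y=-\{\I,\Ds\}=-\{\Is,\D\}$ and $[\Ds,y]=0=[\Is,y]$ one does get, on $\A=q_W(A_0)$ with $\Ds\A=0=\Is\A$, the bulk iterations $y^{m}\A=(-1)^m(\Ds\I)^m\A=(-1)^m(\Is\D)^m\A$ (and note that $\Is\, q_W(A_0)=0$ holds only \emph{along}~$\Sigma$ for an arbitrary extension $A_0$, so even this step needs the generalised divergence condition of Section~\ref{coulomb} to hold in the bulk). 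But the operators exposed by this trick are the bulk $\D$ and $\Ds$, which are not tangential: their restrictions to~$\Sigma$ are not $\extdS$ and $\codS$ but carry normal-derivative terms, and the interior factors sandwiched between them are not expressions in the boundary data. So ``restricting to $\Sigma$ and reading off $L_k=\codS Q_{k+1}\extdS$'' does not follow at this point. What Theorem~\ref{fun} actually supplies---and what your sketch mis-attributes to the anticommutator iteration---are the identities $\X y^{h_0-1}=-(h_0-2)^2\, y^{h_0-3}\oD$ and $y^{h_0-1}\Xs=-(h_0-2)^2\, \oDs y^{h_0-3}$ along~$\Sigma$ (and their companions), which trade canonical tractor operators for the \emph{tangential} $\oD,\oDs$, whose restrictions are the intrinsic $\D_{_\Sigma},\Ds_{_\Sigma}$ by Theorem~\ref{dt}; their proof rests on the nontrivial inductive Lemma~\ref{superlemma}, which has no counterpart in your plan.

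Second, your identification of the middle operator is circular: the factorisation~\nn{plag} does not determine $Q_{k+1}$ (the paper notes this explicitly just before Definition~\ref{Qdef}), so ``the already-established agreement of the holographic $L_k$ and $G_k$ with the Branson--Gover operators'' cannot identify the residual middle factor with the BG Q-operator. The paper pins it down with an auxiliary true scale $\tau$ normalised by $(\delta_R\tau)|_\Sigma=0$ and the operators $\Y,\Ys$ obeying $\{\Xs,\Y\}=1$ and, along~$\Sigma$, $\{\Is,\Y\}=0$: the insertions $1=\{\X,\Ys\}$ and $1=\{\Xs,\Y\}$, together with $q^*\X\Xs=0$, set up the four applications of Theorem~\ref{fun}; the relation~\nn{slaphappy}, Lemma~\ref{trumpet} (to commute $\Ys$ through the tangential double D-operator) and Lemma~\ref{boondoggled} with its adjoint then extract $\extdS$ on the right and $\codS$ on the left, leaving the explicit $Q^{g_{_\Sigma}}_{k+1}$ of Definition~\ref{Qdef} in the middle. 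For Theorem~\ref{detids} alone you only need \emph{some} middle operator $\Omega^{k+1}\Sigma\to\Omega_{k+1}\Sigma$, so the BG identification is optional; but even this weaker factorisation requires the tangential conversion above, and your dismissal of your own points (i)--(iii) as ``bookkeeping'' conceals precisely that content.
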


This will require some holographic identities which hold in greater generality than strictly required
for true forms. The first of these is the following.

\begin{theorem}\label{fun} Let $\A$ be a tractor $k$-form of weight $w_0$ and
  $\F$ a tractor $k$-form of weight $w_0-1$, with $h_0=3,5,7,\ldots$. Then along~$\Sigma$ the following identities hold.
$$
\X y^{h_0-1}\A = - (h_0-2)^2\,  y^{h_0-3} \oD \A\, ,\quad  \quad \Xs y^{h_0-1}\A = - (h_0-2)^2 y^{h_0-3} \oDs\! \A\, ,
$$
and 
$$
y^{h_0-1}\X \F = - (h_0-2)^2 \, \oD\,  y^{h_0-3} \F\, ,\quad \quad
y^{h_0-1}\Xs\F = - (h_0-2)^2 \, \oDs y^{h_0-3} \F\, .
$$
\end{theorem}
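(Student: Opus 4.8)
The plan is to prove all four identities by pushing the string $y^{h_0-1}$ through the canonical tractor operators $\X$, $\Xs$ with the algebra of Proposition~\ref{Iformalg}, and then restricting to~$\Sigma$, where $x=\sigma$ vanishes. First I would record that $y$ is formally self-adjoint: from Proposition~\ref{Yform} one has $y=-\{\Is,\D\}$, so with $\X^\star=\Xs$, $\D^\star=\Ds$ and $\I^\star=\Is$ one computes $y^\star=-\{\Ds,\I\}=y$. Taking formal transposes then turns the third identity into the second and the fourth into the first, since $(y^{h_0-1}\X)^\star=\Xs\,y^{h_0-1}$ and $(\oD\,y^{h_0-3})^\star=y^{h_0-3}\oDs$, the latter using that $\oD^\star$ agrees with $\oDs$ up to operator--ordering corrections that carry a left factor $x$ (hence are negligible along~$\Sigma$, because $[\Is,y]=0$ while $[\Xs,y]$ produces only $x\Ds$- and $\Is$-terms via Proposition~\ref{Iformalg}). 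Thus it suffices to establish, say, the third identity, and the fourth follows by the identical argument with $\Xs,\Ds,\Is$ replacing $\X,\D,\I$.

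Next I would unwind the right-hand side. Using Definition~\ref{baronD} together with Theorem~\ref{dt} (which fixes the normalisation $(\oD\A)|_\Sigma=\D_{_\Sigma}\A_{_\Sigma}$) and the weight $h=h_0-2$ carried by $\F$, the target reduces, via the formula~\nn{DT} for $\D^T$, to the clean statement
\[
y^{h_0-1}\X\F \ \stackrel{\Sigma}{=}\ -(h_0-1)(h_0-2)\,\Big(\D + \I y + \tfrac{1}{(h-1)(h-2)}\,\X y^2\Big)\,y^{h_0-3}\F,
\]
with $h$ evaluated on $y^{h_0-3}\F$. Everything is then controlled by the single relation $(h-2)\,y\X-h\,\X y=2\,x\D-h(h-2)\,\I$ of Proposition~\ref{Iformalg}, which I would solve for $y\X$ and iterate.

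The core computation is the evaluation of $[y^m,\X]$ modulo operators with a left factor $x$. The mechanism is that $yx=xy-h$ (from $[x,y]=h$): each time a $y$ is pushed past the $x\D$ term generated by the commutator it splits into a term that retains $x$ (and so restricts to $0$ on~$\Sigma$) and a term $-h\D$ in which $x$ has been eliminated, and similarly for the $\I$ term, while $[\D,y]=[\I,y]=0$ keeps these factors intact. Running the induction on $m$ and summing the weight-dependent coefficients $\tfrac{H}{H-2}$ produced at each step, the surviving $\D$- and $\I$-contributions assemble into precisely $-(h_0-1)(h_0-2)\big(\D+\I y\big)y^{h_0-3}\F$, and the leftover multiple of $\X y^{h_0-1}\F$ reproduces the $\tfrac{1}{(h-1)(h-2)}\X y^2$ term; matching against the displayed right-hand side closes the identity.

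The step I expect to be the main obstacle is the bookkeeping of these weight-dependent coefficients through the iteration, and in particular the exceptional weights. Because $h_0$ is odd ($3,5,7,\dots$), the weight shifts generated by the recursion can pass through $H=2$ (weight $1-\tfrac d2$) and through $w=\tfrac32-\tfrac d2$ and $w=2-\tfrac d2$, where both $\D^T$ in~\nn{DT} and the coefficient $\tfrac{1}{(h-1)(h-2)}$ are singular. These are handled exactly as elsewhere in the paper: the normalisation $\oD=(h-1)\Dt^T$, the special-weight prescription~\nn{special}, and the $w=1-\tfrac n2$ case of Definition~\ref{baronD} are engineered so that the apparent poles are removable (compare the removable-singularity arguments of Proposition~\ref{diffsplit} and Lemma~\ref{moving}). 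Since both sides of each identity are natural expressions whose coefficients are polynomial in the weight, once the generic case is verified the identities extend across the exceptional values by continuity.
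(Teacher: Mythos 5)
Your proposal is correct and follows essentially the paper's own route: the paper likewise reduces the four identities to a single one using the starred/unstarred symmetry together with formal adjointness along~$\Sigma$ (justified, as you do, by tangentiality of all the constituent operators), and its core computation is the same iteration of the fourth relation of Proposition~\ref{Iformalg} with $[\D,y]=0=[\I,y]$ and $x|_\Sigma=0$, packaged there as Lemma~\ref{superlemma} (the expansion of $\X y^\ell$) combined with the rewrite $\X y^2=(h-1)(h-2)\big(\D^T-\D-\I y\big)$ coming from~\nn{DT}. Your commutation of $[y^m,\X]$ is just the mirror image of that Lemma (the paper proves the first identity, you the third), and your intermediate reduction with coefficient $-(h_0-1)(h_0-2)$ checks out once the weight operators $h$ in $\oD=(h-1)\Dt^T$ and in $\frac{1}{(h-1)(h-2)}\X y^2$ are read as acting at the output weights; note also that for odd $h_0$ the intermediate $h$-eigenvalues $h_0-2, h_0-4,\ldots,4-h_0$ are all odd, so the singular even weights you worry about are never actually met.
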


Before proving this Theorem we establish a Lemma.

\begin{lemma}\label{superlemma}
Acting on $\ct^kM[w_0]$ for any $\ell\in{\mathbb Z}_{\geq 1}$ and weights $h_0\neq0,2,4,\ldots,2\ell$,
$$
\X y^\ell = \frac{1}{h+2\ell-2}\Big[
(h-2)\,  y^\ell \X +\ell\big((\ell-1)(h-2)-2\, xy\big)y^{\ell-2}\D
\Big]+\ell(h-2)\I y^{\ell-1}\, .
$$
\end{lemma}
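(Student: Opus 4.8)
The plan is to prove Lemma~\ref{superlemma} by induction on $\ell$, using the commutation relations from Proposition~\ref{Iformalg} together with the $\mathfrak{sl}(2)$ relations and the enveloping-algebra identities~\nn{core} and~\nn{rcore}. The base case $\ell=1$ should be a direct rearrangement of the identity
$$
(h-2)\, y\X-h\, \X y\ =\ 2\,x\D-h(h-2)\I
$$
of Proposition~\ref{Iformalg}. Solving this for $\X y$ gives
$$
\X y = \frac{1}{h}\Big[(h-2)\, y\X - 2\, x\D + h(h-2)\I\Big],
$$
and I would check that this matches the claimed formula at $\ell=1$ (noting $h+2\ell-2 = h$ and $y^{\ell-2}$ drops out since the coefficient $\ell(\ell-1)=0$, while the $xy$ term is absorbed using $[x,y]=h$ to rewrite $x\D$ against $xy\,\D$ — this bookkeeping with the $\D$-term ordering is the first place to be careful).

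First I would set up the induction by writing $\X y^{\ell+1} = (\X y^\ell)\, y$ and substituting the inductive hypothesis, then commuting the trailing $y$ leftwards through each factor. The key auxiliary facts I would invoke are: $[\D,y]=0$ and $[\I,y]=0$ from Proposition~\ref{Yform} (so $\D$ and $\I$ pass through powers of $y$ freely on Poincar\'e--Einstein/almost Einstein structures), the $h$-shifting rule $y\,f(h) = f(h+2)\,y$ which follows from $[h,y]=-2y$, and $[x,y]=h$ together with $[y,x^k]=-k\,x^{k-1}(h+k-1)$ to handle the $xy$ insertion. The main computational step is reconciling the weight shifts: each $y$ that moves past the Cartan-valued coefficient $(h+2\ell-2)^{-1}$ and the polynomials in $h$ changes $h\mapsto h+2$, and I expect the denominators $h+2\ell-2$ to telescope correctly into $h+2\ell$ for the $\ell+1$ case.

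The hard part will be managing the $\D$-term, namely the piece $\ell\big((\ell-1)(h-2)-2xy\big)y^{\ell-2}\D$, through the induction. When the extra $y$ is commuted in, the factor $2xy$ must be re-expressed: I would use $xy\,y^{\ell-1} = x y^\ell$ and then $x y^\ell = (y^\ell x - [y^\ell,x])$, invoking~\nn{rcore} to generate exactly the new $(\ell+1)$-indexed polynomial coefficient. The delicate point is that $\D$ itself is nilpotent and commutes with $y$ but \emph{not} with $x$, so I must be disciplined about keeping $\D$ to the far right and only moving $x$, $y$, $h$ past each other. I expect that after collecting the $\X$-term, the $\D$-term, and the $\I$-term separately, each coefficient matches the target formula at order $\ell+1$ after simplifying the rational functions of $h$; any apparent poles at the excluded weights $h_0=0,2,\ldots,2\ell$ are precisely the values removed by hypothesis. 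Finally I would remark that, since every operator identity here is natural with coefficients rational in the weight, the restriction to the stated weights is exactly where the leading denominator $h+2\ell-2$ is invertible, and the identity then specialises to the odd values $h_0=3,5,7,\ldots$ needed in Theorem~\ref{fun}.
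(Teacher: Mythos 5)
Your proposal is correct and is essentially the paper's own proof: the paper establishes Lemma~\ref{superlemma} by exactly this induction on $\ell$, with the fourth identity of Proposition~\ref{Iformalg} as the base case, and with the induction step carried out using only that identity, the solution generating algebra~\nn{sl2} (hence the shift rule $y\,f(h)=f(h+2)\,y$ and the consequence~\nn{rcore}) and $[y,\D]=0$ (together with $[\I,y]=0$) from Proposition~\ref{Yform}. Your handling of the $\ell=1$ specialisation (reading $xy\,y^{\ell-2}\D$ as $x\,y^{\ell-1}\D$ so the $-2x\D$ term emerges) and of the telescoping denominators, with the excluded weights $h_0=0,2,\ldots,2\ell$ arising from the chain of shifted inverses of $h$, is precisely the bookkeeping the paper's sketched induction requires.
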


\begin{proof}
The proof is by induction. The base case is the identity on the fourth line of Proposition~\ref{Iformalg}.
Thereafter only that identity, the solution generating algebra~\ref{sl2} and the fact that $[y,\D]=0$ from Proposition~\ref{Yform} is required to complete the induction.
\end{proof}

Armed with this Lemma we may prove Theorem~\ref{fun}.

\begin{proof}
The four identities are essentially equivalent. 
Since the algebra of $\Ds$, $\Xs$ and~$\Is$ satisfy the same identities as their unstarred
counterparts, it suffices to prove one identity from each row as displayed.
We will shortly prove the first of the four identities by simple application of known identities.
To obtain the fourth identity from the first, observe that the operators above acting on~$\A$ and $\F$ at quoted weights are all tangential.
Moreover, so are their separate pieces $\oD$, $\oDs$, $\X$, $\Xs$ and the given powers of~$y$.
Along the boundary the corresponding identities are in fact 
the  formal adjoints of one another. Of course, one can also verify the fourth identity algebraically.

Turning to the first identity,  note that from equation~\nn{DT}
$$
\X y^2 = (h-1)(h-2)\big(\D^T-\D-\I y\big) =(h-2)\big(h\oD-(h-1)(\D+\I y)\big) \, .
$$
Therefore one writes $\X y^{h_0-1}=\X y^{h_0-3} y^2$ and firstly applies Lemma~\ref{superlemma} at $\ell=h_0-3$.
The Theorem then follows by computing along~$\Sigma$, using only $[\D,y]=0$ and elementary algebra. 

\end{proof}

\newcommand{\bce}{\overline{\ce}}

 Let us sketch how the factorization~\nn{plag} of the long operator arises.
  Consider~$\F$ satisfying the usual identities \begin{equation}\label{usual}\Is \F = 0 =\Ds \F = \Xs \F\, ,\end{equation} so it has
  entries in the west and southern slots, but otherwise is zero. 
Then $\F=\Xs \A$ for some $k+1$ form tractor~$\A$. Thus 
$$
y^{h_0-1} \F= y^{h_0-1} \Xs \A\, .
$$ 
We assume that $h_0=3,5,7,\ldots$, and using the Theorem~\ref{fun} we see that this takes the form
$$
\oDs y^{h_0-3} \A\, ,
$$
where we have dropped a non-zero overall constant. Now using 
Theorem~\ref{fun} we see that
the North slot of this vanishes (in fact it is annihilated by $\Xs$). Then it follows easily by inspecting the formula for 
$\Ds_{_\Sigma}$ that, along~$\Sigma$, the western slot takes the form $\cod_{_\Sigma} N$, for some $N$.

On the other hand the western slot of $y^{h_0-1} \F$ is moved to the
southern slot by acting with $\X$.  Thus we may study 
$$
\X y^{h_0-1} \F\,  .
$$ 
Using once again Theorem~\ref{fun} and dropping the non-zero constant factor this becomes
$$
y^{h_0-3} \oD \F\,  .
$$
But now using that $\F$ has weight $-k$ and satisfies the system~\nn{usual},
it follows at once from the formula for $\oD$ that this factors through $\extd_{_\Sigma} F_{_\Sigma}$
where $F_{_\Sigma}=\Big[q^* \F\Big]\Big|_\Sigma$. 

To follow this through carefully and show that the $Q$ operator arises as in~\nn{plag} we 
need some preliminaries.
Of course the operator $Q_{k+1}$ is not determined
uniquely by  the formula~\nn{plag} for $L_k$. 
Thus, we use a
special choice of true scale $\tau\in\Gamma\ce_+ M[1]$
 to pin down a preferred version. 

Let us write \begin{equation}\label{Y}\Y:=\scalebox{.95}{$\frac{1}{d-2}$}\, \D \log \tau\in \Gamma\ct^1M[-1]\, ,\end{equation}
where 
$$\big(\delta_{\!\raisebox{-.5mm}{\it \tiny R}} \tau\big)\big|_\Sigma=0\, .$$
Note that this condition is easily solved given any $\tau|_\Sigma$.
 Then $$\{\Xs,\Y \}=1\, ,$$
 and, along~$\Sigma$, $$ \{\Is,\Y\}=0\, .$$
Now note  the following Lemma  for commutators of the tangential double D-operator (see Proposition~\ref{hatstotilde} and Remark~\ref{tangDD}) with $\Y$.
\begin{lemma}\label{trumpet}
Acting on $\Gamma \cT^kM[w]$ we have along~$\Sigma$
$$
 \Big[\X \Dt^T , \Y\Big]=  0 =\Big[ \Xs \Dts{}^T , \Ys \Big]\, .
$$
\end{lemma}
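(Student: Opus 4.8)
The plan is to reduce both bulk identities to intrinsic statements on $(\Sigma,c_{_\Sigma})$ and then dispatch them with the derivation property and nilpotency. First I would recall from Remark~\ref{tangDD} that $\X\Dt^T=-D_{[2]}^T$ is a tangential, Grassmann even, first-order derivation, and that along $\Sigma$ it restricts (under the isomorphism $N^\perp\cong\ct\Sigma$) to the intrinsic exterior double D-operator $\X_{_\Sigma}\Dt_{_\Sigma}=-\D_{[2]}^{\scriptscriptstyle\Sigma}$. Next I would show that $\Y=\varepsilon(Y)$ descends to a boundary operator: writing $Y:=\tfrac1{d-2}\D\log\tau$, the hypothesis $\{\Is,\Y\}\big|_\Sigma=0$ says $I\cdot Y=0$ along $\Sigma$, so $Y|_\Sigma\in N^\perp$ defines a boundary tractor one-form $Y_{_\Sigma}$ and $\Y|_\Sigma=\varepsilon(Y_{_\Sigma})$.

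The key geometric input is that $Y_{_\Sigma}$ is $\D_{_\Sigma}$-exact. To establish this I would apply the holographic formula $\big(\oD\,\mu\big)\big|_\Sigma=\D_{_\Sigma}\mu_{_\Sigma}$ of Definition~\ref{baronD} and Theorem~\ref{dt} to $\mu=\log\tau$, which lies in $\ker\Is$ trivially (being a zero-form). Since $\oD=(h-1)\Dt^T=(h-1)h^{-1}\big(\D+\I y+\cdots\big)$, and every term beyond $\D$ carries a factor of $y\log\tau$, which by Proposition~\ref{Yform} is controlled along $\Sigma$ by $\delta_R\log\tau$ and hence vanishes there because of the defining condition $\big(\delta_R\tau\big)\big|_\Sigma=0$, one finds $\oD\log\tau\big|_\Sigma\propto\D\log\tau\big|_\Sigma=(d-2)\,Y\big|_\Sigma$. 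Comparing with $\oD\log\tau\big|_\Sigma=\D_{_\Sigma}\log\tau_{_\Sigma}$ shows that $Y_{_\Sigma}$ is a constant multiple of $\D_{_\Sigma}\log\tau_{_\Sigma}$.

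With these preliminaries the exterior identity is immediate. As $\X_{_\Sigma}\Dt_{_\Sigma}$ is an even derivation, $\big[\X_{_\Sigma}\Dt_{_\Sigma},\varepsilon(Y_{_\Sigma})\big]=\varepsilon\big(\X_{_\Sigma}\Dt_{_\Sigma}Y_{_\Sigma}\big)$; and since $\Dt_{_\Sigma}=h^{-1}\D_{_\Sigma}$ with $Y_{_\Sigma}\propto\D_{_\Sigma}\log\tau_{_\Sigma}$, one gets $\Dt_{_\Sigma}Y_{_\Sigma}\propto h^{-1}\D_{_\Sigma}^2\log\tau_{_\Sigma}=0$ by nilpotency of the exterior tractor D-operator, whence $\X_{_\Sigma}\Dt_{_\Sigma}Y_{_\Sigma}=0$ and $[\X\Dt^T,\Y]\big|_\Sigma=0$. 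For the second identity I would invoke tractor Hodge duality exactly as in the proof of Theorem~\ref{fun}: under $\star$ one has $\Ys=(-1)^{\N-1}\star^{-1}\Y\star$ and $\Xs\Dts{}^T$ is the Hodge conjugate of $\X\Dt^T$, so $\big[\Xs\Dts{}^T,\Ys\big]\big|_\Sigma$ is the Hodge conjugate of $\big[\X\Dt^T,\Y\big]\big|_\Sigma=0$ and hence vanishes. (Alternatively it follows by the identical computation, now using $\{\D_{_\Sigma},\Ds_{_\Sigma}\}=0$ and that $\Ds_{_\Sigma}$ annihilates zero-forms to get $\Dts_{_\Sigma}Y_{_\Sigma}\propto h^{-1}\Ds_{_\Sigma}\D_{_\Sigma}\log\tau_{_\Sigma}=-h^{-1}\D_{_\Sigma}\Ds_{_\Sigma}\log\tau_{_\Sigma}=0$.)

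The main obstacle I anticipate is the middle step: justifying rigorously that $\Y$ restricts to $\varepsilon(Y_{_\Sigma})$ with $Y_{_\Sigma}$ genuinely $\D_{_\Sigma}$-exact. This requires careful bookkeeping of the weight-dependent normalizations in $\oD=(h-1)\Dt^T$ and a verification that each term of $\D^T$ past $\D$ is indeed killed along $\Sigma$ by the gauge choice $\delta_R\tau|_\Sigma=0$ (and that one stays away from the exceptional weights $w=1-\tfrac d2,\,1-\tfrac n2$ where these operators require separate treatment). Once that holographic identification is secured, the derivation property together with $\D_{_\Sigma}^2=0$ closes the argument cheaply.
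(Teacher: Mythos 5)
Your core mechanism is the right one --- the commutator is ultimately controlled by the $\D$-exactness of $Y$ together with nilpotency --- but your strategy of reducing everything to intrinsic boundary operators has a genuine domain gap. The restriction formula of Theorem~\ref{dt} and Remark~\ref{tangDD}, which you invoke to replace $\X \Dt^T$ by $\X_{_\Sigma}\Dt_{_\Sigma}$, is only valid on extensions satisfying $\Is \A = 0$; your argument therefore proves the identity only on sections whose restriction is orthogonal to the normal tractor, whereas the Lemma asserts an operator identity on \emph{all} of $\Gamma\ct^kM[w]$ along~$\Sigma$ --- and that generality is used: in the proof of the factorisation theorem the second identity is applied to $\Ys\, y^{h_0-3}\X\Y q\, \extd_{_\Sigma}(\cdot)$, which lies in no such kernel. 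Splitting a general section via $1=\Is\I+\I\Is$ leaves you the piece $\I\B$, and handling it requires commuting $\X\Dt^T$ past $\I$ --- algebra your proof never develops, and precisely where the paper's own purely bulk proof does its work: it writes $\X\Dt^T = \X\big(\Dt - \I\{\Is,\Dt\}\big)$, observes that the genuine double D-operator $\X\Dt$ is Leibnizian and kills $Y$ outright \emph{in the bulk} (since $\X\Dt\, Y \propto \X\D^2\log\tau = 0$, so $[\X\Dt,\Y]=0$ everywhere, with no boundary reduction at all), and then disposes of the correction terms $-\I\Is\X\Dt + \I\X\Dt\Is$ along~$\Sigma$ using $\{\Is,\Y\}=O(\sigma)$ together with the fact that pushing the resulting $\sigma$ through $\X\Dt$ produces only $\I$- or $\X$-terms, annihilated by the $\I$, $\X$ already present. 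Relatedly, calling $\X\Dt^T$ a ``first-order derivation'' is false in the bulk: it contains the second-order piece $\X\I y$ (up to weight factors); only $\X\Dt$ is Leibnizian, which is exactly why the paper isolates it first.

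Your middle step also overreaches. From $(\delta_R\tau)|_\Sigma=0$ one gets $y\log\tau = O(\sigma)$, so the $\I y$ term in $\oD\log\tau$ indeed dies along~$\Sigma$; but $y^2\log\tau\big|_\Sigma$ need \emph{not} vanish ($y$ applied to an $O(\sigma)$ quantity is not $O(\sigma)$, by $[y,x]=h$), so the $\X y^2$ term survives and your conclusion should read $Y|_\Sigma = a\, \D_{_\Sigma}\log\tau_{_\Sigma} + \X_{_\Sigma}\rho$ for some~$\rho$, not a constant multiple of $\D_{_\Sigma}\log\tau_{_\Sigma}$. This particular hole is repairable --- the relation $(h-2)\D\X + (h+2)\X\D=0$ shows $\X_{_\Sigma}\Dt_{_\Sigma}$ annihilates $\X_{_\Sigma}$-exact terms as well --- and applying Theorem~\ref{dt} to the log-density $\log\tau$ would itself need separate justification, since the theorem is stated for weighted tractor forms. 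But the whole detour is avoidable: $\D Y=0$ holds identically in the bulk by $\D^2=0$, so the exactness input is free, and the only real content of the Lemma is the bookkeeping of the tangential correction terms --- which your boundary reduction cannot reach on general sections, and which the paper's short bulk computation settles completely.
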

\begin{proof}
The proof amounts to recalling that $\X \Dt^T=\X(\Dt -\I \{\Is,\Dt\})$. Along $\Sigma$ the right hand side of this is $\X \Dt - \I\Is \X \Dt +\I
\X\Dt\Is $. But the double D-operator $\X\Dt$ is Leibnizian so obviously commutes with $\Y$. Moreover, $\Y$ was chosen such that $\{\Is,\Y\}=0$ along $\Sigma$. Note that $\I \X\Dt \{\Is,\Y\}$ vanishes along $\Sigma$ since an $x$ produced on the right by the anticommutator produces terms containing either a second  $\I$ or $\X$
when pushed through to the left. 
\end{proof}
We shall also need the following which is obtained by an easy computation using the tangential double D-operator version of Theorem~\ref{dt} given in Remark~\ref{tangDD}.
\begin{lemma}\label{boondoggled}
Acting on $A\in\Omega^kM $ an extension of $A_{_\Sigma}\in\Omega^k\Sigma$  we have
$$
 \Big[\X \Dt^T q A\Big]\Big|_\Sigma =  \X_{_\Sigma} q_{_ \Sigma} \extd_{_\Sigma} A_{_\Sigma} \, .
$$
\end{lemma}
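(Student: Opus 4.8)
The plan is to read the claim off the tangential double $D$-operator identity recorded in Remark~\ref{tangDD} (the double-$D$ form of Theorem~\ref{dt}), applied to the extension $\A:=q A\in\Gamma\ct^kM[-k]$ of the boundary insertion $\A_{_\Sigma}:=q_{_\Sigma}A_{_\Sigma}\in\Gamma\ct^k\Sigma[-k]$; note $A\in\Omega^kM=\ce^kM[0]$ forces the tractor weight $w=-k$. First I would verify the three preconditions. Since $A$ extends $A_{_\Sigma}\in\Omega^k\Sigma$ we have $\iota(n)A\big|_\Sigma=0$, and a glance at the matrix form~\nn{Iext} of $\Is$ shows that for the representative $qA\stackrel{g}=(0,A,0,0)$ one gets $\big(\Is qA\big)\big|_\Sigma=0$, so the $\ker\Is$ condition holds along~$\Sigma$. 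This choice of representative is legitimate because $\X$ and $\Dt^T$ descend to $\coker\X$: by~\nn{X}, $\X$ sends the north and west slots to the east and south slots, which are precisely the quotient directions, so $\X\Dt^T$ depends only on the $\coker\X$-class and is representative independent. Finally, restriction of the west tractor $qA$ to~$\Sigma$ returns $q_{_\Sigma}A_{_\Sigma}$ under the isomorphism $N^\perp\cong\ct\Sigma$ of~\nn{boundaryisom}, exactly as in the compatibility Proposition~\ref{extExt}; hence $\A_{_\Sigma}=q_{_\Sigma}A_{_\Sigma}$.

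With these checks, Remark~\ref{tangDD} gives at once
$$
\big(\X\Dt^T qA\big)\big|_\Sigma \;=\; \X_{_\Sigma}\Dt_{_\Sigma}\,\A_{_\Sigma} \;=\; \X_{_\Sigma}\Dt_{_\Sigma}\,q_{_\Sigma}A_{_\Sigma}\, .
$$
It then remains to evaluate the purely boundary composition $\Dt_{_\Sigma}q_{_\Sigma}$ on $A_{_\Sigma}$, and here I would simply apply the matrix formula~\nn{D} for $\D_{_\Sigma}$ (in the boundary dimension $n$) together with Definition~\ref{WHAT}: acting on $q_{_\Sigma}A_{_\Sigma}\stackrel{g}=(0,A_{_\Sigma},*,*)$ at tractor weight $w=-k$, the northern slot of $\Dt_{_\Sigma}q_{_\Sigma}A_{_\Sigma}$ carries the coefficient $(w+k)=0$ and so vanishes, while the western slot is $\extd_{_\Sigma}A_{_\Sigma}$. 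Thus $\Dt_{_\Sigma}q_{_\Sigma}A_{_\Sigma}=q_{_\Sigma}\extd_{_\Sigma}A_{_\Sigma}$ as a $\coker\X_{_\Sigma}$-class, and applying $\X_{_\Sigma}$ yields the asserted identity $\big(\X\Dt^T qA\big)\big|_\Sigma=\X_{_\Sigma}q_{_\Sigma}\extd_{_\Sigma}A_{_\Sigma}$.

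The one point demanding care—the main obstacle—is the excluded weight of Theorem~\ref{dt}, namely $w=\tfrac32-\tfrac d2=1-\tfrac n2$, which for integer $k$ (the relevant case of even $n$) occurs precisely at $k=\tfrac n2-1$; the other excluded weight $w=1-\tfrac d2$ is non-integral here and so never arises. At $w=1-\tfrac n2$ the operator $\Dt^T$ is singular and Theorem~\ref{dt} instead controls $\X y^2$, which is exactly the branch by which $\oD$ is defined at this weight in Definition~\ref{baronD}. So there I would rerun the identical argument through the companion statement $\big(\X y^2\A\big)\big|_\Sigma=-\D_{_\Sigma}\A_{_\Sigma}$ of Theorem~\ref{dt}, feeding into $\oD$ rather than $\X\Dt^T$; the boundary computation $\Dt_{_\Sigma}q_{_\Sigma}=q_{_\Sigma}\extd_{_\Sigma}$ is untouched, so the factorization of $L_k$ built on this Lemma goes through uniformly. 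For all generic weights $w=-k\neq1-\tfrac d2,\,1-\tfrac n2$ the three routine verifications above complete the proof.
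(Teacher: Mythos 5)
Your proof is correct and follows exactly the route the paper intends: the paper's entire justification is that the Lemma ``is obtained by an easy computation using the tangential double D-operator version of Theorem~\ref{dt} given in Remark~\ref{tangDD}'', which is precisely your application of that remark to $\A=qA$ (after checking $\Is\, qA\big|_\Sigma=0$ and representative independence in $\coker\X$), followed by the boundary slot computation showing $\Dt_{_\Sigma}q_{_\Sigma}A_{_\Sigma}=q_{_\Sigma}\extd_{_\Sigma}A_{_\Sigma}$ because the northern coefficient $w+k$ vanishes at $w=-k$. Your extra care at the exceptional weight $w=1-\frac n2$ (i.e.\ $k=\frac n2-1$), rerouted through the companion identity $\big(\X y^2\A\big)\big|_\Sigma=-\D_{_\Sigma}\A_{_\Sigma}$, is consistent with how the paper itself handles that weight, namely by treating the $h_0=3$ singularity as removable (see the comment following~\nn{slaphappy}) and reading $\Dt_{_\Sigma}$ through the special case of Definition~\ref{WHAT}.
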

\begin{remark}
There is an obvious adjoint version of the above Lemma, 
$$
q^* \Xs_{_\Sigma} \Dts_{_\Sigma} = -\cod_{_{\Sigma}}q^* \Xs_{_\Sigma}\, .
$$
\end{remark}

We are now ready to give a holographic formula for the Q-operator in preparation for our main Theorem.
\begin{definition}\label{Qdef} Let $n$ be even, fix a true scale $\tau\in \Gamma\ce_+M[1]$ and
  denote by $g\in c$ the corresponding metric ({\it i.e.}, $g=\tau^{-2}\bg$).
 For each $k\in \{0,\ldots ,\frac n2 \}$. This determines the boundary differential {\em Q-operator}
$$
Q^{g_{_\Sigma}}_{k}: \Omega^k\Sigma\longrightarrow \Omega_k\Sigma \, ,
$$
determined by restriction of a bulk holographic formula,  called the {\it holographic Q-operator} 
$$
Q^g_k:= q^* \Ys\Xs y^{n-2k} \X\Y  q\ \, ,
$$
which acts on bulk true forms.
\end{definition}

This enables a holographic proof of the following Theorem.

\begin{theorem}
  On $(\Sigma,c_{_\Sigma})$ of even dimension $n\geq 4$ and given any choice of $g_{_\Sigma}\in c_{_\Sigma}$, the conformally
  invariant detour operators
$$
L_k: \Omega^k\Sigma\longrightarrow \Omega_k \Sigma \, ,\quad 0\leq k \leq \frac{n}{2}-1  \, ,
$$
can be expressed as the composition  
\begin{equation}\label{dmd}
L_k= \gamma_k\, \cod_{_\Sigma} Q^{g_{_\Sigma}}_{k+1} \extd_{_\Sigma} \, ,
\end{equation}
where 
$Q^{g_{_\Sigma}}_{k+1}$ is the Q-operator from Definition~\ref{Qdef} and
$$
\gamma_k=-(n-2k)(n-2k+2)(n-2k-1)^2\, .
$$
\end{theorem}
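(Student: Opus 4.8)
The plan is to assemble the factorisation from the holographic identities already in hand, tracking the numerical constant carefully.  The starting point is the holographic formula for the detour operator, $L_k=q^*\,y^{h_0-1}\,\oDs\,\X q$ with $h_0=n-2k+1$ (so $h_0=n-2k+1$ gives $y$-power $n-2k$), as recorded in~\nn{holL} and~\nn{fullobst}.  First I would take a bulk true form $A\in\Omega^kM$ extending $A_{_\Sigma}$ and rewrite $\oDs\X q A$ using the tangential double D-operator: along~$\Sigma$ the operator $\oDs$ reduces to $\Ds_{_\Sigma}$, and Lemma~\ref{boondoggled} supplies $\big[\X\Dt^T q A\big]\big|_\Sigma=\X_{_\Sigma}q_{_\Sigma}\extd_{_\Sigma}A_{_\Sigma}$.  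The goal is to peel the outer $\cod_{_\Sigma}$ and the inner $\extd_{_\Sigma}$ off the obstruction operator, exhibiting what remains as $Q^{g_{_\Sigma}}_{k+1}$ acting between them.

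The central computation would repeatedly invoke Theorem~\ref{fun}.  Writing $\F=\Xs\A$ so that $\F$ satisfies the system~\nn{usual}, the identity $y^{h_0-1}\Xs\F=-(h_0-2)^2\,\oDs y^{h_0-3}\F$ and its exterior partner let me move powers of $y$ past $\X$ and $\Xs$ at the cost of trading two powers of $y$ for an $\oD$ or $\oDs$ and a factor $-(h_0-2)^2$.  Concretely I would commute the leading $\Xs$ (from $\oDs$) leftwards and the trailing $\X$ rightwards, so that acting on the $\Y$-split form $\X\Y q A$ the obstruction operator becomes, up to accumulated constants, $\Ys\Xs y^{n-2k}\X\Y$ sandwiched between $\cod_{_\Sigma}$ on the left and $\extd_{_\Sigma}$ on the right.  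The middle piece $q^*\Ys\Xs y^{n-2k}\X\Y q$ is exactly the holographic Q-operator $Q^{g_{_\Sigma}}_{k+1}$ of Definition~\ref{Qdef} (with $k$ replaced by $k+1$, hence the power $n-2(k+1)+2=n-2k$).  Lemma~\ref{trumpet}, which says $\X\Dt^T$ commutes with $\Y$ along~$\Sigma$, together with the adjoint remark $q^*\Xs_{_\Sigma}\Dts_{_\Sigma}=-\cod_{_\Sigma}q^*\Xs_{_\Sigma}$, is what legitimately produces the boundary codifferential on the outside and keeps the $\Y$-insertions from interfering with the tangential structure.

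The delicate part is purely combinatorial: bookkeeping the constant $\gamma_k=-(n-2k)(n-2k+2)(n-2k-1)^2$.  Each application of Theorem~\ref{fun} contributes $-(h_0-2)^2=-(n-2k-1)^2$, which accounts for the squared factor.  The remaining factors $(n-2k)$ and $(n-2k+2)$ arise from Proposition~\ref{diffsplit}-type eigenvalues when $\Xs\Dt$ and $\wDs\X$ act on western tractors at the relevant weights, and from the normalisation in Lemma~\ref{beach}; I would pin these down by evaluating $\{\Xs,\Y\}=1$ and tracking the weight at which each operator acts (the $\Y$-split raises the form degree to $k+1$ and shifts the weight appropriately).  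I expect the main obstacle to be verifying that the inner and outer differentials genuinely \emph{factor out} as $\extd_{_\Sigma}$ and $\cod_{_\Sigma}$ rather than merely appearing as tangential pieces — this requires that the North slot of the relevant intermediate tractor vanish (as sketched after Theorem~\ref{fun}, using that $\F$ has weight $-k$ and lies in $\ker(\Is,\Ds,\Xs)$), so that the surviving western slot is genuinely $\cod_{_\Sigma}$-exact and the surviving data on the right is $\extd_{_\Sigma}$ of the boundary form.  Once the factorisation~\nn{dmd} is established, the detour complex~\nn{detour} of Theorem~\ref{detids} follows immediately: $\cod_{_\Sigma}^2=0=\extd_{_\Sigma}^2$ give $L_k\extd_{_\Sigma}=0$ and $\cod_{_\Sigma}L_k=0$, and the conformal invariance of $L_k$ is already known, so the displayed sequence is a complex.
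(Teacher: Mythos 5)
Your route is the paper's route --- Theorem~\ref{fun} to shuttle powers of $y$ past $\X$ and $\Xs$, Lemma~\ref{trumpet} and Lemma~\ref{boondoggled} (with its adjoint) to peel off $\extd_{_\Sigma}$ and $\cod_{_\Sigma}$, and identification of the residue with the holographic Q-operator --- but there is a concrete error in your identification of the middle operator. You claim the residue is $q^*\Ys\Xs y^{n-2k}\X\Y q$ and that this equals $Q^{g_{_\Sigma}}_{k+1}$ ``with power $n-2(k+1)+2=n-2k$''. Definition~\ref{Qdef} contains no such $+2$: it sets $Q^g_k:= q^*\Ys\Xs y^{n-2k}\X\Y q$, so $Q^g_{k+1}$ carries $y^{n-2(k+1)}=y^{n-2k-2}$, not $y^{n-2k}$. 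The computation itself forces the lower power: each invocation of Theorem~\ref{fun} trades two powers of $y$ for one $\oD$ or $\oDs$, and exactly one net trade survives in the final expression (the produced $\oD$, $\oDs$ factors having been converted into the outer $\cod_{_\Sigma}$, the inner $\extd_{_\Sigma}$, and the $\X\Dt^T$ intermediary), so one lands on $y^{h_0-3}=y^{n-2k-2}$. Your version is also weight-inconsistent: with $y^{n-2k}$ in the middle, the composition $\cod_{_\Sigma}(\,\cdot\,)\extd_{_\Sigma}$ would map $\Omega^k\Sigma$ into $\Gamma\ce^k\Sigma[2k-n-2]$ rather than $\Omega_k\Sigma=\Gamma\ce^k\Sigma[2k-n]$, contradicting the stated mapping property of~$L_k$.

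The constant bookkeeping as you describe it would likewise fail. In the actual derivation Theorem~\ref{fun} is applied three times --- twice forwards and once \emph{in reverse}, converting $y^{h_0-3}\oDs$ back into $-(h_0-2)^{-2}\,\Xs y^{h_0-1}$ before the second insertion of $1=\{\Xs,\Y\}$ --- so only a single net factor $-(h_0-2)^2=-(n-2k-1)^2$ survives; your rule ``each application contributes $-(h_0-2)^2$'' applied to two forward uses would instead produce $+(n-2k-1)^4$ and the wrong $y$-power. Moreover the factors $(n-2k+2)$ and $(n-2k)$ do not come from Lemma~\ref{beach}, whose normalisation never enters ${\mathbb L}_k= y^{2\ell}\,\oDs\X q$ of Definition~\ref{LONG}: they arise from the boundary identity $\oD\X=-(h+1)\X\Dt^T$ of Equation~\nn{slaphappy}, and from the factor $h-1$ in Definition~\ref{baronD} when $\oDs$ is rewritten as $(h-1)\,{\Dt}^{\star T}$ so that Lemma~\ref{trumpet} can move $\Ys$ past it. Since you never invoke~\nn{slaphappy}, your plan has no mechanism to convert $\oD\X$ into the tangential $\X\Dt^T$ that Lemma~\ref{boondoggled} requires, and hence no way to make the inner $\extd_{_\Sigma}$ genuinely factor out.
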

\begin{proof}
From Equation~\nn{holL}, $L_k$ is the composition of operators
$$
L_k= q^* y^{h_0-1} {\oDs} \X q \, ,
$$
evaluated along~$\Sigma$ with $h_0:=d-2k$.

This operator is tangential and throughout we will calculate along~$\Sigma$ without further comment.
Now using that $\{\X,\Y^* \}=1 $ and that the composition $q^* \X\Xs=0$ (recall from Theorem~\ref{fun} that   $y^{h_0-1} \oDs \X  q$ can be written  as a composition of $\Xs$ from the left)  we come to 
$$
L_k= q^*\Ys \X y^{h_0-1} {\oDs} \X  q \, .
$$
Now using Theorem~\ref{fun} we have 
$$
L_k= -(h_0-2)^2 q^*\Ys  y^{h_0-3} \oD\, \oDs \X  q  \, .
$$
Since along~$\Sigma$ we have $\oD\  \oDs=-\oDs\, \oD $ (because $\{\D_{_\Sigma},\Ds_{_\Sigma}\}=0$ and $y^{h_0-3}$ acts tangentially here), thus 
\begin{eqnarray*}
L_k&=&\ \  (h_0-2)^2 \, q^*\Ys  y^{h_0-3} \oDs\oD \X  q  \\[2mm]
&=& -(h_0-2)^2\, (h_0+1)\,  q^* \Ys y^{h_0-3} \oDs \X \Dt^T\,  q\, .
\end{eqnarray*}
In the last step we used that along~$\Sigma$ one has 
\begin{equation}
\label{slaphappy}
\oD \X = \D_{_\Sigma} \X_{_\Sigma}=-\frac{h_{_\Sigma}+2}{h_{_\Sigma}-2}\, \X_{_\Sigma} \D_{_\Sigma} =
-\frac{h+1}{h-3}\, \X \oD=-(h+1)\X\Dt^T\, .
\end{equation}
Note that the singularity at $h_0=3$ in the fourth expression above is a removable one as evidenced by the last equality.

We may now employ Lemma~\ref{boondoggled} to generate a boundary exterior derivative on the right (along~$\Sigma$, $\X$ and $\X_{_\Sigma}$
agree):
$$
L_k=-(h_0-2)^2\, (h_0+1)\,  q^* \Ys y^{h_0-3} \oDs \X q \, \extd_{_\Sigma}\, .
$$
Using the second relation of Theorem~\ref{fun} we then have 
$$
L_k=  (h_0+1)\,  q^* \Ys \Xs y^{h_0-1}  \X q \, \extd_{_\Sigma}\, .
$$
Now we use  that $1=\{\Xs,\Y\}$ and $\X\Y\Xs q =0$ to obtain
$$
L_k=-(h_0+1)\,  q^* \Ys \Xs y^{h_0-1}  \Xs\X \Y q \, \extd_{_\Sigma}\, .
$$
Now the fourth relation in Theorem~\ref{fun} gives
$$
L_k=(h_0-2)^2\, (h_0+1)\,  q^* \Ys \Xs \oDs y^{h_0-3}  \X \Y q \, \extd_{_\Sigma}\, .
$$
Now we use the second relation in Lemma~\ref{trumpet} to move $\Ys$ to the right (being careful to replace $\oDs=(h-1)\, \Dt^{\star T}$).
$$
L_k=-(h_0-2)^2\, (h_0+1)(h_0-1)\,  q^*  \Xs {\Dt}^{\star T} \Ys\, y^{h_0-3}  \X \Y q \, \extd_{_\Sigma}\, .
$$
The final step is to employ the adjoint version of Lemma~\ref{boondoggled} to extract a boundary codifferential on the left.
$$
L_k=- (h_0-2)^2(h_0+1)(h_0-1)\,  \cod_{_\Sigma} q^*  \Ys\Xs \, y^{h_0-3}  \X \Y q \, \extd_{_\Sigma}\, .
$$
\end{proof}
\begin{remark}
The way the above holographic proof employs the fundamental Theorem~\ref{fun} mirrors the original proof of~\cite[Theorem 2.8]{BrGodeRham} based on the ambient Fefferman--Graham metric.
Similarly, consideration of~$Q_{k}\extd_{_\Sigma}$, and holographic arguments parallel to  those in the proof above,   recovers the conformal transformation 
formula of~\cite[Equation (1)]{BrGodeRham} for $Q_k$ as an operator on closed $k$-forms.
On the other hand, using again parallel arguments to above shows that  $\cod_{_\Sigma}Q_k$ is proportional to $G_k$  and recovers the conformal transformation law for $G_k$ found in~\cite{BrGodeRham}.
\end{remark}

\begin{remark}
We observed in Remark~\ref{injectively} that ${\mathcal H}^k_L$ parametrises the smooth, Dirichlet solution  space 
to the gauge fixed Maxwell system  (which amounts in the  preferred interior scale to equations $\FL A = 0 = \cod A$) up to the addition of second solutions. As a special case of this problem, one can begin with closed Dirichlet boundary data $A_{_\Sigma}$. It follows that solutions are then parametrised by $A_{_\Sigma}$
such that
$$A_{_\Sigma}\in \ker( \extd_{_\Sigma}, G_k)=: {\mathcal H}^k(\Sigma)\, .$$
This is exactly the conformal harmonic space for~$\Sigma$ defined in~\cite{BrGodeRham}.
It is interesting to understand what this special boundary data means in the bulk. This is answered in~\cite[Theorem 1.3]{AG-BGops}: Essentially, this is captured by the space $\ker(\extd,\cod)=:Z^k(M)$, more precisely it is there proven that there is
an exact sequence 
$$
0\longrightarrow H^k(M,\Sigma)\longrightarrow Z^k(M)\longrightarrow
{\mathcal H}^k(\Sigma)\longrightarrow H^{k+1}(M,\Sigma)\, ,
$$
where $k<\frac n2$ and $H^k(M,\Sigma)\cong \ker_{L^2}\FL$ is the relative cohomology of $M$.
\end{remark}

\appendix

\section{The ambient manifold}
\label{AMBIENT}

A detailed  analysis of tractor forms using an ambient manifold approach
has been given in~\cite{BrGodeRham}. We first sketch the ingredients needed
to provide expedient proofs of Lemma~\ref{DXid}, Proposition~\ref{Thomtang} and Theorem~\ref{dt}.

A conformal structure is equivalent to the ray bundle $\pi:\cG\to M$ of
conformally related metrics.  Let us use $\rho $ to denote the ${\Bbb
  R}_+$ action on $ \cG$ given by $\rho(t) (x,g_x)=(x,t^2g_x)$.  An
{\em ambient manifold\/}  is a smooth $(d+2)$-manifold $\aM$ endowed
with a free $\Bbb R_+$--action~$\rho$ and an $\Bbb R_+$--equivariant
embedding $i:\cG\to\aM$.  We write $\aX\in\Gamma(T \aM)$ for the
fundamental field generating the $\Bbb R_+$--action.  That is, for
$f\in C^\infty(\aM)$ and $ u\in \aM$, we have $\aX
f(u)=(d/ds)f(\rho(e^s)u)|_{s=0}$.  For an ambient manifold $\aM$, an
{\em ambient metric\/} is a pseudo--Riemannian metric $\h$ of
signature $(d+1,1)$ on $\aM$ satisfying the conditions: (i) ${\Cal
L}_{\sX}\h=2\h$, where $\Cal L_{\sX}$ denotes the Lie derivative by
$\aX$; (ii) for $u=(x,g_x)\in \cG$ and $\xi,\eta\in T_u\cG$, we have
$\h(i_*\xi,i_*\eta)=g_x(\pi_*\xi,\pi_*\eta)$. In~\cite{FGast,FGrnew} Fefferman and Graham considered formally the Gursat
problem of obtaining $\Ric(\h)=0$. They proved that, for the case of
$d=2$ and $d\geq 3$ odd, this may be achieved to all orders, while for
$d\geq 4$ even, the problem is obstructed at finite order by a natural
2-tensor conformal invariant (this is the Bach tensor if $d=4$, and is
called the Fefferman--Graham obstruction tensor in higher even
dimensions); for $d$ even one may obtain $\Ric(\h)=0$ up to the
addition of terms vanishing to order $d/2-1$. See~\cite{FGrnew} for
statements concerning uniqueness. For extracting results via
tractors we do not need this, as discussed in {\it e.g.}~\cite{CapGoamb,GoPetCMP}. (In fact, to obtain a correspondence with the normal tractor connection which is all that we require below, it suffices that the tangential components
of Ricci vanish along~$\cG$.)
We shall henceforth call any (approximately
or otherwise) Ricci-flat  metric on $\aM$  a {\em Fefferman--Graham
  metric}.   In the subsequent
discussion of ambient metrics all results can be assumed to hold
formally to all orders.

In the following discussion we  use bold symbols or tilded
symbols for the objects on~$\aM$. For example
$\boldsymbol{\nabla}$ denotes the Levi-Civita connection on
$\aM$.  
Familiarity with the 
treatment of the Fefferman--Graham metric, as in {\it e.g.}~\cite{CapGoamb,GoPetobstrn} 
or~\cite{BrGodeRham}, will be assumed.
In particular, we
shall use that suitably homogeneous tensor fields of
$\aM|_\cG$ correspond to tractor fields. This correspondence is
compatible with the Levi-Civita connection in that each weight zero
tractor field $F$ on $M$ is identified with (the restriction to $\cG$ of)
a homogenous tensor field $\aF$ on $\aM$
with the property that it is parallel in the vertical direction, that
is $\aNd_{\sX} \aF=:\aX^A \aNd_A \aF= 0$ along $\cG$.  The metric~$\h$
and its Levi-Civita connection $\aNd$ on~$\aM$ determine a metric and
connection on tractor bundles, and by~\cite[Theorem 2.5]{CapGoamb}
this agrees with the normal tractor metric and connection. 
We use abstract indices in an obvious way on
$\aM$ and these are lowered and raised using $\h_{AB}$ and its inverse
$\h^{AB}$.

We shall say $\aF$ is homogeneous of {\em weight} $w_0$
if $\aNd_{\sX} \aF=w_0 \aF$, and this corresponds to a tractor field
$F$ of weight $w_0$.  We shall always take such fields to be extended
off~$\cG$ smoothly and also such that $\aNd_{\sX} \aF= w_0 \aF$ on
$\aM$. The operator $\aNd_{\sX}$ gives an ambient realisation of the weight
operator, as applied to tensor fields of
well defined weight along~$\cG$. 

In this picture the operator $ \aD^A= \big(d+2\, w_0 -2\big) \aNd^A -
\aX^A \aNd\,^2 $ on tensors homogeneous of weight $w_0$ corresponds to
the tractor D-operator as applied to tractors of weight $w_0$.  Thus
we equivalently view this as a restriction of
$$
\aD^A= \aNd^A(d+2\aNd_{\sX}-2) +\aX^A \aNd\,^2.
$$
Here $\aNd\,^2$ is the ambient Bochner Laplacian. The above operator acts tangentially along the submanifold $\cG$ in $\aM$,
~\cite{BrGodeRham} and~\cite{GoPetCMP}.

This technology enables simple computations on $\aM$ of often complicated tractor quantities on~$M$.
\newcommand{\bh}{{h}}
For example the proof of Lemma~\ref{DXid} is:
\begin{eqnarray*}
&&\bh\, \aX^A\aD^B-(\bh-2)\, \aD^B \aX^A - 2\,  \aX^B\aD^A +\bh(\bh-2)\, \h^{AB}\\[2mm]
&=&\bh \, \aX^A \big(\bh\aNd^B-\aX^B \aNd\,^2\big) \ -\ (\bh-2)\, \big(\bh\aNd^B -\aX^B \aNd\,^2\big) \aX^A\\[1mm]
&-&2\, \aX^B\big(\bh\aNd^A-\aX^A \aNd\,^2\big) \ + \ \bh(\bh-2)\, \h^{AB}\\[2mm]
&=&\bh (\bh-2)\, \aX^A \aNd^B - \bh\,  \aX^A \aX^B \aNd\,^2 \\[1mm]
 &-& \bh (\bh-2)\, \big(\aX^A\aNd^B+\h^{AB}\big) +(\bh-2)\, \aX^B \big(\aX^A \aNd\,^2+2\, \aNd^A\big)\\[1mm]
&-&2(\bh-2)\, \aX^B \aNd^A+2\aX^A\aX^B \aNd\,^2 \, + \, \bh(\bh-2)\, \h^{AB}\ = \ 0\, .   \end{eqnarray*}
\vspace{-13mm}
\begin{flushright} $\square$\end{flushright}
\vspace{4mm}

We will employ the same notations as used on $M$ for the natural operators
on sections of $\Lambda^\bullet \aM$. Namely
$\extd$ for the ambient exterior derivative, $\cod$ the ambient codifferential and $\FL=\{\extd,\cod\}$ for the
ambient form Laplacian. No confusion should arise from this recycling of notation.
As shown in~\cite{BrGodeRham}, the ambient operator corresponding to the exterior Thomas D-operator is $\bm{\mathcal D}=(d+2w_0-2) \extd - \varepsilon(\aX) \FL$
acting on ambient differential forms homogeneous of weight~$w_0$. This, and its interior analog can both be equivalently viewed as restrictions 
of the operators
$$
\bm{\mathcal D}=h \extd  - \varepsilon(\aX) \FL\, ,\qquad \bm{\mathcal D}^* = h \cod - \iota(\aX) \FL\, ,
$$
where 
$$
h:=d+2\aNd_\sX\, .
$$

Given a defining scale $\si\in \Gamma\ce M[1]$ on $M$, we shall write
$\asi$ for the corresponding homogeneous weight~1 function on $\cG$ with
some homogeneous extension to $\aM$.  Then we introduce 
$$
\aI^A:=\frac{1}{d}\, \aD^A \asi
$$
and, again, we can define the interior and exterior operators
 $$
 \bm{\mathcal I}:=\varepsilon(\aI)~,\qquad\, \bm{\mathcal I}^*:=\iota(\aI)~,
 $$
 as well as
 $$
 \bm{\mathcal X}:=\varepsilon({\aX})~,\qquad \bm{\mathcal X}^*:=\iota(\aX)~.
 $$
 
 We now restrict our attention to the ambient analog of a Poincar\'e--Einstein structure with defining scale~$\sigma$. For simplicity, we begin by taking the dimension~$d$ odd and so may assume~\cite{Goal}
 that~$\aI=\aNd \, \tilde \sigma$ is parallel with respect to the ambient Levi--Civita connection and has unit length~$\aI^2=1$.
Then a restriction of the
differential operator
\begin{equation*}\label{IdotDamb}
-\bm{y}:=\{\iaI, \bm{\mathcal D}\}= \{\eaI, \bm{\mathcal D}^*\} = h \aNd_{\sI} - \tilde \sigma \al
\end{equation*}
(on $\aM$)  lifts the operator 
 $I_A \slashed D{}^A$ on $M$, enabling calculations on $\aM$. 
 Note now that the combination $\bm{\mathcal I}^*\bm{\mathcal I}$ projects onto boundary objects, thus we call 
 $$ \extd^T:=\bm{\mathcal I}^*\bm{\mathcal I}\, \extd\qquad \mbox{ and }\qquad \cod^T:=\cod \, \bm{\mathcal I}^*\bm{\mathcal I}\, .$$
 
 We are ready now to prove Theorem~\ref{dt}; along $\Sigma$ and acting on $\ker  \bm{\mathcal I}^*$ we find
\begin{eqnarray*}
 \bm{\mathcal D}^T&=& \bm{\mathcal D}+ \bm{\mathcal I} \bm{y} +\frac1{(h-1)(h-2)}\eaX \bm{y}^2 \\[1mm]
&=&h\, (\extd^T +\bm{\mathcal I}\, \bm{\mathcal I}^*\extd)- \eaX \big(\cod (\bm{\mathcal I}\, \bm{\mathcal I}^* + \bm{\mathcal I}^*\bm{\mathcal I})\extd+(\bm{\mathcal I}\, \bm{\mathcal I}^*+\bm{\mathcal I}^*\bm{\mathcal I})\, \extd\, \cod\  \bm{\mathcal I}^*\bm{\mathcal I}\big) \\
&-& \bm{\mathcal I}\, h\aNd_{\sI}+\frac1{(h-1)(h-2)} \eaX\  h \aNd_{\sI} \, \big(h\aNd_{\sI}-\tilde \sigma\FL\big)\\[1mm]
&=&h\,  \extd^T
-\frac{h}{h-1} \eaX
\big(
\FL^T 
+\cod\, \bm{\mathcal I}\,  \aNd_{\sI}
+\bm{\mathcal I} \, \aNd_{\sI}\, \cod - \aNd_{\sI}^{\: 2}\big)
\\[1mm]
&=&\frac{h}{h-1}\big((h-1)\,  \extd^T - \eaX \FL^T\big)=\frac{h}{h-1}  \bm{\mathcal D}_\Sigma\, .
\end{eqnarray*}
In the second line we inserted $1=\{\bm{\mathcal I},\bm{\mathcal I}^*\}$ in order to produce the operators  $\extd^T, \cod^T$ and $\FL^T:=\{\extd^T, \cod^T\}$ appearing in the third line. 
Also, to obtain the third line, we used, for example,   $\{\iaI,\extd\}=\aNd_\sI$. To obtain fourth line, we used $\{\cod,\eaI\}=\aNd_\sI$ as well as $[\aNd_I,\cod]=0$ to cancel all
terms with a $\aNd_\sI$ in the third line.
In the last step we evaluated our result along the boundary using that $\extd^T|_{\Sigma}=\extd_\Sigma$ and $\FL^T|_{\Sigma}=\FL_\Sigma$ there.

Note that the special case of Paneitz weight where $ \bm{\mathcal D}^T$ acts on ambient forms of homogeneity $w_0=2-\frac d2$ is not covered by the above 
computation. However, it is easy to see how that case is proven from the above display: Focussing on the last term on the third
line, we see that the potentially singular factor $h-2$ in the denominator cancels because $\eaX h = (h-2) \eaX$. The definition of $ \bm{\mathcal D}^T$
exactly removes the last singular term on that line.

Finally, the last special case of boundary Yamabe weight can be handled by multiplying both sides of the above computation from the left by $h-1$
and then specializing to homogeneity  $w_0=1-\frac n2$.
\vspace{-1mm}
\begin{flushright} $\square$\end{flushright}
\vspace{2mm}

The proof for the analogous formula for the Thomas D-operator acting on arbitrary tractors required for 
Proposition~\ref{Thomtang} is very similar. Using $\aI\cdot \aD=h \aNd_{\sI} - \tilde \sigma \aNd{}^{\, 2}$ and $$\aNd^{\, T}_A:=\aNd_A -\aI_A \aNd_\sI\, ,\qquad (\aNd^{\, T})^2= \aNd^{\, 2} - \aNd_\sI^{\, 2}\, ,$$ we have, calculating along~$\Sigma$
\begin{eqnarray*}
\aD_A^{\, T}\!&=& \aD_A - \aI_A \aI\cdot \aD + \frac1{(h-1)(h-2)} \aX_A \big(\aI\cdot \aD\big)^2\\[2mm]
&=&h(\aNd^{\, T}_A+\aI_A \aNd_\sI) - \aX_A \aNd^{\, 2} - \aI_A h \aNd_\sI
+\frac1{(h-1)(h-2)} \aX_A h \aNd_\sI \big(h \aNd_{\sI} - \tilde \sigma \aNd{}^{\, 2}\big)\\[2mm]
&=&h\aNd^{\, T}_A-\frac{h}{h-1}\aX_A(\aNd^{\, T})^2 \: = \: \frac{h}{h-1}\, \aD^{\Sigma}_A\, . 
\end{eqnarray*}
\vspace{-11mm}
\begin{flushright} $\square$\end{flushright}
\vspace{12mm}
For $d\geq 4$ even, the harmonic extension problem for $\tilde \sigma$ is potentially obstructed. 
However, we may still obtain that  $0=\al \tilde \sigma=\al^2 \tilde \sigma= \cdots = \al^{d/2} \tilde \sigma$ along~$\cG$
(see~\cite{GJMS,Goal}). This suffices for the above proofs of Theorem~\ref{dt} and Proposition~\ref{Thomtang}
to apply also in this dimension parity.

\newpage

\section{List of common symbols}
\vspace{3mm}
\begin{center}
\scalebox{.8}{
\begin{tabular}{lr}
$d:=n+1$ & $\dim M:=\dim\partial M+1$\\[1mm]
$\Lambda^\bullet M$ & Exterior bundle on $M$\\[1mm]
$\Omega^\bullet M$ & Sections of $\Lambda^\bullet M$ \\[1mm]
$\extd$ & Exterior derivative\\[1mm]
$\cod$ & Codifferential\\[1mm]
$*$ & Hodge star\\[1mm]
$\FL$ & Form Laplacian\\[1mm]
$\degree$ & Form degree\\[1mm] 
$\P$ & \hyperlink{Schouten}{Schouten endomorphism}\\[1mm]
$\varepsilon$ & Exterior multiplication\\[1mm]
$\iota$ & Interior product \\[1mm]
$\wepsilon$ & \hyperlink{wiota}{Holographic exterior normal}\\[1mm]
$\wiota$ & \hyperlink{wiota}{Holographic interior normal} \\[1mm]
$c$ & \hyperlink{conformal structure}{Conformal structure} \\[1mm]
$\bg$ & \hyperlink{conformal metric}{Conformal metric}\\[1mm]
$\ce M[\, . \, ]$ & \hyperlink{exterior density}{Conformal density bundle} on $M\!\!$\\[1mm]
$\ce^\bullet M[\, . \, ]\ \ $ & \hyperlink{exterior density}{Exterior density bundle} on $M$\\[1mm]
$\Gamma{\mathcal B}$ & Sections of bundle ${\mathcal B}$\\[1mm] 
$\ct M$ & \hyperlink{standard tractor bundle}{Standard tractor bundle} on $M$\\[1mm]
$\ct^\Phi M[\, . \, ]$ & \hyperlink{tractor bundle}{Tractor bundle}
\end{tabular}\hspace{.3cm}
\begin{tabular}{lr}
$h(.\, ,.),h_{AB}$ & \hyperlink{tractor metric}{Tractor metric}\\[1mm]
$\w$ & \hyperlink{weight operator}{Tractor weight operator}\\[1mm]
\hyperlink{h}{$h$} & $d+2\w$\\[1mm]
$\ct^\bullet M$ & \hyperlink{exterior tractor bundle}{Exterior tractor bundle} on $M$\\[1mm]
$D^A$ & \hyperlink{Thomas D}{Thomas D-operator}\\[1mm]
$\hash$ & \hyperlink{tensor endomorphism}{Tensorial endomorphism}\\[1mm]
$\D,\varepsilon(\slashed D)$ & \hyperlink{exterior D}{Exterior tractor D-operator}\\[1mm]
$\Ds,\iota(\slashed D)$ &  \hyperlink{exterior D}{Interior tractor D-operator}\\[1mm]
$\star$ & \hyperlink{tractor star}{Tractor Hodge star}\\[1mm]
$X^A$ & \hyperlink{canonical tractor}{Canonical tractor}\\[1mm]
$\X,\varepsilon(X)$ & \hyperlink{exterior canonical tractor}{Exterior canonical tractor}\\[1mm]
$\Xs,\iota(X)$ & \hyperlink{interior canonical tractor}{Interior canonical tractor}\\[1mm]
$\N$ & \hyperlink{tractor degree}{Tractor form degree}\\[1mm]
$q_N,q_E,q_S,q_W\ \ $ & \hyperlink{insertions}{Insertion operators}\\[1mm]
$\sigma,x$ & \hyperlink{scale}{Scale}\\[1mm]
$I^A$ & Scale tractor\\[1mm]
$\I,\varepsilon(I)$ & Exterior scale tractor\\[1mm]
$\Is,\iota(I)$ & Interior scale tractor\\[1mm]
$y$ & \hyperlink{extension operator}{Extension operator}
\end{tabular}}
\end{center}

\newpage

\newcommand{\msn}[2]{\href{http://www.ams.org/mathscinet-getitem?mr=#1}{#2}}
\newcommand{\hepth}[1]{\href{http://arxiv.org/abs/hep-th/#1}{arXiv:hep-th/#1}}
\newcommand{\maths}[1]{\href{http://arxiv.org/abs/math/#1}{arXiv:math/#1}}
\newcommand{\mathph}[1]{\href{http://lanl.arxiv.org/abs/math-ph/#1}{arXiv:math-ph/#1}}
\newcommand{\arxiv}[1]{\href{http://lanl.arxiv.org/abs/#1}{arXiv:#1}}


\begin{thebibliography}{XX}

\bibitem{AdSCFTreview}
 \msn{1743597}{O.\ Aharony, S.S.\ Gubser, J.M.\ Maldacena {\it et al.},}
  {\em Large $N$ field theories, string theory and gravity},
  Phys.\ Rept.\  {\bf 323}, 183-386 (2000),
  \href{http://arxiv.org/abs/hep-th/9905111}{arXiv:hep-th/9905111}. Cited on page(s): 

\bibitem{Anderson} \href{http://www.ams.org/mathscinet-getitem?mr=730923}{M.T.~Anderson,} {\em The Dirichlet problem at infinity for manifolds of negative curvature},  J. Diff. Geom. {\bf 18}, 701 (1983). Cited on page(s): 

\bibitem{AG-BGops} \msn{2800480}{E.\ Aubry, C.\ Guillarmou,} {\em Conformal harmonic
    forms, Branson-Gover operators and Dirichlet problem at infinity},
\arxiv{0808.0552}. Cited on page(s):


\bibitem{BEG} \href{http://www.ams.org/mathscinet-getitem?mr=1322223}{T.N.\ Bailey, M.G.\ Eastwood, and A.R.\ Gover,} {\em
    Thomas's structure bundle for conformal, projective and related
    structures}, Rocky Mountain J.\ Math.\ {\bf 24} (1994),
  1191--1217. Cited on page(s): 


\bibitem{Tomforms}
\msn{652815}{T.P.~Branson,} {\em Conformally covariant equations on differential forms}, Comm. Partial Differential Equations, {\bf 4} (1983), 393.
 Cited on page(s):  
  
\bibitem{tomsharp}
\href{http://www.ams.org/mathscinet-getitem?mr=652815}{T.P.~Branson,} {\em Sharp inequalities, the functional determinant, and the complementary series},
Trans. Amer. Math. Soc. {\bf 347} (1995), 3671--3742. Cited on page(s): 

\bibitem{BrGoEM} \msn{1925767}{T.~Branson and A.R.~Gover,} {\em Electromagnetism, metric deformations, ellipticity and gauge operators on conformal 4-manifolds}, Differential Geom. Appl. {\bf 17} (2002), 229--249, \hepth{0111003}. Cited on page(s):


\bibitem{BrGoOps} \href{http://www.ams.org/mathscinet-getitem?mr=1867890}{T. Branson, A.R. Gover,} {\em Conformally invariant
    non-local operators}, Pacific J. Math. {\bf 201} (2001), 19--60. Cited on page(s): 


\bibitem{BrGodeRham} \href{http://www.ams.org/mathscinet-getitem?mr=2182307}{T.~Branson, A.R.~Gover,} {\em Conformally
    invariant operators, differential forms, cohomology and a
    generalisation of Q-curvature}, Comm. Partial Differential
  Equations {\bf 30} (2005), 1611--1669. Cited on page(s): 


  \bibitem{CapGoirred} \href{http://www.ams.org/mathscinet-getitem?mr=1822358}{A.\ \v Cap, and A.R.\ Gover,} {\em Tractor
       bundles for irreducible parabolic geometries.  Global analysis
       and harmonic analysis}, S\'emin. Congr. {\bf 4}, 129, Soc. Math.
     France 2000. Cited on page(s): 

\bibitem{CapGoamb} \msn{1996768}{A.\ \v Cap, and A.R.\ Gover,} {\em Standard
     tractors and the conformal ambient metric construction},  Ann.\
     Global Anal.\ Geom.\  {\bf 24} (2003), 231--295, \maths{0207016}. Cited on page(s):


\bibitem{cherrier} \href{http://www.ams.org/mathscinet-getitem?mr=749522}{P.\ Cherrier,} {\em Probl\`emes de Neumann non
lin\'eaires sur les vari\'et\'es riemanniennes}, J.\ Funct.\ Anal.\
{\bf 57} (1984), 154--206. Cited on page(s): 


\bibitem{Cherney:2009vg}
  \href{http://www.ams.org/mathscinet-getitem?mr=2521854}{D.~Cherney, E.~Latini, A.~Waldron,}
  {\em $(p,q)$-form K\"ahler Electromagnetism,}
  Phys.\ Lett.\  {\bf B674}, 316-318 (2009),
  \href{http://lanl.arxiv.org/abs/0901.3788}{arXiv:0901.3788}. Cited on page(s):
  
  
\bibitem{Deser:1983tm}
  \href{http://www.ams.org/mathscinet-getitem?mr=747206}{S.~Deser, R.~I.~Nepomechie,}
  {\em Anomalous Propagation Of Gauge Fields In Conformally Flat Spaces,}
  Phys.\ Lett.\  {\bf B132}, 321 (1983);
   {\em ``Gauge Invariance Versus Masslessness In De Sitter Space,}
  Annals Phys.\  {\bf 154}, 396 (1984). Cited on page(s): 
 
 

\bibitem{Deser:2001us} 
  \msn{1850799}{S.~Deser and A.~Waldron,}
  {\em Partial masslessness of higher spins in (A)dS,}
  Nucl.\ Phys.\ B {\bf 607} (2001), 577--604,
  \hepth{0103198};
  \msn{1846971}{{\em Gauge invariances and phases of massive higher spins in (A)dS,}}
  Phys.\ Rev.\ Lett.\  {\bf 87} (2001), 031601,
  \hepth{0102166};
  {\em Null propagation of partially massless higher spins in (A)dS and cosmological constant speculations,}
  Phys.\ Lett.\ B {\bf 513}, 137--141 (2001),
  \hepth{0105181};
  \msn{1843925}{{\em Stability of massive cosmological gravitons,}}
  Phys.\ Lett.\ B {\bf 508} (2001), 347--353,
  \hepth{0103255};
  \msn{1984380}{{\em Arbitrary spin representations in de Sitter from dS / CFT with applications to dS supergravity,}}
  Nucl.\ Phys.\ B {\bf 662} (2003), 379--392,
  \hepth{0301068};
  \msn{2104460}{{\em Conformal invariance of partially massless higher spins,}}
  Phys.\ Lett.\ B {\bf 603} (2004), 30--39,
  \hepth{0408155};
  \msn{}{{\em Partially Massless Spin 2 Electrodynamics,}}
  Phys.\ Rev.\ D {\bf 74} (2006), 084036,
  \hepth{0609113}. Cited on page(s):


 
 
 \bibitem{Esrni} \msn{1463509}{M.G.~Eastwood,}\ {\em Notes on conformal differential
  geometry},  
  Supp.\ Rend.\ Circ.\ Matem.\ Palermo, Ser.\ II, Suppl.,\ {\bf 43},
  (1996), 57--76. Cited on page(s): 

 
 \bibitem{Eastwood-Rice} 
  \href{http://www.ams.org/mathscinet-getitem?mr=880414}{M.G.~Eastwood and J.W.~Rice}, {\em Conformally invariant differential operators on Minkowski space and their curved analogues},
  Comm. Math. Phys. {\bf 109}, 207 (1987); {\it loc. cit.} {\bf 144}, 213, 1992. Cited on page(s): 

  
  \bibitem{FGast} \href{http://www.ams.org/mathscinet-getitem?mr=837196}{C.\ Fefferman, and C.R.\ Graham,} {\em Conformal
    invariants} in: The mathematical heritage of \'{E}lie Cartan (Lyon,
  1984).  Ast\' erisque 1985, Numero Hors Serie, 95--116.  Cited on page(s): 

\bibitem{FGrnew} \msn{2858236}{C.\ Fefferman, and C.R.\ Graham}, {\em The Ambient Metric},
  Annals of Mathematics Studies, {\bf 178}, Princeton University Press, \arxiv{0710.0919}. Cited on page(s):

\bibitem{Fefferman-Hirachi} \href{http://www.ams.org/mathscinet-getitem?mr=2025058}{C. Fefferman and K. Hirachi,}
{\em  Ambient metric construction of Q-curvature in conformal and CR geometries},
Math. Res. Lett. {\bf 10}, (2003) 819--831,   \href{http://lanl.arxiv.org/abs/math.DG/0303184}{arXiv:math.DG/0303184}. Cited on page(s): 

\bibitem{GoSrni99} \msn{1692257}{A.R. Gover,} {\em  Aspects of parabolic invariant theory}, Rend.\ Circ.\ Mat.\ Palermo (2) Suppl.\  No. 59 (1999),
25--47. Cited on page(s): 

\bibitem{Goinvariant}
\msn{1864834}{A.R.~Gover,} {\em Invariant theory and calculus for conformal geometries}, 
Adv. Math. {\bf 163} (2001), 206--257. Cited on page(s): 

\bibitem{powerslap} \href{2244375}{A.R. Gover,} {\em Laplacian operators and $Q$-curvature
  on conformally Einstein manifolds}, Math.\ Ann. {\bf 336} (2006),
  311--334, \maths{0506037}. Cited on page(s):
  
  
  
\bibitem{GoIP} \href{http://www.ams.org/mathscinet-getitem?mr=2366922}{A.R. Gover,} {\em Conformal Dirichlet-Neumann Maps and
    Poincar\'e-Einstein Manifolds}, SIGMA Symmetry Integrability Geom.\
  Methods Appl.\  {\bf 3} (2007), Paper 100, \arxiv{0710.2585}. Cited on page(s):
  
  \bibitem{GoPrague} \href{http://www.ams.org/mathscinet-getitem?mr=2268937}{A.R. Gover,} {\em Almost conformally Einstein
    manifolds and obstructions}, in Differential geometry and its
  applications, 247--260, Matfyzpress, Prague, 2005,
  \href{http://lanl.arxiv.org/abs/math/0412393}{arXiv:math/0412393}. Cited on page(s):



\bibitem{Goal} \href{http://www.ams.org/mathscinet-getitem?mr=2587388}{A.R. Gover,} {\em Almost Einstein and
    Poincar\'e-Einstein manifolds in Riemannian signature},   J.\
    Geometry and Physics, {\bf 60} (2010), 182--204, \href{http://lanl.arxiv.org/abs/0803.3510}{arXiv:0803.3510}.
Cited on page(s): 


\bibitem{GoNur}
\href{http://www.ams.org/mathscinet-getitem?mr=2171895}{A.R.~Gover and P.~Nurowski,}
{\em Obstructions to conformally Einstein metrics in n dimensions},
J. Geom. Phys. {\bf 56} (2006),  45--484,\maths{0405304}. Cited on page(s):

\bibitem{GoPetCMP} A.R.\ Gover, and L.\ Peterson, {\em Conformally
    invariant powers of the Laplacian, $Q$-curvature, and tractor
    calculus} Comm.\ Math.\ Phys.\  {\bf 235} (2003),  339--378, \mathph{0201030}. Cited on page(s): 
    
\bibitem{GoPetobstrn} 
A.R.~Gover, L.J.~Peterson, Lawrence,
{\em The ambient obstruction tensor and the conformal deformation complex},
Pacific J. Math. {\bf 226} (2006), 309--351. Cited on page(s): 
    
    
\bibitem{Gover:2008pt}
  \href{http://www.ams.org/mathscinet-getitem?mr=2502272}{A.~R.~Gover, A.~Shaukat and A.~Waldron,}
  {\em Weyl Invariance and the Origins of Mass},
  Phys.\ Lett.\  B {\bf 675} (2009), 93,
  \href{http://lanl.arxiv.org/abs/0812.3364}{arXiv:0812.3364};
  {\em Tractors, Mass and Weyl Invariance},
  Nucl.\ Phys.\  B {\bf 812} (2009), 424,
  \href{http://lanl.arxiv.org/abs/0810.2867}{arXiv:0810.2867}. Cited on page(s):
 
\bibitem{GWasym}
  {A.R.~Gover and A.~Waldron,}
  {\em Boundary calculus for conformally compact manifolds},
  \href{http://lanl.arxiv.org/abs/1104.2991}{arXiv:1104.2991}. Cited on page(s):
  
  \bibitem{GrVol} \msn{1758076}{C.R.~Graham,} {\em Volume and area renormalizations for
  conformally compact Einstein metrics}, Rend. Circ. Mat. Palermo (2) Suppl. No. {\bf 63} (2000), 31--42. Cited on page(s): 

\bibitem{GJMS}
C.R.~Graham, R.~Jenne, Ralph, L.~Mason and G.~Sparling,
{\em Conformally invariant powers of the Laplacian. I. Existence}, 
J. London Math. Soc. (2) {\bf 46} (1992), 557--565.



\bibitem{GrL} \msn{1112625}{C.R.\ Graham, and J.M.\ Lee,} {\em Einstein metrics with
    prescribed conformal infinity on the ball}, Adv.\ Math.\ {\bf 87}
  (1991), 186--225. Cited on page(s): 


\bibitem{GrZ} \msn{1860694}{C.R.\ Graham, and M.\ Zworski,} {\em Scattering matrix in
    conformal geometry}, Invent.\ Math.\ {\bf 152} (2003), 89--118, \maths{0109089}. Cited on page(s):

\bibitem{Grant} D.~Grant, {\em A conformally invariant third order Neumann-type operator for hyper surfaces}, \href{http://www.math.auckland.ac.nz/mathwiki/images/5/51/GrantMSc.pdf}{M.Sc. Thesis}, University of Auckland, 2003; http://www.math.auckland.ac.nz/mathwiki/images /5/51/GrantMSc.pdf. 
 Cited on page(s): 


\bibitem{Grigoriev:2011gp} 
  \msn{2835508}{M.~Grigoriev and A.~Waldron,}
  {\em Massive Higher Spins from BRST and Tractors},
  Nucl.\ Phys.\ B {\bf 853} (2011), 291--326, \arxiv{1104.4994}. Cited on page(s):


\bibitem{GuilN} \msn{2481747}{C.\ Guillarmou, Fr\'ed\'ric Naud,} {\em Wave decay on
    convex co-compact hyperbolic manifolds}, Comm.\ Math.\ Phys.\ {\bf
    287} (2009), 489--511, \arxiv{0802.1345}. Cited on page(s):


\bibitem{Henningson}
  \msn{1644988}{M.\ Henningson, K.\ Skenderis,}
  {\em The Holographic Weyl anomaly},
  JHEP {\bf 9807}, 023 (1998),
  \hepth{9806087}. Cited on page(s):

\bibitem{Helgason}
\msn{2463854}{S. Helgason,} {\em Groups and geometric analysis}, Academic Press, 1984; {\em Geometric analysis on symmetric spaces}, Amer. Math. Soc., 1994. Cited on page(s): 




\bibitem{Hallowell:2007zb}
  \msn{2366933}{K.~Hallowell, A.~Waldron,}
  {\em The Symmetric Tensor Lichnerowicz Algebra and a Novel Associative Fourier--Jacobi Algebra,}
  SIGMA {\bf 3}, (2007) 089--101,
   \arxiv{0707.3164}. Cited on page(s):


\bibitem{Hallowell:2005np}
  \msn{2163016}{K.~Hallowell, A.~Waldron,}
  {\em Constant curvature algebras and higher spin action generating functions,}
  Nucl.\ Phys.\  {\bf B724}, (2005) 453--486,
  \hepth{0505255}. Cited on page(s):

\bibitem{HPT} \msn{2472889}{P. Hislop, P. Perry and S. Tang,}
  {\em CR-invariants and the scattering operator for complex manifolds with
  boundary},   Anal. PDE {\em 1}, (2008) 197--227, \arxiv{0709.1103}. Cited on page(s):

\bibitem{Joshi} \href{http://www.ams.org/mathscinet-getitem?mr=1756569}{M. Joshi, A. S\'a Barreto,} {\em Inverse scattering on asymptotically hyperbolic manifolds}, Acta Math. {\bf 184},
41 (2000), \maths{9811118}. Cited on page(s):

\bibitem{Juhl}
\msn{2521913}{A. Juhl,} 
Families of conformally covariant differential operators, Q-curvature and holography, 
Progress in Mathematics, {\bf 275}, Birkh\"auser Verlag  2009.  Cited on page(s): 


\bibitem{LeBrunH}
\msn{652038}{C.R.~LeBrun,} {\em H-space with a cosmological constant}, Proc. Roy. Soc. London Ser. {\bf A
380} (1982), 171--185. Cited on page(s): 

\bibitem{Lowdin} \msn{191466}{P. L{\"o}wdin},  {\em Angular momentum wavefunctions constructed by projector operators},
Rev. Modern Phys., {\bf 36}, (1964) 966--976. Cited on page(s): 


\bibitem{Mal} \msn{1633016}{J.\ Maldacena} {\em The large $N$ limit of superconformal
    field theories and supergravity},  Adv.\ Theor.\ Math.\ Phys.\  {\bf 2}
    (1998), 231--252, \hepth{9711200}. Cited on page(s):


  \bibitem{Ma-hodge} \href{http://www.ams.org/mathscinet-getitem?mr=961517}{R.~Mazzeo,} {\em The Hodge cohomology of a
      conformally compact metric}, J.\ Differential Geom., {\bf 28}
    (1988), 309--339. Cited on page(s): 

\bibitem{Ma-unique} \msn{1087800}{R.~Mazzeo,} {\em Unique continuation at infinity and
    embedded eigenvalues for asymptotically hyperbolic manifolds},
    Amer.\ J.\ Math.,  {\bf 113} (1991), 25--45. Cited on page(s): 

  \bibitem{MaMe} \msn{916753}{R.~Mazzeo and R. ~Melrose,} {\em Meromorphic extension
      of the resolvent on complete spaces with asymptotically constant
      negative curvature}, J. Funct. Anal. {\bf 75} (1987), 260--310. Cited on page(s): 

 \bibitem{MSV} R.~Melrose, A.\ S\'a Barreto, A.\ Vasy, {\em Asymptotics
      of solutions of the wave equation on de Sitter-Schwarzschild
      space}, \arxiv{0811.2229}. Cited on page(s):


\bibitem{Metsaev} 
  R.~R.~Metsaev,
  {\em Anomalous conformal currents, shadow fields and massive AdS fields,}
  \arxiv{1110.3749}. Cited on page(s): 

\bibitem{ot} \msn{917488}{R.\ Penrose, W.\ Rindler,} {\em Spinors and
  space-time. Vol. 1. Two-spinor calculus and relativistic
  fields}, Cambridge Monographs on Mathematical Physics. Cambridge
  University Press, Cambridge, 1984. Cited on page(s): 

\bibitem{Proca}
A.~Proca, {\it Sur la the\'orie ondulatoire des \'electrons positifs et n\'egatifs}, J. Phys. Radium {\bf 7}, (1936)
347; {\it Sur la the\'orie du positon}, C. R. Acad. Sci. Paris {\bf 202}, (1936) 1366. Cited on page(s): 


\bibitem{Sasaki} \href{http://www.ams.org/mathscinet-getitem?mr=14768}{S.~Sasaki,} {\em On the spaces with normal
    conformal connexions whose groups of holonomy fix a point or a
    hypersphere II},  Jap.\ J.\ Math.\  {\bf 18}, (1943) 623--633. Cited on page(s): 
    
 \bibitem{Stafford} R.~Stafford, {\em Tractor Calculus and Invariants for Conformal Sub-Manifolds}, \href{www.math.auckland.ac.nz/mathwiki/images/c/cf/StaffordMSc.pdf}{M.Sc. Thesis}, University of Auckland, 2005; www.math.auckland.ac.nz/mathwiki/images/c/cf/StaffordMSc.pdf.
 Cited on page(s): 

   

\bibitem{Sullivan} \href{http://www.ams.org/mathscinet-getitem?mr=730924}{D.~Sullivan,} {\em The Dirichlet problem at infinity for a negatively curved manifold},  J. Diff. Geom.
{\bf 18}, 723 (1983). Cited on page(s): 

\bibitem{Susskind} \msn{1355913}{L.~Susskind,} {\it The world as a hologram}, J. Math. Phys. {\bf 36} (1995) 6377.
\href{http://arxiv.org/abs/hep-th/9409089}{arXiv:hep-th/9409089}. Cited on page(s):



\bibitem{T} T.Y.\ Thomas, {\em  On conformal geometry},
  Proc.\ Natl.\ Acad.\ Sci.\ USA
  {\bf 12}, 352--359 (1926). Cited on page(s): 

\bibitem{tHooft}	
G. 't Hooft, {\it Dimensional reduction in quantum gravity}. 
\href{http://arXiv.org/abs/gr-qc/9310026}{arXiv:gr-qc/9310026}. Cited on page(s):

\bibitem{Tolstoy}
\msn{2184036}{V.N.~Tolstoy}, 
{\em Fortieth anniversary of extremal projector method for {L}ie symmetries} in
 Noncommutative geometry and representation theory in mathematical physics,
 Contemp. Math., {\bf 391}, 371--384, Amer. Math. Soc.,  Providence, RI, 2005. Cited on page(s):

\bibitem{Vasy} \msn{2563211}{A.\ Vasy,} {\em The wave equation on asymptotically de
    Sitter-like spaces}, Adv.\ Math., {\bf 223} (2010), 49--97, \arxiv{0706.3669}. Cited on page(s):

\end{thebibliography}
\end{document}